\newtheorem{thm}{Theorem}[section]
\newtheorem{lemma}[thm]{Lemma}
\newtheorem{prop}[thm]{Proposition} 
\newtheorem{coro}[thm]{Corollary}
\newtheorem{claim}[thm]{Claim}
\newtheorem{setup}[thm]{Setup} 
\newtheorem{remark}[thm]{Remark}
\newtheorem{definition}[thm]{Definition}
\newtheorem{conjecture}[thm]{Conjecture}
\newcommand{\eps}{\varepsilon}
\newcommand{\vphi}{\varphi}
\newcommand{\sm}{\setminus}
\newcommand{\oHC}{\overline{\mathrm{HC}}}
\newcommand{\oHP}{\overline{\mathrm{HP}}}
\newcommand{\HC}{{\mathrm{HC}}} 
\newcommand{\HP}{{\mathrm{HP}}}
\newcommand{\Heven}{H_\textrm{even}}
\newcommand{\Hodd}{H_\textrm{odd}}
\newcommand{\T}{\mathcal{T}}   
\newcommand{\Sc}{\mathcal{S}}
\newcommand{\Part}{{\mathcal P}}
\newcommand{\Prob}{\mathbb{P}}
\newcommand{\Expect}{\mathbb{E}}
\newcommand{\N}{\mathbb{N}}
\newcommand{\red}{{\mathrm{red}}}
\newcommand{\blue}{{\mathrm{blue}}}
\newcommand{\bad}{{\mathrm{bad}}}
\title{Hamilton cycles in hypergraphs below the Dirac threshold}
\author{Frederik Garbe and Richard Mycroft}
\email{garbe@math.cas.cz, r.mycroft@bham.ac.uk}
\thanks{\emph{Postal address:} Frederik Garbe, Institute of Mathematics, Czech Academy of Sciences, \v{Z}itn\'{a} 25, 110 00, Prague, Czech Republic. Richard Mycroft, University of Birmingham, Birmingham, B15 2TT, UK. \\ \indent RM supported by EPSRC grant EP/M011771/1.}
\begin{document}
 
\begin{abstract}
We establish a precise characterisation of $4$-uniform hypergraphs with minimum codegree close to $n/2$ which contain a Hamilton $2$-cycle. As an immediate corollary we identify the exact Dirac threshold for Hamilton $2$-cycles in $4$-uniform hypergraphs. Moreover, by derandomising the proof of our characterisation we provide a polynomial-time algorithm which, given a $4$-uniform hypergraph $H$ with minimum codegree close to $n/2$, either finds a Hamilton $2$-cycle in $H$ or provides a certificate that no such cycle exists. This surprising result stands in contrast to the graph setting, in which below the Dirac threshold it is NP-hard to determine if a graph is Hamiltonian. We also consider tight Hamilton cycles in $k$-uniform hypergraphs $H$ for $k \geq 3$, giving a series of reductions to show that it is NP-hard to determine whether a $k$-uniform hypergraph $H$ with minimum degree $\delta(H) \geq \frac{1}{2}|V(H)| - O(1)$ contains a tight Hamilton cycle. It is therefore unlikely that a similar characterisation can be obtained for tight Hamilton cycles.
\end{abstract}

\clearpage\maketitle
\thispagestyle{empty}

\section{Introduction}\label{sec:intro}
The existence of Hamilton cycles in graphs is a fundamental problem of graph theory which has been an active area of research for many years. The decision problem -- given a graph $G$, determine if it contains a Hamilton cycle -- was one of Karp's famous $21$ NP-complete problems~\cite{Ka72}. This means we are unlikely to find a `nice' characterisation of Hamiltonian graphs analogous to Hall's Marriage Theorem and Edmonds's algorithm for the existence of a perfect matching in graphs. Consequently, much research has focussed on sufficient conditions which ensure the existence of a Hamilton cycle in a graph $G$, such as the classic theorem of Dirac~\cite{dirac52} that every graph on $n \geq 3$ vertices with minimum degree at least $n/2$ contains a Hamilton cycle.

In recent years a great deal of attention has been devoted towards establishing analogous results for Hamilton cycles in hypergraphs. To discuss this work we make the following standard definitions. 

A \emph{$k$-uniform hypergraph}, or \emph{$k$-graph} $H$ consists of a set of vertices $V(H)$ and a set of edges $E(H)$, where each edge consists of $k$ vertices. This generalises the notion of a (simple) graph, which coincides with the case $k=2$. Given any integer $1 \leq \ell < k$, we say that a $k$-graph $C$ is an \emph{$\ell$-cycle} if $C$ has no isolated vertices and the vertices of $C$ may be cyclically ordered in such a way that every edge of $C$ consists of $k$ consecutive vertices and each edge intersects the subsequent edge (in the natural ordering of the edges) in precisely $\ell$ vertices. It follows from the latter condition that the number of vertices of an $\ell$-cycle $k$-graph $C$ is divisible by $k-\ell$, as each edge of $C$ contains exactly $k-\ell$ vertices which are not contained in the previous edge. We say that a $k$-graph $H$ on $n$ vertices contains a \emph{Hamilton $\ell$-cycle} if it contains an $n$-vertex $\ell$-cycle as a subgraph; as above, a necessary condition for this is that $k-\ell$ divides $n$, and we assume this implicitly throughout the following discussion. It is common to refer to $(k-1)$-cycles as \emph{tight cycles} and to speak of \emph{tight Hamilton cycles} accordingly. This is the most prevalently used definition of a cycle in a uniform hypergraph, but more general definitions, such as a Berge cycle~\cite{hh-bermond78}, have also been considered. Given a $k$-graph $H$ and a set $S \subseteq V(H)$, the \emph{degree} of $S$, denoted $d_H(S)$ (or $d(S)$ when $H$ is clear from the context), is the number of edges of $H$ which contain $S$ as a subset. The \emph{minimum codegree} of $H$, denoted $\delta(H)$, is the minimum of $d(S)$ taken over all sets of $k-1$ vertices of $H$, and the \emph{maximum codegree} of $H$, denoted $\Delta(H)$, is the maximum of $d(S)$ taken over all sets of $k-1$ vertices of $H$. Note that for graphs the maximum and minimum codegree are simply the maximum and minimum degree respectively. 

\subsection{Previous work}
The study of Hamilton cycles in hypergraphs has been a thriving area of research in recent years. We briefly summarise some of this work here; for a more expository presentation we refer the reader to the recent surveys of K\"uhn and Osthus~\cite{KOSurvey}, R\"odl and Ruci\'nski~\cite{RRSurvey} and Zhao~\cite{ZhaoSurvey}. A major focus has been to find hypergraph analogues of Dirac's theorem. Specifically, the aim is to find, for each $k$ and $\ell$, the \emph{Dirac threshold}, that is, the best-possible minimum codegree condition which guarantees that a $k$-graph on $n$ vertices contains a Hamilton $\ell$-cycle. The first results in this direction were by Katona and Kierstead~\cite{hh-katona99} who established the first non-trivial bounds on the Dirac threshold for a tight Hamilton cycle in a $k$-graph for $k \geq 3$. R\"odl, Ruci\'nski and Szemer\'edi~\cite{hh-rodl06, hh-rodl08} then improved this bound by determining asymptotically the Dirac threshold for a tight Hamilton cycle, first for $k=3$ and then for any $k \geq 3$. The asymptotic Dirac threshold for any $1 \leq \ell < k$ such that $\ell$ divides $k$ follows as a consequence of this. This left those values of $\ell$ for which $k-\ell$ does not divide $k$, in which cases the Dirac threshold was determined asymptotically through a series of works by K\"uhn and Osthus~\cite{pp-kuhn06-cherry}, Keevash, K\"uhn, Mycroft and Osthus~\cite{hh-keevash11}, H\`an and Schacht~\cite{hh-han10} and K\"uhn, Mycroft and Osthus~\cite{KMO14}. These results can all be collectively described by the following theorem, whose statement gives the asymptotic Dirac threshold for any $k$ and $\ell$. 

\begin{thm}[\cite{hh-han10, hh-keevash11, KMO14, pp-kuhn06-cherry, hh-rodl06, hh-rodl08}] \label{codeg}
  For any $k \geq 3$, $1 \leq \ell < k$ and $\eta > 0$, there exists~$n_0$ such that if $n \geq
  n_0$ is divisible by $k-\ell$ and $H$ is a $k$-graph on $n$ vertices with $$\delta(H) \geq 
  \begin{cases}
    \left( \frac{1}{2} + \eta \right) n& \mbox{ if $k-\ell $ divides $k$,} \\
    \left(\frac{1}{\lceil 
    \frac{k}{k-\ell} \rceil(k-\ell)}+\eta\right) n & \mbox{otherwise,}
  \end{cases} 
  $$ 
  then $H$ contains a Hamilton $\ell$-cycle. 
\end{thm}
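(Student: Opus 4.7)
The plan is to prove this by the \emph{absorbing method}, which has become the standard framework for Dirac-type problems in hypergraphs. Given a $k$-graph $H$ satisfying the stated codegree condition, I would construct a Hamilton $\ell$-cycle in four stages: (i) build a short \emph{absorbing $\ell$-path} $A$ which can incorporate any small ``leftover'' set of vertices, (ii) set aside a small \emph{reservoir} $R \subseteq V(H) \sm V(A)$ that can be used to connect up path endpoints, (iii) cover all but $o(n)$ of the remaining vertices by a constant number of long $\ell$-paths, and (iv) concatenate these long paths together with $A$ via $R$ and use the absorbing property of $A$ to swallow the final $o(n)$ leftover vertices.

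For the absorbing path in step (i), the codegree hypothesis implies that for each $(k-\ell)$-tuple $T$ of vertices there are $\Omega(n^{\text{const}})$ short ``absorbers'' $\ell$-paths $P_T$ in $H$ with the property that $P_T$ can be rerouted into an $\ell$-path $P'_T$ on $V(P_T) \cup T$ with the same endpoints as $P_T$. A probabilistic selection argument (of the type introduced by R\"odl, Ruci\'nski and Szemer\'edi) then merges $\Theta(\log n)$ or polynomially many such absorbers into a single $o(n)$-vertex $\ell$-path $A$ which can absorb any set $L$ of $o(n)$ vertices whose size is divisible by $k-\ell$. Step (ii) is obtained by taking $R$ uniformly at random and verifying via Chernoff bounds that $R$ inherits roughly the same codegree density as $H$; then a connecting lemma based on the codegree condition produces, for any two admissible $(k-1)$-tuple endpoints, a short $\ell$-path between them using only vertices of $R$.

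The crux of the argument is step (iii), the \emph{almost-covering} by $\ell$-paths. Here the proof splits into the two cases in the statement. When $k-\ell$ divides $k$, an $\ell$-path naturally decomposes into ``blocks'' of $k-\ell$ vertices that behave uniformly, and the relevant extremal example (the space barrier given by a vertex partition into two nearly equal parts with edges respecting a prescribed intersection pattern) requires density just below $n/2$; the weak hypergraph regularity lemma together with a path-cover argument suffices to cover $(1-o(1))n$ vertices once the density exceeds this threshold. When $k-\ell \nmid k$, the relevant extremal examples are more subtle divisibility barriers, and the sharp threshold $1/(\lceil k/(k-\ell)\rceil(k-\ell))$ arises from the fractional matching polytope for $\ell$-paths: one analyses the lattice generated by the link hypergraphs in a regularity partition, using the geometric theory of hypergraph matchings developed by Keevash and Mycroft, to show that the codegree assumption forces the existence of a near-perfect fractional $\ell$-path tiling, which is then converted to an integer almost-cover.

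The main obstacle is step (iii) in the non-divisible case. The difficulty is that the space of possible ``extremal'' structures is much richer than in the graph setting: beyond the bipartite-style space barriers, there are divisibility barriers coming from the interaction between $\ell$ and $k$, and these must be ruled out by a delicate lattice analysis in the reduced hypergraph. Once this is handled, step (iv) is routine: connect the long $\ell$-paths end-to-end through $R$ to form an almost-spanning $\ell$-cycle containing $A$, and then apply the absorbing property of $A$ to absorb the remaining vertices (whose count is a multiple of $k-\ell$ by the divisibility assumption on $n$), yielding a Hamilton $\ell$-cycle in $H$.
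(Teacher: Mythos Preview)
This theorem is not proved in the paper at all: it is a summary statement of prior results, attributed in the paper to the six references \cite{hh-han10, hh-keevash11, KMO14, pp-kuhn06-cherry, hh-rodl06, hh-rodl08} and stated without proof in the ``Previous work'' subsection of the introduction. There is therefore no proof in the paper to compare your proposal against.

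That said, your outline is a fair high-level description of the absorbing-method framework that those cited papers actually use. One point is off, though. In your discussion of step~(iii) you have the two cases backwards in spirit: the case where $k-\ell$ divides $k$ is the harder one with threshold $n/2$, while the case $k-\ell \nmid k$ has the \emph{lower} threshold $n/\big(\lceil k/(k-\ell)\rceil(k-\ell)\big)$, and the extremal constructions there are space barriers rather than divisibility barriers. The papers handling that case (e.g.\ \cite{hh-han10, hh-keevash11, KMO14}) do not need the Keevash--Mycroft lattice machinery; they proceed via weak hypergraph regularity and a direct path-cover or fractional tiling argument in the reduced hypergraph. The lattice/divisibility analysis you describe is associated with perfect matching thresholds and with the $n/2$ tight-cycle regime, not with the non-divisible $\ell$-cycle case.
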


More recently the exact Dirac threshold has been identified in some cases, namely for $k=3, \ell=2$ by R\"odl, Ruci\'nski and Szemer\'edi~\cite{RoRuSz09}, for $k=3, \ell=1$ by Czygrinow and Molla~\cite{CM}, and for any $k \geq 3$ and $\ell < k/2$ by Han and Zhao~\cite{HZ}.

The stated motivation for many of the above works is to establish sufficient minimum-degree conditions which render the Hamilton $\ell$-cycle decision problem tractable, since the problem of determining whether a $k$-graph admits a Hamilton $\ell$-cycle is NP-hard (this can be shown by an elementary reduction from the graph case). Indeed, it is trivial to determine whether there is a Hamilton $\ell$-cycle in a $k$-graph $H$ with minimum degree above the Dirac threshold identified asymptotically in Theorem~\ref{codeg} (as by the theorem the answer must be affirmative). Moreover, for tight cycles Karpi\'nski, Ruci\'nski and Szyma\'nska~\cite{KaRuSz10} derandomised the proof of Theorem~\ref{codeg} to describe a polynomial-time algorithm which actually finds a tight Hamilton cycle in such a $k$-graph. This raises the question whether the threshold for tractability could lie substantially below the Dirac threshold. Dahlhaus, Hajnal and Karpi{\'n}ski~\cite{DHK} essentially answered this question for graphs by showing that the minimum degree needed to render the problem tractable is asymptotically equal to the Dirac threshold. That is, they showed that for any fixed $\eps > 0$ it is NP-hard to determine whether a graph $G$ with $\delta(G) \geq (\frac{1}{2}-\eps)|V(G)|$ admits a Hamilton cycle. More recently, Karpi\'nski, Ruci\'nski and Szyma\'nska~\cite{KaRuSz10} gave an analogous result for hypergraphs with minimum codegree below the lower threshold of Theorem~\ref{codeg} (actually, their statement pertained only to tight cycles, but the same construction gives the result for $\ell$-cycles for any $\ell < k$). \medskip

\begin{thm}[Karpi\'nski, Ruci\'nski and Szyma\'nska~\cite{KaRuSz10}]~\label{NPhard}
For any $1 \leq \ell < k$ and any $\eps > 0$ the following problem is NP-hard: given a $k$-graph $H$ with $\delta(H) \geq (\frac{1}{\lceil \frac{k}{k-\ell} \rceil(k-\ell)}-\eps) |V(H)|$, determine whether $H$ contains a Hamilton $\ell$-cycle.
\end{thm}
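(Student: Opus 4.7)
The plan is to give a polynomial-time Karp reduction from the unrestricted Hamilton $\ell$-cycle problem for $k$-graphs — itself NP-hard by an elementary reduction from the graph case — to the restricted problem in the statement. Write $q = k - \ell$ and $r = \lceil k/q \rceil$, so that the Dirac threshold of Theorem~\ref{codeg} equals $\alpha := 1/(rq)$. Given an arbitrary $k$-graph $H_0$ on $m$ vertices, I would construct in polynomial time a $k$-graph $H$ on $n$ vertices, with $n$ divisible by $q$ and $n = \Theta(m/\eps)$ (so $m/n \ll \eps$), satisfying $\delta(H) \geq (\alpha - \eps) n$ and having a Hamilton $\ell$-cycle if and only if $H_0$ does.

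The construction mirrors the extremal example witnessing tightness of Theorem~\ref{codeg}. Partition $V(H) = V(H_0) \sqcup A \sqcup B$ with $|A|$ just below $\alpha n$, and let $E(H)$ consist of all edges of $H_0$, every $k$-set of $V(H)$ meeting $A$, and a small supplementary structure in $A \cup B$ together with a pair of ``docking'' edges linking two designated vertices of $V(H_0)$ to $A \cup B$; the supplementary structure is arranged so that $A \cup B$, with the docking vertices attached, supports a spanning $\ell$-path having those vertices as its endpoints. The codegree bound is immediate, since every $(k-1)$-set disjoint from $A$ has codegree essentially $|A| \geq (\alpha - \eps) n$. For the forward direction, given a Hamilton $\ell$-cycle $C_0$ of $H_0$ one cuts $C_0$ at an appropriately chosen edge to obtain a Hamilton $\ell$-path $P_0$ of $H_0$ with endpoints at the docking vertices, and then glues $P_0$ to the spanning $\ell$-path on $A \cup B$ through the docking edges to close a Hamilton $\ell$-cycle of $H$.

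The main obstacle is the backward direction: extracting a Hamilton $\ell$-cycle of $H_0$ from any Hamilton $\ell$-cycle $C$ of $H$. The space-barrier underlying the extremal example ensures each vertex of $A$ lies in at most $r$ edges of $C$, so at most $r|A| = n/q - r\eps n/2$ edges of $C$ meet $A$; since $C$ has exactly $n/q$ edges, at least $r\eps n/2$ edges of $C$ must lie inside $E(H_0)$ or inside the supplementary structure. A careful accounting — exploiting how the cyclic $\ell$-structure of $C$ interacts with $A$, $B$, and $V(H_0)$ — then forces $C \cap V(H_0)$ to be a single Hamilton $\ell$-path of $H_0$ with endpoints at the docking vertices, which closes into a Hamilton $\ell$-cycle of $H_0$ via the edge that was cut in the forward direction. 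The crux of the technical work is the design of the docking gadgets and the supplementary structure in $B$ so that the only way for $C$ to bridge between $V(H_0)$ and $A \cup B$ is via the prescribed dockings: this is what prevents $C \cap V(H_0)$ from being a disjoint union of several $\ell$-paths rather than a single one, which would otherwise give only an $\ell$-path cover of $H_0$ instead of a Hamilton $\ell$-cycle.
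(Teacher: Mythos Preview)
This theorem is not proved in the paper; it is quoted from~\cite{KaRuSz10}, with the authors' remark that the construction given there for tight cycles extends to all~$\ell$. So there is no proof in the paper to compare your proposal against.

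Your high-level plan --- embed $H_0$ into a near-extremal host and use the space-barrier count --- is the natural one, but the backward direction has a genuine gap. Your counting shows only that at least $r\eps n/2$ edges of any Hamilton $\ell$-cycle $C$ of $H$ avoid $A$; it does \emph{not} show that any such edge lies in $V(H_0)$. Because you take $n=\Theta(m/\eps)$ you have $|A|=\Theta(m/\eps)\gg m$, so the at most $r|A|$ edges of $C$ meeting $A$ are more than sufficient to absorb all $rm$ vertex--edge incidences coming from $V(H_0)$. Concretely, $C$ can route every vertex of $V(H_0)$ through edges that also meet $A$ --- all of which lie in $E(H)$ regardless of $E(H_0)$ --- and place its $A$-free edges entirely inside the supplementary structure on $B$; for $k=3$, $\ell=2$ one simply sandwiches each vertex of $V(H_0)$ between two $A$-vertices in the cyclic order. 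Your proposed remedy, designing docking gadgets so that $C$ can enter or leave $V(H_0)$ only at two prescribed places, is in direct conflict with including every $k$-set meeting $A$ as an edge, which is exactly what you need for the codegree bound: those edges give $C$ unrestricted passage between $V(H_0)$ and $A$, so no gadget confined to $B$ can control the transitions.

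There is also a smaller issue in the forward direction --- the docking vertices are fixed when $H$ is built, but a Hamilton $\ell$-cycle $C_0$ of $H_0$ need not be cuttable into an $\ell$-path with those particular ends --- though this is repairable by iterating the construction over the $O(m^{2\ell})$ possible end-tuples. The backward direction, however, needs a different construction (or a much tighter structural argument bounding the number of $V(H_0)$--$A$ transitions of $C$), not just more careful gadgetry layered on the one you describe.
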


The same authors observed that for tight cycles this left a `hardness gap' of $(\tfrac{1}{k},\tfrac{1}{2}]$ for the problem of determining whether, for a fixed $c$ in this range, a $k$-graph $H$ with $\delta(H) \geq c|V(H)|$ admits a tight Hamilton cycle. More generally, there is a `hardness gap' between the results of Theorem~\ref{codeg} and Theorem~\ref{NPhard} for any $\ell$ such that $k-\ell$ divides $k$.

The reader should also note that the Dirac threshold has also been investigated for other types of degree conditions for uniform hypergraphs. Specifically, for a $k$-graph $H$ and $1\leq s < k$ we say the {\it  minimum $s$-degree of $H$} is the minimum of $d(S)$ taken over all sets of $s$ vertices of~$H$. Much less is known about $s$-degree Dirac thresholds for $s < k-1$; Bastos, Mota, Schacht, Schnitzer and Schulenburg~\cite{BMSSS16, BMSSS17} recently determined the exact $(k-2)$-degree Dirac threshold for a Hamilton $\ell$-cycle in a $k$-graph for $1 \leq \ell < k/2$, generalising previous results for $3$-graphs due to Bu\ss, H\`an and Schacht~\cite{BHS13} and Han and Zhao~\cite{hh-han14}. One significant open problem is to determine the asymptotic $1$-degree Dirac threshold for a tight Hamilton cycle in a $k$-graph. For $k = 3$ this problem was solved very recently by Reiher, R\"odl, Ruci\'nski, Schacht and Szemer\'edi~\cite{RRRSSz} (partial results were previously given by R\"odl and Ruci\'nski~\cite{RR14} and R\"odl, Ruci\'nski, Schacht and Szemer\'edi~\cite{RRSS16}). For general $k \geq 4$ the problem remains open; upper bounds were established by Glebov, Person and Weps~\cite{GPW12}.

\subsection{New results}

Our central result considers Hamilton $2$-cycles in $4$-graphs (that is, the case $k=4$ and $\ell=2$), in which case the asymptotic bound of Theorem~\ref{codeg} is the best previously known result. For this case  we provide a more detailed result than the various exact results described above: rather than merely identifying the Dirac threshold for such a cycle, we give a precise characterisation of all $4$-graphs with minimum codegree close to the Dirac threshold according to whether or not they contain a Hamilton $2$-cycle. This is the following theorem. Note for this that a bipartition of a set $V$ simply means a partition of $V$ into two sets. The precise definitions of the terms `even-good' and `odd-good' are somewhat technical, so we defer them to Section~\ref{sec:partitions}; for now the reader should be aware that each refers to the existence of certain small structures in~$H$ with respect to the bipartition of $V(H)$. Recall also that a $4$-graph can only contain a Hamilton $2$-cycle if it has even order.

\begin{thm} \label{character}
There exist $\eps, n_0 > 0$ such that the following statement holds for any even $n \geq n_0$. Let $H$ be a $4$-graph on $n$ vertices with $\delta(H)\geq n/2-\eps n$. Then $H$ admits a Hamilton $2$-cycle if and only if every bipartition of $V(H)$ is both even-good and odd-good.
\end{thm}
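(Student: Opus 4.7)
The plan is to prove the two implications separately, with the sufficiency direction being by far the harder one and requiring a split into extremal and non-extremal cases.

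For the necessity direction, suppose $H$ admits a Hamilton $2$-cycle $C$ and fix any bipartition $(A,B)$ of $V(H)$. Since $C$ has $n/2$ edges $e_1, \dots, e_{n/2}$ cyclically arranged with $|e_i \cap e_{i+1}| = 2$, the sequence $(|e_i \cap A|)$ encodes parity information that must be globally consistent around the cycle. The properties \emph{even-good} and \emph{odd-good} are (I expect) defined precisely to record the existence of the small substructures that any such cyclic sequence of edges must realise -- for instance, short $2$-paths containing prescribed numbers of vertices from $A$ in specified positions. Thus both properties can be read off directly from suitable consecutive stretches of edges of $C$; this direction reduces to locating these substructures inside $C$ itself, which is essentially a bookkeeping exercise.

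For the sufficiency direction, I would follow the by now standard dichotomy used in exact Dirac-type results (as in R\"odl--Ruci\'nski--Szemer\'edi and Han--Zhao). First, fix small constants $\eps \ll \gamma \ll 1$ and separate into two regimes according to whether $H$ is $\gamma$-close to an \emph{extremal configuration} -- here, a nearly complete bipartite-like $4$-graph relative to some bipartition $(A,B)$ of $V(H)$ -- or not. In the \textbf{non-extremal case}, apply the standard absorption machinery. The hypothesis $\delta(H) \geq n/2 - \eps n$ (together with the non-extremality) yields enough codegree robustness to build: (i) an absorbing $2$-path $P_{\mathrm{abs}}$ which can absorb any small leftover vertex set into a $2$-path; (ii) a reservoir of connector pairs; (iii) a near-spanning almost-cover by a bounded number of long $2$-paths, obtained by a suitable variant of the argument behind Theorem~\ref{codeg} for $(k,\ell)=(4,2)$; and (iv) a connection-and-absorption step producing the Hamilton $2$-cycle. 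The even-good/odd-good hypotheses are not needed here because non-extremality already rules out the parity obstruction.

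The \textbf{extremal case} is the main obstacle, and is where the even-good/odd-good conditions are used. Here $V(H)$ has a bipartition $(A,B)$ with almost all edges meeting $A$ in a fixed parity (or parities), so a Hamilton $2$-cycle can only exist if edges of the other parity classes can be inserted in a consistent way around the cycle. I would argue: any Hamilton $2$-cycle in such a nearly bipartite $H$ must use long stretches of edges with prescribed intersection patterns with $(A,B)$, punctuated by a bounded number of \emph{switching gadgets} -- exactly the small substructures that even-good and odd-good guarantee. The strategy is then to (a) classify the edges of $H$ by $(|e \cap A|, |e \cap B|)$; (b) count the number of switches of each type needed to reconcile $|A|, |B|$ with the parities produced by the long bipartite-like stretches; (c) locate the required switching gadgets using even-good and odd-good; and (d) piece everything together using an absorption-type argument tailored to the bipartite-like host. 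The core difficulty is a careful parity accounting depending on $|A| \bmod 2$ and $|B| \bmod 2$: we must verify that the combinatorial constraints imposed on switches by the structure of $(A,B)$ coincide exactly with what even-good and odd-good provide, so that the hypothesis is both sufficient and tight. This matching of the extremal-case parity conditions with the definitions of even-good and odd-good is the central conceptual step, and also explains why these specific definitions are the right ones to characterise Hamiltonicity below the Dirac threshold.
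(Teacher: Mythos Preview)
Your high-level plan matches the paper's proof almost exactly: the forward implication is indeed a short parity bookkeeping on the cycle (Proposition~\ref{forwards}), and the backward implication splits into a non-extremal case handled by the absorbing method (an absorbing path plus a long-cycle lemma, Lemmas~\ref{lem:absorbing} and~\ref{lem:longcycle}) and an extremal case where the good-conditions are used.

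One refinement you are missing is that there are \emph{two} genuinely different extremal configurations, not one, and they are handled separately. The paper defines a bipartition $\{A,B\}$ to be $c$-\emph{even-extremal} if almost all edges are even (i.e.\ $|e\cap A|\in\{0,2,4\}$) and $c$-\emph{odd-extremal} if almost all edges are odd; correspondingly, the even-good hypothesis is used only in the even-extremal case (Lemma~\ref{thm:evenextr}) and the odd-good hypothesis only in the odd-extremal case (Lemma~\ref{thm:oddextremal}). The constructions in the two cases are rather different: in the even-extremal case one builds Hamilton paths inside each of $H[A]$ and $H[B]$ (or a Hamilton path of split pairs) and bridges them with a short path containing the few odd edges guaranteed by even-goodness; in the odd-extremal case one builds a single Hamilton path alternating between $A$ and $B$ and inserts a short path containing the few even edges guaranteed by odd-goodness, with a delicate mod-$8$/mod-$4$ accounting. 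Your phrase ``nearly complete bipartite-like'' and the focus on $|A|,|B|\bmod 2$ blur this distinction. It is also worth noting that in the non-extremal argument the two extremal notions enter separately: failure of the connecting property forces even-extremality, while failure of the absorbing property forces odd-extremality.
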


In particular, having established this characterisation, the exact Dirac threshold for Hamilton $2$-cycles in $4$-graphs follows by a straightforward deduction (which is given in Section~\ref{sec:dirac}).

\begin{coro}\label{dirac24}
There exists $n_1$ such that if $n \geq n_1$ is even and $H$ is a $4$-graph on $n$ vertices with 
$$
\delta(H) \geq 
  \begin{cases}
\tfrac{n}{2} - 2 & \mbox{ if $n$ is divisible by $8$,} \\
\tfrac{n}{2} - 1& \mbox{otherwise,}
  \end{cases} 
$$
then $H$ contains a Hamilton $2$-cycle. Moreover, this minimum codegree condition is best-possible in each case.
\end{coro}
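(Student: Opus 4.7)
The plan is to derive Corollary~\ref{dirac24} directly from Theorem~\ref{character} by (a) verifying that under the stated minimum codegree hypothesis every bipartition of $V(H)$ is both even-good and odd-good, and (b) constructing extremal $4$-graphs that match the claimed thresholds.

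For (a), I assume $H$ satisfies the codegree bound and aim, by Theorem~\ref{character}, to show that every bipartition $(A,B)$ of $V(H)$ is both even-good and odd-good. Using the definitions in Section~\ref{sec:partitions}, each condition asserts the existence in $H$ of one of a short list of small configurations whose form depends on the parities of $|A|$ and $|B|$. Assuming for contradiction that $(A,B)$ is bad, the absence of every configuration on the list forbids a specific set of $4$-extensions of certain $3$-subsets $S\subseteq V(H)$; a direct count at a well-chosen $S$ then bounds $d_H(S)$ from above by a quantity strictly less than $n/2-2$ (respectively $n/2-1$), unless $n$ and the residues of $|A|,|B| \bmod 4$ satisfy very restrictive divisibility conditions. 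The dichotomy in the statement is exactly the dichotomy between $n$'s that do and do not admit such a forbidden pair $(|A|,|B|)$.

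For (b), I provide two parity constructions. When $8\nmid n$, take $|A|=|B|=n/2$ and let $E(H)$ consist of all $4$-subsets $e$ with $|e\cap A|$ odd; a direct calculation gives $\delta(H)=n/2-2$, and in any would-be Hamilton $2$-cycle with consecutive edges sharing a pair $P_{i+1}$, the relation $|P_i\cap A|+|P_{i+1}\cap A|\equiv 1\pmod 2$ forces the $A$-parities of the $n/2$ pairs $P_i$ to alternate cyclically, which fails precisely when $8\nmid n$ (either $n/2$ is odd, or $n/2$ is even but $|A|=n/4+2k$ with integer $k$ is incompatible with $|A|=n/2$). When $8\mid n$, instead take $|A|=n/2-1$, $|B|=n/2+1$ with the same edge parity; a direct count gives $\delta(H)=n/2-3$, and the alternation again forces $|A|\equiv n/4\pmod 2$, contradicting $|A|=n/2-1$ since $8\mid n$ makes $n/4$ even while $n/2-1$ is odd. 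In each case Theorem~\ref{character} then certifies the absence of a Hamilton $2$-cycle through the chosen bipartition.

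The main obstacle is the sufficiency step: extracting from the definitions in Section~\ref{sec:partitions} an explicit list of forbidden configurations whose absence leaves enough room to contradict the minimum codegree bound, and performing the resulting case analysis on $(|A|\bmod 4, |B|\bmod 4)$ cleanly enough that exactly the split $8\mid n$ versus $8\nmid n$ emerges. The sharpness constructions reduce to routine parity arithmetic once the appropriate pair of sizes and edge-parity are identified.
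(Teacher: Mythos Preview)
Your sharpness constructions in (b) are exactly the paper's: the same part sizes ($|A|=n/2$ when $8\nmid n$, $|A|=n/2-1$ when $8\mid n$) and the same odd-parity edge set. The paper simply observes that $\{A,B\}$ is not odd-good (since $\Heven$ is empty and the sizes are chosen so that clause (v) of Definition~\ref{evengood} does not apply) and invokes Theorem~\ref{character}; your direct alternation argument is a valid alternative.

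For (a), however, your outline is too vague to count as a proof, and the contrapositive framing (``bound $d_H(S)$ from above at a well-chosen $S$'') together with a case split on $(|A|\bmod 4,|B|\bmod 4)$ is not how the argument actually runs. The paper's proof is short and constructive. First one reduces to $\delta(H)\geq |A|-1$ with $18\leq|A|\leq|B|$: small $|A|$ is handled directly, and if $8\mid n$ with $|A|=|B|=n/2$ then clauses (i) and (v) of Definition~\ref{evengood} already hold, so when $8\mid n$ one may assume $|A|\leq n/2-1$; either way $\delta(H)\geq|A|-1$. For even-good, pick two disjoint triples in $A$; each has $|N(\cdot)\cap B|\geq\delta(H)-(|A|-3)\geq 2$, yielding two disjoint odd edges and hence clause (ii). For odd-good, pick $a_1,\dots,a_9,a'_1,\dots,a'_9\in A$ and $b_1,\dots,b_9\in B$; for each $i,j$ the bound $|N(a_i,a'_i,b_j)\cap B|\geq 1$ gives an even edge $\{a_i,a'_i,b_j,b^i_j\}$. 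Either for some $j$ the $b^i_j$ are distinct across $i$ (nine disjoint even edges, so no hitting set of size at most $8$, hence odd-good via Remark~\ref{deletevs}), or a collision $b^i_j=b^{i'}_j$ produces simultaneously an even edge with $|e\cap A|=2$ and a $2$-path of length two in $\Heven$, which suffices for whichever of (v)--(viii) applies. No residue-by-residue case analysis is needed; the dichotomy $8\mid n$ versus $8\nmid n$ enters only in the reduction to $\delta(H)\geq|A|-1$, not in the configuration-finding step.
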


It is not immediately apparent that the criterion of Theorem~\ref{character} can be tested in polynomial time, but in Section~\ref{sec:partitions} we explain why, due to the minimum codegree of $H$, this is in fact the case. Consequently, we can determine in polynomial time whether or not a $4$-graph $H$ whose codegree is close to the Dirac threshold admits a Hamilton $2$-cycle. Moreover, by derandomising the proof of Theorem~\ref{character} we can actually find such a cycle, should one exist, giving the following theorem.

\begin{thm} \label{2cycle}
There exist a constant $\eps > 0$ and an algorithm which, given a $4$-graph $H$ on $n$ vertices with $\delta(H) \geq n/2 - \eps n$, runs in time $O(n^{32})$ and returns either a Hamilton $2$-cycle in $H$ or a certificate that no such cycle exists (that is, a bipartition of $V(H)$ which is either not even-good or not odd-good).
\end{thm}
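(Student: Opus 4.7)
The plan is to derandomise the proof of Theorem~\ref{character}, producing an algorithm with two distinct phases: first a polynomial-time check of the criterion, and then, if the criterion is satisfied, a deterministic construction of a Hamilton $2$-cycle following the proof of Theorem~\ref{character} step by step.

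For the criterion-checking phase, the main obstacle to address is that there are $2^n$ bipartitions of $V(H)$, so one cannot test all of them directly. I would exploit the minimum codegree assumption together with the structural definitions of \emph{even-good} and \emph{odd-good} (given in Section~\ref{sec:partitions}) to show that the only bipartitions that could possibly be \emph{bad} have a very restricted form. Concretely, I expect that any bad bipartition $(A,B)$ must have $|A|$ and $|B|$ each close to $n/2$, and moreover is essentially determined (up to $O(1)$ vertices) by a small witness structure, so that the list of candidate bipartitions to test has size $n^{O(1)}$. Each even-good / odd-good test then reduces to counting small substructures, which is polynomial. If any candidate is bad, we return it as the certificate.

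The second phase derandomises the existential proof of Theorem~\ref{character}. Following the standard absorbing scheme for Hamilton cycles in hypergraphs, this proof should decompose into: (i) building an \emph{absorbing} $2$-path that can swallow any small leftover vertex set; (ii) selecting a \emph{reservoir} $2$-path used for flexible connections; (iii) a \emph{covering/long-path} stage producing a $2$-path through almost all remaining vertices, typically via a hypergraph regularity or matching argument; and (iv) \emph{connection} and \emph{absorption} steps to close up into a Hamilton $2$-cycle. Each randomised choice in (i) and (ii) -- random sampling of absorber gadgets and reservoir vertices to achieve the required concentration properties -- I would derandomise using the method of conditional expectations, with pessimistic estimators for the bad events (insufficient absorbers for some target vertex, reservoir not spreading over the host, etc.); alternatively, the sets can be built greedily by repeatedly selecting an element that maintains every half-degree above its expected trajectory. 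The covering stage, which in the randomised proof appears as a near-perfect matching in an auxiliary hypergraph obtained from the regularity decomposition, can be handled deterministically: use an algorithmic version of the (weak) hypergraph regularity lemma and then invoke deterministic near-perfect matching algorithms on the reduced structure. Connections can be found by exhaustive search over short $2$-paths of bounded length, which is polynomial.

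The main obstacle will be derandomising the absorbing-structure step, since this is where the analysis is most delicate: one must verify that conditional-expectation choices can simultaneously preserve a long list of high-probability events (each corresponding to a vertex having many absorbers, a reservoir intersecting each link neighbourhood correctly, and so on). I expect this to work because all the bad events have polynomially bounded pessimistic estimators of the multiplicative-weights or Chernoff-bound type. Finally, I would tally the dominant cost across all stages to verify the $O(n^{32})$ running time: enumerating candidate bipartitions and verifying good-ness, running the algorithmic regularity lemma, and performing the conditional-expectation selection all contribute polynomially bounded factors, with the regularity-based covering stage expected to give the largest exponent.
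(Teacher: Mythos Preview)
Your proposal has a genuine structural gap: you treat the constructive phase as a single absorbing-method argument, but the proof of Theorem~\ref{character} splits into three very different cases (non-extremal, even-extremal, odd-extremal), and the absorbing machinery only applies in the first. Concretely, the absorbing lemma (Lemma~\ref{lem:absorbing}) requires $H$ to be neither $c$-even-extremal nor $c$-odd-extremal; if $H$ is extremal but all bipartitions are even-good and odd-good, the absorbing approach simply fails, and the paper instead builds the Hamilton $2$-cycle by a completely different direct construction (Sections~\ref{sec:evenextr} and~\ref{sec:oddextr}: short ``bridge'' paths together with explicit ``grid'' structures that swallow leftover vertices via bipartite matchings). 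Your plan contains no analogue of this, so it would not produce a cycle in the extremal cases. Relatedly, the paper's algorithm does not test ``extremal or not'' directly; it tests whether $H$ is $\kappa$-connecting and whether $H$ is $(\alpha,\beta)$-absorbing (both efficiently checkable), and if either fails it \emph{derives} an extremal bipartition from the failure (Procedures~\ref{proc:evenpartition} and~\ref{proc:oddpartition}) and hands off to the extremal-case constructors.

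A second, smaller discrepancy: the long-cycle/covering stage in the paper does not use any form of hypergraph regularity. It is a direct greedy extension argument (Lemma~\ref{lem:longcycle}) that alternates between finding long paths in dense leftover sets via Theorem~\ref{thm:pathindensegraph} and, when the leftover is sparse, locating a complete $4$-partite piece via Erd\H{o}s's theorem (Corollary~\ref{thm:Erdoesalgo}), with all connections routed through a reservoir set chosen by the derandomised Procedure~\ref{proc:SelSet}. Consequently your running-time accounting is off: the dominant $O(n^{32})$ term does not come from regularity but from the conditional-expectation selection of absorbing octuples (Procedure~\ref{proc:SelSet} applied with $N=|V(H)^8|=n^8$, giving $O(N^4)=O(n^{32})$). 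Your instinct to derandomise the absorber and reservoir selections via conditional expectations is exactly right and matches the paper; what is missing is the extremal-case machinery and the correct (regularity-free) long-cycle argument.
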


This theorem demonstrates the existence of a linear-size gap between the minimum codegree threshold which renders the Hamilton $2$-cycle problem tractable and the Dirac threshold. This provides a surprising contrast to the graph setting, for which the result of Dahlhaus, Hajnal and Karpi\'nski noted above shows that there is no such gap.

We also consider the tractability of finding a tight Hamilton cycle in a $k$-graph for $k \geq 3$. For such cycles we close the aforementioned `hardness gap' identified by Karpi\'nski, Ruci\'nski and Szyma\'nska~\cite{KaRuSz10} by proving the following theorem. 

\begin{thm} \label{tightcycle}
For any $k \geq 3$ there exists $C$ such that it is NP-hard to determine whether a $k$-graph $H$ with $\delta(H) \geq \frac{1}{2}|V(H)|-C$ admits a tight Hamilton cycle.
\end{thm}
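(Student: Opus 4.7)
The plan is to establish NP-hardness via a polynomial-time Karp reduction from the NP-hard problem provided by Theorem~\ref{NPhard} applied with $\ell = k-1$: namely, deciding whether a $k$-graph $H$ contains a tight Hamilton cycle given the weaker codegree promise $\delta(H) \geq (\tfrac{1}{k} - \eps)|V(H)|$. The goal of the reduction is to produce, in polynomial time, a $k$-graph $H^\ast$ on $n^\ast$ vertices with $\delta(H^\ast) \geq \tfrac{1}{2} n^\ast - C$ for a constant $C=C(k)$ such that $H^\ast$ contains a tight Hamilton cycle if and only if $H$ does. Since the abstract already promises \emph{a series of reductions}, I would split the codegree gap into a constant number of stages and, at each stage, boost the codegree by a small multiplicative amount.

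A typical boosting stage takes a $k$-graph $G$ with $\delta(G) \geq \alpha\,|V(G)|$ and produces $G'$ with $\delta(G') \geq \alpha'\,|V(G')|$ for some $\alpha' > \alpha$, preserving the existence of a tight Hamilton cycle. The construction at each stage would add a fresh set $A$ of auxiliary vertices, together with (a) a dense internal structure on $A$ providing tight Hamilton-path flexibility within $A$, (b) a large collection of \emph{mixed} edges that meet both $V(G)$ and $A$ in order to supply the required codegree boost to every $(k-1)$-set in $V(G')$, and (c) a small family of distinguished \emph{bridge} gadgets which are the only places at which a tight Hamilton cycle of $G'$ may cross between $V(G)$ and $A$. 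The bridges would be designed so that the only way a tight cycle can use them consistently is to split into a tight Hamilton path in the $V(G)$-part and a tight Hamilton path in the $A$-part, joined at the bridges. Conversely, starting from any tight Hamilton cycle of $G$, one can cut it at the bridge vertices and extend via the freely-available paths in $A$, producing a tight Hamilton cycle in $G'$.

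The main obstacle—and the reason for breaking the reduction into stages rather than performing it in one shot—is the tension between codegree boosting and rigidity. Adding enough mixed edges to force every $(k-1)$-set to have codegree at least $\tfrac{1}{2}|V(G')|$ inevitably creates the risk of spurious tight Hamilton cycles that entirely avoid the structure of $G$, which would destroy soundness. Tight cycles are extremely rigid (consecutive edges overlap in $k-1$ vertices), so the bridge gadgets must be intricate enough to \emph{force} any tight Hamilton cycle to enter $A$ only through specific configurations; at the same time, the bridge vertices themselves must be consistent with the codegree condition, so we cannot simply forbid edges at them. Resolving this simultaneously seems hard in one step, but becomes tractable in stages because at each stage the hypothesis on $G$ supplies plenty of `free' edges through the bridge vertices that can be used to fix codegree locally.

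Once the sequence of reductions is carried out, a constant number of iterations, each increasing $\alpha$ by a fixed increment depending on $k$, suffices to move from $\alpha \approx 1/k$ up to $\alpha = 1/2 - o(1)$. A final clean-up stage then pushes the additive slack down to an absolute constant $C=C(k)$, using the same bridge-gadget design but with a linear-size auxiliary vertex class chosen so that $n^\ast$ and $C$ have the correct parity and divisibility properties required for a tight Hamilton cycle to exist. This produces the required polynomial-time reduction and establishes Theorem~\ref{tightcycle}.
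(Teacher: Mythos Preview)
Your proposal is a strategy outline rather than a proof: the central obstacle you yourself identify---that adding enough mixed edges to boost codegree risks creating spurious tight Hamilton cycles that bypass the structure of $G$---is precisely the hard part, and you give no concrete bridge-gadget construction that resolves it. Saying that ``the hypothesis on $G$ supplies plenty of free edges through the bridge vertices'' does not explain how to prevent a tight cycle from entering and leaving $A$ at unintended places, nor why staging helps. The final ``clean-up'' from $\tfrac12 - o(1)$ to $\tfrac12 - C$ is also entirely unspecified, and that step is not obviously easier than the whole problem. As written, the proposal contains no argument that could be checked.

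The paper sidesteps your obstacle by taking a completely different route. It does not start from Theorem~\ref{NPhard} and does not stay at the same uniformity. Instead it first proves, by induction from the Hamilton cycle problem in subcubic graphs, that for every $k$ there is a constant $D$ such that the tight Hamilton cycle (and path) problem is NP-hard for $k$-graphs with \emph{bounded maximum codegree} $\Delta(H)\le D$. The final reduction then \emph{changes uniformity}: given such a $k$-graph $H$ on vertex set $A$, one builds a $(2k{-}1)$- or $2k$-graph on $A\cup B$ with $|B|\approx |A|$, in which a set $e$ is an edge according to $|e\cap A|$ alone, except that sets with $|e\cap A|=k$ are edges iff $e\cap A\in E(H)$ and sets with $|e\cap A|=k+1$ are edges iff $e\cap A$ contains no edge of $H$. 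Almost every $(k'-1)$-set $S$ has $|S\cap A|\ne k$, and then its codegree is automatically about $|A|\approx n/2$; for the exceptional sets with $|S\cap A|=k$, the bound $\Delta(H)\le D$ means at most $kD$ extensions are forbidden. This gives $\delta\ge \tfrac{n}{2}-k(D+1)$ in a single step with no gadgets: the rules on intersection sizes force any tight Hamilton cycle in the new hypergraph to use only edges of type $(k,k-1)$ or $(k,k)$, and restricting to $A$ recovers a tight Hamilton path or cycle in $H$. The point is that bounded \emph{maximum} codegree, not high minimum codegree, is the right starting hypothesis, because it guarantees that the embedded structure of $H$ affects very few codegrees in the blown-up hypergraph.
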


This shows that, in stark contrast to the situation just described for $2$-cycles in $4$-graphs, the minimum codegree threshold which renders the problem tractable is asymptotically equal to the Dirac threshold (at which a tight Hamilton cycle is guaranteed to exist). 
It would be interesting to know, for other values of $k$ and $\ell$, whether the minimum codegree which renders the problem of finding a Hamilton $\ell$-cycle in a $k$-graph tractable is essentially equal to the Dirac threshold (as Theorem~\ref{tightcycle} shows is the case for tight cycles, and Theorems~\ref{codeg} and~\ref{NPhard} together show is the case for $\ell$-cycles when $(k-\ell) \nmid k$), or whether it is significantly different (as Theorem~\ref{2cycle} shows is the case for $2$-cycles in $4$-graphs).

\subsection{Organisation of the paper}

This paper is organised as follows. In Section~\ref{sec:2cycles} we establish the definitions and notation we need, including the definition of even-good and odd-good bipartitions needed for our characterisation (Theorem~\ref{character}), before giving the brief deduction of the exact Dirac threshold for Hamilton 2-cycles in 4-graphs (Corollary~\ref{dirac24}) and discussing the complexity aspects of Theorem~\ref{character}. Next, in Section~\ref{sec:proof} we give the proof of Theorem~\ref{character}, postponing a number of key lemmas to future sections. Specifically we distinguish a non-extremal case, an even-extremal case and an odd-extremal case, the necessary lemmas for which are postponed to Sections~\ref{sec:gen},~\ref{sec:evenextr} and~\ref{sec:oddextr} respectively. In parallel with the proofs of these lemmas we establish analogous algorithmic results, and in Section~\ref{sec:algo} we combine these results to formulate a polynomial-time algorithm as claimed in Theorem~\ref{2cycle}. We then consider tight Hamilton cycles in Section~\ref{sec:tight}, proving Theorem~\ref{tightcycle} via a sequence of polynomial-time reductions, before finally giving some concluding remarks in Section~\ref{sec:disc}.

A previous extended abstract of this paper~\cite{GaMy16} described many of the results of this paper but omitted all of the proofs except for those of Proposition~\ref{forwards} and the deduction of Corollary~\ref{dirac24} from Theorem~\ref{character}; for completeness we also include these proofs here. Moreover, the extended abstract only described an algorithm to determine the existence of a Hamilton $2$-cycle in a $4$-graph with minimum codegree close to the Dirac threshold; in this paper we go significantly further by derandomising the proof of Theorem~\ref{character} to give an algorithm which explicitly finds such a cycle, should it exist (Theorem~\ref{2cycle}).

This arXiv preprint ({\tt arXiv:1609.03101}) also includes an appendix with further details of some of the algorithms presented here and proofs of their correctness.

\section{A characterisation of dense 4-graphs with no Hamilton 2-cycle}\label{sec:2cycles}

\subsection{Notation}

Let $H$ be a $k$-graph. We write $e(H)$ for the number of edges of $H$. Also, for any set $S \subseteq V(H)$ we define the \emph{neighbourhood} of $S$ to be $N_H(S) := \{S' \subseteq V(H) \sm S : S \cup S' \in E(H)\}$. That is, $N_H(S)$ is the collection of all sets which together with $S$ form an edge of $H$. So $N_H(S)$ is a collection of $(k-|S|)$-sets and $|N_H(S)| = d_H(S)$. In particular, if $|S| = k-1$ then $N_H(S)$ is a set of singleton sets of vertices, in which case we identify $N_H(S)$ with the corresponding set of vertices for notational simplicity, for example writing $v \in N_H(S)$ instead of $\{v\} \in N_H(S)$.
Furthermore, when $H$ is clear from the context we write simply $N(S)$ and to avoid clutter we frequently omit braces around sets, for example writing $N(x, y, z)$ instead of $N(\{x, y, z\})$. Given a set $X \subseteq V(H)$, we write $H[X]$ to denote the subgraph of $H$ \emph{induced by $X$}, that is, the $k$-graph with vertex set $X$ and whose edges are all edges $e \in E(H)$ with $e \subseteq X$.

We define $\ell$-paths in $k$-graphs in a similar way to $\ell$-cycles. Indeed, a $k$-graph $P$ is an \emph{$\ell$-path} if $P$ has no isolated vertices and, moreover, the vertices of $P$ can be linearly ordered in such a way that every edge of $P$ consists of $k$ consecutive vertices and each edge intersects the subsequent edge in precisely $\ell$ vertices. So the number of vertices in an $\ell$-path must be congruent to $k$ module $k-\ell$. As for cycles we refer to $(k-1)$-paths as \emph{tight paths}. The \emph{length} of an $\ell$-path or $\ell$-cycle is the number of edges it contains. A \emph{segment} of an $\ell$-path $P$ or $\ell$-cycle $C$ is an $\ell$-path $P'$ which is a subgraph of $P$ or $C$ respectively. 

Now suppose that $H$ is a $4$-graph. Given a bipartition $\{A, B\}$ of $V(H)$, we say that an edge $e \in E(H)$ is \emph{odd} if $|e \cap A|$ is odd (or equivalently, if $|e \cap B|$ is odd) and \emph{even} otherwise. We denote the subgraph of $H$ consisting only of the even edges of $H$ by $\Heven$ and the subgraph of $H$ consisting only of the odd edges of $H$ by $\Hodd$. Also, we say that a pair $p$ of distinct vertices of $H$ is a \emph{split} pair if $|p\cap A|=1$, and a \emph{connate} pair otherwise. These terms are all dependent on the bipartition $\{A, B\}$ and $4$-graph $H$ in question, but these will always be clear from the context.

We use various ways of describing a $2$-path or $2$-cycle in a $4$-graph. One is is to list a sequence of vertices, that is, $(v_1, \dots, v_m)$ for some even $m \geq 4$; the edges of $C$ are then $\{v_i, v_{i+1}, v_{i+2}, v_{i+3}\}$ for each even $i$. Another is to give a sequence of pairs of vertices, that is, $p_1p_2p_3\dots p_m$ for some integer $m \geq 2$; the edges of $C$ are then $p_i \cup p_{i+1}$ for each $i$. The \emph{ends} of a $2$-path in a $4$-graph are the initial pair and final pair, that is $\{v_1, v_2\}$ and $\{v_{m-1}, v_m\}$ in the first style of notation, and $p_1$ and $p_m$ in the second style of notation. We concatenate $2$-paths in the natural way, for example, if $P$ is a $2$-path with ends $p$ and $p'$, and $Q$ is a $2$-path with ends $p'$ and $q$, and $P$ and $Q$ have no vertices in common outside $p'$, then $PQ$ is a $2$-path with ends $p$ and $q$. We sometimes say that $P$ is a path \emph{from $p$ to $q$} to mean that $P$ has ends $p$ and $q$, however, note that a path from $p$ to $q$ has the same meaning as a path from $q$ to $p$.

Given a $4$-graph $H$, the \emph{total $2$-pathlength} of $H$ is the maximum sum of lengths of vertex-disjoint $2$-paths in $H$. For example, $H$ having total $2$-pathlength $3$ indicates the presence in $H$ of three disjoint edges (\emph{i.e.} three $2$-paths of length 1), or of a $2$-path of length $3$, or of two vertex-disjoint $2$-paths, one of length 1 and one of length~2.

For an integer $k$ we write $[k]$ for the set of integers from $1$ to $k$ and, given a set $V$, we write $\binom{V}{k}$ for the set of subsets of $V$ of size $k$. Also, we write $x \ll y$ (``$x$ is sufficiently smaller than~$y$'') to mean that for any $y > 0$ there exists $x_0 > 0$ such that for any $x \leq x_0$ the subsequent statement holds. Similar statements with more variables are defined accordingly. We omit floors and ceilings throughout this paper where they do not affect the argument.

\subsection{Odd-good and even-good bipartitions of $4$-graphs} \label{sec:partitions}

Using the definitions introduced in the previous subsection, we can now give the central definition of our characterisation.

\begin{definition} \label{evengood}
Let $H$ be a $4$-graph on $n$ vertices, where $n$ is even, and let $\{A, B\}$ be a bipartition of $V(H)$. We say that $\{A, B\}$ is \emph{even-good} if at least one of the following statements holds.
\begin{enumerate}[noitemsep, label=(\roman*)]
\item $|A|$ is even or $|A| = |B|$.
\item $\Hodd$ contains edges $e$ and $e'$ such that either $e \cap e' = \emptyset$ or $e \cap e'$ is a split pair.
\item $|A| = |B|+2$ and $\Hodd$ contains edges $e$ and $e'$ with $e \cap e' \in \binom{A}{2}$.
\item $|B| = |A|+2$ and $\Hodd$ contains edges $e$ and $e'$ with $e \cap e' \in \binom{B}{2}$.
\end{enumerate}
Now let $m \in \{0, 2, 4, 6\}$ and $d \in \{0, 2\}$ be such that $m \equiv n \mod 8$ and $d \equiv |A| - |B| \mod 4$. Then we say that $\{A, B\}$ is \emph{odd-good} if at least one of the following statements holds.
\begin{enumerate}[noitemsep, label=(\roman*)]
\setcounter{enumi}{4}
\item $(m, d) \in \{(0, 0), (4, 2)\}$.
\item $(m, d) \in \{(2, 2), (6, 0)\}$ and $\Heven$ contains an edge.
\item $(m, d) \in \{(4, 0), (0, 2)\}$ and $\Heven$ has total $2$-pathlength at least two.
\item $(m, d) \in \{(6, 2), (2, 0)\}$ and either there is an edge $e \in E(\Heven)$ with $|e \cap A| = |e \cap B| = 2$ or $H_{even}$ has total $2$-pathlength at least three.
\end{enumerate}
\end{definition}

Note in particular that, if $n$ is odd, then any bipartition of $V(H)$ is neither even-good nor odd-good. We have now introduced all notation and definitions needed to understand and make use of Theorem~\ref{character}.

\begin{remark} \label{deletevs}
Suppose that $H$ has even order. If $\{A, B\}$ is a bipartition of $V(H)$ which is not even-good, then, since $\Hodd$ must not contain two disjoint edges, there exists a set $X$ of at most four vertices of $H$ such that each edge of $\Hodd$ meets $X$. Similarly, if $\{A, B\}$ is a bipartition of $V(H)$ which is not odd-good, then there exists a set $X$ of at most eight vertices of $H$ such that every edge of $\Heven$ meets $X$.
\end{remark}

Using Remark~\ref{deletevs} we can test the criterion of Theorem~\ref{character} in polynomial time in graphs of high minimum codegree. A special case of a result of Keevash, Knox and Mycroft~\cite[Lemma~2.2]{KeKnMy13} states that, given a $k$-graph $H$ on $n$ vertices with minimum codegree $\delta(H) \geq n/3$, we can list in time $O(n^5)$ all bipartitions $\{A, B\}$ of $V(H)$ with no odd edge and all bipartitions $\{A, B\}$ of $V(H)$ with no even edge, and that there are at most a constant number of such bipartitions. Hence we can first check whether the order of $H$ is even and, if so, we can establish all candidates for a bipartition $\{A, B\}$ which is not even-good or not odd-good in the following way: for each set $X$ of eight vertices of $H$, delete the vertices of $X$ from $H$ to form $H'$, and list all bipartitions $\{A', B'\}$ of $V(H')$ with no even edge or no odd edge, then for each such $\{A', B'\}$ list each of the $2^8$ possible extensions to a bipartition of $V(H)$. Clearly we can check in polynomial time whether a given bipartition of $V(H)$ is even-good and odd-good, so we can test the criterion of Theorem~\ref{character} by checking this for all listed bipartitions. Together with Theorem~\ref{character} this proves that we can determine in polynomial time whether a $4$-graph $H$ satisfying the minimum degree condition of Theorem~\ref{character} contains a Hamilton $2$-cycle. More precisely, we have the following corollary (see~\cite{GaMy16} for a more detailed description of this algorithm and proof of its correctness under the assumption that Theorem~\ref{character} holds).

\begin{coro}\label{algodecide}
There exist a constant $\eps>0$ and an algorithm which, given a $4$-graph $H$ on $n$ vertices with $\delta(H) \geq n/2 - \eps n$, determines in time $O(n^{25})$ whether $H$ contains a Hamilton $2$-cycle. Furthermore, if $H$ does not contain a Hamilton $2$-cycle, then the algorithm returns a bipartition $\{A, B\}$ of $V(H)$ which is not even-good or odd-good.
\end{coro}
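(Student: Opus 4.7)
The plan is to implement the algorithm sketched in the paragraph preceding the corollary and verify that it runs within the claimed time. By Theorem~\ref{character}, $H$ contains a Hamilton $2$-cycle if and only if every bipartition of $V(H)$ is both even-good and odd-good, so the task reduces to either certifying that every bipartition is good, or producing a witness $\{A,B\}$ which fails one of the two conditions.

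If $n$ is odd, then by inspection of Definition~\ref{evengood} no bipartition of $V(H)$ can be even-good or odd-good, so we simply return the trivial bipartition $\{V(H),\emptyset\}$. Assume from now on that $n$ is even. I would enumerate a polynomial-sized family of candidate bad bipartitions which provably contains every bipartition of $V(H)$ that fails some clause of Definition~\ref{evengood}, and then test each candidate directly. To build this family, iterate over all sets $X\subseteq V(H)$ with $|X|\leq 8$ (there are $O(n^8)$ such sets) and set $H':=H-X$. Since $|X|$ is bounded by a constant and $\delta(H)\geq n/2-\eps n$, we have $\delta(H')\geq |V(H')|/3$ once $n$ is large, so \cite[Lemma~2.2]{KeKnMy13} applies to $H'$ and produces, in time $O(n^5)$, a constant-size list of all bipartitions of $V(H')$ containing no odd edges, together with all those containing no even edges. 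For each such bipartition $\{A',B'\}$ of $V(H')$ I would enumerate all $2^{|X|}\leq 256$ extensions to $V(H)$, producing one candidate per extension.

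Each candidate bipartition is then checked against the clauses of Definition~\ref{evengood}. Every clause refers only to a constant-size substructure of $H$ (a single edge, two edges intersecting in a prescribed way, or vertex-disjoint $2$-paths of total length at most three), and the heaviest test -- total $2$-pathlength at least three in $\Heven$, whose witnessing configuration spans at most $12$ vertices -- can be carried out by brute force in $O(n^{12})$ time. If some candidate fails even-goodness or odd-goodness, return it as a certificate; otherwise, by the completeness argument below, every bipartition of $V(H)$ is good, and Theorem~\ref{character} guarantees a Hamilton $2$-cycle. Multiplying the factors $O(n^8)\cdot O(n^5)\cdot O(1)\cdot O(n^{12})$ yields the claimed $O(n^{25})$ running time.

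The main point to justify -- and the only real obstacle -- is completeness of the enumeration. Suppose $\{A,B\}$ is a bipartition of $V(H)$ which is not even-good. By Remark~\ref{deletevs} there is a set $X\subseteq V(H)$ with $|X|\leq 4$ hitting every edge of $\Hodd$, so the bipartition of $V(H)\setminus X$ inherited from $\{A,B\}$ contains no odd edges and thus appears in the list produced by \cite[Lemma~2.2]{KeKnMy13} applied to $H'=H-X$. The assignment of $X$ that reproduces $\{A,B\}$ is one of the $2^{|X|}$ extensions enumerated by the algorithm, so $\{A,B\}$ itself is among the candidates that get tested. An identical argument using the size-$8$ bound of Remark~\ref{deletevs} for $\Heven$ handles the non-odd-good case. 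Hence every bad bipartition is enumerated and detected, as required.
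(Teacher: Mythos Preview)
Your proof is correct and follows essentially the same approach as the paper's sketch preceding the corollary: enumerate all sets $X$ of at most eight vertices, apply \cite[Lemma~2.2]{KeKnMy13} to $H-X$ to list the constant number of bipartitions with no odd or no even edge, extend each to $V(H)$ in all $2^{|X|}$ ways, and test every candidate against Definition~\ref{evengood}, invoking Remark~\ref{deletevs} for completeness. Your running-time multiplication is loose (the $O(n^5)$ listing step and the $O(n^{12})$ testing step are additive rather than multiplicative per choice of $X$, giving $O(n^{20})$), but the resulting $O(n^{25})$ bound is of course still valid and matches the claimed statement.
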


Theorem~\ref{2cycle} supersedes Corollary~\ref{algodecide} by showing that we can actually find a Hamilton $2$-cycle in such a $4$-graph in polynomial time (if such a cycle exists). This does not follow from Theorem~\ref{character} directly, but instead follows by giving algorithms for each step involved in the proof of Theorem~\ref{character}. We do this in parallel with the results needed for the proof of Theorem~\ref{character} in Sections~\ref{sec:gen},~\ref{sec:evenextr} and~\ref{sec:oddextr}, before presenting the algorithm claimed in Theorem~\ref{2cycle} in Section~\ref{sec:algo}.

\subsection{The exact Dirac threshold for Hamilton $2$-cycles in $4$-graphs.} \label{sec:dirac}
We now make use of our characterisation to deduce Corollary~\ref{dirac24}. First, we give a construction to show that the degree bound of Corollary~\ref{dirac24} is best-possible. So fix an even integer $n \geq 6$ and let $A$ and $B$ be disjoint sets with $|A \cup B| = n$ such that $|A| = \tfrac{n}{2}-1$ if $8$ divides $n$ and $|A| = \tfrac{n}{2}$ otherwise. We define $H^*$ to be the $4$-graph with vertex set $A \cup B$ whose edges are all $4$-sets $e\subseteq A \cup B$ such that $|e \cap A|$ is odd. Then it is easy to see that $\delta(H^*) = \tfrac{n}{2} - 3$ if $8$ divides~$n$ and $\tfrac{n}{2}-2$ otherwise. Furthermore, the size of $A$ implies that the bipartition $\{A, B\}$ of $V(H^*)$ is not odd-good, as $H^*$ has no even edges. Hence by Theorem~\ref{character} there is no Hamilton $2$-cycle in $H^*$.

\begin{proof}[Proof of Corollary~\ref{dirac24}]
Choose $\eps, n_0$ as in Theorem~\ref{character}, and set $n_1 := \max\{n_0, \frac{2}{\eps}, 100\}$. Let $n \geq n_1$ be even and let $H$ be a $4$-graph on $n$ vertices which satisfies the minimum codegree condition of Corollary~\ref{dirac24}. Also let $\{A, B\}$ be a bipartition of $V(H)$, and assume without loss of generality that $|A| \leq \tfrac{n}{2}$. By Theorem~\ref{character} it suffices to prove that $\{A, B\}$ is even-good and odd-good. This follows immediately from Definition~\ref{evengood} if $|A| < 18$, so we may assume that $18 \leq |A| \leq |B|$ (this ensures that we may choose distinct vertices as required in what follows). Note that if $8$ divides $n$ and $|A| = \tfrac{n}{2}$, then $\{A, B\}$ is even-good by Definition~\ref{evengood}(i) and odd-good by Definition~\ref{evengood}(v). So we may assume that if $8$ divides $n$, then $|A| \leq \tfrac{n}{2}-1$ and $\delta(H) \geq \tfrac{n}{2}-2$, whilst otherwise we have $|A| \leq \tfrac{n}{2}$ and $\delta(H) \geq \tfrac{n}{2}-1$. Either way, we must have $\delta(H) \geq |A|-1$. 

To see that $\{A, B\}$ must be even-good, arbitrarily choose vertices $x_1, x_2, y_1, y_2, z_1, z_2 \in A$. Then $|N(x_1, y_1, z_1)\cap B|, |N(x_2, y_2, z_2)\cap B| \geq \delta(H) - (|A| - 3) \geq 2$, so we may choose distinct $w_1, w_2 \in B$ with $w_1 \in N(x_1, y_1, z_1)\cap B$ and $w_2 \in N(x_2, y_2, z_2)\cap B$. The sets $\{x_1, y_1, z_1, w_1\}$ and $\{x_2, y_2, z_2, w_2\}$ are then disjoint odd edges of $H$, so $\{A, B\}$ is even-good by Definition~\ref{evengood}(ii).

We next show that $\{A, B\}$ is also odd-good. For this, arbitrarily choose distinct vertices $a_1, a_2, \dots, a_9, a'_1, \dots, a'_9 \in A$ and $b_1, \dots, b_9 \in B$. 
For any $1 \leq i, j \leq 9$ we have $|N(a_i, a'_i, b_j) \cap B| \geq \delta(H)-(|A| - 2) \geq 1$, so there must be $b_j^i \in B$ such that $\{a_i, a'_i, b_j, b^i_j\}$ is an (even) edge of $H$. 
If for each $1 \leq j \leq 9$ the vertices $b^i_j$ for $1 \leq i \leq 9$ are all distinct, then $\Heven$ contains a set of nine disjoint edges, so there is no set $X \subseteq V(H)$ with $|X| \leq 8$ which intersects every even edge of $H$. 
However, by Remark~\ref{deletevs} such a set $X$ must exist if $\{A, B\}$ is not odd-good. 
So we may assume that $b^{i'}_j = b^i_j$ for some $1 \leq i,i', j \leq 9$ with $i \neq i'$. It follows that $\{a_i, a_i', b_j, b^i_j\}$ is an even edge of $H$ with exactly two vertices in $A$, whilst $(a_i, a_i', b_j, b^i_j, a_{i'}, a'_{i'})$ is a $2$-path of length $2$ in $\Heven$. 
So $\{A, B\}$ is odd-good by Definition~\ref{evengood}(v),~(vi),~(vii) or~(viii), according to the value of $n$.
\end{proof}

\section{Proof of Theorem~\ref{character}} \label{sec:proof}
In this section we prove Theorem~\ref{character}, our characterisation of $4$-graphs with minimum codegree close to the Dirac threshold which contain a Hamilton $2$-cycle, although the proofs of several lemmas are deferred to subsequent sections. We begin with the following proposition, which establishes the forward implication of Theorem~\ref{character}; note that the minimum codegree assumption is not needed for this direction.

\begin{prop} \label{forwards}
Let $H$ be a $4$-graph. If $H$ contains a Hamilton $2$-cycle, then every bipartition of $V(H)$ is both even-good and odd-good. 
\end{prop}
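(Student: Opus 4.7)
First I would analyse the cyclic structure of the given Hamilton $2$-cycle $C$ with respect to an arbitrary bipartition $\{A,B\}$ of $V(H)$. Write $C = p_1 p_2 \cdots p_N$ with $N = n/2$, so that the edges of $C$ are $e_i = p_i \cup p_{i+1}$ (indices modulo $N$), and set $\sigma_i := 1$ if $p_i$ is a split pair and $\sigma_i := 0$ otherwise; denote by $s$, $c_A$, $c_B$ the numbers of split, $A$-connate, and $B$-connate pairs, so that $|A| = s + 2c_A$ and $|B| = s + 2c_B$. The key observation is that $|e_i \cap A| \equiv \sigma_i + \sigma_{i+1} \pmod 2$, so $e_i$ is odd if and only if $\sigma_i \neq \sigma_{i+1}$. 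Hence the number of odd edges of $C$ equals the number of cyclic sign-changes of $\sigma$, which is always even; moreover any two distinct edges of $C$ share $0$ or exactly $2$ vertices, sharing $2$ exactly when they are cyclically adjacent (in which case the shared pair is a single $p_j$).

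For even-goodness I would case-split on $\sigma$. If $s = 0$ then $|A|$ is even, and if $s = N$ then $|A| = |B| = N$; in either case (i) holds. Otherwise let $r$ be the number of maximal $1$-runs of $\sigma$, so $C$ has $2r \geq 2$ odd edges located at the switch positions. If $r \geq 2$, or if $r = 1$ with the unique $1$-run of length $s$ satisfying $2 \leq s \leq N-2$, a short cyclic-distance argument yields two odd edges of cyclic index distance at least $2$; these are vertex-disjoint and give (ii). If $r = 1$ and $s = 1$ the two odd edges share the unique split pair, again giving (ii). If $r = 1$ and $s = N-1$ the two odd edges share the unique connate pair, which lies entirely in $A$ or entirely in $B$, forcing $|A| - |B| = \pm 2$ and hence (iii) or (iv).

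For odd-goodness I would rely on two elementary facts: the number of even edges of $C$ has the same parity as $N$ (the number of odd edges being even), and $d/2 \equiv c_A + c_B \pmod 2$ (since $|A| - |B| = 2(c_A - c_B)$). The cases $(m,d) \in \{(0,0),(4,2)\}$ are immediate from (v). For $(m,d) \in \{(2,2),(6,0)\}$, $m$ forces $N$ odd and hence gives at least one even edge, establishing (vi). For $(m,d) \in \{(4,0),(0,2)\}$, $m$ forces $N$ even; if $C$ had no even edge then $\sigma$ would alternate and $c_A + c_B = N/2$ would have parity incompatible with $d$, so $C$ has at least two even edges, which meet in $0$ or $2$ vertices and therefore form either a pair of disjoint edges or a $2$-path of length $2$ in $\Heven$, giving (vii).

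The hardest cases are $(m,d) \in \{(6,2),(2,0)\}$, where $N$ is odd. I would argue by contradiction, assuming (viii) fails: then no two consecutive pairs of $C$ are both split and no $A$-connate pair of $C$ is adjacent to any $B$-connate pair of $C$. Consequently the cyclic sequence of pair-types consists of maximal single-type connate runs separated by isolated split pairs, so $s$ equals the total number of connate runs, and the even edges of $C$ are precisely the internal edges of these runs, organised into vertex-disjoint $2$-paths of total length $(c_A + c_B) - s$. The assumption that $\Heven$ has total $2$-pathlength at most $2$, combined with $N$ odd and $s \leq (N-1)/2$, then forces $s = (N-1)/2$ and $c_A + c_B = (N+1)/2$; a parity check of $(N+1)/2$ against the parity of $c_A + c_B$ required by $d$ contradicts both subcases, completing the proof.
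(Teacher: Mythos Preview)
Your proposal is correct and follows essentially the same approach as the paper: both set up the Hamilton $2$-cycle as a cyclic sequence of pairs, introduce the split/connate indicator $\sigma$, observe that $e_i$ is odd precisely when $\sigma_i \neq \sigma_{i+1}$, and finish by case analysis. The organization differs slightly. For even-goodness the paper reduces to the situation where at most two (cyclically adjacent) edges are odd and then inspects the type of the remaining run, while you phrase the same reduction in terms of the number and lengths of $1$-runs of $\sigma$; these are equivalent. For odd-goodness the paper cases on whether $4 \mid n$ and, when $N$ is odd, reduces directly to a \emph{single} even edge in $C$ before reading off $(m,d)$, whereas you case on $(m,d)$ and for $(m,d)\in\{(6,2),(2,0)\}$ carry out a fuller structural analysis of the pair-type sequence (no $SS$, no $AB$ adjacency, hence $s=(N-1)/2$) before the parity check. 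Your route is a little more laborious in that case but equally valid, and the final parity contradiction is the same computation as the paper's.

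One small point worth tightening: in your case $(m,d)\in\{(6,2),(2,0)\}$ the sentence ``$s$ equals the total number of connate runs'' presumes $s\geq 1$. If $s=0$ then the no-$AB$-adjacency constraint forces all pairs to be connate of a single type, so one of $A,B$ is empty; either treat this as an excluded degenerate bipartition or note that for $N\geq 5$ the whole cycle then lies in $\Heven$ and already gives total $2$-pathlength $N-1\geq 3$. (The paper's proof glosses over the same boundary case.)
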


\begin{proof}
Let $n$ be the order of $H$, let $C = (v_1, v_2, \dots, v_n)$ be a Hamilton 2-cycle in $H$ and let $\{A, B\}$ be a bipartition of $V(H)$. Write $P_i = \{v_{2i-1}, v_{2i}\}$ for each $1 \leq i  \leq \tfrac{n}{2}$, so the edges of $C$ are $e_i := P_i \cup P_{i+1}$ for $1 \leq i  \leq \tfrac{n}{2}$ (with addition taken modulo $\tfrac{n}{2}$). The key observation is that $e_i$ is even if $P_i$ and $P_{i+1}$ are both split pairs or both connate pairs, and odd otherwise. 

We first show that $\{A, B\}$ is even-good. This holds by Definition~\ref{evengood}(ii) if $H$ contains two disjoint odd edges, so we may assume without loss of generality that all edges of $H$ other than $e_1$ and $e_{n/2}$ are even. It follows that the pairs $P_2, P_3, \dots, P_{n/2}$ are either all split pairs or all connate pairs. In the former case, if $P_1$ is a split pair then $|A| = |B|$, so Definition~\ref{evengood}(i) holds, whilst if $P_1 \subseteq A$ then Definition~\ref{evengood}(iii) holds, and if $P_1 \subseteq B$ then Definition~\ref{evengood}(iv) holds. In the latter case, if $P_1$ is a connate pair then $|A|$ is even, so Definition~\ref{evengood}(i) holds, whilst if $P_1$ is a split pair then Definition~\ref{evengood}(ii) holds. So in all cases we find that $\{A, B\}$ is even-good. 

To show that $\{A, B\}$ is odd-good, suppose first that 4 does not divide $n$, and note that by our key observation the number of even edges in $C$ must then be odd. 
If $C$ contains three or more even edges or an edge with precisely two vertices in $A$, then $\{A, B\}$ is odd-good by Definition~\ref{evengood}(vi) and~(viii), so we may assume without loss of generality that $e_{n/2}$ is the unique even edge in $C$ and that $e_{n/2} \subseteq A$ or $e_{n/2} \subseteq B$. It follows that $P_1, P_3, \dots, P_{n/2}$ are connate pairs (of which there are $\lceil{\tfrac{n}{4}}\rceil$ in total) and the remaining pairs are split, so $|A| - |B| \equiv 2\lceil{\tfrac{n}{4}}\rceil \mod 4$. We must therefore have $(m, d) \in \{(2, 2), (6, 0)\}$, and $\{A, B\}$ is odd-good by Definition~\ref{evengood}(vi). On the other hand, if $4$ divides $n$, then by our key observation the number of even edges in $C$ is even. If this number is at least two then $\{A, B\}$ is odd-good by Definition~\ref{evengood}(v) and~(vii). If instead every edge of $C$ is odd, then exactly $\tfrac{n}{4}$ of the pairs $P_i$ are connate pairs, so $|A| - |B| \equiv 2 \cdot \tfrac{n}{4} \mod 4$, and $C$ is odd-good by Definition~\ref{evengood}~(v).
\end{proof}

The main difficulty in proving Theorem~\ref{character} is therefore to establish the backwards implication. For this note that if $H$ is a $4$-graph and $\{A, B\}$ is a bipartition of $V(H)$ which is not odd-good, then $H$ has very few even edges, as it does not contain three disjoint even edges. Similarly, if $\{A, B\}$ is not even-good, then $H$ has very few odd edges. This motivates the following definition.

\begin{definition}
Fix $c > 0$, a $4$-graph $H$ on $n$ vertices, and a bipartition $\{A, B\}$ of $V(H)$.
\begin{enumerate}[noitemsep, label=(\alph*)]
\item We say that $\{A, B\}$ is \emph{$c$-even-extremal} if $\frac{n}{2}-cn \leq |A| \leq \frac{n}{2}+cn$ and $H$ contains at most $c\binom{n}{4}$ odd edges.
\item We say that $\{A, B\}$ is \emph{$c$-odd-extremal} if $\frac{n}{2}-cn \leq |A| \leq \frac{n}{2}+cn$ and $H$ contains at most $c\binom{n}{4}$ even edges.
\item We say that $H$ is \emph{$c$-even-extremal} if $V(H)$ admits a $c$-even-extremal bipartition, and likewise that $H$ is \emph{$c$-odd-extremal} if $V(H)$ admits a $c$-odd-extremal bipartition.
\end{enumerate}
\end{definition}

In our proof of Theorem~\ref{character} we distinguish between the non-extremal case, in which $H$ is neither even-extremal nor odd-extremal, and the two extremal cases.

\subsection{Non-extremal 4-graphs} Suppose first that $H$ is neither even-extremal nor odd-extremal. In this case we proceed by the so-called `absorbing' method, introduced by R\"odl, Ruci\'nski and Szemer\'edi~\cite{RoRuSz09}, to construct a Hamilton $2$-cycle in $H$. More specifically, we adapt the approach used by Karpi{\'n}ski, Ruci{\'n}ski and Szyma{\'n}ska~\cite{KaRuSz10}, proving an `absorbing lemma' and a `long cycle lemma'. Loosely speaking, our `absorbing lemma' allows us to find a short $2$-path in $H$ which can `absorb' most small collections of pairs of $H$.

\begin{lemma}[Absorbing lemma] \label{lem:absorbing}
Suppose that $1/n\ll\eps\ll \gamma\ll\lambda\ll c,\mu$. If $H$ is a $4$-graph of order $n$ with $\delta(H) \geq n/2 - \eps n$ which is neither $c$-even-extremal nor $c$-odd-extremal, then there is a $2$-path $P$ in $H$ and a $2$-graph $G$ on $V(H)$ with the following properties.
\begin{enumerate}[label=(\roman*)]
\item $P$ has at most $\mu n$ vertices.
\item Every vertex of $V(H) \sm V(P)$ is contained in at least $(1-\lambda) n$ edges of $G$. 
\item For any $s \leq \gamma n$ and any $s$ disjoint edges $e_1, \dots, e_s$ of $G$ which do not intersect $P$.
there is a $2$-path $P^*$ in $H$ with the same ends as $P$ such that $V(P^*) = V(P) \cup \bigcup_{j=1}^s e_j$.
\end{enumerate}
\end{lemma}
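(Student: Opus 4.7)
The plan is to use the absorbing method. Define an \emph{absorber} for a pair $\{x,y\}\subseteq V(H)$ to be an ordered $4$-tuple $(a_1,a_2,b_1,b_2)$ of distinct vertices, disjoint from $\{x,y\}$, such that each of the three $4$-sets $\{a_1,a_2,b_1,b_2\}$, $\{a_1,a_2,x,y\}$ and $\{x,y,b_1,b_2\}$ is an edge of $H$. The point is that the single edge $\{a_1,a_2,b_1,b_2\}$, viewed as a $2$-path of length $1$ with ends $\{a_1,a_2\}$ and $\{b_1,b_2\}$, has the same ends as the length-$2$ path $(a_1,a_2,x,y,b_1,b_2)$ in $H$; hence we may "switch" between these two to insert the pair $\{x,y\}$ into any $2$-path containing the absorber. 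If $P$ is built by splicing many such switches together via short connector paths, we can absorb any family of disjoint pairs with private switches inside $P$ simultaneously.

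\medskip

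The quantitative heart of the proof is to show that every pair $\{x,y\}\subseteq V(H)$ has at least $\beta n^4$ absorbers, for some $\beta=\beta(c)>0$. The codegree condition first yields
$$
2\,d_H(\{x,y\}) \ =\ \sum_{z\neq x,y} d_H(\{x,y,z\}) \ \geq\ (n-2)\bigl(\tfrac{n}{2}-\eps n\bigr),
$$
so the link graph $L_{xy}:=\{\{a_1,a_2\}:\{a_1,a_2,x,y\}\in E(H)\}$ has at least $\bigl(\tfrac14-O(\eps)\bigr)n^2$ edges, and similarly $e(H)\geq\bigl(\tfrac12-O(\eps)\bigr)\binom{n}{4}$. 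A supersaturation argument then produces $\Omega(n^4)$ $4$-sets $e\subseteq V(H)\setminus\{x,y\}$ containing two disjoint edges of $L_{xy}$, and one needs to show that a positive proportion of these are themselves edges of $H$. Non-extremality enters exactly here: if overwhelmingly many of these $4$-sets were missing from $H$ for some $\{x,y\}$, a stability-type argument would promote $L_{xy}$ (and hence $H$) to be bipartite-like with respect to some bipartition $\{A,B\}$ of $V(H)$, contradicting that $H$ is neither $c$-even- nor $c$-odd-extremal. With this count in hand, define the auxiliary $2$-graph $G$ on $V(H)$ by putting $\{x,y\}\in E(G)$ iff $\{x,y\}$ has at least $\beta n^4$ absorbers in $H$; a simple averaging shows that the few vertices incident to more than $\lambda n/2$ non-edges of $G$ can be absorbed into $V(P)$ at the outset.

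\medskip

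The remaining steps are standard absorbing machinery. Sample each potential absorber independently with probability $p=\Theta(\mu n^{-3})$; by Chernoff and Markov, with positive probability the sampled family $\mathcal{A}$ has $O(\mu n)$ members, only $O(\mu^2 n)$ conflicting pairs, and covers each $\{x,y\}\in E(G)$ at least $2\gamma n$ times. Deleting one absorber from each conflicting pair yields a vertex-disjoint family still covering every such $\{x,y\}$ at least $\gamma n$ times. A separate connecting lemma — proved from the codegree together with non-extremality by showing that any two disjoint pairs in $V(H)\setminus X$, with $|X|$ small, are joined by a short $2$-path in $H\setminus X$ — then chains the single-edge states of the absorbers in $\mathcal{A}$ into one $2$-path $P$ of length $O(\mu n)$, which gives (i) and (ii). For (iii), given disjoint $e_1,\dots,e_s\in E(G)$ with $s\leq\gamma n$ and disjoint from $V(P)$, greedily assign each $e_j$ a distinct absorber of $\mathcal{A}$ covering it (feasible since each has $\geq\gamma n$ options) and perform the corresponding switches simultaneously; the result is a $2$-path $P^*$ with the same ends as $P$ and $V(P^*)=V(P)\cup\bigcup_j e_j$.

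\medskip

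The main obstacle is the absorber count in the second paragraph: upgrading the qualitative non-extremality hypothesis to a uniform $\Omega(n^4)$ lower bound per pair. This requires a stability argument showing that a pair $\{x,y\}$ with few absorbers forces the non-edges of $H$ among the "matching $4$-sets" of $L_{xy}$ to concentrate in a bipartite-like pattern, which can then be bootstrapped to a global bipartition of $V(H)$ certifying $c$-odd- or $c$-even-extremality of $H$. The remaining ingredients — probabilistic selection, the connecting lemma, and the final greedy switching — follow standard lines once this count is secured.
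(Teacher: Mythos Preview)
Your overall architecture (random selection of absorbers, connecting lemma from non-even-extremality, greedy allocation at the end) matches the paper, but the quantitative claim you call ``the heart of the proof'' is false as stated, and your single $4$-vertex absorber is not enough.

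Concretely: take $|A|=|B|=n/2$ and let $H$ consist of all odd $4$-sets together with all $4$-sets contained in $A$. One checks $\delta(H)\ge n/2-2$, and $H$ is neither $c$-even-extremal (roughly half of all $4$-sets are odd edges) nor $c$-odd-extremal (about $\tfrac{1}{16}\binom{n}{4}$ even edges lie inside $A$). Now fix any pair $\{x,y\}\subseteq B$. Every edge through $\{x,y\}$ has type $(1,3)$, so $L_{xy}$ consists \emph{only} of split pairs; but the union of two disjoint split pairs is a $4$-set of type $(2,2)$, and $H$ has no such edges. Hence $\{x,y\}$ has \emph{zero} absorbers in your sense. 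Since this holds for every pair in $B$, a constant fraction of vertices are incident to $\Theta(n)$ non-edges of your $G$, so you cannot ``absorb the few bad vertices into $P$ at the outset'' either. Your stability sketch (``few absorbers forces a bipartition witnessing extremality'') therefore cannot go through: the bipartition is right there, yet $H$ is not extremal.

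The paper's fix is to use a \emph{second} absorbing gadget. Its ``type 1'' structure is essentially your absorber padded out to eight vertices via the codegree, but when type~1 counts fail the proof extracts exactly the kind of lopsided bipartite structure above and then exhibits, for connate pairs, a different eight-vertex ``type 2'' absorber that routes through an edge inside $A$ (using that $H[A]$ is dense in this scenario). Even with both gadgets they only conclude that all but $\alpha n^2$ pairs are $\beta$-absorbable; the genuinely bad vertices are handled not by absorbers at all but by a greedy path $P_0$ built from the codegree condition. So to repair your argument you need (a) a second absorber type for the regime where $L_{xy}$ is bipartite-like but $H$ is not odd-extremal, and (b) to weaken your goal from ``every pair'' to ``all but $o(n^2)$ pairs'', dealing with the leftover bad vertices separately.
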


Next, our `long cycle lemma' states that, having found an absorbing-path $P_0$ in $H$, we can cover almost all vertices of $H$ by a long $2$-cycle $C$ of which $P_0$ is a segment such that the vertices not covered by $C$ form a collection of pairs which can be absorbed by $P_0$.

\begin{lemma}[Long cycle lemma]\label{lem:longcycle}
Suppose that $1/n \ll \eps \ll \gamma \ll \lambda \leq \mu \ll c$ and that $n$ is even. 
Let $H=(V,E)$ be a $4$-graph of order $n$ with $\delta(H)\geq n/2-\eps n$ which is not $c$-even-extremal.
Also let $P_0$ be a $2$-path in $H$ on at most $\mu n$ vertices, and let $G$ be a $2$-graph on~$V$ such that each vertex $v \in V \sm V(P_0)$ has $d_G(v) \geq (1-\lambda) n$. Then $H$ contains a $2$-cycle~$C$ on at least $(1-\gamma) n$ vertices such that $P_0$ is a segment of $C$ and $G[V\sm V(C)]$ contains a perfect matching.
\end{lemma}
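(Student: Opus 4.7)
\emph{Overall approach.} The plan is to build the required $2$-cycle $C$ using the absorbing/regularity framework, while adding a ``matching reservoir'' trick to guarantee that the leftover $V \sm V(C)$ has a perfect matching in $G$. First I would greedily select a matching $M_0$ in $G[V \sm V(P_0)]$ of size $\gamma n$; this is possible because every $v \in V \sm V(P_0)$ satisfies $d_G(v) \geq (1-\lambda)n$ while $|V(P_0)| \leq \mu n$, so the induced subgraph $G[V \sm V(P_0)]$ has minimum degree close to $n$ and admits large matchings greedily. Let $R = V(M_0)$. The goal is to produce a $2$-cycle $C$ with $V(P_0) \subseteq V(C)$, $|V(C)| \geq (1-\gamma)n$, and $V \sm V(C) \subseteq R$ saturated by $M_0$; then the restriction of $M_0$ to $V \sm V(C)$ is the required perfect matching.

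\emph{Almost-spanning cycle via regularity.} Next I would apply the weak $4$-graph regularity lemma to $H$, placing $V(P_0)$ in the exceptional set. The minimum codegree $\delta(H) \geq n/2 - \eps n$ translates, via standard counting arguments, into a Dirac-type condition on the reduced multigraph whose edges correspond to dense $\eps$-regular $4$-tuples of clusters. Because $H$ is not $c$-even-extremal, the reduced graph admits no hidden bipartite obstruction and one can find a Hamilton cycle in it whose successive clusters are joined by regular $4$-tuples of the correct parities. Lifting this back to $H$ via the usual blow-up of $2$-paths through regular $4$-tuples, and using short linking $2$-paths (provided by the minimum codegree) both between consecutive cluster-segments and to attach the resulting long path to the two ends of $P_0$, yields a $2$-cycle $C_0$ containing $P_0$ as a segment with $|V(C_0)| \geq (1-\gamma')n$ for some $\gamma' \ll \gamma$.

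\emph{Matching-reservoir adjustment.} The remaining task is to modify $C_0$ so that its complement lies inside $R$ and is saturated by $M_0$. For each edge $e = \{u,v\}$ of $M_0$ that is ``split'' by $C_0$ (exactly one of $u,v$ lies in $V(C_0)$), perform a local swap: using the high codegree of $H$, one can find a short ``swap gadget'' -- a pair of $2$-paths that rewires $C_0$ in a bounded region, inserting the absent endpoint of $e$ into the cycle while ejecting some matched pair of $R$ currently lying on $C_0$. Likewise, each vertex of the leftover outside $R$ is pushed into the cycle at the cost of ejecting an entire edge of $M_0$. Since $C_0$ is of length $\Theta(n)$ and the number of required swaps is at most $\gamma'n$, these local modifications can be performed in disjoint regions of $C_0$ without conflict, yielding the desired cycle $C$.

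\emph{Main obstacle.} The key technical difficulty is guaranteeing that a swap gadget always exists around any given pair of $C_0$. This requires the structural flexibility of $H$ near an arbitrary pair, specifically the existence of short $2$-paths of both parities connecting prescribed endpoints; it is precisely this flexibility that can fail in the extremal cases, which is why the hypothesis that $H$ is not $c$-even-extremal is essential here and why Sections~\ref{sec:evenextr} and~\ref{sec:oddextr} must treat the extremal cases separately. A cleaner alternative implementation integrates the matching constraint directly into the regularity embedding phase by reserving, inside each cluster, exactly the vertices of $M_0$ lying in that cluster as ``candidate leftover'' and embedding $2$-paths that skip them in coordinated pairs; either route reduces the lemma to the combinatorial content of the non-extremal case.
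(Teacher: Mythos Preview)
Your proposal diverges substantially from the paper's proof, and the swap-gadget phase contains a genuine gap.

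The paper does \emph{not} use regularity. Instead it first invokes the Connecting Lemma (Lemma~\ref{lem:con}) to deduce from non-$c$-even-extremality that $H$ is $\kappa$-connecting, and then applies the Reservoir Lemma (Lemma~\ref{lem:reservoir}) to fix a small set $R$ which simultaneously (a) allows any two pairs to be joined by a short $2$-path through $R$ and (b) has $|N_G(x)\cap R|\geq (1-35\lambda)|R|$ for every $x$. The long path is then built by direct greedy extension up to $(\tfrac12-\mu)n$ vertices, followed by an iterative density-increment step: either $H[L]$ is dense enough to find a long sub-path (Theorem~\ref{thm:pathindensegraph}), or a pigeonhole argument (Claim~\ref{clm:segment}) finds a segment of $P$ of length $D$ that can be replaced by a longer $2$-path through a complete $4$-partite piece found via Theorem~\ref{thm:Erdoes}. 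Each iteration gains $\geq D/3$ vertices and uses $\leq 8$ vertices of $R$, so $R$ is barely depleted. The perfect matching then comes for free: $V\sm V(C)$ is small and each leftover vertex has almost full $G$-degree into the surviving part of $R$, giving $\delta(G[V\sm V(C)])\geq |V\sm V(C)|/2$.

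Your matching-reservoir/swap idea has a real problem at the gadget step. A gadget that ``inserts one endpoint while ejecting a matched pair'' is precisely an absorbing structure, and in this paper the existence of many such structures (Lemma~\ref{lem:absstruc}) is proved under the hypothesis that $H$ is not $c$-\emph{odd}-extremal. The Long Cycle Lemma, however, assumes only that $H$ is not $c$-\emph{even}-extremal; $H$ could perfectly well be odd-extremal here, in which case the absorbing structures you rely on can fail to exist (this is exactly why the odd-extremal case is handled separately in Section~\ref{sec:oddextr}). Minimum codegree $n/2-\eps n$ alone does not supply local vertex-swap gadgets at arbitrary locations on a pre-built cycle. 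Even setting this aside, performing $\Theta(\gamma' n)$ post-hoc local rewirings in disjoint regions requires that the gadgets were planted in advance along $C_0$ --- which is the absorbing-path set-up you are trying to circumvent --- rather than found after the fact. The paper's reservoir trick sidesteps all of this: it never touches the cycle after construction, and the $G$-matching is obtained by a one-line minimum-degree argument on the leftover.
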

 
By combining these two lemmas we obtain the following lemma, which shows that if $H$ is a non-extremal $4$-graph whose minimum codegree is close to the Dirac threshold, then $H$ contains a Hamilton $2$-cycle, thereby establishing the backwards implication of Theorem~\ref{character} for non-extremal $4$-graphs. 

\begin{lemma} \label{thm:gencase}
Suppose that $1/n \ll\eps \ll c$ and that $n$ is even, and let $H$ be a $4$-graph of order $n$ with $\delta(H)\geq n/2-\eps n$. If $H$ is neither $c$-odd-extremal nor $c$-even-extremal, then $H$ contains a Hamilton 2-cycle.
\end{lemma}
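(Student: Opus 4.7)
The plan is to deduce Lemma~\ref{thm:gencase} by combining Lemma~\ref{lem:absorbing} and Lemma~\ref{lem:longcycle} in the standard way of the absorbing method: first build a short path that will later consume any leftover vertices, then cover almost all remaining vertices with a long cycle, then absorb the rest.

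First I would choose auxiliary constants $\mu, \lambda, \gamma$ with $\eps \ll \gamma \ll \lambda \ll \mu \ll c$, so that the hierarchies required by both Lemma~\ref{lem:absorbing} and Lemma~\ref{lem:longcycle} are satisfied simultaneously. Using that $H$ is neither $c$-even-extremal nor $c$-odd-extremal, I would then apply Lemma~\ref{lem:absorbing} to obtain a $2$-path $P_0$ on at most $\mu n$ vertices together with a $2$-graph $G$ on $V(H)$ such that every vertex of $V(H) \sm V(P_0)$ has $G$-degree at least $(1-\lambda)n$, and such that $P_0$ can absorb any collection of at most $\gamma n$ disjoint $G$-edges that avoid $V(P_0)$.

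Next, since $H$ is not $c$-even-extremal and $n$ is even, the hypotheses of Lemma~\ref{lem:longcycle} hold for this choice of $P_0$ and $G$. Applying it yields a $2$-cycle $C$ in $H$ on at least $(1-\gamma)n$ vertices with $P_0$ as a segment, such that the induced $2$-graph $G[V(H) \sm V(C)]$ admits a perfect matching $M$. Note that $|V(H) \sm V(C)|$ is even (since $n$ and $|V(C)|$ are both even), consistent with $M$ being perfect, and $M$ consists of at most $\gamma n / 2 \leq \gamma n$ edges of $G$, all of which avoid $V(P_0) \subseteq V(C)$.

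Finally, I would invoke the absorbing property of $P_0$ with the edges of $M$: this produces a $2$-path $P^*$ in $H$ with the same ends as $P_0$ and with $V(P^*) = V(P_0) \cup V(M)$. Replacing the segment $P_0$ inside $C$ by $P^*$ then yields a $2$-cycle in $H$ whose vertex set is $V(C) \cup V(M) = V(H)$, which is the desired Hamilton $2$-cycle. The entire argument is a clean concatenation of the two lemmas; the only minor obstacle is bookkeeping the hierarchy of constants and checking that the parity and disjointness hypotheses of the absorbing step are inherited from the long cycle step. All genuine work is hidden inside Lemmas~\ref{lem:absorbing} and~\ref{lem:longcycle}, which will be the hard parts to prove in Section~\ref{sec:gen}.
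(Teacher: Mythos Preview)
Your proposal is correct and follows essentially the same approach as the paper: introduce the intermediate constants $\gamma \ll \lambda \ll \mu$, apply Lemma~\ref{lem:absorbing} to obtain the absorbing path $P_0$ and the graph $G$, then Lemma~\ref{lem:longcycle} to obtain the long cycle $C$ together with a perfect matching $M$ in $G[V\sm V(C)]$, and finally absorb $M$ into $P_0$ to complete the Hamilton $2$-cycle. The paper's proof is identical up to notation (it writes out the remaining segment $P'$ of $C$ explicitly rather than speaking of ``replacing $P_0$ inside $C$'').
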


\begin{proof}
We introduce constants $\gamma,\lambda,\mu>0$ such that
$$\tfrac{1}{n}\ll\eps\ll\gamma\ll\lambda\ll\mu\ll c\;.$$
By Lemma~\ref{lem:absorbing} there is a $2$-path $P_0$ in $H$ and a graph $G$ on $V(H)$ such that the properties~(i)-(iii) of Lemma~\ref{lem:absorbing} hold. Then by Lemma~\ref{lem:longcycle} there is a $2$-cycle $C$ on at least $(1-\gamma)n$ vertices which contains $P_0$ as a segment and such that $G[V\sm V(C)]$ contains a perfect matching $e_1,\cdots,e_s$. Denote the $2$-path $C[V(C)\sm V(P_0)]$ by $P'$. Since $s \leq \gamma n$, there is a $2$-path $P^*$ with the same ends as $P_0$ such that $V(P^*)=V(P_0)\cup\bigcup_{j=1}^{s}e_j$, and  $P^*P'$ is then a Hamilton $2$-cycle in~$H$.
\end{proof}

\subsection{Extremal 4-graphs} It remains to prove the backwards implication of Theorem~\ref{character} in the case where $H$ is either even-extremal or odd-extremal. In each case we have significant structural information about $H$. Indeed, if $V(H)$ admits an even-extremal bipartition $\{A, B\}$ (respectively odd-extremal), then we know that almost all edges of $H$ are even (respectively odd). However, the assumption that the bipartition of $H$ is even-good (respectively odd-good) yields certain small structures in $\Hodd$ (respectively $\Heven$). By a careful analysis we are able to use this information to find a Hamilton $2$-cycle in $H$ in each case. We consider the even-extremal case in Section~\ref{sec:evenextr} and the odd-extremal case in Section~\ref{sec:oddextr}, culminating in the following two lemmas, which establish the backwards implication of Theorem~\ref{character} in each extremal case.

\begin{lemma} \label{thm:evenextr}
Suppose that $1/n \ll \eps, c \ll 1$ and that $n$ is even, and let $H$ be a $4$-graph of order $n$ with $\delta(H)\geq n/2-\eps n$. If $H$ is $c$-even-extremal and every bipartition $\{A, B\}$ of $V(H)$ is even-good, then $H$ contains a Hamilton $2$-cycle. 
\end{lemma}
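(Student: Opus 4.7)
The plan is to exploit the $c$-even-extremal bipartition $\{A,B\}$ directly: we build a Hamilton $2$-cycle whose edges are almost all even, and we use the small substructures guaranteed by even-goodness (Definition~\ref{evengood}) as a \emph{parity gadget} that supplies the unavoidable odd edges.

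First I would extract regularity from extremality. Since $H$ has at most $c\binom{n}{4}$ odd edges, the set $W$ of \emph{atypical} vertices -- those contained in more than $c^{1/2}n^{3}$ odd edges -- satisfies $|W|\leq c^{1/2}n$, and for each typical vertex $v$ the edge $\{v\}\cup T$ is even for all but a negligible fraction of triples $T$. Combined with $\delta(H)\geq n/2-\eps n$, this implies that for any three typical vertices $x,y,z$, the set $N_{\Heven}(x,y,z)$ has size at least $n/2-O(\eps n)$ and lies almost entirely in the part of $\{A,B\}$ whose parity completes $\{x,y,z\}$ to an even edge. This provides the workhorse connectivity I will use throughout.

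Next I would analyse the global parity constraints, following Proposition~\ref{forwards}. Writing a Hamilton $2$-cycle as $P_{1}P_{2}\cdots P_{n/2}$, the edge $P_{i}\cup P_{i+1}$ is even iff $P_{i}$ and $P_{i+1}$ are both split or both connate. If $s$, $c_{A}$, $c_{B}$ denote the numbers of split pairs, $A$-connate pairs and $B$-connate pairs, then $|A|=s+2c_{A}$ and $|B|=s+2c_{B}$. Thus an all-even Hamilton $2$-cycle exists only if either $|A|=|B|$ (all split) or both $|A|$ and $|B|$ are even (all connate) -- exactly case (i) of Definition~\ref{evengood}. In every other case any Hamilton $2$-cycle must contain at least two odd edges whose intersection pattern is constrained by the value of $|A|-|B|$, and this is precisely what even-goodness in (ii), (iii), (iv) supplies. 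I would then split into these subcases. In each subcase I would build a short parity gadget $P_{\star}$ of bounded length from the odd edges named in (ii)/(iii)/(iv): for instance, if (ii) holds via two odd edges sharing a split pair, $P_{\star}$ is a single split pair flanked by two odd edges and otherwise surrounded by connate pairs; the $|A|=|B|\pm 2$ cases from (iii) and (iv) are similar. In every subcase $P_{\star}$ contributes precisely the parity correction needed for $s$, $c_{A}$, $c_{B}$ to match $|A|$ and $|B|$ once the rest of the cycle is built from pairs of a single type.

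Third, the cycle is assembled in three pieces. A short \emph{cleanup} path $P_{\mathrm{clean}}$ absorbs the vertices of $W$ one at a time, embedding each $w\in W$ in the middle of a short even subpath chosen to match $w$'s dominant side in $\{A,B\}$ and with endpoint pair-types agreeing with the surrounding block. Together with $P_{\star}$ this leaves a remainder set $U\subseteq V(H)\sm W$ whose intersections with $A$ and $B$ have the cardinalities demanded by the prescribed block pattern (all split, all connate, or connate-with-one-split-block bracketed by $P_{\star}$). Using the near-completeness of $N_{\Heven}(x,y,z)$ in the correct part for typical $x,y,z$, I would greedily build a long $2$-path through $U$ whose ends have the pair-types required to splice onto the ends of $P_{\star}$ and $P_{\mathrm{clean}}$; a Hamilton-connectivity argument of the standard hypergraph Dirac type then closes the three segments into the required Hamilton $2$-cycle.

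The main obstacle is the exact parity bookkeeping. The triple $(s,c_{A},c_{B})$ is pinned down by $|A|$, $|B|$, and the block topology of the cycle, so the gadget $P_{\star}$ must inject exactly the right imbalance while its two ends display pair-types compatible with an all-even continuation. Each case of Definition~\ref{evengood}(ii)--(iv), crossed with the permitted residues of $|A|-|B|$, must be handled separately, and at each step the codegree assumption has to be used delicately to route the ends of $P_{\star}$ around $W$ without disturbing the block pattern. A secondary subtlety is $P_{\mathrm{clean}}$: an atypical $w$ may have skewed degrees between $A$ and $B$, so the pair-types chosen around $w$ must be driven by $w$'s even-neighbourhood rather than by the global pattern, and these local choices must then be reconciled with the block structure when $P_{\mathrm{clean}}$ is inserted into the cycle.
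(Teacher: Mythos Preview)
Your high-level plan aligns with the paper's: exploit the even-extremal bipartition to build a Hamilton $2$-cycle from mostly even edges, using the substructures named in Definition~\ref{evengood} as parity gadgets. Your parity analysis (the interplay of $s,c_A,c_B$ with $|A|,|B|$) is correct and matches the paper's case split.

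There is, however, a genuine gap in your treatment of atypical vertices. You propose to embed each $w\in W$ in ``a short even subpath chosen to match $w$'s dominant side,'' but a vertex $w\in A$ may lie in almost no even edges of $H$ at all with respect to $\{A,B\}$; then no such even subpath exists. If instead you silently treat $w$ as a $B$-vertex when building $P_{\mathrm{clean}}$, the edges through $w$ are \emph{odd} with respect to $\{A,B\}$, and your parity bookkeeping --- carried out for $\{A,B\}$ using the gadget supplied by the even-good hypothesis for $\{A,B\}$ --- no longer closes. The paper's key opening move is to \emph{relabel} every $(\beta_1',\beta_2')$-bad vertex to the opposite side, obtaining a refined bipartition which is still $c$-even-extremal (with a larger $c$), in which every vertex is $(\beta_1,\beta_2)$-medium, and which is still even-good because the hypothesis is that \emph{every} bipartition is even-good. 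All subsequent case analysis and gadget construction is done relative to this refined partition, dissolving the bad-vertex problem at the outset rather than patching it inside the cycle.

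A secondary point: the step you call ``a Hamilton-connectivity argument of the standard hypergraph Dirac type'' is in fact the bulk of the technical work. The paper proves two dedicated Hamilton-path connecting lemmas (Lemmas~\ref{lem:evenextrhampathequal} and~\ref{lem:evenextrhampathsplit}), each via a bespoke grid that can swallow a prescribed set of leftover vertices by matching them into reserved slots; pure greedy extension does not suffice to reach every last vertex with the correct endpoint pair-types.
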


\begin{lemma}\label{thm:oddextremal}
Suppose that $1/n \ll \eps, c \ll 1$ and that $n$ is even, and let $H$ be a $4$-graph of order $n$ with $\delta(H)\geq n/2-\eps n$. If $H$ is $c$-odd-extremal and every bipartition $\{A, B\}$ of $V(H)$ is odd-good, then $H$ contains a Hamilton $2$-cycle. 
\end{lemma}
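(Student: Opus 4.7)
The plan is to exploit the near-bipartite structure of $H$ supplied by odd-extremality together with the small ``gadget'' in $\Heven$ that odd-goodness provides. Let $\{A,B\}$ be a $c$-odd-extremal bipartition, so all but at most $c\binom{n}{4}$ edges of $H$ are odd. The central combinatorial observation, borrowed from the proof of Proposition~\ref{forwards}, is that if we write a prospective Hamilton $2$-cycle cyclically as a sequence of pairs $P_1P_2\cdots P_{n/2}$, then the edge $e_i=P_i\cup P_{i+1}$ is odd if and only if exactly one of $P_i,P_{i+1}$ is split with respect to $\{A,B\}$. Consequently, a Hamilton $2$-cycle built entirely from odd edges must have pair-sequence alternating between split and connate, which is possible only when $4\mid n$ and $|A|-|B|$ has a very specific residue modulo $4$; this is exactly case~(v) of Definition~\ref{evengood}. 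In every other residue class the alternation must be broken at a few places by even edges, and odd-goodness guarantees that $\Heven$ contains enough structure to do precisely this.

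I would then run the argument in four steps. \emph{Cleaning.} Let $X$ be the set of vertices either incident to more than $c^{1/2}n^3$ even edges or with unusually small codegree into odd edges of the expected parity type; double-counting using $e(\Heven)\leq c\binom{n}{4}$ gives $|X|=O(c^{1/2}n)$. Reassign the vertices of $X$ between $A$ and $B$ to obtain a new bipartition $\{A',B'\}$, still balanced to within $O(c^{1/2}n)$, in which every vertex has very large `odd' codegree. \emph{Gadget.} Using the odd-goodness of the original $\{A,B\}$, extract from $\Heven$ the structure prescribed by the case $(m,d)=(n\bmod 8,(|A|-|B|)\bmod 4)$: an edge in case~(vi), an even $2$-path of length two (or two disjoint even edges) in case~(vii), and either a balanced even edge with $|e\cap A|=2$ or an even $2$-path of length three in case~(viii). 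Because only $O(c^{1/2}n)$ vertices have moved sides in cleaning, the gadget either survives unchanged or can be rebuilt by local codegree arguments using the freedom left after cleaning. \emph{Seed.} Assemble the gadget, together with a short surrounding odd $2$-path, into a seed $2$-path $P_0$ whose two ends are split pairs and whose interior pair-sequence realises precisely the parity ``defect'' required so that the remainder of the cycle may be completed by a perfectly alternating odd pair-pattern. \emph{Completion.} Build an absorbing $2$-path inside $\Hodd$ on vertices disjoint from $V(P_0)$ whose pair-sequence alternates split and connate, then apply a long-cycle argument to cover almost all remaining vertices by a $2$-cycle through $P_0$ and the absorbing path, and finally absorb the leftover pairs. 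Each extension step is driven by the minimum codegree together with the near-completeness of $\Hodd$, which jointly guarantee that odd $2$-paths of any prescribed alternating pair-pattern can be extended or spliced through almost any triple of vertices.

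The main obstacle is the arithmetic bookkeeping in the \emph{Seed} step: one must check, in each of the eight residue classes $(m,d)$, that the desired numbers of $AA$-pairs, $BB$-pairs and split pairs can simultaneously match $|A'|$ and $|B'|$ while maintaining the alternating pattern everywhere except at the $O(1)$ positions occupied by the gadget. This is the inverse of the counting performed in Proposition~\ref{forwards}: the precise role of each clause~(vi)--(viii) of odd-good is to contribute exactly the parity defect needed in its corresponding residue class, and it is here that the full case analysis must be executed. Once this bookkeeping is in place, the splicing of odd $2$-paths to realise any prescribed pair-type sequence, and the subsequent absorption of remainders, is a routine consequence of $\delta(H)\geq n/2-\eps n$ and the sparsity of $\Heven$.
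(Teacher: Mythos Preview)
Your cleaning, gadget and seed steps are broadly in line with the paper: one moves a small set of atypical vertices to obtain a bipartition in which every vertex behaves well, then uses the specific even-edge configuration supplied by odd-goodness to build a short path $P_0$ whose ends are good split pairs and whose internal pair-types correct the residue-class defect. The case analysis you flag as the ``main obstacle'' is indeed a genuine part of the proof (the paper carries it out as Cases~A--D), so that emphasis is well placed.

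The real gap is in your Completion step. You propose to finish by building an absorbing path inside $\Hodd$, running a long-cycle argument, and then absorbing leftovers. This is the machinery of the non-extremal case, and it does not transfer here. Concretely, consider a connate pair $\{x,y\}\in\binom{A}{2}$. A type~1 absorbing structure needs $\{c_1,c_2,x,y\}$, $\{x,y,c_3,c_4\}$ and $\{c_1,c_2,c_3,c_4\}$ all to be edges; for the first two to be odd we must have $\{c_1,c_2\}$ and $\{c_3,c_4\}$ split, but then $\{c_1,c_2,c_3,c_4\}$ has exactly two vertices in each part and is therefore an \emph{even} $4$-set --- and in the odd-extremal case there are at most $c\binom{n}{4}$ such edges. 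The same parity trap arises for split pairs and for type~2 structures: somewhere in the absorbing gadget a pair of equal-type pairs must be joined by an edge, and that edge is forced to be even. So ``absorb the leftover pairs'' is precisely the step that fails, and your claim that it is a routine consequence of the density of $\Hodd$ is where the argument breaks.

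The paper does not use absorption at all in this regime. Instead, once $P_0$ is in place and all remaining vertices are $\gamma$-good, it proves a bespoke Hamilton-path connecting lemma (Lemma~\ref{lem:oddhampath}): build a ``grid'' $L$ of $3m+3$ vertices from each side together with two auxiliary bipartite graphs, so that $L$ can swallow any balanced set of $2m$ palatable vertices; then thread two paths $Q_1,Q_2$ from the ends of $P_0$ into $L$, covering every non-palatable vertex and leaving exactly $m$ vertices on each side; finally a perfect matching in the auxiliary graphs lets $L$ swallow the remainder. This grid construction is the substantive technical content of the odd-extremal case, and it is not something one can wave away as routine. If you want to salvage an absorption-style argument you would need to design new absorbing structures that live entirely inside $\Hodd$ (hence of length at least four, with pair-types alternating), and then redo the random-selection and connecting arguments in that setting; this is possible in principle but is a separate piece of work, not a corollary of the non-extremal lemmas.
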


Combining Lemmas~\ref{thm:gencase},~\ref{thm:evenextr} and~\ref{thm:oddextremal} completes the proof of Theorem~\ref{character} by establishing the backwards implication.

\begin{proof}[Proof of Theorem~\ref{character}]
Fix a constant $c$ small enough for Lemmas~\ref{thm:evenextr} and~\ref{thm:oddextremal}. Having done so, choose $\eps$ sufficiently small for us to apply Lemma~\ref{thm:gencase} with this choice of $c$, and $n_0$ sufficiently large that we may apply Lemmas~\ref{thm:gencase},~\ref{thm:evenextr} and~\ref{thm:oddextremal} with these choices of $c$ and~$\eps$ and any even $n \geq n_0$. Let $n \geq n_0$ be even, and let $H$ be a $4$-graph on $n$ vertices with $\delta(H) \geq (\tfrac{1}{2} - \eps) n$, and suppose that every bipartition $\{A, B\}$ of $V(H)$ is both even-good and odd-good. If $H$ is either $c$-even-extremal or $c$-odd-extremal then $H$ contains a Hamilton $2$-cycle by Lemma~\ref{thm:evenextr} or~\ref{thm:oddextremal} respectively. On the other hand, if $H$ is neither $c$-odd-extremal nor $c$-even-extremal then $H$ contains a Hamilton $2$-cycle by Lemma~\ref{thm:gencase}. This completes the proof of the backwards implication of Theorem~\ref{character}; the proof of the forwards implication was Proposition~\ref{forwards}.
\end{proof}

\section{Hamilton 2-cycles in 4-graphs: non-extremal case}\label{sec:gen}

In this section we give the proofs of Lemma~\ref{lem:absorbing} (the absorbing lemma) and Lemma~\ref{lem:longcycle} (the long cycle lemma). Throughout this section we only consider $2$-paths and $2$-cycles, so we suppress the $2$ and speak simply of paths, cycles and Hamilton cycles.

\subsection{A connecting lemma}

We begin with a `connecting lemma'. This states that if $H$ is a $4$-graph whose minimum codegree is close to the Dirac threshold, then either $H$ is even-extremal, or $H$ is well-connected in the sense that any two disjoint pairs of vertices of $H$ are connected by many short paths. This property is encapsulated in the following definition.

\begin{definition}
For $\kappa > 0$, we say that a $4$-graph $H$ of order $n$ is \emph{$\kappa$-connecting} if for every two disjoint pairs $p_1,p_2\in\binom{V(H)}{2}$ there are either at least $\kappa n^2$ paths of length $2$ or at least $\kappa n^4$ paths of length $3$ whose ends are $p_1$ and $p_2$.
\end{definition}

In the proof of our connecting lemma we will make use of the following result of Goodman~\cite{Go59}, that any two-colouring of a complete graph has many monochromatic triangles.

\begin{thm}[\cite{Go59}]\label{thm:Goodman}
Suppose that $1/n\ll\eps$. If $G$ is a graph on $n$ vertices, then
$$|\{S \in \tbinom{V(G)}{3} : G[S]\cong K_3\textrm{ or }G[S]\cong\overline{K_3}\;\}|\geq \tfrac{1-\eps}{24}n^3\;.$$
\end{thm}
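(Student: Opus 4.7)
}
The plan is to use the standard double-counting argument due to Goodman. Consider the edges of $G$ as ``red'' and the edges of the complement $\overline{G}$ as ``blue''; the triples counted in the theorem are precisely the monochromatic triples, and I want to lower-bound them by upper-bounding the number of non-monochromatic triples (those inducing exactly one red or exactly two red edges).

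First I would observe that in any non-monochromatic triangle on vertices $\{u,v,w\}$, exactly two of the three vertices are incident within the triangle to one red edge and one blue edge. Call such an ordered incidence a \emph{bichromatic cherry} at a vertex. Hence
\begin{equation*}
2 \cdot |\{\text{non-monochromatic triples}\}| = \sum_{v \in V(G)} d_G(v)\bigl(n-1-d_G(v)\bigr),
\end{equation*}
since the right-hand side counts, at each vertex $v$, the number of pairs $\{u,w\}$ with $uv$ red and $vw$ blue.

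Next I would bound each term $d_G(v)(n-1-d_G(v))$ by its maximum $\lfloor (n-1)^2/4 \rfloor \leq (n-1)^2/4$, using AM--GM (since the two factors sum to $n-1$). This gives
\begin{equation*}
|\{\text{non-monochromatic triples}\}| \leq \tfrac{n(n-1)^2}{8}.
\end{equation*}
Subtracting from $\binom{n}{3} = n(n-1)(n-2)/6$, the number of monochromatic triples is at least
\begin{equation*}
\tfrac{n(n-1)(n-2)}{6} - \tfrac{n(n-1)^2}{8} = \tfrac{n(n-1)(n-5)}{24}.
\end{equation*}
For $1/n \ll \eps$ this is at least $(1-\eps)n^3/24$, as required.

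There is no real obstacle here: the argument is entirely elementary, and the only mild point is to verify that the lower-order terms in $n(n-1)(n-5)/24$ are absorbed by the factor $(1-\eps)$ once $n$ is sufficiently large relative to $1/\eps$.
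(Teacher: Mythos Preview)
Your proposal is correct; it is precisely Goodman's classical double-counting argument. The paper does not supply its own proof of this theorem but simply cites it from~\cite{Go59}, so there is nothing further to compare.
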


\begin{lemma}[Connecting lemma]\label{lem:con}
Suppose that $1/n\ll\eps\ll\kappa\ll c$ and that $H = (V, E)$ is a $4$-graph on $n$ vertices with $\delta(H) \geq n/2-\eps n$. If $H$ is not $\kappa$-connecting, then $H$ is $c$-even-extremal. Moreover, there exists an algorithm Procedure~\ref{proc:evenpartition}$(H)$ which returns a $c$-even-extremal bipartition $\{A, B\}$ of $V$ in time~$O(n^8)$.
\end{lemma}

\begin{proof}
Introduce further constants $\beta,\eta,\delta>0$ such that
\[\tfrac{1}{n}\ll\eps\ll\kappa\ll\beta\ll\eta\ll\delta\ll c\;.\]
Suppose that $H$ is not $\kappa$-connecting. Then we may fix two disjoint pairs $\{a_1,a_2\}$ and $\{b_1,b_2\}$ of vertices of $H$ such that there are fewer than $\kappa n^2$ paths of length $2$ whose ends are $\{a_1,a_2\}$ and $\{b_1,b_2\}$, and fewer than $\kappa n^4$ paths of length $3$ whose ends are $\{a_1,a_2\}$ and $\{b_1,b_2\}$. Observe that there are then at most $\beta n$ vertices $v \in V$ with  $|N(a_1,a_2,v)\cap N(b_1,b_2,v)|\geq \beta n$, as otherwise there are at least $\frac{1}{2} \beta^2 n^2>\kappa n^2$ paths of length $2$ connecting $\{a_1,a_2\}$ and $\{b_1,b_2\}$. Now we colour the edges of the complete graph $G$ on $V$ as follows. For distinct vertices $v, w \in V$ we say that
$$
\{v,w\}\textrm{ is }
\begin{cases}
\textrm{red, if }\{a_1,a_2,v,w\}\in E\textrm{ and }\{b_1,b_2,v,w\} \notin E,\\ 
\textrm{blue, if }\{a_1,a_2,v,w\}\notin E\textrm{ and }\{b_1,b_2,v,w\} \in E,\\ 
\textrm{uncoloured, otherwise.} 
\end{cases} 
$$
Because of our first observation we have for at least $(1-\beta)n-4\geq (1-2\beta)n$ vertices $v\in V$ that 
\begin{equation}
d_\red(v),d_\blue(v)\geq \delta(H)-\beta n\geq (\tfrac{1}{2}-2\beta)n\;.\label{eq:coldeg}
\end{equation}
Furthermore, for at least $\frac{1}{2}(n-2)\delta(H) \geq(\frac{1}{2}-2\eps)\binom{n}{2}$ pairs $\{v,w\}\subseteq V$ we have $\{a_1,a_2,v,w\}\in E$, and likewise for at least $(\frac{1}{2}-2\eps)\binom{n}{2}$ pairs $\{v,w\}\subseteq V$ we have $\{b_1,b_2,v,w\}\in E$. Since by assumption at most $\kappa n^2$ pairs $\{x,y\}\subseteq V$ fulfil both $\{a_1,a_2,x,y\}\in E$ and $\{b_1,b_2,x,y\}\in E$, it follows that there are at most 
\begin{equation}
2\kappa n^2 +4\eps\binom{n}{2}<5 \kappa\binom{n}{2} \label{eq:uncol}
\end{equation}
uncoloured edges. Therefore the number of red edges and the number of blue edges are each at least
\begin{equation}
(\tfrac{1}{2}-2\eps)\binom{n}{2}-5\kappa\binom{n}{2}\geq(\tfrac{1}{2}-6\kappa)\binom{n}{2}\;.\label{eq:redblue}
\end{equation}

Now we show that $G$ either contains at least $n^3/50$ red triangles or at least $n^3/50$ blue triangles. To do this, suppose that there are fewer than $n^3/50$ red triangles. Then by Theorem~\ref{thm:Goodman} there are at least $(1-\eps)\frac{n^3}{24}-\frac{n^3}{50} \geq \frac{n^3}{50} + 3 \kappa n^3$ triangles which consist only of blue and uncoloured edges. Since by~\eqref{eq:uncol} there are at most $5 \kappa\binom{n}{2}\cdot n \leq 3\kappa n^3$ triangles in $G$ which contain an uncoloured edge, there are then at least $n^3/50$ blue triangles in $G$. So we may assume without loss of generality that $G$ contains at least $n^3/50$ red triangles. At least $n^3/50 -2\beta n^3 \geq n^3/100$ of these red triangles contain only vertices which fulfil~\eqref{eq:coldeg}; we denote the set of such triangles by $\T$. Furthermore for a triangle $T\in \T$ with vertex set $\{v_1, v_2, v_3\}$ we denote by $N_\red(T)$ (respectively $N_\blue(T)$) the set of vertices $x \in V$ such that each of the edges $\{v_1, x\}$, $\{v_2, x\}$ and $\{v_3, x\}$ is red (respectively blue). We call an edge $\{x,y,z,w\}$ of~$H$ {\it colourful} if it contains disjoint red and blue edges of $G$, say $\{x, y\}$ and $\{z, w\}$ respectively. Observe that each colourful edge of $H$ disjoint from $\{a_1,a_2,b_1,b_2\}$ creates at least one path $(a_1,a_2,x,y,z,w,b_1,b_2)$ of length $3$ in $H$ with ends $\{a_1, a_2\}$ and $\{b_1, b_2\}$. Since there are at most $\kappa n^4$ such paths, but no two distinct colourful edges of $H$ can create the same path, there must be at most 
\begin{equation}
\kappa n^4+4n^3<2\kappa n^4\label{eq:coledges}
\end{equation}
colourful edges of $H$. This observation will prove the following claim.

\begin{claim}\label{clm:triangle}
There exist vertices $v_1,v_2,v_3\in V$ such that $T^*=\{v_1,v_2,v_3\}$ is a triangle with $T^*\in \T$ and such that the following properties hold.
\begin{enumerate}[label=(\roman*)]
\item $|N_\red(T^*)|\geq (\frac{1}{2}-\eta)n$, \label{clm:redneigh}
\item there are at most $\eta n$ vertices $x\in N_\red(T^*)$ with $|N_H(v_1,v_2,x)\cap N_\blue(T^*)|\geq\eta n$, \label{clm:blueinH}
\item there are at most $\eta n$ vertices $x\in N_\blue(T^*)$ with $|N_H(v_1,v_2,x)\cap N_\red(T^*)|\geq\eta n$, \label{clm:redinH}
\item there are at most $\eta n$ vertices $x\in V$ with $|N_H(v_1,v_2,x)\cap N_\blue(x)|\geq\eta n$. \label{clm:blueedgesinH}
\end{enumerate}
\end{claim}

\begin{proof}[Proof of Claim~\ref{clm:triangle}]
Arbitrarily fix, for each $T \in \T$, a labelling of the vertices of $T$ as $v_1, v_2$ and $v_3$. For this labelling we refer to the pair $\{v_1, v_2\}$ as the \emph{specified pair} of $T$. Now let $\T_1$ consist of those triangles $T \in \T$ which do not satisfy~(i). Likewise, let $\T_2, \T_3$ and $\T_4$ consist of those triangles $T \in \T$ which do not satisfy~(ii),~(iii) and~(iv) respectively (for our choice of specified pair). Let $T \in \T_1$, so by definition we have $|N_\red(T)| < (\frac{1}{2}-\eta)n$. Since $T \in \T$, each vertex of $T$ satisfies~\eqref{eq:coldeg} and so is incident to at most $4\beta n$ uncoloured edges of $G$, so there are at least $|N_H(V(T))|-|N_\red(T)|-3\cdot 4\beta n\geq(\eta-\eps-12\beta)n > \frac{1}{2}\eta n$ colourful edges of $H$ containing $V(T)$. So in total $H$ has at least $\frac{1}{4} \cdot |\T_1| \cdot \frac{1}{2}\eta n$ colourful edges; by \eqref{eq:coledges} we find that $|\T_1| < \frac{16\kappa}{\eta} n^3 < \beta n^3$. Next observe that at least $\tfrac{1}{n}|\T_2|$ pairs $\{v_1,v_2\} \in \binom{V}{2}$ are the specified pair for some triangle $T \in \T_2$. For each such pair there are at least $\frac{1}{2}\eta^2 n^2$ pairs $\{x, y\} \in \binom{V}{2}$ with $x \in N_\red(T)$ and $y \in N_H(v_1,v_2,x)\cap N_\blue(T)$, and each $4$-tuple $\{v_1, v_2, x, y\}$ formed in this way is a colourful edge of $H$. Overall this gives at least $\frac{1}{6} \cdot \tfrac{1}{n}|\T_2| \cdot\frac{1}{2}\eta^2 n^2$ colourful edges in total, so $|\T_2| < \beta n^3$. Essentially the same argument shows that $|\T_3| < \beta n^3$. Finally, at least $\tfrac{1}{n}|\T_4|$ pairs $\{v_1,v_2\} \in \binom{V}{2}$ are the specified pair for some triangle $T \in \T_4$. For each such pair there are at least $\frac{1}{2}\eta^2 n^2$ pairs $\{x, y\} \in \binom{V}{2}$ with $y \in N_H(v_1,v_2,x)\cap N_\blue(x)$, and each $4$-tuple $\{v_1, v_2, x, y\}$ formed in this way is a colourful edge of $H$. In total this gives at least $\frac{1}{6} \cdot \tfrac{1}{n}|\T_4| \cdot \frac{1}{2}\eta^2 n^2$ colourful edges, so $|T_4| < \beta n^3$. So at least $|\T| - \sum_{i=1}^4 |\T_i| > |\T| - 4\beta n^3 > 0$ triangles $T\in \T$ satisfy~(i)-(iv).
\end{proof}

Fix a triangle $T^*\in\T$ with vertex set $\{v_1, v_2, v_3\}$ as in Claim~\ref{clm:triangle}, and define $d_{\mathrm{min}}:=(\frac{1}{2}-2\beta)n$, $N^*_{\mathrm{r}}:=N_\red(T^*)$ and $N^*_{\mathrm{b}}:=N_\blue(T^*)$. Since $T^* \in \T$ each $v_i$ fulfills~\eqref{eq:coldeg} and so is incident to at least $d_{\mathrm{min}}$ blue edges, none of which has an endvertex in $N^*_{\mathrm{r}}$, so we have
\begin{equation}
|N^*_{\mathrm{b}}|\geq |V \sm N^*_{\mathrm{r}}|-3(|V \sm N^*_{\mathrm{r}}|-d_{\mathrm{min}})=3d_{\mathrm{min}}-2|V\setminus N^*_{\mathrm{r}}|>(\tfrac{1}{2}-3\eta)n\;, \label{eq:blueneigh}
\end{equation}
where the final inequality is by Claim~\ref{clm:triangle}\ref{clm:redneigh}. Also by \eqref{eq:coldeg} we derive the upper bounds
\begin{equation}
|N^*_{\mathrm{r}}|\leq (\tfrac{1}{2}+2\beta)n\;\textrm{ and } |N^*_{\mathrm{b}}|\leq (\tfrac{1}{2}+2\beta)n\;.\label{eq:upboundneigh}
\end{equation}
Together,~\eqref{eq:blueneigh} and Claim~\ref{clm:triangle}\ref{clm:redneigh} tell us that at most $4\eta n$ vertices of $V$ are neither in $N^*_{\mathrm{r}}$ or $N^*_{\mathrm{b}}$, so by Claim~\ref{clm:triangle}\ref{clm:blueinH}, all but at most $\eta n$ vertices $x\in N^*_{\mathrm{r}}$ satisfy
\begin{equation}
|N_H(v_1,v_2,x)\cap N^*_{\mathrm{r}}| \geq \delta(H) - |N_H(v_1,v_2,x)\cap N^*_{\mathrm{b}}| - 4 \eta n \geq(\tfrac{1}{2}-6\eta)n\;.\label{eq:redNintHN} 
\end{equation}
By the same argument using Claim~\ref{clm:triangle}\ref{clm:redinH}, all but at most $\eta n$ vertices $x\in N^*_{\mathrm{b}}$ have
\begin{equation}
|N_H(v_1,v_2,x) \cap N^*_{\mathrm{b}}|\geq (\tfrac{1}{2}-6\eta)n\;.\label{eq:blueNintHN} 
\end{equation}
Define $A := N^*_{\mathrm{r}}$ and $B := V\sm A$; we conclude the proof by showing that $\{A, B\}$ is a $c$-even-extremal bipartition of $V$. For this, observe that by~\eqref{eq:upboundneigh},~\eqref{eq:redNintHN} and Claim~\ref{clm:triangle}\ref{clm:blueedgesinH} at most $2 \eta n$ vertices $x \in N^*_{\mathrm{r}}$ have $|N_{\textrm{blue}}(x) \cap N^*_{\mathrm{r}}| > 8 \eta n$. Similarly, by~\eqref{eq:upboundneigh},~\eqref{eq:blueNintHN} and Claim~\ref{clm:triangle}\ref{clm:blueedgesinH} at most $2 \eta n$ vertices $x \in N^*_{\mathrm{b}}$ have $|N_{\textrm{blue}}(x) \cap N^*_{\mathrm{b}}| > 8 \eta n$.
Since at most $5 \kappa \binom{n}{2}$ edges of $G$ are uncoloured (see~\eqref{eq:uncol}), it follows that at least 
$$\tfrac{1}{2}\cdot(|N^*_{\mathrm{r}}|-2\eta n)\cdot(|N^*_{\mathrm{r}}|-8\eta n)+\tfrac{1}{2}\cdot(|N^*_{\mathrm{b}}|-2\eta n)\cdot(|N^*_{\mathrm{b}}|-8\eta n)-5\kappa\binom{n}{2}\geq(\tfrac{1}{2}-\delta)\binom{n}{2}$$
edges of $G[A] \cup G[B]$ are coloured red. Since by Claim~\ref{clm:triangle}\ref{clm:redneigh} and~\eqref{eq:upboundneigh} we have
$$\binom{|A|}{2}+\binom{|B|}{2}\leq \binom{(\tfrac{1}{2}+\eta)n}{2}+\binom{(\tfrac{1}{2}-\eta)n}{2}\leq (\tfrac{1}{2}+\delta)\binom{n}{2},$$
we conclude that at most $2\delta\binom{n}{2}$ edges of $G[A] \cup G[B]$ are not red. Together with~\eqref{eq:redblue} this shows that at least $(\tfrac{1}{2}-6\kappa-2\delta)\binom{n}{2}\geq(\tfrac{1}{2}-3\delta)\binom{n}{2}$ of the at most $\frac{n^2}{4}\leq(\tfrac{1}{2}+\delta)\binom{n}{2}$ edges of~$G$ with one vertex in $A$ and the other in $B$ are blue, and so at most $4\delta \binom{n}{2} $ such edges are not blue. Combining this and~\eqref{eq:coledges}, we find that the number of edges of $H$ with an odd number of vertices in $A$ is at most
\[2\delta\binom{n}{2}\cdot(\tfrac{1}{2}+\delta)\binom{n}{2} + 4\delta\binom{n}{2}\cdot(\tfrac{1}{2}+\delta)\binom{n}{2}+ 2\kappa n^4 < c\binom{n}{4}\;.\]
Since $(\tfrac{1}{2}-c)n<|A|<(\tfrac{1}{2}+c)n$ by Claim~\ref{clm:triangle}\ref{clm:redneigh} and~\eqref{eq:upboundneigh}, it follows that the bipartition $\{A, B\}$ is $c$-even-extremal. To complete the proof we note that each step of the proof directly translates to an algorithm which returns the desired bipartition in the claimed running-time.
\end{proof}

\subsection{A reservoir lemma}

Another lemma which we use to prove our `long cycle lemma' is a `reservoir lemma'. This states that given a $4$-graph $H$ and a graph $G$ on the same vertex set $V$, we can choose a `reservoir set', that is, a small subset $R \subseteq V$ such that $H[R]$ and $G[R]$ are representative of $H$ and $G$. A standard and straightforward probabilistic argument shows that such a subset must exist, but derandomising this argument to give an algorithm which finds such a subset is somewhat more technical. For this we adapt the approach of Karpi\'nski, Ruci\'nski and Szyma\'nska~\cite{KaRuSz10} to our setting. We first describe an algorithm, Procedure~\ref{proc:SelSet}, which is similar to Procedure~SelectSubset from~\cite{KaRuSz10}, but chooses a subset which is representative of two graphs (instead of just one graph) simultaneously. We will use this procedure in this subsection to find a `reservoir set', and also in the next subsection to find an absorbing path. The conditions on the two graphs from which we want to choose the subset are described by the following setup.

\begin{setup} \label{setup:derand}
Fix constants $\beta, \lambda, \tau >0$ and integers $m, M, N$ and $r$ with $1 \leq r \leq N$. Let $U$ and $W$ be disjoint sets of sizes $|U| = M$ and $|W| = N$. Let $G_1$ be a graph with vertex set $U \cup W$ such that $G_1[U]$ is empty, $G_1[W]$ has precisely $m$ edges, and $|N_{G_1}(u)|\geq \beta N$ for every $u\in U$. Also let $G_2$ be a graph with vertex set $W$ such that $|N_{G_2}(w)|\geq (1-\lambda)N$ for every $w\in W$. Finally, define $\nu := 2mr/N^2$.
\end{setup} 

Note that specifying the sextuple $(G_1,G_2,r,\beta,\lambda,\tau)$ determines all of the information given in Setup~\ref{setup:derand}.
\begin{procedure}
\SetAlgoVlined
\SetAlgoNoEnd
\caption{SelectSet($G_1,G_2,r,\beta,\lambda,\tau$)}
\KwData{ A sextuple $(G_1,G_2,r,\beta,\lambda,\tau)$ as in Setup~\ref{setup:derand}.}
\KwResult{A `reservoir set' $R \subseteq W$.}
\BlankLine
Set $U:=\{u_1,\cdots,u_M\}$, $W:=\{w_1,\cdots,w_N\}$ and $R':=\emptyset$.\\
\For{$k=1$ to $r$}
   {
   \For{$j=1$ to $N$}
      {
      Set $R'_j := R' \cup \{w_j\}$. \\
      Set $e_{j} :=e(G_1[R'_j])$, $e_{j}' :=e(G_1[R'_j, W\sm R'_j])$ and $e_{j}'' := e(G_1[W\sm R'_j])$. \\
      \For{$i=1$ to $M$}
         {
         Set $d_{i,j}:= |N_{G_1}(u_i) \cap R'_j|$ and $d_{i,j}' := |N_{G_1}(u_i) \sm R'_j|$.
         }
      \For{$i=1$ to $N$}
         {
         Set $f_{i,j} :=|N_{G_2}(w_i) \cap R'_j|$ and $f'_{i, j}:=|N_{G_2}(w_i) \sm R'_j|$.\\
         }
      }
   Find $j_k \in [N] \sm \{j_1, \dots, j_{k-1}\}$ such that $A+B+C < 1$, where  
   \begin{align*}
   A:=& \sum_{i=1}^M\sum_{s \leq(\beta-\tau)r-d_{i,j_k}}\frac{\binom{d_{i,j_k}'}{s}\binom{N-k-d_{i,j_k}'}{r-k-s}}{\binom{N-k}{r-k}}, \\
   B:=& \frac{1}{\nu r}\left(e_{j_k} +e_{j_k}' \frac{r-k}{N-k}+e_{j_k}'' \frac{(r-k)(r-k-1)}{(N-k)(N-k-1)}\right),\\ 
   C:=&\sum_{i = 1}^N\sum_{s\leq (1-2\lambda)r-f_{i,j_k}}\frac{\binom{f'_{i, j_k}}{s}\binom{N-k-f'_{i,j_k}}{r-k-s}}{\binom{N-k}{r-k}}. 
   \end{align*}
   Add $w_{j_k}$ to $R'$.
   }
Remove one vertex from each edge of $G_1[R']$ and call the resulting set $R$.\\
\Return{$R$.}
\label{proc:SelSet}
\end{procedure}
Whilst Procedure~\ref{proc:SelSet} is entirely deterministic, to analyse it we consider a set of~$r$ vertices chosen uniformly at random and apply the following Chernoff-type bounds for binomial and hypergeometric distributions.

\begin{thm} [\cite{JaLuRu11RG}, Corollary 2.3 and Theorem 2.10]\label{chernoff}
Suppose $X$ has binomial or hypergeometric distribution and $0<a<3/2$. Then
$\Prob(|X - \Expect(X)| \ge a \Expect(X)) \le 2
\exp({-\frac{a^2}{3}\Expect(X)})$.
\end{thm}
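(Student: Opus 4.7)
The plan is to give the standard Chernoff argument, treating the binomial and hypergeometric cases together via Hoeffding's monotone-moments inequality. Since the statement is cited from Janson, \L{}uczak and Ruci\'nski, the proof is entirely classical, so I will only sketch its structure.

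First I would handle the binomial case. Write $X \sim \mathrm{Bin}(n, p)$ with $\mu := \Expect(X) = np$. For any $t > 0$, Markov's inequality applied to the moment generating function yields
$$\Prob\bigl(X \geq (1+a)\mu\bigr) \;\leq\; e^{-t(1+a)\mu}\,\Expect\bigl(e^{tX}\bigr) \;=\; e^{-t(1+a)\mu}\bigl(1 - p + pe^t\bigr)^n.$$
Using the elementary inequality $1 - p + pe^t \leq \exp\bigl(p(e^t - 1)\bigr)$ and optimising at $t = \ln(1+a)$ gives
$$\Prob\bigl(X \geq (1+a)\mu\bigr) \;\leq\; \exp\bigl(-\mu\bigl((1+a)\ln(1+a) - a\bigr)\bigr).$$
A direct calculus check shows $(1+a)\ln(1+a) - a \geq a^2/3$ on $(0, 3/2)$, which is precisely the inequality that forces the upper restriction on $a$ in the statement, and this produces the upper-tail bound $\exp(-a^2\mu/3)$. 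The lower tail $\Prob\bigl(X \leq (1-a)\mu\bigr)$ is handled by the symmetric argument, applying Markov to $e^{-tX}$ with $t > 0$ and expanding $1 - p + pe^{-t} \leq \exp(p(e^{-t} - 1))$; summing the two one-sided estimates is what accounts for the factor of $2$ in the statement.

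For the hypergeometric case I would appeal to Hoeffding's classical comparison lemma: if $X$ is hypergeometric with parameters $(N, K, n)$ and $Y \sim \mathrm{Bin}(n, K/N)$, then $\Expect\bigl(\phi(X)\bigr) \leq \Expect\bigl(\phi(Y)\bigr)$ for every convex $\phi$. Specialising to $\phi(x) = e^{tx}$ for arbitrary real $t$, the moment-generating-function bound used for the binomial case dominates the hypergeometric one pointwise in $t$, so the Markov step above goes through verbatim and the same tail inequality follows.

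The main obstacle is the calculus inequality $(1+a)\ln(1+a) - a \geq a^2/3$ on the interval $(0, 3/2)$; one verifies this by setting $f(a) := (1+a)\ln(1+a) - a - a^2/3$ and checking that $f(0) = f'(0) = 0$ while $f''(a) = \frac{1}{1+a} - \frac{2}{3}$ is positive on $(0,1/2)$ and negative thereafter, together with a direct check that $f(3/2) > 0$. Once this inequality is in hand everything else is routine, and the only genuine conceptual point is the reduction of hypergeometric tails to binomial ones through Hoeffding's convex-order comparison.
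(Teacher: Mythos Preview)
The paper does not prove this statement; it is quoted verbatim as a black-box tool from Janson, \L{}uczak and Ruci\'nski, with the citation to Corollary~2.3 and Theorem~2.10 of that book, and is used only to control hypergeometric tails in the reservoir and absorbing arguments. Your sketch is the standard derivation (and is essentially what appears in the cited source): the MGF/Markov bound for the binomial, the calculus inequality $(1+a)\ln(1+a)-a\ge a^2/3$ on $(0,3/2)$, and Hoeffding's convex-order comparison to transfer the bound to the hypergeometric distribution. It is correct; there is simply nothing in the paper to compare it against.
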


\begin{prop}\label{prop:selset}
Adopt Setup~\ref{setup:derand}. Assume additionally that $\beta>\tau$, that $r$ and $N$ are sufficiently large, and that $M \leq \tfrac{1}{8} \cdot \exp(\tfrac{\tau^2r}{3\beta})$ as well as $N \leq \tfrac{1}{8} \cdot \exp(\tfrac{\lambda r}{3})$. Then in time $O(N^4+MN^3)$ Procedure~\ref{proc:SelSet}$(G_1,G_2,r,\beta,\lambda,\tau)$ returns a set $R\subseteq W$ such that
\begin{enumerate}[label=(\alph*)]
\item $(1-\nu)r\leq |R|\leq r$,
\item $R$ is an independent set in $G_1$,
\item $|N_{G_1}(u)\cap R|\geq(\beta-\tau-\nu)r$ for all $u\in U$ and
\item $|N_{G_2}(w)\cap R|\geq (1-2\lambda-\nu)r$ for all $w\in W$.
\end{enumerate}
\end{prop}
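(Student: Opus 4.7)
The plan is to derandomise, via the method of conditional probabilities, the following natural randomised scheme: pick $R^*$ uniformly from the $r$-subsets of $W$ and avoid the three ``bad events'' (i)~some $u\in U$ has $|N_{G_1}(u)\cap R^*|\le(\beta-\tau)r$, (ii)~$e(G_1[R^*])\ge\nu r$, and (iii)~some $w\in W$ has $|N_{G_2}(w)\cap R^*|\le(1-2\lambda)r$. Once an $R^*$ avoiding (i)--(iii) has been selected, removing one endvertex from each edge of $G_1[R^*]$ deletes fewer than $\nu r$ vertices and produces $R$ satisfying (a)--(d) with the stated slack: the neighbourhood counts in (c) and (d) each decrease by at most $\nu r$.

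I would interpret the quantities $A$, $B$, $C$ computed at iteration $k$ as conditional pessimistic estimators given the event $\mathcal{F}_k:=\{w_{j_1},\ldots,w_{j_k}\}\subseteq R^*$. Concretely, $A$ is the conditional expected number of $u\in U$ witnessing~(i), $C$ the conditional expected number of $w\in W$ witnessing~(iii) (both upper bounds on the corresponding probabilities), and $B=\Expect[e(G_1[R^*])\mid\mathcal{F}_k]/(\nu r)$, which bounds the probability of~(ii) by Markov. The explicit formula for $B$ follows by splitting the edges of $G_1$ according to whether they have $0$, $1$ or $2$ endpoints in $R'_j$ and applying the inclusion probabilities $(r-k)/(N-k)$ and $(r-k)(r-k-1)/\bigl((N-k)(N-k-1)\bigr)$; the formulas for $A$ and $C$ arise directly as sums of hypergeometric probabilities. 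Before any vertex is selected, Theorem~\ref{chernoff} applied to these hypergeometric distributions, together with the hypotheses on $M$ and $N$, yields $A_0,C_0\le 1/4$; the direct computation $\Expect[e(G_1[R^*])]=m\binom{r}{2}/\binom{N}{2}\le mr^2/N^2=\nu r/2$ gives $B_0\le 1/2$. Hence $A_0+B_0+C_0<1$.

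The greedy step is justified by the averaging identity
$$\Expect[\,f\mid\mathcal{F}_{k-1}\,]=\frac{1}{N-k+1}\sum_{j\in[N]\sm\{j_1,\ldots,j_{k-1}\}}\Expect[\,f\mid\mathcal{F}_{k-1},\,w_j\in R^*\,],$$
valid for any function $f$ of $R^*$ because $\Prob(w_j\in R^*\mid\mathcal{F}_{k-1})=(r-k+1)/(N-k+1)$ is constant in $j$. Applied to $f=A+B+C$ this guarantees the existence of some $j_k$ preserving the invariant $A+B+C<1$, and this is precisely the $j_k$ the procedure locates by trying each candidate in turn. After all $r$ iterations, $R'=\{w_{j_1},\ldots,w_{j_r}\}$ is completely determined, so $A$ and $C$ become non-negative integer counts; the condition $A+B+C<1$ then forces $A=C=0$ and $e(G_1[R'])<\nu r$. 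Pruning $R'$ by deleting one endvertex from each edge of $G_1[R']$ loses fewer than $\nu r$ vertices, yielding $R$ satisfying (a)--(d). The running time is dominated by the $r=O(N)$ outer iterations, each of which tests $N$ candidates and for each one updates $O(M+N)$ hypergeometric probabilities that are $O(r)$-term sums, producing the claimed $O(N^4+MN^3)$ bound.

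The two technical points I expect to need care with are verifying that the explicit expression for $B$ really matches the conditional expectation of $e(G_1[R^*])/(\nu r)$ (the edge split by incidence with $R'_j$ described above), and threading the Chernoff bound of Theorem~\ref{chernoff} through the minimum-degree hypotheses carefully enough to obtain $A_0+C_0\le 1/2$. The latter is easiest for (iii) when applied to the complementary count $r-|N_{G_2}(w)\cap R^*|$, whose mean is at most $\lambda r$, since this produces a strong-deviation estimate of the form $\exp(-\lambda r/3)$ that matches the hypothesis on $N$ directly.
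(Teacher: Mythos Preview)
Your proposal is correct and follows essentially the same approach as the paper: derandomising a uniform random $r$-subset via the method of conditional expectations, with $A$, $B$, $C$ interpreted exactly as you describe, the initial bound $A_0+B_0+C_0<1$ obtained via Theorem~\ref{chernoff} and the direct edge-count computation, and the greedy step justified by the averaging identity you state. The paper's proof also explicitly verifies that the formula for $C$ in the procedure matches the conditional expectation (the corresponding verifications for $A$ and $B$ having appeared in~\cite{KaRuSz10}), and your remark about handling~(iii) via the complementary count $r-|N_{G_2}(w)\cap R^*|$ is precisely what is done.
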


\begin{proof}
For a set $S \in \binom{W}{r}$, let $X(S)$ denote the number of vertices $u\in U$ with $|N_{G_1}(u)\cap S| \leq (\beta-\tau)r$, let $Y(S)$ denote the number of edges in $G_1[S]$ and let $Z(S)$ denote the number of vertices $w\in W$ with $|N_{G_2}(w)\cap S| \leq(1-2\lambda)r$. Now choose uniformly at random a subset $S \in \binom{W}{r}$, and let $X, Y$ and $Z$ denote the random variables $X(S), Y(S)$ and $Z(S)$ respectively.
For each $u \in U$ the random variable $|N_{G_1}(u)\cap S|$ has a hypergeometric distribution with expectation at least $\beta r$; by Theorem~\ref{chernoff} and our assumption that $M \leq \tfrac{1}{8} \exp(\tfrac{\tau^2 r}{3\beta})$ it follows that the probability of the event $|N_{G_1}(u)\cap S| \leq (\beta-\tau)r$ is at most $2 \exp({-\frac{(\tau/\beta)^2}{3}\beta r}) \leq \tfrac{1}{4M}$. So by linearity of expectation we have $\Expect(X) < \tfrac{1}{4}$. Likewise, for each $w \in W$ the random variable $|S \sm N_{G_2}(w)|$ has a hypergeometric distribution with expectation at most $\lambda r$, so a similar calculation using our assumption that $N \leq \tfrac{1}{8} \exp(\tfrac{\lambda r}{3})$ shows that $\Expect(Z) < \tfrac{1}{4}$. By linearity of expectation we also have $\Expect(Y) = m\cdot \frac{r(r-1)}{N(N-1)} \leq m(\frac{r}{N})^2$, so in particular we have $\tfrac{1}{\nu r}\Expect(Y) \leq \tfrac{1}{2}$. Thus we have in total that 
\begin{equation}
\Expect(X)+\tfrac{1}{\nu r}\Expect(Y)+\Expect(Z)<1\;.\label{eq:sumofexp}
\end{equation}
Now suppose that for some $1 \leq k \leq r$ we have already chosen vertices $w_{j_1},\cdots,w_{j_{k-1}}\in W$ such that 
$$\Expect(X|j_1,\cdots,j_{k-1})+
\tfrac{1}{\nu r}\Expect(Y|j_1,\cdots,j_{k-1})+
\Expect(Z|j_1,\cdots,j_{k-1})<1\;,$$
where we identify $j_i$ with the event that $w_{j_i} \in S$. Note that the base case $k=1$ is guaranteed by \eqref{eq:sumofexp}. Then, by the law of total probability, it is possible to choose $w_{j_k} \in W \sm \{w_{j_1}, \dots, w_{j_{k-1}}\}$ such that
$$\Expect(X|j_1,\cdots,j_{k})+\tfrac{1}{\nu r}\Expect(Y|j_1,\cdots,j_{k})+\Expect(Z|j_1,\cdots,j_{k})<1\;.$$
Having chosen $w_{j_1}, \dots, w_{j_r}$ in this way, define $R' := \{w_{j_1}, \dots, w_{j_r}\}$. Then
$$X(R')+\frac{1}{\nu r}Y(R')+Z(R')=
\Expect(X|j_1, \dots, j_r)+\frac{1}{\nu r}\Expect(Y|j_1, \dots, j_r)+\Expect(Z|j_1, \dots, j_r) < 1.$$ 
So $X(R') = Z(R') = 0$, as $X(R')$ and $Z(R')$ must have non-negative integer values. Also $Y(R') < \nu r$, meaning that $G_1[R']$ has at most $\nu r$ edges. So if we form $R$ from $R'$ by removing one vertex from each of these edges, then $R$ has the properties~(a)-(d). 

It therefore suffices to show that the choices of $j_1, \dots, j_k$ in Procedure~\ref{proc:SelSet} are identical to the choices of $j_1, \dots, j_k$ in the above argument, so the resulting sets $R'$ are identical. That is, we must show that for each $1 \leq k \leq r$ we have $A = \Expect(X|j_1,\cdots,j_{k})$, $B = \tfrac{1}{\nu r}\Expect(Y|j_1,\cdots,j_{k})$ and $C = \Expect(Z|j_1,\cdots,j_{k})$, where $A$, $B$ and $C$ are the quantities given in Procedure~\ref{proc:SelSet}. The first two of these equalities were established in~\cite{KaRuSz10} (and are straightforward to verify). For the third define $f_{i, j}$ and $f'_{i, j}$ as in Procedure~\ref{proc:SelSet}. Then we have
\begin{align*}
\Expect(Z|j_1,\cdots,j_{k})&=\sum_{w \in W} \Prob\left(|N_{G_2}(w) \cap S|\leq (1-2\lambda)r \mid j_1,\cdots,j_k\right)\\
&=\sum_{i = 1}^N\sum_{s\leq (1-2\lambda)r-f_{i,j_k}}\Prob\left(|N_{G_2}(w_i) \cap (S \sm \{j_1, \dots, j_k\})| = s \mid j_1,\cdots,j_k\right) \\
&=\sum_{i = 1}^N\sum_{s\leq (1-2\lambda)r-f_{i,j_k}}\frac{\binom{f'_{i, j_k}}{s}\binom{N-k-f'_{i,j_k}}{r-k-s}}{\binom{N-k}{r-k}} = C, 
\end{align*}
as required.
\end{proof}

A simple application of Procedure~\ref{proc:SelSet} gives an algorithm to find a reservoir set.

\begin{lemma}[Reservoir lemma]\label{lem:reservoir}
Suppose that $1/n \ll \rho \ll \lambda, \kappa$, that $H=(V,E)$ is a $4$-graph of order $n$ which is $\kappa$-connecting, and that $G$ is a $2$-graph on the same vertex set $V$ with $\delta(G)\geq n-\lambda n$. Then there exists a subset $R\subseteq V$ such that
\begin{enumerate} [label=(\alph*)]
\item $(1-4\rho)\rho n\leq|R|\leq\rho n$,
\item for every $x\in V$ we have $|N_G(x)\cap R|\geq (1-35\lambda)|R|$ and
\item for every disjoint $p_1,p_2\in\binom{V}{2}$ there are at least $\frac{\kappa}{5}|R|$ internally disjoint paths of length at most three in $H[R\cup p_1\cup p_2]$ with ends $p_1$ and $p_2$.
\end{enumerate}
Moreover, there exists an algorithm, Procedure~\ref{proc:selres}($H,G,\rho$), which returns such a subset $R \subseteq V$ in time $O(n^{16})$.
\end{lemma}

\begin{proof}
We first define the graphs on which we will use Procedure~\ref{proc:SelSet}. For this set $U :=\{\{p,p'\} : p, p' \in \binom{V}{2} \mbox{ and } p\cap p'=\emptyset\}$ and $W :=\binom{V}{4}$. We also set 
\begin{align*}
&E_1:=\{\{\{p,p'\},S\}:  \{p,p'\}\in U, S\in W\textrm{ and }H[S\cup p\cup p']\textrm{ contains a path with ends }p,\;p'\},\\
&E_1':=\{\{S,S'\}: S, S' \in W \mbox{ and } S\cap S'\neq\emptyset\},\mbox{ and }\\
&E_2:=\{\{S,S'\}: S, S' \in W \mbox{ and } \{u,v\}\in E(G)\textrm{ for all }u\in S,v\in S'\}, 
\end{align*}
and define graphs $G_1:=(U\cup W,E_1\cup E_1')$ and $G_2:=(W,E_2)$. We now define $M := |U| \leq 3\binom{n}{4}$ and $N := |W| = \binom{n}{4}$, $r := \frac{\rho}{4}n$, $\beta := \kappa$, $\lambda' := 17\lambda$ and $\tau := \rho$. We also define $m := |E_1'|$ and note that we then have $m \leq \frac{1}{2} \cdot\binom{n}{4} \cdot 4 \cdot\binom{n-1}{3}$, from which it follows that $$\nu := \frac{2mr}{N^2} = \frac{2 \cdot 2 \binom{n}{4} \binom{n-1}{3} \cdot \tfrac{\rho}{4}n}{\binom{n}{4}^2} = 4\rho.$$ 

Observe that $G_1[U]$ is empty and $G_1[W]$ has precisely $m$ edges. Furthermore, since $H$ is $\kappa$-connecting we have $d_{G_1}(\{p,p'\}) \geq \min \{\kappa n^4, \frac{1}{6} \cdot \kappa n^2 \cdot \binom{n-2}{2}\} \geq \kappa \binom{n}{4}$ for every $\{p,p'\}\in U$, or in other words, $|N_{G_1}(u)| \geq \beta N$ for all $u \in U$. Also, since $\delta(G)\geq(1-\lambda)n$ we have $d_{G_2}(S)\geq \binom{n}{4} - 4\binom{n-1}{3} - 4\cdot\lambda n\cdot\binom{n-5}{3} \geq(1-17\lambda)\binom{n}{4}$ for every $S\in W$, or in other words, $|N_{G_2}(w)| \geq (1-\lambda') N$ for all $w \in W$. So our chosen graphs and constants satisfy Setup~\ref{setup:derand} with $\lambda'$ in place of $\lambda$, and the conditions of Proposition~\ref{prop:selset} are satisfied (in particular our assumption that $\tfrac{1}{n} \ll \lambda, \kappa, \rho$ gives $M \leq \tfrac{1}{8} \cdot \exp(\tfrac{\tau^2r}{3\beta})$ and $N \leq \tfrac{1}{8} \cdot \exp(\tfrac{\lambda r}{3})$. We may therefore apply Procedure~\ref{proc:SelSet} to obtain $R':=$\ref{proc:SelSet}($G_1,G_2,r,\beta,\lambda',\tau) \subseteq W$ such that
\begin{enumerate}[label=(\alph*)]
\item $(1-4\rho)\frac{\rho}{4} n\leq |R'|\leq\frac{\rho}{4}n$, 
\item $R'$ is an independent set in $G_1$,
\item $|N_{G_1}(u)\cap R'|\geq(\kappa-\rho-4\rho)\frac{\rho}{4} n>\frac{\kappa \rho}{5} n$ for all $u\in U$ and 
\item $|N_{G_2}(w)\cap R'|\geq (1-34\lambda-4\rho)\frac{\rho}{4} n>(1-35\lambda)\frac{\rho}{4} n$ for all $w\in W$. 
\end{enumerate}
We now define $R:=\bigcup_{S\in R'}S$; it remains to show that $R$ has the desired properties. Indeed,~(b) implies that the members of $R'$ are pairwise-disjoint, so $|R| = 4|R'|$, and so~(a) immediately implies~(i). Now consider some $x \in V$, and arbitrarily choose a set $S \in W$ with $x \in S$. Then by~(d) there are at least $(1-35\lambda)\frac{\rho}{4} n$ sets $S' \in R'$ such that every $y \in S'$ is a neighbour of $x$; since the members of $R'$ are pairwise-disjoint it follows that $|N_G(x)\cap R| \geq (1-35\lambda)\rho n \geq (1-35\lambda)|R|$, establishing~(ii). Finally,~(c) implies that for every disjoint $p_1,p_2\in\binom{V}{2}$ there are at least $\frac{\kappa \rho}{5} n \geq\frac{\kappa}{5}|R|$ sets $S \in R'$ such that $H[S \cup p_1 \cup p_2]$ contains a path of length at most three with ends $p_1$ and $p_2$. Together with the fact that the members of $R'$ are pairwise-disjoint this gives~(iii). Furthermore, the dominant term of the running time is that of Procedure~\ref{proc:SelSet}, which runs in time $O(N^4 + MN^3) = O(n^{16})$.
\end{proof}

\subsection{Proof of the absorbing lemma (Lemma~\ref{lem:absorbing})}

We now turn to the proof of the absorbing lemma (Lemma~\ref{lem:absorbing}), for which we make the following general definition of an absorbing structure.

\begin{definition}\label{def:conabs}
Let $\alpha,\beta>0$ and let $H=(V,E)$ be a $4$-graph of order $n$. 
\begin{enumerate}[label=(\alph*)]
\item We say that an ordered octuple $O = (a_1,a_2,c_1,c_2,c_3,c_4,b_1,b_2)$ of distinct vertices of~$H$ is an \emph{absorbing structure} for a pair $p \in\binom{V}{2}$ if there are paths $P$ and $P'$ in $H$, both with ends $\{a_1, a_2\}$ and $\{b_1, b_2\}$, such that $V(P) = O$ and $V(P') = O \cup p$.
\item We say that a pair $p \in \binom{V}{2}$ is \emph{$\beta$-absorbable} if there are at least $\beta n^8$ absorbing structures for $p$ in $H$.
\item We say that $H$ is \emph{$(\alpha,\beta)$-absorbing} if at most $\alpha n^2$ pairs $p\in\binom{V}{2}$ are not \emph{$\beta$-absorbable}.
\end{enumerate}
\end{definition}

More specifically, we will work with the two types of absorbing structures given by the next definition. 

\begin{definition}
Let $H=(V,E)$ be a $4$-graph, and let $x$ and $y$ be distinct vertices of~$H$. We say that an ordered octuple $O = (a_1,a_2,c_1,c_2,c_3,c_4,b_1,b_2)$ of vertices of $H$, is an \emph{absorbing structure of type $1$ for the pair $\{x, y\}$} if
$$\{a_1,a_2,c_1,c_2\}, \{c_1,c_2,c_3,c_4\}, \{c_3,c_4,b_1,b_2\}, \{c_1,c_2,x,y\}, \{x,y,c_3,c_4\}\in E\;.$$
Similarly, we say that $O$ is an \emph{absorbing structure of type $2$ for $\{x,y\}$} if 
$$\{a_1,a_2,c_1,c_2\}, \{c_1,c_2,c_3,c_4\}, \{c_3,c_4,b_1,b_2\}, \{a_1,a_2,x ,y\}, \{x,y,c_1,c_4\}, \{c_2,c_3,b_1,b_2\}\in E\;.$$
\end{definition}

\begin{figure}
\begin{center}
\includegraphics[width=\textwidth]{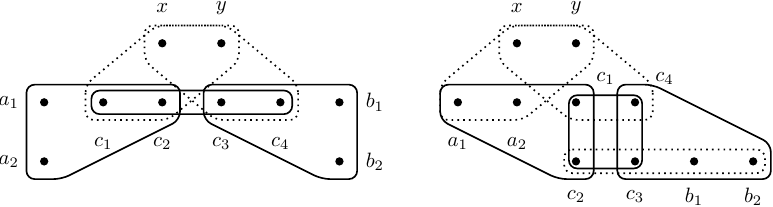}
\end{center}
\caption{Absorbing structures of type 1 (left) and type 2 (right) for the pair $p = \{x, y\}$. In each case the edges of the path not containing $p$ are marked by solid lines, whilst the paths including $p$ use the edges marked by dashed lines together with some of the edges marked by solid lines.}
\label{fig:absstr}
\end{figure}

The absorbing structures are depicted in Figure~\ref{fig:absstr}. Observe that if $O$ is an absorbing structure for $\{x, y\}$ of type $1$ or type $2$, then the sequence $(a_1,a_2,c_1,c_2,c_3,c_4,b_1,b_2)$ forms a path in $H$ with vertex set $O$ and ends $\{a_1, a_2\}$ and $\{b_1, b_2\}$. Moreover, if $O$ is an absorbing structure of type $1$ then $(a_1, a_2, c_1, c_2, x, y, c_3, c_4, b_1, b_2)$ is a path in $H$ with vertex set $O \cup \{x, y\}$ and the same ends, whilst if $O$ is an absorbing structure of type $2$ then $(a_1, a_2, x, y, c_1, c_4, c_2, c_3, b_1, b_2)$ is a path in $H$ with vertex set $O \cup \{x, y\}$ and the same ends. Therefore absorbing structures of type $1$ and $2$ are indeed absorbing structures according to Definition~\ref{def:conabs}. We now show that most pairs in non-odd-extremal $4$-graphs are contained in many absorbing structures.

\begin{lemma}\label{lem:absstruc}
Suppose that $1/n\ll\eps\ll \beta \ll\alpha\ll c$ and that $H=(V,E)$ is a $4$-graph of order $n$ with $\delta(H) \geq n/2 - \eps n$. If $H$ is not $(\alpha,\beta)$-absorbing, then $H$ is $c$-odd-extremal. Moreover, there exists an algorithm Procedure~\ref{proc:oddpartition}($H$) which returns a $c$-odd-extremal bipartition $\{A, B\}$ of $V$ in time $O(n^6)$.
\end{lemma}

\begin{proof}
We introduce constants $\omega,\omega',\vphi,\eps_2,\psi,\eps_1>0$ such that 
\[\tfrac{1}{n}\ll\eps\ll \beta\ll\omega\ll\omega'\ll\vphi \ll \eps_2\ll\psi\ll\eps_1 \ll\alpha\ll c\;.\]
Furthermore, for a pair $\{x,y\} \in \binom{V}{2}$ we define
\[T_{x,y}=\{\{e_1,e_2,e_3\}\subseteq E\;:\;e_1\cap e_2=\{x,y\} \mbox{ and } (e_1\cup e_2)\sm\{x,y\}=e_3\}\;.\]
We then can make the following observation.
\begin{claim}\label{clm:abs1}
If $|T_{x,y}| \geq \beta n^4$, then $\{x, y\}$ is $\beta$-absorbable. Moreover, there exists an edge $\{x,y,x',y'\}$ of $H$ such that $|T_{x,y}|<\beta n^4$ and $|N_H(x,y)\cap N_H(x',y')|<\frac{\omega}{2}\binom{n}{2}$.
\end{claim}

\begin{proof}
Suppose that $|T_{x,y}| \geq \beta n^4$ for some $\{x,y\} \in \binom{V}{2}$. Then there are at least~$8 \beta n^4$ ordered sextuples $S = (c_1,c_2,x,y,c_3,c_4)$ such that each of $\{c_1,c_2,x,y\}$, $\{x,y,c_3,c_4\}$ and $\{c_1,c_2,c_3,c_4\}$ is an edge of $H$. By the fact that $\delta(H)\geq(\tfrac{1}{2}-\eps)n$, for each such sextuple $S$ there are at least $(n-6)((\tfrac{1}{2}-\eps)n-4) \geq \tfrac{3n^2}{8}$ ways to choose an ordered pair $(a_1, a_2)$ of vertices of $V \sm S$ such that $\{a_1, a_2, c_1, c_2\} \in E(H)$, and then at least $(n-8)((\tfrac{1}{2}-\eps)n-6) \geq \tfrac{3n^2}{8}$ ways to choose an ordered pair $(b_1, b_2)$ of vertices of $V \sm (S \cup \{a_1, a_2\})$ such that $\{b_1, b_2, c_3, c_4\} \in E(H)$. Overall this gives at least $8\beta n^4 \cdot (\tfrac{3n^2}{8})^2 > \beta n^8$-many $10$-tuples $(a_1,a_2,c_1,c_2,x,y,c_3,c_4,b_1,b_2)$, each of which is an absorbing structure of type~$1$ for $\{x, y\}$. Hence $\{x, y\}$ is $\beta$-absorbable, proving the first statement of the claim.

Since by assumption $H$ is not $(\alpha,\beta)$-absorbing, we may choose a pair $\{x,y\} \in \binom{V}{2}$ which is not $\beta$-absorbable, so $|T_{x,y}| < \beta n^4$. Since $|N_H(x, y)| \geq (\frac{1}{2}-2\eps)\binom{n}{2}$, there must then exist a pair $\{x',y'\} \in N_H(x,y)$ such that $|N_H(x,y)\cap N_H(x',y')|<\frac{\omega}{2}\binom{n}{2}$, as otherwise we would have $|T_{x,y}|\geq \frac{1}{2} \cdot(\frac{1}{2}-2\eps)\binom{n}{2}\cdot\frac{\omega}{2}\binom{n}{2} > \beta n^4$. This gives the desired edge $\{x,y,x',y'\}$ of $H$.
\end{proof}

Fix an edge $e := \{x,y,x',y'\}$ of $H$ as in Claim~\ref{clm:abs1}. We now colour the edges of the complete $2$-graph $K$ on $V$ in the following way: for $\{a,b\}\in\binom{V}{2}$ we say that
$$\{a,b\} \mbox{ is }
\begin{cases} 
\red & \mathrm{if~} \{x,y,a,b\}\in E(H),\{x',y',a,b\}\notin E(H) \textrm{ and }a,b \notin e,\\ 
\blue & \mathrm{if~} \{x,y,a,b\}\notin E(H), \{x',y',a,b\}\in E(H)\textrm{ and }a,b \notin e,\\ 
\mathrm{uncoloured} & \mathrm{otherwise}.
\end{cases}$$
Note that if $\{a,b\}$ and $\{a',b'\}$ are disjoint red edges of $K$ such that $\{a,b,a',b'\}\in E$, then 
$\{\{x,y,a,b\},\{x,y,a',b'\},\{a,b,a',b'\}\}$ is an element of $T_{x,y}$. In such a case we call the edge $\{a,b,a',b'\}$ of $H$ a \emph{red hyperedge}. Also, we say that a vertex of $V$ is \emph{normal} if we have both $d_{\textrm{red}}(v) \geq (\tfrac{1}{2}-\omega')n$ and $d_{\textrm{blue}}(v) \geq (\tfrac{1}{2}-\omega')n$ in $K$. We observe that our colouring of $K$ has the following properties. 
\begin{enumerate}[label=(\Alph*)]
\item There are at most $|T_{x, y}| < \beta n^4$ red hyperedges of $H$,
\item at least $(\tfrac{1}{2}-\omega)\binom{n}{2}$ edges of $K$ are coloured red,
\item at least $(\tfrac{1}{2}-\omega)\binom{n}{2}$ edges of $K$ are coloured blue,
\item at most $2\omega\binom{n}{2}$ edges of $K$ are uncoloured, and
\item all but at most $\omega'n$ vertices are normal.
\end{enumerate}
Indeed,~(A) follows immediately from our definition of a red hyperedge and our choice of~$e$. For~(B) note that there are at least $\tfrac{1}{2}(n-2)\delta(H)$ pairs $\{a, b\}$ for which $\{x, y, a, b\}$ is an edge of $H$. By choice of $e$ at most $\frac{\omega}{2}\binom{n}{2}$ of these pairs have $\{x', y', a, b\} \in E(H)$, whilst at most $4n$ such pairs have $a \in e$ or $b \in e$, so at least  $\tfrac{1}{2}(n-2)\delta(H)-\frac{\omega}{2}\binom{n}{2}-4n\geq(\tfrac{1}{2}-\omega)\binom{n}{2}$ edges of $K$ are coloured red. Essentially the same argument proves~(C), and~(D) follows immediately from~(B) and~(C). Finally, observe that for any $v \in V\sm e$ we have $d_H(\{x,y,v\}),d_H(\{x',y',v\})\geq (\tfrac{1}{2}-\eps)n$, so for any $v \in V$ we have $d_{\red}(v),d_{\blue}(v)\leq (\tfrac{1}{2}+\eps)n$. So if there are more than $\frac{\omega'}{2}n$ vertices with $d_{\textrm{red}}(v) < (\tfrac{1}{2}-\omega')n$, then the total number of red edges is at most $\tfrac{1}{2}(\frac{\omega'}{2}n\cdot(\tfrac{1}{2}-\omega')n+(1-\frac{\omega'}{2})n\cdot(\tfrac{1}{2}+\eps)n)<(\tfrac{1}{2}-\omega)\binom{n}{2}$, contradicting~(B). A similar argument shows that there cannot be more than $\frac{\omega'}{2}n$ vertices with $d_{\textrm{blue}}(v) < (\tfrac{1}{2}-\omega')n$, establishing~(E). 

For any triangle $T$ in $K$ with vertex set $\{u, v, w\}$, we write $N_\red(T)$ (respectively $N_\blue(T)$) for the set of vertices $x \in V$ for which each of $ux, vx$ and $wx$ is a red (respectively blue) edge of $K$. We then have the following claim.

\begin{claim}\label{clm:abscase1}
If the number of red triangles in $K$ is at least $\vphi n^3$, then there exists a red triangle $T$ in $K$ with vertex set $\{u,v,w\}$ such that
\begin{enumerate}[label=(\alph*)]
\item $|N_{\blue}(T)|>(\tfrac{1}{2}-\vphi) n$ and $|N_{\red}(T)|>(\tfrac{1}{2}-\vphi) n$,
\item at most $\vphi n$ vertices $x\in N_{\red}(T)$ have $|N_H(v,w,x)\cap N_{\red}(T)| > \vphi n$, and
\item there are at most $9\vphi \binom{n}{2}$ red edges of $K$ between $N_{\red}(T)$ and $V\sm N_{\red}(T)$.
\end{enumerate}
\end{claim}

\begin{proof}
By~(E) there are at most $\omega'n\cdot n^2<\frac{\vphi}{3} n^3$ triangles in $K$ which contain a vertex which is not normal, and by~(A) there are at most $\frac{\vphi}{3} n^3$ triangles in $K$ which are contained in more than $\omega' n$ red hyperedges, as otherwise there would be at least $\frac{1}{4}\cdot\frac{\vphi}{3} n^3\cdot \omega' n> \beta n^4$ red hyperedges in total. Similarly, by~(A) all but at most $\omega' n^2$ edges of $K$ are contained in at most~$\omega' n^2$ red hyperedges, as otherwise there would be at least $\frac{1}{6}\cdot \omega' n^2\cdot \omega' n^2>\beta n^4$ red hyperedges in total. We call these edges of $K$ {\it normal}; observe that at most $\omega' n^2 \cdot n <\frac{\vphi}{3} n^3$ triangles in $K$ contain an edge which is not normal. So we can choose a red triangle $T$ in $K$ with vertex set $\{u,v,w\}$ which contains only normal vertices and normal edges and which is contained in at most~$\omega' n$ red hyperedges. Then $|N_H(T) \cap N_{\red}(x)| \leq \omega' n$ for each $x \in \{u,v,w\}$, as otherwise $\{u,v,w\}$ would be contained in more than $\omega' n$ red hyperedges. Also, since $u$, $v$ and $w$ are normal, for each $x \in \{u, v, w\}$ at most $2\omega' n$ vertices are in neither $N_{\red}(x)$ nor $N_{\blue}(x)$. It follows that for each $x \in \{u, v, w\}$ at most $3\omega' n$ of the vertices inside $N_H(T)$ are not in $N_{\blue}(x)$, so $|N_{\blue}(T)| \geq |N_H(T)| - 3 \cdot 3 \omega ' n \geq \delta(H) - 9 \omega' n > (\tfrac{1}{2}-\vphi) n$. Similarly, for each $x\in \{u,v,w\}$ we have 
$|(V \sm N_H(T)) \cap N_{\red}(x)| \geq d_{\red}(x) - \omega' n \geq (\tfrac{1}{2} - 2 \omega') n$; since $|V \sm N_H(T)| \leq n-\delta(H) \leq (\tfrac{1}{2}+\eps) n$ it follows that 
$|N_{\red}(T)| \geq (\tfrac{1}{2}+\eps) n - 3 \cdot (2\omega' + \eps) n \geq (\tfrac{1}{2}-\vphi) n,$ proving~(a).

Now observe that if~(b) does not hold, then there are at least $\frac{1}{2} (\vphi n)^2 >\omega' n^2$ pairs $\{x, y\}$ such that $x \in N_{\red}(T)$ and $y \in N_H(v,w,x) \cap N_{\red}(T)$. Each such pair yields a red hyperedge  $\{v, w, x, y\}$ containing $\{v, w\}$, contradicting the fact that $\{v,w\}$ is normal. So~(b) holds, and together with~(a) and inclusion-exclusion we find that all but at most $\vphi n$ vertices $x \in N_{\red}(T)$ have $|N_{\red}(T) \cup N_H(v, w, x)| \geq (\tfrac{1}{2}- \vphi) n + \delta(H) - \vphi n \geq n - 3\vphi n$. So there are at most $4\vphi n^2$ pairs $\{x, y\}$ with $x \in N_\red(T)$ and $y \in V \sm (N_{\red}(T) \cup N_H(v, w, x))$. On the other hand, any red edge $\{x, y\}$ of $K$ with $x\in N_{\red}(T)$ and $y \in N_H(v,w,x)$ yields a red hyperedge $\{v, w, x, y\}$ containing $\{v, w\}$, so there are at most $\omega' n^2$ such edges. It follows that $K$ has at most $4 \vphi n^2 + \omega' n^2 \leq 9 \vphi \binom{n}{2}$ red edges between $N_{\red}(T)$ and $V\sm N_{\red}(T)$, proving~(c).
\end{proof}

Suppose first that there exists a red triangle $\{u, v, w\}$ in $K$ with the properties given in Claim~\ref{clm:abscase1}, and define $A := N_{\red}(T)$ and $B := V \sm A$. Then by Claim~\ref{clm:abscase1}(a) we have $(\tfrac{1}{2} - \vphi)n \leq |A|, |B| \leq (\tfrac{1}{2} +\vphi)n$, so certainly we have $(\tfrac{1}{2} - c) n \leq |A|, |B| \leq (\tfrac{1}{2} + c)n$. 
Furthermore, by~(B) and Claim~\ref{clm:abscase1}(c) there are at least $(\tfrac{1}{2}-\omega)\binom{n}{2} - 9\vphi \binom{n}{2}>(\tfrac{1}{2}-10\vphi)\binom{n}{2}$ red edges in $K[A] \cup K[B]$. Together with the fact that there are 
$\binom{|A|}{2} + \binom{|B|}{2} \leq \tbinom{(1/2+\vphi)n}{2}+\tbinom{(1/2-\vphi)n}{2} \leq (\tfrac{1}{2} + \vphi) \binom{n}{2}$ edges in $K[A] \cup K[B]$, this implies that there are at most 
$11\vphi\binom{n}{2}$
edges in $K[A] \cup K[B]$ which are not red. So if there are more than $c\binom{n}{4}$ edges $e\in E$ such that $|e\cap A|$ is even, then there are at least $c\binom{n}{4}-11 \vphi \binom{n}{2} \cdot (\tfrac{1}{2} + \vphi)\binom{n}{2} > \beta n^4$ red hyperedges, contradicting~(A). We may therefore conclude that there are at most $c\binom{n}{4}$ edges $e\in E$ such that $|e\cap A|$ is even, whereupon $\{A, B\}$ is the desired $c$-odd-extremal bipartition of $V$.

Now assume that such a red triangle does not exist. We may then by Claim~\ref{clm:abscase1} assume that there are at most $\vphi n^3$ red triangles in $K$. So we may choose a normal vertex $v\in V$ which is contained in at most $4\vphi n^2$ red triangles, as otherwise $K$ would have at least $\tfrac{1}{3}(1-\omega')n\cdot 4\vphi n^2>\vphi n^3$ red triangles in total. There are then at most $4 \vphi n^2$ red edges inside $N_{\textrm{red}}(v)$, so by~(E) and the fact that $v$ is normal there are at least
\[\sum_{u\in N_{\textrm{red}}(v)}d_{\textrm{red}}(u)- 2\cdot 4\vphi n^2\geq (\tfrac{1}{2}-2\omega')n\cdot (\tfrac{1}{2}-\omega')n- 8\vphi n^2>(\tfrac{1}{2}- 17\vphi)\binom{n}{2}\]
red edges between $N_{\textrm{red}}(v)$ and $V\sm N_{\textrm{red}}(v)$, so at most $\frac{n^2}{4}-(\tfrac{1}{2}- 17\vphi)\binom{n}{2} \leq 18 \vphi \binom{n}{2}$ edges between $N_{\textrm{red}}(v)$ and $V\sm N_{\textrm{red}}(v)$ are not red. It follows that at most $\eps_2\binom{n}{4}$ edges $f \in E(H)$ have $|N_{\textrm{red}}(v)\cap f|=2$, as otherwise there would be at least $\eps_2\binom{n}{4} - 18\vphi \binom{n}{2}\cdot \frac{n^2}{4}>\beta n^4$ red hyperedges in total, contradicting~(A). So if $H[N_{\textrm{red}}(v)]$ and $H[V\sm N_{\textrm{red}}(v)]$ each contain at most $\eps_1 \binom{n}{4}$ edges of $H$, then the total number of edges $f \in E(H)$ for which $|f \cap N_{\textrm{red}}(v)|$ is even is at most $(2\eps_1+\eps_2)\binom{n}{4} \leq c \binom{n}{4}$, and then taking $A := N_{\textrm{red}}(v)$ and $B := V \sm A$ gives the desired $c$-odd-extremal bipartition $\{A, B\}$ of $V$, as $(\tfrac{1}{2}-c)n\leq|N_{\textrm{red}}(v)|\leq(\tfrac{1}{2}+c)n$ since $v$ is normal. 

This leaves only the cases in which either $H[N_{\textrm{red}}(v)]$ or $H[V\sm N_{\textrm{red}}(v)]$ contains more than $\eps_1\binom{n}{4}$ edges of $H$. If the former holds then we set $A := N_{\textrm{red}}(v)$ and $B := V\sm N_{\textrm{red}}(v)$, and otherwise we set $A := V\sm N_{\textrm{red}}(v)$ and $B := N_{\textrm{red}}(v)$; either way this results in a bipartition $\{A, B\}$ of $V$ such that
\begin{enumerate}[label=(\Alph*)]
\setcounter{enumi}{5}
\item $(\tfrac{1}{2}-\omega')n \leq |A|, |B| \leq (\tfrac{1}{2}+\omega')n$ (since $v$ is normal),
\item at least $\eps_1 \binom{n}{4}$ edges $f\in E(H)$ have $|f \cap A|=4$, and 
\item at most $\eps_2 \binom{n}{4}$ edges $f\in E(H)$ have $|f\cap A| = 2$.
\end{enumerate}
In the remaining part of the proof we show that these conditions imply that at most $\alpha\binom{n}{2}$ pairs of vertices are not $\beta$-absorbable. This contradicts our assumption that $H$ is not $(\alpha,\beta)$-absorbing, and so completes the proof. Recall that a pair is \emph{split} if it has one vertex in $A$ and one in $B$, and \emph{connate} otherwise. Additionally, we say that a pair $p \in \binom{V}{2}$ is \emph{good} if there are at least $(\frac{1}{2}-\psi) \binom{n}{2}$ pairs $p' \in \binom{V}{2}$ such that $p \cup p'$ is an odd edge of $H$ (so a split pair is good if it forms an edge with most connate pairs, and a connate pair is good if it forms an edge with most split pairs).
\begin{claim} \label{absclaim} 
At most $\psi \binom{n}{2}$ pairs in $\binom{V}{2}$ are not good.
\end{claim}

\begin{proof}
First consider a split pair $p \in \binom{V}{2}$ which is not good. Then there are at least $\frac{1}{2}(n-2)\delta(H) \geq (\frac{1}{2} - 2\eps) \binom{n}{2}$ pairs $p'$ for which $p \cup p'$ is an edge of $H$. Since $p$ is not good it follows that $p$ is contained in at least $(\psi - 2\eps)\binom{n}{2} \geq \frac{\psi}{2}\binom{n}{2}$ even edges of $H$, each of which must have precisely two vertices in $A$ (because $p$ is a split pair). So at most $\frac{\psi}{3}\binom{n}{2}$ split pairs in $\binom{V}{2}$ are not good, as otherwise in total there would be at least $\frac{1}{6} \cdot \frac{\psi}{3}\binom{n}{2} \cdot \frac{\psi}{2}\binom{n}{2} > \eps_2 \binom{n}{4}$ edges of~$H$ with precisely two vertices in $A$, contradicting~(H). 

Next observe that for all but at most $\frac{\psi}{3} \binom{n}{2}$ pairs $p \in\binom{B}{2}$ there are at most $\frac{\psi}{3}n$ vertices $w \in A$ such that $|N_H(p \cup \{w\}) \cap A| > \frac{\psi}{3}n$, as otherwise there would be at least $\tfrac{1}{12} \cdot \frac{\psi}{3} \binom{n}{2} \cdot \frac{\psi}{3}n \cdot \frac{\psi}{3}n > \eps_2 \binom{n}{4}$ edges of $H$ with precisely two vertices in $A$, contradicting~(H). For each such $p$ there are at least 
$(|A|-\frac{\psi}{3}n) (\delta(H) - \frac{\psi}{3}n) \geq (\frac{1}{2} - \omega' - \frac{\psi}{3})(\frac{1}{2} - \eps - \frac{\psi}{3})n^2 > (\tfrac{1}{2} - \psi) \binom{n}{2}$ 
split pairs $p'$ such that $p \cup p'$ is an edge of $H$; in other words, each such $p$ is good. The same argument with the roles of $A$ and $B$ reversed shows that all but at most $\frac{\psi}{3} \binom{n}{2}$ pairs $p \in\binom{A}{2}$ are good.
\end{proof}

We now show that any good pair is $\beta$-absorbable. First consider any good pair $\{x, y\}$ with $x \in A$ and $y \in B$. Since $\{x, y\}$ is good, at most $\binom{|A|}{2} + \binom{|B|}{2} - (\frac{1}{2} - \psi) \binom{n}{2} \leq 2 \psi \binom{n}{2}$ pairs $p' \in \binom{A}{2}$ are not in $N_H(\{x, y\})$, and so at most $2 \psi \binom{n}{2} \cdot \binom{n}{2}$ sets $S \in \binom{A}{4}$ contain such a pair $p'$. By~(G) it follows that at least $\eps_1\binom{n}{4} - \psi \binom{n}{2}^2 \geq \beta n^4$ edges $\{a, b, c, d\} \in H[A]$ are such that $\{x, y, a, b\}$ and $\{x, y, c, d\}$ are both edges of $H$. Each such edge yields an element of~$T_{x, y}$, so $|T_{x, y}| \geq \beta n^4$, and so by Claim~\ref{clm:abs1} $\{x,y\}$ is $\beta$-absorbable. 

Now consider any good pair $\{x, y\}$ with $x, y \in A$ or $x, y \in B$. Choose (not necessarily distinct) vertices $a_1, c_1, c_2, c_3, b_1, b_2 \in A$ and $a_2, c_4 \in B$ uniformly at random, and observe that with probability at least $1 - \frac{100}{n}$ these eight vertices are distinct from each other and from $x$ and $y$. Furthermore, since $\{x, y\}$ is good and $|A||B| \leq \frac{n^2}{4}$ we have that
$$\Prob(\{x, y, a_1, a_2\} \in E(H)) \geq \frac{1}{|A||B|} \cdot \left(\frac{1}{2} - \psi\right)\binom{n}{2} \geq 1 - 3 \psi,$$ 
and likewise the probability that $\{x, y, c_1, c_4\}$ is an edge of $H$ is at least $1-3\psi$. Also, 
\begin{align*} 
&\Prob(\{a_1, a_2, c_1, c_2\} \in E(H)) \\
\geq &\Prob(\mbox{$\{c_1, c_2\}$ is good})\cdot \Prob(\{a_1, a_2, c_1, c_2\} \in E(H) \mid \mbox{$\{c_1, c_2\}$ is good}) \\ 
\geq &\frac{2}{|A|^2} \left(\binom{|A|}{2} - \psi\binom{n}{2}\right) \cdot  \frac{1}{|A||B|} \left(\frac{1}{2} - \psi\right)\binom{n}{2} \geq 1 - 8\psi,
\end{align*}
where we use~(F) for the final inequality. Exactly the same calculation shows that the probabilities that $\{c_1, c_2, c_3, c_4\}$ and $\{c_3, c_4, b_1, b_2\}$ are edges of $H$ are each at least $1 - 8 \psi$. Finally, by~(G) the probability that $\{c_2, c_3, b_1, b_2\}$ is an edge of $H$ is at least $\frac{4!}{|A|^4} \cdot e(H[A]) \geq \eps_1$. Taking a union bound we find that with probability at least $\eps_1 - 30\psi - \frac{100}{n} \geq \frac{\eps_1}{2}$ all of these events occur, in which case $(a_1, a_2, c_1, c_2, c_3, c_4, b_1, b_2)$ is an absorbing structure of type 2 for $\{x, y\}$. So in total there are at least $\frac{\eps_1}{2} |A|^6|B|^2 \geq \beta n^8$ such absorbing structures for $\{x, y\}$, so $\{x, y\}$ is $\beta$-absorbable.

We conclude by Claim~\ref{absclaim} that at most $\psi \binom{n}{2} < \alpha n^2$ pairs in $\binom{V}{2}$ are not $\beta$-absorbable. This contradicts our assumption that $H$ is not $(\alpha, \beta)$-absorbing and so completes the proof.

Finally, note that each step of the proof directly translates to an algorithm which returns the desired bipartition in the claimed running-time.
\end{proof}

We are now ready to prove our absorbing lemma (Lemma~\ref{lem:absorbing}) which guarantees the existence of an absorbing path in the non-extremal case. In fact, we actually prove the following stronger statement, in which the assumption that $H$ is non-extremal is replaced by the assumption that $H$ is absorbing and connecting, and which concludes that we can find an absorbing path $P$ in polynomial time (furthermore, the modified condition~(iii) allows the absorption of pairs into $P$ to be done greedily). Since Lemmas~\ref{lem:con} and~\ref{lem:absstruc} imply that a non-extremal graph $H$ must be absorbing and connecting, this is indeed a stronger statement than Lemma~\ref{lem:absorbing}. We could instead prove Lemma~\ref{lem:absorbing} directly by a standard random selection argument; we avoid this approach since we will also use the polynomial-time algorithm given by Lemma~\ref{lem:abspath2} in the proof of Theorem~\ref{2cycle}.

\begin{lemma}\label{lem:abspath2}
Suppose that $1/n\ll\eps\ll \gamma\ll \beta\ll\alpha\ll\lambda\ll \kappa,\mu$. Let $H$ be a $4$-graph on $n$ vertices with $\delta(H) \geq n/2 - \eps n$ which is $\kappa$-connecting and $(\alpha,\beta)$-absorbing. Then there is a path~$P$ in $H$ and a graph $G$ on $V(H)$ with the following properties.
\begin{enumerate}[label=(\roman*)]
\item $P$ has at most $\mu n$ vertices.
\item Every vertex of $V(H) \sm V(P)$ is contained in at least $n-\lambda n$ edges of $G$. 
\item For any edge $e$ of $G$ which does not intersect $V(P)$ there are at least $2\gamma n$ vertex-disjoint segments of $P$ which are absorbing structures for $e$.
\end{enumerate}
Furthermore, there exists an algorithm, Procedure~\ref{proc:abspath}($H$), which returns such a path~$P$ and graph $G$ in time $O(n^{32})$.
\end{lemma}

\begin{proof}
Let $W:=V(H)^8$, let $U$ be the set of all $\beta$-absorbable pairs of vertices of $H$, and define the graph $G:=(V(H), U)$. Furthermore set $V_1:=U\cup W$ and let $E_1$ be the set of all pairs $\{p,T\}$ with $p \in U$, $T \in W$ for which $T$ is an absorbing structure for $p$. We then set $E_1':=\{\{T,T'\} : T, T' \in W \mbox{ and } T\cap T'\neq\emptyset\}$ and define the graph $G_1:=(V_1,E_1\cup E_1')$. Set $M := |U|$, so $M \geq \binom{n}{2} - \alpha n^2 > (1-3\alpha)\binom{n}{2}$ since $H$ is $(\alpha, \beta)$-absorbing, and set $N := |W| = n^8$, $m := |E_1'| < 64n^{15}$, $r=\beta^2 n$ and $\nu = \frac{2mr}{N^2} < 128\beta^2$. Then, taking $G_2$ to be the empty graph on vertex set $W$, the conditions of Setup~\ref{setup:derand} and Proposition~\ref{prop:selset} are satisfied (with $\beta$ playing the same role here as there, and with $1$ and $\beta^2$ in place of $\lambda$ and $\tau$ respectively), since each pair $p\in U$ is $\beta$-absorbable and so has $d_{G_1}(p)\geq \beta n^8$. The call of Procedure~\ref{proc:SelSet}$(G_1,\emptyset,\beta^2 n,\beta,1,\beta^2)$ then returns a family $\T' \subseteq W$ of ordered octuples of vertices of $H$ which is an independent set in $G_1$ such that $(1-128\beta^2)\beta^2 n \leq |\T'|\leq \beta^2 n$ and $|\T' \cap N_{G_1}(p)| \geq(\beta-129\beta^2) |\T'| > \frac{1}{2}\beta^3 n$ for each $p\in U$. If we now delete from $\T'$ every $T \in \T'$ which is not an absorbing structure for some $\beta$-absorbable pair $\{x, y\} \in \binom{V(H)}{2}$, then we obtain a pairwise-disjoint family $\T$ satisfying the following properties:
\begin{enumerate}[label=(\Alph*)]
\item $|\T| \leq \beta^2 n$,
\item for any $\beta$-absorbable pair $\{x, y\} \in \binom{V(H)}{2}$ the family $\T$ contains at least $\tfrac{1}{2}\beta^3 n$ absorbing structures for $\{x, y\}$, and
\item every $T \in \T$ is an absorbing structure for some $\beta$-absorbable pair $\{x, y\} \in \binom{V(H)}{2}$.
\end{enumerate} 

Enumerate the members of $\T$ as $T_1,\cdots,T_q$, so $q\leq \beta^2n$ by~(A), and for each $1 \leq i \leq q$ let $a_1^i$,~$a_2^i$,~$b_1^i$ and $b_2^i$ be the first, second, seventh and eighth elements of $T_i$ respectively. Then by~(C) and the definition of an absorbing structure we may choose, for each $i$, a path $P_i$ in $H$ with vertex set $T_i$ and with ends $\{a_1, a_2\}$ and $\{b_1, b_2\}$. Let $Q = \bigcup_{i=1}^q T_i$, so $|Q| = 8q \leq 8 \beta^2 n$, and let $X \subseteq V \sm Q$ be the set of vertices  not in $Q$ which lie in fewer than $(1-\lambda) n$-many $\beta$-absorbable pairs. We must have $|X|\leq\lambda n$, as otherwise there would be at least $\tfrac{1}{2} \cdot \lambda n \cdot (\lambda n-1) > \alpha \binom{n}{2}$ pairs in $H$ which are not $\beta$-absorbable, contradicting the fact that~$H$ is $(\alpha, \beta)$-absorbing. 

We now greedily construct a path $P_0$ containing every vertex of $X$. For this write $X = \{x_1, \dots, x_t\}$, so $t = |X| \leq \lambda n$, and greedily choose distinct vertices $y_1, \dots, y_t \in V \sm (Q \cup X)$ such that $\{x_{i-1}, y_{i-1}, x_i, y_i\}$ is an edge of $H$ for each $2 \leq i \leq t$. This is possible since $y_1$ can be any vertex of $V \sm (Q \cup X)$, and when choosing $y_i$ for $2 \leq i \leq t$ there are at least $d_H(x_{i-1}, y_{i-1}, x_i) - |Q| - |X| - (i-1) \geq \delta(H) - 8\beta^2 n - \lambda n - t > 0$ suitable choices available. Having done so, we let $P_0$ be the path $(x_1, y_1, \dots, x_t, y_t)$ in $H$, and set $b_0^1 := x_t$ and $b_0^2 := y_t$. Note that $P_0$ has $2t \leq 2 \lambda n$ vertices and that the paths $P_0, P_1, \dots, P_q$ are pairwise vertex-disjoint.

To complete the proof we use the fact that $H$ is $\kappa$-connecting to greedily choose paths $Q_i$ of length at most three which join the paths $P_i$ together into a single path. Suppose for this that we have already chosen paths $Q_1, \dots, Q_{i-1}$ for some $1 \leq i \leq q$, and set $A_i := (Q \cup V(P_0) \cup \bigcup_{j=1}^{i-1}Q_j) \sm \{b^{i-1}_1, b^{i-1}_2, a^i_1, a^i_2\}$. Since $H$ is $\kappa$-connecting, there are either at least $\kappa n^2$ paths of length two or at least $\kappa n^4$ paths of length three in $H$ with ends $\{b^{i-1}_1, b^{i-1}_2\}$ and $\{a^i_1, a^i_2\}$. In either case, since $|A_i|\leq 8\beta^2 n + 2\lambda n + 4 q < \kappa n$, at least one of these paths does not intersect $A_i$. Arbitrarily choose such a path and call it $Q_i$. Having proceeded in this manner to find paths $Q_1, \dots, Q_q$ we define $P := P_0 Q_1P_1 \cdots Q_q P_q$ and observe that $P$ is a path in $H$. It remains only to show that $P$ has the desired properties. Indeed, as just shown $P$ has at most $\kappa n \leq \mu n$ vertices, so~(i) holds. For~(ii), recall that any edge $e$ of $G$ is a $\beta$-absorbable pair in $H$ and that by construction of $P_0$ the paths $P$ included every vertex which was in fewer than $(1-\lambda) n$-many $\beta$-absorbable pairs. Finally, by~(B) there are at least $\tfrac{1}{2}\beta^3n \geq 2\gamma n$ paths $P_i$ for which $T_i = V(P_i)$ is an absorbing structure for $e$, and these paths $P_i$ are vertex-disjoint segments of $P$. 
For the running time note that the dominant term is the call of the Procedure~\ref{proc:SelSet} with $O(N^4+MN^3)=O(n^{32})$.
\end{proof}

\subsection{Proof of the long cycle lemma (Lemma~\ref{lem:longcycle})}

Now we can turn to the proof of Lemma~\ref{lem:longcycle} for which we need the following result of Erd\H{o}s~\cite{Er64}. We say that a $k$-graph $H$ is \emph{$k$-partite} if its vertex set may be partitioned into \emph{vertex classes} $V_1, \dots, V_k$ such that $|e \cap V_i| = 1$ for every $e \in E(H)$ and every $i \in [k]$. We say that $H$ is \emph{complete $k$-partite} if additionally every set $e \subseteq V(H)$ such that $|e \cap V_i| = 1$ for every $i \in [k]$ is an edge of $H$.

\begin{thm}[\cite{Er64}]\label{thm:Erdoes}
Suppose that $1/n\ll d, 1/f, 1/k$. Let $F$ be a $k$-partite $k$-graph on $f$ vertices. If $H$ is a $k$-graph on $n$ vertices with $e(H) \geq d\binom{n}{k}$, then $H$ contains a copy of $F$. Moreover, such a copy can be found in time $O(n^k)$.
\end{thm}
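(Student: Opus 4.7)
The plan is to prove the stronger statement that $H$ contains a copy of the complete $k$-partite $k$-graph $K := K^{(k)}(f,\ldots,f)$ with $k$ vertex classes of size $f$ each. Since $F$ is $k$-partite on $f$ vertices, $F$ embeds into $K$ by mapping each of its vertex classes injectively into a distinct class of $K$; hence any copy of $K$ in $H$ yields a copy of $F$, and the theorem reduces to finding $K$.

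First I would reduce to the $k$-partite case via a random partition. A uniformly random partition of $V(H)$ into $k$ classes keeps each edge as \emph{transverse} (one vertex in each class) with probability exactly $k!/k^k$, so in expectation the $k$-partite subgraph induced by the partition has at least $(k!/k^k)\cdot d\binom{n}{k} = \Omega(n^k)$ transverse edges. Fix such a partition into classes $V_1,\ldots,V_k$, each of size $\Theta(n/k)$, and let $H'$ denote the resulting $k$-partite $k$-graph.

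Next I would induct on $k$. The base case $k=2$ is the K\H{o}v\'ari--S\'os--Tur\'an theorem, which gives $K_{f,f} \subseteq H'$ whenever $n$ is sufficiently large. For the inductive step, for each $(k-1)$-tuple $S = (v_1,\ldots,v_{k-1}) \in V_1 \times \cdots \times V_{k-1}$ write $N(S) := \{u \in V_k : S \cup \{u\} \in E(H')\}$. Double-counting yields $\sum_S |N(S)| = e(H') = \Omega(n^k)$, so by Jensen's inequality applied to the convex function $x \mapsto \binom{x}{f}$, we have $\sum_S \binom{|N(S)|}{f} = \Omega(n^{k-1+f})$. Since there are only $\binom{|V_k|}{f} = O(n^f)$ candidates for an $f$-subset $T \subseteq V_k$, pigeonhole produces some $T$ contained in $N(S)$ for $\Omega(n^{k-1})$-many tuples $S$. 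The $(k-1)$-partite $(k-1)$-graph on $V_1 \cup \cdots \cup V_{k-1}$ whose edges are precisely these tuples therefore has density bounded below by a positive constant, so the inductive hypothesis yields a copy of $K^{(k-1)}(f,\ldots,f)$ inside it; attaching $T$ in the last coordinate produces the desired copy of $K^{(k)}(f,\ldots,f)$ in $H'$, and hence in $H$.

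The main obstacle is tracking how the density parameter and the vertex count degrade through the induction: each application passes to a $(k-1)$-graph with a new density constant $d' = d'(d,f,k)$ on $\Theta(n)$ vertices, and one must check that the resulting hierarchy $1/n \ll d', 1/f, 1/(k-1)$ remains satisfied at every level. This is routine under the hypothesis $1/n \ll d, 1/f, 1/k$, since the relevant density constants depend only on $d$, $f$ and $k$, which are fixed throughout the induction.
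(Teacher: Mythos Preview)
The paper does not supply its own proof of this statement: Theorem~\ref{thm:Erdoes} is quoted as a classical result of Erd\H{o}s~\cite{Er64} and used as a black box. There is therefore nothing in the paper to compare your argument against.

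That said, your sketch is essentially the original Erd\H{o}s argument and is correct. The reduction to the complete $k$-partite $k$-graph $K^{(k)}(f,\ldots,f)$ is sound, the random partition step correctly manufactures a $k$-partite host of positive density, and the induction via Jensen on $\binom{x}{f}$ followed by pigeonhole on common neighbourhoods in $V_k$ is exactly the standard mechanism (with K\H{o}v\'ari--S\'os--Tur\'an as the base case). One minor point worth tightening: the random partition need not give classes of size $\Theta(n/k)$ automatically from the expectation argument alone; you should either appeal to concentration for the class sizes or simply note that a fixed balanced partition also retains a $(k!/k^k)$-fraction of the edges in expectation. This is a routine fix and does not affect the structure of the proof.
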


Actually the original version of this theorem did not consider the running time, but this can be derived by a straightforward argument. First we restrict to a constant size subgraph $H'$ of $H$ whose density is similar to that of $H$, and then we find a copy of $F$ in $H'$ by exhaustive search. The existence of such a subgraph can be established by a simple probabilistic argument, and this argument can be derandomised to give an algorithm which finds a subgraph $H'$ with density at least as large as that of $H$.

We will also make use of the following observation of R\"odl, Ruci\'nski and Szemer\'edi~\cite{RoRuSz09} (they did not mention the running time, but this follows immediately from their proof).

\begin{thm}[\cite{RoRuSz09}]\label{thm:pathindensegraph}
Given $a>0$ and $k\geq 2$, every $k$-graph $F$ on $m$ vertices and with at least $a\binom{m}{k}$ edges contains a tight path on at least $am/k$ vertices. Moreover, such a path can be found in time $O(n^k)$.
\end{thm}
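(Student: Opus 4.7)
The plan is to argue by contradiction: suppose the longest tight path in $F$ has strictly fewer than $\ell := \lceil am/k \rceil$ vertices, and derive a contradiction with the hypothesis $e(F) \geq a\binom{m}{k}$. The main device is an iterative extraction procedure. Starting from $F_0 := F$, I would repeatedly take a maximal tight path $P_i$ in the current graph $F_{i-1}$ (one which admits no further extension at either end using a vertex of $F_{i-1}$) and delete its vertices to obtain $F_i := F_{i-1} - V(P_i)$, continuing until $F_t$ has no edges. This yields vertex-sets $V(P_1), \ldots, V(P_t)$ of sizes $s_i := |V(P_i)|$ with $k \leq s_i \leq \ell-1$ and $\sum_i s_i \leq m$, and every edge of $F$ is destroyed in exactly one round.

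To close the contradiction I need an upper bound of the form $e(F) \leq \sum_i \binom{s_i}{k}$. Once that is in hand, convexity (the function $s \mapsto \binom{s}{k}/s = \binom{s-1}{k-1}/k$ is non-decreasing) forces
\[
  \sum_i \binom{s_i}{k} \;\leq\; \frac{m}{\ell-1}\binom{\ell-1}{k} \;=\; \frac{m}{k}\binom{\ell-2}{k-1},
\]
which, compared to $a\binom{m}{k} = \frac{m}{k}\cdot a\binom{m-1}{k-1}$ and combined with $\ell < am/k$, $a \leq 1$, and $k \geq 2$, yields a contradiction for $m$ sufficiently large, since $a \cdot \binom{m-1}{k-1}/\binom{\ell-2}{k-1}$ is at least $a\cdot ((m-1)/(\ell-2))^{k-1} \geq a \cdot (k/a)^{k-1} = k^{k-1}/a^{k-2} > 1$.

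The main obstacle is precisely the edge-count above: an edge of $F$ need not lie entirely inside a single $V(P_i)$, since it could straddle several of them, so $e(F) \leq \sum_i \binom{s_i}{k}$ is not automatic. I would try to recover this bound by exploiting the maximality of each $P_i$, namely that both of its terminal $(k-1)$-tuples have every codegree-neighbour in $F_{i-1}$ already inside $V(P_i)$; this constrains the edges of $F_{i-1}$ incident to but not contained in $V(P_i)$, and I expect that after a small refinement of the extraction (for instance, always picking $P_i$ of maximum length in $F_{i-1}$, or additionally deleting the few ``boundary'' vertices of $V(P_i)$) the contribution of straddling edges can be absorbed. For the algorithmic version, the iterative extraction is already constructive: a maximal tight path is produced by starting from an arbitrary edge (viewed as a length-$k$ tight path) and extending greedily via $O(n)$ codegree look-ups at each end, and since at least one of the extracted $P_i$ must contain $\geq am/k$ vertices (else the density hypothesis fails), the procedure returns the desired tight path within the stated $O(n^k)$ running-time bound.
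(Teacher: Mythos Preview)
The paper does not supply its own proof here --- the result is quoted from R\"odl, Ruci\'nski and Szemer\'edi --- but your argument has a genuine gap independent of any comparison. The inequality $e(F)\le\sum_i\binom{s_i}{k}$ is the entire content of the proof, and it is simply false for the extraction you describe. For $k=2$ take $F=K_{2,m-2}$: any maximal path has five vertices (e.g.\ $b_1a_1b_2a_2b_3$), and after deleting them no edges remain, so $t=1$ and $\sum_i\binom{s_i}{2}=10$ while $e(F)=2(m-2)$. Your suggested patches cannot help: maximality of $P_i$ in $F_{i-1}$ only says that edges containing one of the two \emph{terminal} $(k-1)$-tuples lie inside $V(P_i)$; it says nothing about edges meeting $V(P_i)$ only in interior vertices, and in the example all $2(m-5)$ straddling edges are of exactly that type. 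Choosing $P_i$ longest or deleting ``boundary vertices'' does not touch them. Your final algorithmic sentence (``at least one extracted $P_i$ must have $\ge am/k$ vertices, else the density hypothesis fails'') just restates the unproved inequality.

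The argument that actually works deletes edges, not vertices. Find a maximal tight path $P$ in the current $k$-graph; its terminal $(k-1)$-set $T$ has all neighbours inside $V(P)$, so its current codegree is at most $|V(P)|-(k-1)\le s-k+1$, where $s$ is the length of a longest tight path in $F$. Delete the at most $s-k+1$ edges through $T$ and iterate. Each $(k-1)$-set plays the role of $T$ at most once, so $e(F)\le (s-k+1)\binom{m}{k-1}$; combined with $e(F)\ge a\binom{m}{k}=\tfrac{a(m-k+1)}{k}\binom{m}{k-1}$ this yields $s\ge \tfrac{a(m-k+1)}{k}+(k-1)\ge am/k$. This is already algorithmic: some maximal tight path produced along the way has at least $am/k$ vertices, and each step is a greedy extension plus a single codegree lookup.
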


Note that deleting every other edge of a tight path on $s$ vertices in a $4$-graph yields a $2$-path on at least $s-1$ vertices, so for $k=4$ we may replace `tight path' by `path' (\emph{i.e.} 2-path) and $am/k$ by $am/k-1$ in the statement of Theorem~\ref{thm:pathindensegraph}. 

We are now ready to prove our long cycle lemma, Lemma~\ref{lem:longcycle}. Again we actually prove a stronger statement, Lemma~\ref{lem:longcycle2}, which assumes instead that $H$ is connecting and states that we can find the cycle $C$ in polynomial time. Since by Lemma~\ref{lem:con} any $4$-graph which is not even-extremal is connecting, this is indeed a stronger statement. Our proof of Lemma~\ref{lem:longcycle2} is based on the proof given by Karpi\'nski, Ruci\'nski and Szyma\'nska~\cite[Fact~4]{KaRuSz10} for tight cycles, which in turn was based on a similar lemma for tight cycles in $3$-graphs given by R\"odl, Ruci\'nski and Szemer\'edi~\cite[Lemma~2.2]{RoRuSz09}. The principal differences are that our minimum codegree assumption is weaker, and that our absorbing lemma also requires us to consider the auxiliary graph $G$, in which we must find a perfect matching among the vertices not used in $C$.

\begin{lemma}\label{lem:longcycle2}
Suppose that $1/n\ll \eps\ll\gamma\ll\lambda \leq \mu \ll \kappa$ and that $n$ is even.
Let $H=(V,E)$ be a $4$-graph of order $n$ with $\delta(H)\geq n/2-\eps n$ which is $\kappa$-connecting. 
Also let $P_0$ be a $2$-path in $H$ on at most $\mu n$ vertices, and let $G$ be a $2$-graph on $V$ such that each vertex $v \in V \sm V(P_0)$ has $d_G(v) \geq (1-\lambda) n$.
Then there exists a $2$-cycle $C$ in $H$ on at least $(1-\gamma) n$ vertices such that $P_0$ is a segment of $C$ and $G[V\sm V(C)]$ contains a perfect matching. Moreover, there exists an algorithm, Procedure~\ref{proc:longcycle}($H,G,P_0$), which returns such a cycle $C$ in time $O(n^{16})$.
\end{lemma}

\begin{proof}
First, we introduce a constant $D> 0$ such that
$$\tfrac{1}{n} \ll \tfrac{1}{D} \ll \eps \ll \gamma \ll \lambda \leq \mu \ll \kappa \;.$$
Set $V' = V \sm V(P_0)$, $H'=H[V']$, $G'=G[V']$ and $n' = |V'|$, so $n' \geq (1-\mu)n$, $\delta(G') \geq (1-\lambda-\mu)n \geq (1-2\mu)n'$ and $\delta(H') \geq (\frac{1}{2}-\eps-\mu)n \geq (\frac{1}{2}-2\mu)n'$. Also it follows from the definition of $\kappa$-connecting that $H'$ is $\frac{\kappa}{2}$-connecting. So by Lemma~\ref{lem:reservoir} (with $2\gamma/3, n', \kappa/2$ and $2\mu$ in place of $\rho, n, \kappa$ and $\lambda$ respectively) we can choose a set $R \subseteq V'$ with $\frac{3\gamma}{5}n \leq (1-\frac{8}{3}\gamma)\frac{2\gamma}{3} n' \leq |R| \leq \frac{2\gamma}{3} n' \leq \frac{2\gamma}{3} n$ such that for any $x\in V'$ we have$|N_{G'}(x)\cap R| \geq (1-70\mu)|R| \geq \frac{4\gamma}{7}n$ and for every disjoint $p_1,p_2\in\binom{V'}{2}$ there are at least $\frac{\kappa}{10}|R| \geq \frac{\kappa\gamma}{20}n$ internally disjoint paths in $H'[R\cup p_1\cup p_2]$ of length at most three whose ends are $p_1$ and $p_2$. 

We first extend $P_0$ to a path $P_0'$ in $H$ by adding a single edge at each end. The purpose of this is that the ends of $P_0'$ will then be pairs in $H'$ to which we can apply the fact that $H'$ is $\kappa$-connecting. So let $\{u_1,u_2\}$ and $\{u_3,u_4\}$ be the ends of $P_0$. Then since $|V'\sm R|\geq (1-2\mu)n$ and $\delta(H)\geq (\frac{1}{2}-\eps)n$ we may choose distinct vertices $u_1', u_2', u_3', u_4' \in V' \sm R$ such that $u_2' \in N(u_1, u_2, u_1')$ and $u_4' \in N(u_3, u_4, u_3')$. This gives the desired path $P_0'=(u_1',u_2')P_0(u_3',u_4')$. Write $q := \{u_1', u_2'\}$, so $q$ is an end of $P_0'$.

We next proceed by an iterative process to extend $P_0'$ to a path on at least $(1-\gamma)n$ vertices in $H$. At any point in this process we write $P$ for the path we have built so far (so initially we take $P = P_0'$), and write $L := V\sm (V(P)\cup R)$ and $R' := R \sm V(P)$ (so at any point the sets $V(P)$, $R'$ and $L$ partition $V$). Moreover, throughout the process $P_0'$ will be a segment of~$P$ which shares an end in common with $P$, namely $q$. If at any point in the process we have $|L| \leq \frac{\gamma}{3} n$ then we terminate; observe that we then have $|V(P)| \geq n - |L| - |R| \geq (1-\gamma)n$, so $P$ is the desired path. We may therefore assume throughout the process that $|L| > \frac{\gamma}{3} n$.

The first stage of the process is while $P$ contains at most $(\frac{1}{2}-\mu)n$ vertices, in which case we have $|V(P) \cup R| \leq (\frac{1}{2}-\mu)n + \frac{2\gamma}{3} n \leq \delta(H) - 2$. So we may use the minimum degree condition as before to extend $P$ by one edge. That is, let $p$ be the end of $P$ other than $q$, choose any vertex $u \in L$ and any vertex $v \in N(p \cup \{u\}) \cap L$, and extend $P$ by the edge $p \cup \{u, v\}$. We continue to extend $P$ in this way until $P$ has more than $(\frac{1}{2}-\mu)n$ vertices. Note that no vertices of $R$ are added to $P$ during this stage of the process.

Once $P$ contains more than $(\frac{1}{2}-\mu)n$ vertices, we enter the second stage of the process. In each step of the process during this stage we will add at most $8$ additional vertices from the reservoir set $R$ to the path $P$, and this stage of the process will continue for at most $\frac{3n}{D}$ steps. In consequence we will always have $|R'| \geq |R| - 8 \cdot \frac{3n}{D}$. 
Since $8 \cdot \frac{3n}{D} \leq \frac{\kappa\gamma}{20}n$ it follows from our choice of $R$ that at any point in the process (or immediately after the process terminates), given disjoint pairs $p$ and $p'$ in $V'$, we can find a path of length at most three in $H[p \cup p' \cup R']$ with ends $p$ and $p'$.

Suppose first that, at some step of the second stage of the process, we have $e(H[L]) > \mu\binom{|L|}{4}$. Then we can use Theorem~\ref{thm:pathindensegraph} to find a path $P'$ in $H[L]$ on at least $\frac{\mu|L|}{4}-1$ vertices. We let $p\in\binom{V}{2}$ be the end of $P$ other than $q$ and let $p'$ be an end of $P'$, and 
choose a path $Q$ of length at most three in $H[R' \cup p \cup p']$ with ends $p$ and $p'$. We then replace $P$ by $PQP'$ and proceed to the next iteration. Note that in this step we added at most $4$ vertices from $R'$ to $P$, and that the total number of vertices added to $P$ is at least $\frac{\mu|L|}{4}-1 \geq \frac{\mu\gamma}{12}n-1 \geq \frac{D}{3}$. 

Now suppose instead that, at some step of the second stage of the process, we have $e(H[L]) \leq \mu\binom{|L|}{4}$.  Then we have the following claim.

\begin{claim}\label{clm:segment}
There exist sets $J \subseteq I\subseteq V(P)\sm V(P_0')$ and a $3$-graph $H_0$ with vertex set $L$ such that $|I|=D$, $P[I]$ is a segment of $P$, $|J| = \frac{D}{3}$, $e(H_0) \geq 2^{-D}\frac{1}{7}\binom{|L|}{3}$ and for every $e\in E(H_0)$ and every $v\in J$ we have $e\cup\{v\}\in E(H)$.
\end{claim}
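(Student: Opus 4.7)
The plan is to establish the claim by a two-step averaging argument: first average over segments of $P$ to pin down $I$, then average over $(D/3)$-subsets of $I$ to pin down $J$.

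First I would lower-bound the number $N_3$ of edges of $H$ with exactly three vertices in $L$. Summing the codegree bound over $T\in\binom{L}{3}$ gives $\sum_T d_H(T)\geq(\tfrac{1}{2}-\eps)n\binom{|L|}{3}$, while each edge of $H[L]$ contributes $\binom{4}{3}=4$ and each edge counted by $N_3$ contributes exactly $1$. Combined with the case assumption $e(H[L])\leq\mu\binom{|L|}{4}$, this yields $N_3\geq(\tfrac{1}{2}-\eps-\mu)n\binom{|L|}{3}$. Since each vertex of $V(P_0')\cup R$ is the external vertex of at most $\binom{|L|}{3}$ edges counted by $N_3$, and $|V(P_0')|+|R|\leq 2\mu n+\tfrac{2\gamma}{3}n$, the number $N_3'$ of such edges whose external vertex lies in $W:=V(P)\sm V(P_0')$ is at least $\tfrac{n}{3}\binom{|L|}{3}$.

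Next I would partition $W$, which is a contiguous sub-2-path of $P$, into roughly $|W|/D\leq n/D$ segments of $D$ consecutive vertices, each forming a sub-2-path of $P$ (choosing $D$ divisible by $6$ to sidestep parity issues). For each segment $S$ and each triple $T\in\binom{L}{3}$, writing $c_S(T):=|\{v\in S:T\cup\{v\}\in E(H)\}|$, we have $\sum_S\sum_T c_S(T)=N_3'$, since each edge counted by $N_3'$ has its external vertex in exactly one segment. Hence by averaging there is a segment $I$ with $\sum_T c_I(T)\geq N_3'D/|W|\geq\tfrac{D}{3}\binom{|L|}{3}$.

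Finally, I would sample $J\in\binom{I}{D/3}$ uniformly at random. For each $T\in\binom{L}{3}$ the probability that $T\cup\{v\}\in E(H)$ for every $v\in J$ is $\binom{c_I(T)}{D/3}/\binom{D}{D/3}$, so the expected number of such $T$ is $\binom{D}{D/3}^{-1}\sum_T\binom{c_I(T)}{D/3}$. Since the average of $c_I(T)$ over $T\in\binom{L}{3}$ is at least $D/3$, and the map $x\mapsto\binom{x}{D/3}$ is convex (extended by linear interpolation on non-integer arguments), Jensen's inequality yields $\sum_T\binom{c_I(T)}{D/3}\geq\binom{|L|}{3}$; together with $\binom{D}{D/3}\leq 2^D$ the expectation is at least $2^{-D}\binom{|L|}{3}\geq 2^{-D}\binom{|L|}{3}/7$. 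Thus some choice of $J$ attains this bound, and taking $H_0$ to be the $3$-graph on $L$ whose edges are precisely those $T$ extended to an edge of $H$ by every vertex of $J$ completes the proof. The only step requiring any care is the application of discrete Jensen, but this is standard; the rest is a routine double-averaging after the codegree-based count.
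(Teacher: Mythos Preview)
Your proof is correct and follows the same broad outline as the paper---count edges with three vertices in $L$ and one in $W$, average over $D$-vertex segments of $P$ to find $I$, then average again to find $J$---but the final step is genuinely different. After fixing $I$, the paper first isolates the $3$-graph $H_1$ of triples $T$ with $|N_H(T)\cap I|\geq D/3$, shows $e(H_1)\geq\tfrac{1}{7}\binom{|L|}{3}$ by a simple threshold count, and then pigeonholes the edges of $H_1$ over all $2^D$ subsets of $I$ according to the exact set $N_H(T)\cap I$, taking $J$ inside the popular subset. You instead sample $J\in\binom{I}{D/3}$ uniformly and apply Jensen to the convex integer sequence $m\mapsto\binom{m}{D/3}$; this is slicker and in fact yields $e(H_0)\geq 2^{-D}\binom{|L|}{3}$, a factor $7$ stronger than the claim requires. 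The paper's route is marginally more elementary (no convexity needed), and its use of an overlapping sliding window of segments rather than a disjoint partition gives a larger intermediate count $\sum_T c_I(T)\gtrsim\tfrac{D}{2}\binom{|L|}{3}$, though neither advantage is used downstream. Your disjoint partition loses at most $D\binom{|L|}{3}$ edges to leftover vertices, which is negligible against $N_3'\geq\tfrac{n}{3}\binom{|L|}{3}$, so the averaging goes through.
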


\begin{proof}
Let $E_0$ be the set of edges of $H[L]$, $E_1$ be the set of edges of $H$ with three vertices in $L$ and one vertex in $V(P)\sm V(P_0')$ and $E_1'$ be the set of edges of $H$ with three vertices in~$L$ and one vertex in $R' \cup V(P_0')$. We then have that
$$(\tfrac{1}{2}-\eps)n\binom{|L|}{3}\leq \sum_{S\in\binom{L}{3}}d_H(S) = 4|E_0|+|E_1|+|E_1'|\;.$$
Since $4 |E_0|\leq 4\mu\binom{|L|}{4} < \mu |L| \binom{|L|}{3} \leq \mu n \binom{|L|}{3}$ and $|E_1'| \leq (\frac{2\gamma}{3} n + \mu n + 4)\binom{|L|}{3}$, this yields
$$|E_1|\geq (\tfrac{1}{2}-3\mu)n\binom{|L|}{3}\;.$$
Let $\Part$ be the family of segments of $P$ with precisely $D$ vertices which do not intersect~$P_0'$, 
and for each $Q \in \Part$ let $N_Q$ be the number of edges of $E_1$ which intersect~$V(Q)$. Since all but at most $2D$ vertices of $V(P) \sm V(P'_0)$ appear in precisely~$\frac{D}{2}$ of the sets $V(Q)$, we have  
$$\sum_{Q \in \Part} N_Q \geq \left(|E_1| - 2D\binom{|L|}{3}\right) \cdot \frac{D}{2} \geq (\tfrac{1}{2}-4\mu) \binom{|L|}{3} D \cdot \frac{n}{2}$$
Since $P$ has at most $n$ vertices we have $|\Part| \leq \frac{n}{2}$, so we may fix a segment $Q \in \Part$ with $N_Q \geq (\frac{1}{2}-4\mu)\binom{|L|}{3}D$. Write $I := V(Q)$ and let $H_1$ be the $3$-graph  on $L$ whose edges are all sets $S \in \binom{L}{3}$ with $|N_H(S) \cap I| \geq \frac{D}{3}$. Then we have $N_Q \leq e(H_1) D + \binom{|L|}{3}\frac{D}{3}$, and it follows that $e(H_1) \geq (\frac{1}{2}-4\mu-\frac{1}{3})\binom{|L|}{3}\geq \frac{1}{7}\binom{|L|}{3}$. Also, since $I$ has at most $2^{D}$ subsets, by averaging we may fix a set $J' \subseteq I$ with $|J'| \geq \frac{D}{3}$ such that at least $2^{-D}e(H_1)$ edges $S \in E(H_1)$ have $N_{H_i}(S)=J$. Let $H_0$ be the $3$-graph on $L$ with all such sets $S$ as edges, and choose any $J \subseteq J'$ with $|J| = \frac{D}{3}$.
\end{proof}

Fix such an $I$, $J$ and $H_0$. Then $H_0$ contains a complete $3$-partite $3$-graph $K$ with all vertex classes of size $\frac{D}{3}$ by Theorem~\ref{thm:Erdoes}. So let $K'$ be the complete $4$-partite subgraph of $H$ whose vertex classes are $J$ and the three vertex classes of $K$, and let $Q$ be a Hamilton path in $K'$ (so in particular $Q$ is a path in $H$ on $\frac{4D}{3}$ vertices). Since $P[I]$ is a segment of~$P$, removing the vertices in $I$ from $P$ leaves two vertex-disjoint subpaths of $P$; of these let $P_1$ be the path which has $q$ as an end (so in particular $P_0'$ is a segment of $P_1$) and let $P_2$ be the other path. Let $p_1$ be the end of $P_1$ other than $q$, let $p_2$ be an end of $P_2$, and let $q_1$ and $q_2$ be the ends of $Q$, and choose vertex-disjoint paths $Q_1$ and $Q_2$ in $H[p_1 \cup q_1 \cup R']$ and $H[p_2 \cup q_2 \cup R']$ respectively, each of length at most three, so that $Q_1$ has ends $p_1$ and $q_1$ and $Q_2$ has ends $p_2$ and $q_2$. We now replace $P$ with the path $P_1Q_1QQ_2P_2$ and proceed to the next iteration. Note that in this step we added at most eight vertices from $R'$ to $P$ (at most four in each of $Q_1$ and $Q_2$), and that the total number of vertices in $P$ increased by at least $|V(Q)| - |I| = \frac{D}{3}$. 

Since each step in the second stage of the process increases the number of vertices of $P$ by at least $\frac{D}{3}$, this stage of the process continues for at most $\frac{3n}{D}$ steps, as claimed. When the process terminates the final path $P$ has at least $(1-\gamma)n$ vertices and has $q$ as an end, and $P_0'$ is a segment of $P$. Let $p$ be the end of $P$ other than $q$; then we may choose a path $Q$ in $H[R' \cup p \cup q]$ of length at most three and with ends $p$ and $q$. This gives a cycle $C = PQ$ in $H$ on at least $(1-\gamma)n$ vertices such that $P_0$ is a segment of~$C$. It remains only to find a perfect matching in $G^*:=G[V\sm V(C)]$. For this note that $|V(G^*)|\leq \gamma n$ and $|R \sm V(C)| \geq |R| - \frac{24n}{D} - 4$. Therefore our choice of $R$ implies that $\delta(G^*) \geq \frac{4\gamma}{7}n - \frac{24n}{D} - 4 \geq \frac{\gamma}{2}n \geq \frac{|V(G^*)|}{2}$.
Since $C$ is a $2$-cycle (so has an even number of vertices) and $n$ is even, we have that $|V\sm V(C)|$ is even, so $G^*$ contains a perfect matching.

Following this proof gives a polynomial-time algorithm to find a long cycle as in the statement in a $\kappa$-connecting $k$-graph of high minimum codegree. Indeed, Lemma~\ref{lem:reservoir} gives a reservoir set $R$ as required in time $O(n^{16})$, and by Theorem~\ref{thm:Erdoes} we may find the complete $3$-partite $3$-graph $K$ in time $O(n^3)$, whilst Theorem~\ref{thm:pathindensegraph} allows the choice of the path $P'$ in time $O(n^4)$, and it is clear that the remaining steps of the proof can be carried out in polynomial time (e.g. by exhaustive search to find a path of length at most three).
\end{proof}

\section{Hamilton 2-cycles in 4-graphs: even-extremal case}\label{sec:evenextr}

In this section we give a detailed proof of Lemma~\ref{thm:evenextr} which states that Theorem~\ref{character} holds for even-extremal $4$-graphs. As in the previous section, all paths and cycles we consider in this section are $2$-paths and $2$-cycles, therefore we will again omit the $2$ and speak simply of paths and cycles. Also, for most of the section we work within the following setup.

\begin{setup} \label{setup:evenextr}
Fix constants satisfying $1/n\ll\eps, c \ll \gamma \ll \beta \ll \beta_2 \ll \beta_1 \ll \rho \ll \mu \ll 1.$ Let $H$ be a $4$-graph of order $n$, and let $V = V(H)$. 
\end{setup}

Recall that if $H$ is an even-extremal $4$-graph on $n$ vertices, with a corresponding even-extremal bipartition $\{A, B\}$ of $V(H)$, then $H$ has very few odd edges. If $\delta(H)$ is close to $n/2$ then it follows from this that $H[A]$ and $H[B]$ are both very dense, and also that $H$ has very high density of edges with precisely two vertices in $A$. Furthermore recall that we call a pair $p\in \binom{V}{2}$ a split pair if $|p\cap A|=1$ and a connate pair otherwise.

One strategy for finding a Hamilton cycle in such an $H$ is as follows. 
We first find short paths $P$ and $Q$ in $H$ each joining a connate pair in $A$ to a connate pair in $B$. 
That is, the ends $p$ and $p'$ of $P$ and the ends $q$ and $q'$ of $Q$ have $p, q \in \binom{A}{2}$ and $p', q' \in \binom{B}{2}$. 
Moreover $P$ and $Q$ are chosen so that $A' := A \sm (V(P) \cup V(Q))$ and $B' := B \sm (V(P) \cup V(Q))$ each have even size. 
We then use the high density of $H[A]$ and $H[B]$ to find a Hamilton path $P_A$ in $H[A' \cup p \cup q]$ with ends $p$ and $q$ and a Hamilton path $P_B$ in $H[B' \cup p' \cup q']$ with ends $p'$ and $q'$. 
Together $P, P_A, Q$ and $P_B$ then form a Hamilton cycle in $H$. 
Another strategy for finding a Hamilton cycle in such an $H$ is to first find a short path~$P$ in~$H$ whose ends $p$ and $q$ are both split pairs such that $V' := V(H) \sm V(P)$ satisfies $|V' \cap A| = |V' \cap B|$. We then use the high density of edges of $H$ with precisely two vertices in $A$ to find a Hamilton path $P'$ in $H[V' \cup p \cup q]$ with ends $p$ and $q$ which consists of a sequence of split pairs. Together~$P'$ and~$P$ then form a Hamilton cycle. 
We give the necessary preliminaries for implementing this strategy in Subsection~\ref{sec:evenextremal22}, culminating in Lemma~\ref{lem:evenextrhampathsplit} which gives sufficient conditions for us to find~$P'$ as desired. We will also use Lemma~\ref{lem:evenextrhampathsplit} to find the paths $P_A$ and $P_B$ in the very dense case.

Finally, in Subsection~\ref{sec:evenextremalpf} we complete the proof of Lemma~\ref{thm:evenextr} by distinguishing various cases; in each case we apply one of the two strategies described above to find a Hamilton cycle in $H$.

\subsection{Hamilton paths of split pairs} \label{sec:evenextremal22}

In this subsection we consider $4$-graphs $H$ admitting a bipartition $\{A, B\}$ of $V(H)$ such that $H$ has a very high density of edges with two vertices in $A$ and two vertices in $B$, motivating the following definitions.

\begin{definition} \label{def:good}
Under Setup~\ref{setup:evenextr}, for a fixed bipartition $\{A, B\}$ of $V$, we say that
\begin{enumerate}[label=(\roman*)]
\item a triple $S \in \binom{V}{3}$ is \emph{$\gamma$-good} if it is contained in at least $(\frac{1}{2}-\gamma)n$ even edges,
\item a pair $p \in \binom{V}{2}$ is \emph{$\gamma$-good} if it is contained in at least $(\frac{1}{2}-\gamma^3)\binom{n}{2}$ even edges,
\item a vertex $v\in V$ is \emph{$\gamma$-good} if it is contained in at least $(\frac{1}{2}-\gamma^5)\binom{n}{3}$ even edges,
\item a pair $p \in \binom{V}{2}$ is \emph{$\beta_2$-medium} if it is contained in at least $\beta_2\binom{n}{2}$ even edges,
\item a pair $p \in \binom{V}{2}$ is \emph{$\beta_2$-bad} if it is not $\beta_2$-medium,
\item a vertex $v\in V$ is \emph{$(\beta_1,\beta_2)$-medium} if it is contained in at least $\beta_1 n$-many $\beta_2$-medium pairs, and 
\item a vertex $v\in V$ is \emph{$(\beta_1,\beta_2)$-bad} if it is not $(\beta_1,\beta_2)$-medium.
\end{enumerate}
\end{definition}

The following elementary proposition shows that good vertices and pairs lie in many good pairs and triples.

\begin{prop}\label{clm:goodvpt}
Assume Setup~\ref{setup:evenextr}, and fix a bipartition $\{A, B\}$ of $V$ such that $n/2-cn\leq|A|\leq n/2+cn$. Then the following statements hold.
\begin{enumerate}[label=(\roman*)]
\item If $v\in V$ is $\gamma$-good, then $v$ is contained in at least $(1-\gamma)n$-many $\gamma$-good pairs.
\item If $p \in \binom{V}{2}$ is a $\gamma$-good pair, then $p$ is contained in at least $(1-\gamma)n$-many $\gamma$-good triples.
\end{enumerate}
\end{prop}

\begin{proof}
First, we make the following observation. Each set $S\subseteq V$ of size $i\in\{1,2,3\}$ is contained in at least $(\frac{1}{2}-2c)\binom{n}{4-i}$ and at most $(\frac{1}{2}+2c)\binom{n}{4-i}$ even $4$-sets. To prove~(i) note that since $v$ is $\gamma$-good, there are at most $(\frac{1}{2}+2c)\binom{n}{3}-(\frac{1}{2}-\gamma^5)\binom{n}{3}\leq(\gamma^5+2c)\binom{n}{3}$ even $4$-sets which contain $v$ and which do not form an edge in $H$. Now assume that there are more than $\gamma n$ vertices $w$ such that $\{v,w\}$ is not $\gamma$-good. Then there are at least $\frac{1}{3}\cdot\gamma n\cdot ((\frac{1}{2}-2c)\binom{n}{2}-(\frac{1}{2}-\gamma^3)\binom{n}{2})>(\gamma^5+2c)\binom{n}{3}$ even $4$-sets which contain $v$ and do not form an edge in $H$, a contradiction. A similar reasoning proves (ii).
\end{proof}

\begin{prop}\label{clm:evenconnect}
Assume Setup~\ref{setup:evenextr}, and fix a bipartition $\{A, B\}$ of $V$ such that $n/2-cn\leq|A|\leq n/2+cn$. Also let $R \subseteq V$ be such that $|A \sm R|\geq \mu n$ and $|B\sm R|\geq \mu n$. Then 
\begin{enumerate}[label=(\roman*)]
\item for any two disjoint $\gamma$-good split pairs $s_1$ and $s_2$ there exists a split pair $s_3 \in \binom{V\sm R}{2}$ such that $s_1 \cup s_3 \in E(H)$ and $s_3 \cup s_2 \in E(H)$, and
\item for any two disjoint $\gamma$-good connate pairs $p_1$ and $p_2$ there exists a connate pair $p_3 \in \binom{A\sm R}{2}$ such that $p_1 \cup p_3 \in E(H)$ and $p_3 \cup p_2 \in E(H)$.
\end{enumerate}
Now suppose additionally that at most $\rho n$ vertices of $V$ are not $\gamma$-good. Then
\begin{enumerate}[label=(\roman*)]
\setcounter{enumi}{2}
\item for any $\gamma$-good split pair $s_1$ there exists a $\gamma$-good split pair $s_2\in \binom{V\sm R}{2}$ such that $s_1\cup s_2\in E(H)$, and
\item for any $\gamma$-good connate pair $p_1$ there exists a $\gamma$-good connate pair $p_2 \in \binom{A\sm R}{2}$ such that $p_1\cup p_2\in E(H)$.
\end{enumerate}
Note that by symmetry~(ii) and~(iv) remain valid with $B$ in place of $A$.
\end{prop}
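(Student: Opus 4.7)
The plan is to prove all four parts by the same double-counting argument. The central observation is that parity forces the intermediate pair to have the correct type: if $s_1$ is a split pair and $s_1 \cup p$ is an even $4$-set, then $|p \cap A|$ must be odd, so $p$ itself is split; analogously, if $p_1$ is connate then any $p$ with $p_1 \cup p$ even must be connate. Hence the $\gamma$-goodness of $s_1$ (respectively $p_1$) directly lower-bounds the number of split (respectively connate) pairs $p$ with $s_1 \cup p \in E(H)$ (respectively $p_1 \cup p \in E(H)$), and our only remaining task is to subtract the small number of candidates that fail some auxiliary requirement (meeting $R$, meeting the given ends, or, in the last two parts, not being $\gamma$-good themselves).

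For~(i), the $\gamma$-goodness of $s_1$ gives at least $(\tfrac12 - \gamma^3)\binom{n}{2}$ split pairs $p$ disjoint from $s_1$ with $s_1 \cup p \in E(H)$. Since the total number of split pairs $p$ disjoint from $s_1$ is $(|A|-1)(|B|-1) \leq \tfrac{n^2}{4}$, at most $\gamma^2 n^2$ split pairs $p \in \binom{V \sm s_1}{2}$ have $s_1 \cup p \notin E(H)$, and the same bound applies with $s_2$ in place of $s_1$. Because the candidate set — split pairs in $\binom{V \sm R}{2}$ disjoint from $s_1 \cup s_2$ — has size at least $|A\sm R|\cdot |B\sm R| - O(n) \geq \mu^2 n^2 - O(n)$, the count of admissible choices of $s_3$ is at least $\mu^2 n^2 - 2\gamma^2 n^2 - O(n) > 0$, which is positive since $\gamma \ll \mu$. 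For~(ii) assume without loss of generality $p_1 \subseteq A$; the case $p_1 \subseteq B$ is symmetric. The $\gamma$-goodness of $p_1$ gives at least $(\tfrac12 - \gamma^3)\binom{n}{2}$ connate pairs $p$ disjoint from $p_1$ with $p_1 \cup p \in E(H)$; at most $\binom{|B|}{2} \leq \tfrac{n^2}{8} + O(cn^2)$ of these lie in $\binom{B}{2}$, so at least $\tfrac{n^2}{8} - O(cn^2 + \gamma^3 n^2)$ lie in $\binom{A\sm p_1}{2}$. Hence at most $\binom{|A|-2}{2} - (\tfrac{n^2}{8} - O(cn^2+\gamma^3 n^2)) \leq 2\gamma^2 n^2$ pairs in $\binom{A\sm p_1}{2}$ fail the edge condition with $p_1$, and similarly for $p_2$. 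Since $\binom{|A\sm R|}{2} \geq \binom{\mu n}{2}$, the count of admissible $p_3$ is at least $\binom{\mu n}{2} - 4\gamma^2 n^2 - O(n) > 0$, and the symmetric version with $B$ in place of $A$ is verbatim identical.

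For parts~(iii) and~(iv) we use the same framework with only a single pair $s_1$ or $p_1$ in play, but we additionally insist that the chosen pair be $\gamma$-good. Since at most $\rho n$ vertices of $V$ are not $\gamma$-good, and every $\gamma$-good vertex lies in at most $\gamma n$ non-$\gamma$-good pairs by Proposition~\ref{clm:goodvpt}(i), the total number of non-$\gamma$-good pairs in $\binom{V}{2}$ is at most $\tfrac12 (\rho n \cdot n + n \cdot \gamma n) \leq \rho n^2$. Subtracting this additional error term from the counts of~(i) and~(ii) leaves $\mu^2 n^2 - \gamma^2 n^2 - \rho n^2 - O(n)$ and $\binom{\mu n}{2} - 2\gamma^2 n^2 - \rho n^2 - O(n)$ respectively, both positive since $\gamma, \rho \ll \mu$. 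The entire argument is a straightforward counting exercise; the only thing to monitor is that the error terms $\gamma^2 n^2$, $cn^2$, and $\rho n^2$ all stay negligible next to the main term of order $\mu^2 n^2$, which is guaranteed by the constant hierarchy of Setup~\ref{setup:evenextr}, so there is no real obstacle beyond bookkeeping.
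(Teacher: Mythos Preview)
Your proof is correct and follows essentially the same counting approach as the paper: use the parity observation to see that the pairs forming an even edge with a given split (resp.\ connate) pair are themselves split (resp.\ connate), bound the number of ``bad'' pairs by comparing $(\tfrac12-\gamma^3)\binom{n}{2}$ against $\tfrac{n^2}{4}$ (resp.\ against $\binom{|A|}{2}+\binom{|B|}{2}$), and verify that the pool of candidates outside $R$ is larger. The paper in fact omits the details for~(ii) and~(iv), so your treatment of~(ii) is more explicit; one minor remark is that your ``without loss of generality $p_1\subseteq A$'' is unnecessary, since the bound on pairs in $\binom{A}{2}$ failing the edge condition with $p_1$ holds uniformly regardless of whether $p_1$ lies in $A$ or in $B$.
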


\begin{proof}
For~(i), since $s_1$ and $s_2$ are $\gamma$-good, there are at most $2(\frac{n^2}{4}-(\frac{1}{2}-\gamma^3)\binom{n}{2})<3\gamma^3\binom{n}{2}$ split pairs which do not form an edge with both $s_1$ and $s_2$. Since there are at least $\mu^2n^2 >3\gamma^3\binom{n}{2}$ split pairs which do not contain a vertex of $R$, we may choose $s_3$ as required. Similarly, for~(iii), there are at most $\frac{n^2}{4}-(\frac{1}{2}-\gamma^3)\binom{n}{2} < \frac{1}{2}\mu^2 n^2$ split pairs which do not form an edge with $s_1$, and since at most $\rho n$ vertices are not $\gamma$-good, by Proposition~\ref{clm:goodvpt}(i) the total number of pairs which are not $\gamma$-good is at most $\rho n \cdot n + n \cdot \gamma n < \frac{1}{2}\mu^2 n^2$, and so we may choose a split pair $s_2 \in \binom{V\sm R}{2}$ as required. The arguments for~(ii) and~(iv) are very similar, so we omit them.
\end{proof}

Note that we can list all $\gamma$-good pairs in $H$ in time $O(n^4)$, and having done so we can find pairs as in Proposition~\ref{clm:evenconnect}(i)-(iv) in time $O(n^2)$ by exhaustive search. We now state and prove our Hamilton path connecting lemma for this setting.

\begin{lemma}\label{lem:evenextrhampathsplit}
Assume Setup~\ref{setup:evenextr}, and fix a bipartition $\{A, B\}$ of $V$ with $|A| = |B|$. Suppose also that every vertex of $V$ is $\gamma$-good. If $s_1$ and $s_2$ are disjoint $\gamma$-good split pairs, then there exists a Hamilton path in $H$ with ends $s_1$ and $s_2$. Moreover, such a path can be found in time $O(n^4)$.
\end{lemma}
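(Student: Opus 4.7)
The plan is to adapt the three-step strategy of Lemma~\ref{lem:evenextrhampathequal} to the bipartite setting. The key observation is that every pair in the Hamilton path must be a split pair: two consecutive split pairs form an edge of type $(2,2)$, which is automatically even, matching the abundance of even edges supplied by the $\gamma$-goodness hypothesis. Set $m := \lceil (1-\rho)n/4 \rceil$, which we may take to be even. In Step~(I), I build a grid $L \subseteq V \setminus (s_1 \cup s_2)$ of size $3m+2$ following the pattern $(x_1, z_1, y_1, x_2, z_2, y_2, \ldots, x_m, z_m, y_m, x_{m+1}, z_{m+1})$ from the dense case, but now I place $x_i \in A$ and $z_i \in B$ for every $i$, and I distribute the $y_i$ so that exactly $m/2$ lie in $A$ and exactly $m/2$ in $B$. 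All pairs $\{x_i, z_i\}$ and all triples $\{x_i, y_i, z_i\}$ and $\{y_i, x_{i+1}, z_{i+1}\}$ used in the construction are required to be $\gamma$-good, which is possible greedily by Proposition~\ref{clm:goodvpt} together with the hypothesis that every vertex of $V$ is $\gamma$-good. Each such triple has type $(2,1)$ or $(1,2)$, so its $\gamma$-good even extensions lie almost entirely on the opposite side; consequently, for any vertex $w$ on the side opposite to $y_i$, both four-tuples $\{x_i, y_i, z_i, w\}$ and $\{y_i, x_{i+1}, z_{i+1}, w\}$ are (even) edges of $H$ except for at most $2\gamma n$ choices of $w$.

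I then form two auxiliary bipartite graphs: $G_A$ with parts $\{y_i : y_i \in B\}$ and $A \setminus L$, and $G_B$ with parts $\{y_i : y_i \in A\}$ and $B \setminus L$, placing an edge $y_iw$ whenever both relevant four-tuples lie in $E(H)$. Call $w \in V \setminus L$ \emph{palatable} if its degree in the relevant auxiliary graph is at least $0.9 \cdot m/2$; the same averaging argument as in Lemma~\ref{lem:evenextrhampathequal} (valid because $\gamma \ll \rho$) shows that at most $\tfrac{\rho}{100}n$ vertices on each side are non-palatable. For any selection of $m/2$ palatable vertices from $A \setminus L$ and $m/2$ from $B \setminus L$, perfect matchings in the two restricted bipartite graphs exist (Hall's condition being trivially satisfied since $0.9 \cdot m/2 > m/4$) and supply vertices $w_1, \ldots, w_m$ such that the resulting sequence $(x_1, z_1, y_1, w_1, \ldots, x_{m+1}, z_{m+1})$ is a 2-path $P$ of split pairs with ends $q_1 := \{x_1, z_1\}$ and $q_2 := \{x_{m+1}, z_{m+1}\}$.

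Step~(II) constructs short split-pair paths $Q_1$ from $s_1$ to $q_1$ (of length~$2$, via Proposition~\ref{clm:evenconnect}(i)) and a longer split-pair path $Q_2$ from $q_2$ to $s_2$ that absorbs every non-palatable vertex outside $L$. The construction of $Q_2$ follows the greedy iteration of Lemma~\ref{lem:evenextrhampathequal}, alternating Proposition~\ref{clm:evenconnect}(i) (to insert a bad vertex together with a newly chosen partner of the opposite side) and Proposition~\ref{clm:evenconnect}(iii) (to extend without inserting). Because every pair added is split, the invariant $|V(Q_2) \cap A| = |V(Q_2) \cap B|$ is automatic, and a direct count using $|A| = |B|$ and $|L \cap A| = |L \cap B|$ shows that terminating $Q_2$ at the appropriate length $\ell := n/2 - 2m - 3$ forces the residual $R := V \setminus (L \cup V(Q_1) \cup V(Q_2))$ to contain exactly $m/2$ vertices in each of $A$ and $B$, all palatable. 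Step~(III) then applies the swallowing of Step~(I) to $R$, yielding a 2-path $P$ with vertex set $L \cup R$ and ends $q_1, q_2$ so that $Q_1 P Q_2$ is the desired Hamilton path.

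The main obstacle is arranging Step~(II) so that, after absorbing all non-palatable vertices, the residual has \emph{exactly} the right balance between $A$ and $B$ and consists only of palatable vertices. This requires maintaining two balance counts simultaneously, but since every added pair is split these counts move in lockstep, and the total number of bad vertices to absorb ($\leq \tfrac{\rho}{50}n$) is much smaller than the available length ($\approx \tfrac{\rho}{2}n$) of $Q_2$, so no additional difficulty arises. The $O(n^4)$ running time is inherited from the same components as in Lemma~\ref{lem:evenextrhampathequal}: listing all $\gamma$-good pairs and triples in $O(n^4)$, the greedy constructions in $O(n^3)$, and two calls to Edmonds's algorithm for the perfect matchings in $O(n^4)$.
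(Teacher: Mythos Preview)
Your proof is correct and follows essentially the same three-step strategy as the paper's. The one organisational difference is in the grid: the paper builds two separate half-grids $L_1$ and $L_2$ (one with all $y_i\in A$ to swallow vertices of $B$, and a mirror $L_2$ with all $y'_i\in B$ to swallow vertices of $A$), joined by a single bridge edge $\{x_{m+1},z_{m+1},x'_1,z'_1\}$, whereas you achieve the same effect with a single grid by distributing the $y_i$ half-and-half between $A$ and $B$. Both constructions give the same swallowing capacity and the same balance arithmetic for $|R\cap A|=|R\cap B|$, so this is a cosmetic rather than a substantive difference.
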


\begin{proof}
Set $m := \lceil \frac{(1-\rho)n}{8} \rceil$. 
Throughout this argument we only use edges $e\in E$ such that $|e\cap A|=|e\cap B|=2$. Consequently, every path we construct contains the same number of vertices from $A$ and~$B$. The proof consists of the following three steps.
\begin{enumerate}[label=(\Roman*)]
\item We define a notion of a palatable vertex, and build a grid $L \subseteq V \sm (s_1 \cup s_2)$ which consists of $3m+2$ vertices from $A$ and $3m+2$ vertices from $B$, contains two $\gamma$-good split pairs $q_1, q_2 \in \binom{L}{2}$ and which can swallow any set $S \subseteq V \sm L$ of $2m$ palatable vertices with $|S\cap A|=|S \cap B| = m$, meaning that for any such~$S$ there is a path in~$H$ with vertex set $L\cup S$ and with ends $q_1$ and $q_2$. 
\item Next, we construct disjoint paths $Q_1$ and $Q_2$ in $H[(V\sm L)\cup q_1\cup q_2]$ such that $Q_1$ has ends $s_1$ and $q_1$, $Q_2$ has ends $q_2$ and $s_2$, the set $R := V \sm (L \cup Q_1 \cup Q_2)$ satisfies $|R \cap A| = |R \cap B| = m$ and every vertex in $R$ is palatable.
\item Finally, since $L$ can swallow $R$, there is a path $P$ in $H$ with vertex set $R \cup L$ and with ends $q_1$ and $q_2$, and $Q_1PQ_2$ is then a Hamilton path in $H$ with ends $s_1$ and $s_2$.
\end{enumerate}
To construct the grid we first greedily choose a set $L_1'=\{x_1,y_1,x_2,y_2, \cdots,y_m,x_{m+1}\}$ of distinct vertices of $A \sm (s_1 \cup s_2)$ such that for each $i\in [m]$ both $\{x_{i},y_{i}\}$ and $\{y_{i},x_{i+1}\}$ are $\gamma$-good. 
We then greedily select distinct vertices $z_1,\cdots,z_{m+1}\in B \sm (s_1 \cup s_2)$ such that $\{x_1,z_1\}$ and $\{x_{m+1},z_{m+1}\}$ are $\gamma$-good pairs and such that for any $i\in[m]$ both $\{z_i,x_i,y_i\}$ and $\{y_i,x_{i+1},z_{i+1}\}$ are $\gamma$-good triples. 
Having done this, we set $L_1=L_1'\cup\{z_1,\cdots,z_{m+1}\}$ and continue to form a `mirror image' $L_2$ as follows. 
We greedily choose a set $L_2'=\{x_1',y_1',x_2',y_2', \cdots,y_m',x_{m+1}'\}$ of distinct vertices of $B\sm (L_1 \cup s_1 \cup s_2)$ such that $\{x_{m+1},z_{m+1},x_1'\}$ is $\gamma$-good and for each $i\in [m]$ both $\{x_{i}',y_{i}'\}$ and $\{y_{i}',x_{i+1}'\}$ are $\gamma$-good. 
Finally, we greedily select distinct vertices $z_1',\cdots,z_{m+1}'\in A \sm (L_1 \cup s_1 \cup s_2)$ such that $\{x_{m+1},z_{m+1},x_1',z_1'\} \in E(H)$, the pair $\{x_{m+1}',z_{m+1}'\}$ is $\gamma$-good and for any $i\in[m]$ both $\{z_i',x_i',y_i'\}$ and $\{y_i',x_{i+1}',z_{i+1}'\}$ are $\gamma$-good triples. 
Let $L_2 := L_2'\cup \{z_1',\cdots,z_{m+1}'\}$; our grid is then $L := L_1\cup L_2$, and we take $q_1 :=\{x_1,z_1\}$ and $q_2 :=\{x_{m+1}',z_{m+1}'\}$. To confirm that it is possible to make these greedy selections, observe that by the definition of a $\gamma$-good triple and Proposition~\ref{clm:goodvpt}(i) and~(ii) at least $|B| - 2\gamma n$ vertices are suitable for each choice of a vertex from $B$ and at least $|A| - 2 \gamma n$ vertices are suitable for each choice of a vertex from $A$. Since in total we choose $3m+2$ vertices from each of $A$ and $B$, and $|s_1 \cup s_2|=4$, there are always at least $n/2 - (3m+2) - 4 - 2\gamma n \geq 1$ suitable vertices which have not previously been chosen.

We now set $A'=A\sm L$ and $B'=B\sm L$, and define two bipartite auxiliary graphs $G_A$ and $G_B$. Indeed, we take $G_A$ to be the bipartite graph with vertex classes $A'$ and $Y':=\{y_1',\cdots, y_m'\}$ whose edges are those pairs $\{y_{i}',w\}$ with $i\in[m]$ and $w\in A'$ such that $\{x_{i}',z_i',y_i',w\}, \{y_i',w,x_{i+1}',z_{i+1}'\}\in E(H)$. By construction both $\{x_{i}',z_i',y_i'\}$ and $\{y_i',x_{i+1}',z_{i+1}'\}$ are $\gamma$-good for each $i \in [m]$, and it follows that $d_{G_A}(y'_i) \geq |A'|-2 \gamma n > (1-16\gamma)|A'|$. In particular the graph $G_A$ has at least $m(1-16\gamma)|A'|$ edges. Likewise we take $G_B$ to be the bipartite graph with vertex classes $B'$ and $Y:=\{y_1,\cdots, y_m\}$ whose edges are those pairs $\{y_{i},w\}$ with $i\in[m]$ and $w\in B'$ such that $\{x_{i},z_i,y_i,w\},\{y_i,w,x_{i+1},z_{i+1}\}\in E(H)$; by the same argument we have $d_{G_B}(y'_i) > (1-16\gamma)|B'|$ for each $i \in [m]$, and so in particular the graph $G_B$ has more than $m(1-16\gamma)|B'|$ edges.
We call a vertex $a\in A'$ palatable if $d_{G_A}(a)\geq 0.9m$ and a vertex $b\in B'$ palatable if $d_{G_B}(b)\geq 0.9m$. Let $M_A\subseteq A'$ be the set of non-palatable vertices in $A'$ and let $M_B\subseteq B'$ be the set of non-palatable vertices in $B'$. Then $|M_A|<\frac{\rho}{100} n$, as otherwise the number of edges in $G_A$ would be at most 
$$(|A'|-\frac{\rho}{100}n)m+\frac{\rho}{100}n\cdot 0.9m<m(1-16\gamma)|A'|\;,$$
contradicting our previous lower bound. For the same reason we have $|M_B|<\frac{\rho}{100} n$.
Observe that given a set $S\subseteq A'\cup B'$ of $2m$ palatable vertices with $|S\cap A'| = |S \cap B'| =m$ the subgraphs $G'_A := G_A[(A' \cap S) \cup Y']$ and $G'_B := G_B[(B' \cap S) \cup Y]$ contain perfect matchings $\{\{y_i', a_i\}\mid i\in[m]\}$ and $\{\{y_i, b_i\}\mid i\in[m]\}$ respectively, as $\delta(G'_A),\delta(G'_B)\geq 0.9 m$. It follows that 
$$(x_1,z_1,y_1,b_1,\cdots,y_m,b_m,x_{m+1},z_{m+1},x_1',z_1',y_1',a_1,\cdots,y_m',a_m,x_{m+1}',z_{m+1}')$$ 
is a path in $H$ with ends $q_1$ and $q_2$. This demonstrates that $L$ can swallow any set $S$ of $2m$ palatable vertices with $|S\cap A|= |S \cap B| = m$, and so completes Step~(I) of the proof.

We now construct $Q_1$ and $Q_2$. We first use Proposition~\ref{clm:evenconnect}(i) to find $s_1' \in \binom{V \sm (L \cup s_2)}{2}$ such that $Q_1 :=s_1s_1'q_1$ is a path. It then remains to construct a path $Q_2$ with ends $q_2$ and $s_2$, of length $\ell := \frac{n}{2}-4m-4$, with $V(Q_2) \subseteq V \sm (L\cup Q_1)$, so that $Q_2$ contains all non-palatable vertices not in $Q_1$ and such that $|V(Q_2)\cap A|=|V(Q_2)\cap B|$. We do this in the following way. Let $M$ be the set of all non-palatable vertices not in $Q_1$ or $s_2$, so $|M| \leq |M_A| + |M_B| \leq \frac{\rho}{50}n$, and write $M = \{g_1, \dots, g_k\}$ and $V' := V \sm (L \cup V(Q_1)\cup s_2)$. Now greedily choose distinct vertices $h_1,\cdots,h_k \in V' \sm M$ such that $\{g_i, h_i\}$ is a $\gamma$-good split pair for each $1\leq i\leq k$. This is possible as each $g_i$ is $\gamma$-good and $k \leq \frac{\rho}{50} n$. Write $p_0 := s_2$, and for each $i \in [k]$ let $p_{2i} := \{g_i, h_i\}$. By repeated application of Proposition~\ref{clm:evenconnect}(i) we may then choose split pairs $p_i \in \binom{V'}{2}$ for each odd $i \in [2k]$ such that the pairs $p_i$ are all disjoint and $p_{i-1}p_i \in E(H)$ for each $i \in [k]$. We then use Proposition~\ref{clm:evenconnect}(iii) repeatedly to obtain $\gamma$-good split pairs $p_{2k+1}, p_{k+2}, \dots, p_{\ell-2} \in \binom{V'}{2}$ which are pairwise-disjoint and disjoint from $p_0, \dots, p_{2k}$ and such that $p_{i-1}p_i \in E(H)$ for each $k+1 \leq i \leq \ell-2$. Finally, we set $p_\ell := q_2$ and apply Proposition~\ref{clm:evenconnect}(i) to choose a split pair $p_{\ell-1} \in \binom{V'}{2}$ disjoint from $\bigcup_{i=0}^{\ell-2} p_i$ so that $p_{\ell-2}p_{\ell-1} \in E(H)$ and $p_{\ell-1}p_\ell \in E(H)$, and $Q_2 := p_0p_1\dots p_\ell$ is then a path with the desired properties, completing Step~(II). In particular, the set $L \cup Q_1 \cup Q_2$ contains precisely $\frac{1}{2}(|L| + |V(Q_1) \sm L| + |V(Q_2) \sm L|) = (3m+2) + 2 + \ell = \frac{n}{2}- m$ vertices from each of $A$ and $B$, so $R := V \sm (L \cup Q_1 \cup Q_2)$ satisfies $|R \cap A| = |R \cap B| = m$, and therefore can be swallowed by $L$ for Step~(III). This calculation also justifies that it was possible to choose vertices $h_i$ and to use Proposition~\ref{clm:evenconnect} as claimed above.

Finally, for the algorithmic statement observe that for Step~(I) we can list the $\gamma$-good vertices, pairs and triples, greedily construct the grid $L$, form the auxiliary bipartite graphs $G_A$ and $G_B$ and list all palatable vertices in time $O(n^4)$. Then, for Step~(II), we can form the paths $Q_1$ and $Q_2$ in time $O(n^3)$ by repeated exhaustive search of which pair to add next. Finally, for Step~(III) we need to find a perfect matching in $G'_A$ and $G'_B$, and we can do so in time $O(n^4)$ by using Edmonds's blossom algorithm~\cite{Ed65}.
\end{proof}

\subsection{Proof of Lemma~\ref{thm:evenextr}} \label{sec:evenextremalpf}

We now turn to the proof of Lemma~\ref{thm:evenextr}, for which we first give two preliminary results. The first of these states that if $\{A, B\}$ is an even-extremal bipartition of $V$, then almost all vertices and pairs are good, and that pairs which are not bad must form an even edge with a good pair, even if we forbid a small number of vertices from being used for this.

\begin{prop}\label{clm:types}
Assume Setup~\ref{setup:evenextr}, and suppose that $\delta(H)\geq n/2-\eps n$ and that $\{A, B\}$ is a $c$-even-extremal bipartition of $V$. Then
\begin{enumerate}[label=(\alph*)]
\item there are at most $\frac{c}{\gamma^3-2\eps}\binom{n}{2}$ pairs which are not $\gamma$-good,
\item there are at most $\frac{c}{\gamma^5-2\eps}n$ vertices which are not $\gamma$-good,
\item there are at most $\frac{c}{(1-\beta_1)(\frac{1}{2}-\beta_2-2\eps)}n$ vertices which are $(\beta_1,\beta_2)$-bad, and
\item if $R\subseteq V$ satisfies $|R| \leq \frac{1}{3} \beta n$, then for every $\beta$-medium pair $p_1$ there exists a $\gamma$-good pair $p_2 \in \binom{V \sm R}{2}$ such that $p_1\cup p_2$ is an even edge.
\end{enumerate}
\end{prop}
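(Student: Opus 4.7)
Each of (a)--(c) is a double-counting argument exploiting the fact that, since $\{A,B\}$ is $c$-even-extremal, the number of odd edges $e(\Hodd)$ is at most $c\binom{n}{4}$. The preliminary ingredient consists of two degree lower bounds obtained from the codegree hypothesis: summing $d_H(p\cup\{w\})\ge\delta(H)$ over $w\in V\sm p$ and observing that each edge containing $p$ is counted twice yields $d_H(p)\ge(n-2)\delta(H)/2\ge(\tfrac12-2\eps)\binom{n}{2}$ for every pair $p$, and the analogous calculation at the vertex level gives $d_H(v)\ge(\tfrac12-\eps)\binom{n}{3}$ for every vertex $v$.

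For (a), a pair $p$ that is not $\gamma$-good lies in fewer than $(\tfrac12-\gamma^3)\binom{n}{2}$ even edges, and so in more than $(\gamma^3-2\eps)\binom{n}{2}$ odd edges. Since the number of (pair, odd edge containing it) incidences equals $6\,e(\Hodd)\le 6c\binom{n}{4}$, dividing and using $\binom{n}{4}/\binom{n}{2}=(n-2)(n-3)/12\le\binom{n}{2}/6$ gives the claimed bound. Part (b) is identical at the vertex level, with $4\,e(\Hodd)\le 4c\binom{n}{4}$ and $\binom{n}{4}/\binom{n}{3}=(n-3)/4$ playing the corresponding roles.

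For (c) an extra layer is required. A $(\beta_1,\beta_2)$-bad vertex $v$ is in at most $\beta_1 n$ pairs through it that are $\beta_2$-medium, so at least $(1-\beta_1)(n-1)$ pairs through $v$ are $\beta_2$-bad; each such pair lies in fewer than $\beta_2\binom{n}{2}$ even edges but in at least $(\tfrac12-2\eps)\binom{n}{2}$ edges in total, hence in more than $(\tfrac12-\beta_2-2\eps)\binom{n}{2}$ odd edges. Counting in two ways the incidences of the form \emph{(bad pair $p\ni v$, odd edge $e\supseteq p$)} then gives $3\cdot|\{\text{odd edges at }v\}|\ge(1-\beta_1)(n-1)(\tfrac12-\beta_2-2\eps)\binom{n}{2}$, and summing over all bad vertices against $\sum_v|\{\text{odd edges at }v\}|=4\,e(\Hodd)\le 4c\binom{n}{4}$ produces the claimed bound $\tfrac{c}{(1-\beta_1)(1/2-\beta_2-2\eps)}n$.

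Finally, (d) is a short inclusion--exclusion. A $\beta$-medium pair $p_1$ forms an even edge with at least $\beta\binom{n}{2}$ pairs; from these we discard pairs meeting $R$ (at most $|R|(n-1)\le\tfrac13\beta n^2$), pairs meeting $p_1$ (at most $2(n-1)$), and non-$\gamma$-good pairs (at most $\tfrac{c}{\gamma^3-2\eps}\binom{n}{2}$ by~(a)). Since $c\ll\gamma\ll\beta$, for large $n$ what remains is positive, and any such pair $p_2$ satisfies the conclusion. None of the four steps presents a genuine obstacle; the only care required is to track the constants in~(d) to ensure positivity after the three subtractions.
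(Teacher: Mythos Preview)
Your proposal is correct and follows essentially the same approach as the paper: parts (a)--(c) are the same double-counting of (vertex/pair, odd-edge) incidences against the bound $e(\Hodd)\le c\binom{n}{4}$, and part (d) is the same subtraction argument using (a). The only cosmetic difference is that in (c) you split the count into two steps (first bound odd edges through a bad vertex, then sum) whereas the paper directly counts triples $(\text{bad vertex},\text{bad pair through it},\text{odd edge})$ with the overcount factor $12=4\cdot 3$; the arithmetic is equivalent.
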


\begin{proof}
For~(a) note that by our minimum degree condition every pair forms an edge with at least $(\frac{1}{2}-2\eps)\binom{n}{2}$ other pairs, so if the assertion is not true, then there are more than $\frac{1}{6}\cdot\frac{c}{\gamma^3-2\eps}\binom{n}{2}\cdot (\gamma^3-2\eps)\binom{n}{2}\geq c\binom{n}{4}$ odd edges. Similarly, for~(b) note that every vertex forms an edge with at least $(\frac{1}{2}-2\eps)\binom{n}{3}$ triples, so if the assertion is not true, then there are more than $\frac{1}{4}\cdot\frac{c}{\gamma^5-2\eps}n\cdot (\gamma^5-2\eps)\binom{n}{3}\geq c\binom{n}{4}$ odd edges. For~(c), if we assume otherwise, then there are more than $\frac{1}{12}\cdot\frac{c}{(1-\beta_1)(\frac{1}{2}-\beta_2-2\eps)}n\cdot(1-\beta_1)n\cdot(\frac{1}{2}-\beta_2-2\eps)\binom{n}{2}\geq c\binom{n}{4}$ odd edges in $H$. In each case we have a contradiction to the fact that the bipartition $\{A, B\}$ is $c$-even-extremal.
Finally, for~(d) note that by~(a) there are at most $\frac{c}{\gamma^3-2\eps}\binom{n}{2} < \frac{\beta}{4}\binom{n}{2}$ pairs which are not $\gamma$-good, and at most $\frac{\beta}{3} n^2 < \frac{3\beta}{4}\binom{n}{2}$ pairs contain a vertex of $R$, but since $p_1$ is $\beta$-medium there are at least $\beta \binom{n}{2}$ pairs which form an even edge with $p_1$.
\end{proof}

Recall the proof strategies outlined at the start of this section, both of which begin by choosing short paths whose ends we can connect by the Hamilton path connecting lemma. We will use the following lemma to obtain suitable short paths.

\begin{lemma}\label{lem:bridge}
Assume Setup~\ref{setup:evenextr}, and suppose that $\delta(H)\geq n/2-\eps n$ and that $\{A, B\}$ is a $c$-even-extremal bipartition $\{A, B\}$ of $V$. Suppose also that every vertex of $H$ is $(\beta_1, \beta_2)$-medium. If $H$ contains an odd edge, then $H$ contains a path of length $3$ whose ends are a $\gamma$-good split pair and a $\gamma$-good connate pair.
\end{lemma}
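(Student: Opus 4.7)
The strategy is to use the odd edge $e$ itself as the middle edge of the desired length-$3$ path, extending on each side by a single even edge to reach $\gamma$-good endpoints. Assume without loss of generality that $|e\cap A| = 3$ (the case $|e\cap A| = 1$ follows symmetrically by swapping $A$ and $B$), and write $e = \{a_1, a_2, a_3, b\}$ with $a_1,a_2,a_3\in A$ and $b\in B$. The desired path must have ends of different split/connate types, so by a parity count on the three edges it must contain an odd number of odd edges; the existence of $e$ is therefore essential.

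Any way to partition $e$ into two disjoint pairs gives one connate pair $\{a_i,a_j\}\subseteq A$ and one split pair $\{a_k,b\}$. The plan is to set $p_2\cup p_3 = e$ in this way, say $p_2 = \{a_i,a_j\}$ and $p_3 = \{a_k,b\}$; since $p_2\cup p_3$ is odd, a parity check shows that making both $p_1\cup p_2$ and $p_3\cup p_4$ even forces $p_1$ to be connate (same type as $p_2$) and $p_4$ to be split (same type as $p_3$). The main case is when for some split of $e$ both $p_2$ and $p_3$ are $\beta$-medium. In that case I apply Proposition~\ref{clm:types}(d) twice: first to $p_2$ with forbidden set $R = e$ to obtain a $\gamma$-good connate pair $p_1$ disjoint from $e$ with $p_1\cup p_2\in E(H)$, and then to $p_3$ with $R = e\cup p_1$ to obtain a $\gamma$-good split pair $p_4$ disjoint from $e\cup p_1$ with $p_3\cup p_4\in E(H)$; the resulting sequence $p_1 p_2 p_3 p_4$ is the desired length-$3$ path.

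The main obstacle is the situation where no split of $e$ has both pairs $\beta$-medium. In that case each split of $e$ contains a $\beta$-bad pair, and the codegree condition forces any $\beta$-bad pair to lie in at least $(1/2 - 2\eps - \beta_2)\binom{n}{2}$ odd edges, yielding an abundance of odd edges near $e$. I plan to handle this case by instead constructing a length-$3$ path all of whose edges are odd -- which still has one connate and one split end by the parity count -- by extending a convenient $\beta$-bad pair in $e$ through its odd-edge neighbourhood to reach $\gamma$-good endpoints, which are plentiful since by Proposition~\ref{clm:types}(a) only $O(c)\binom{n}{2}$ pairs fail to be $\gamma$-good. The hypothesis that every vertex of $H$ is $(\beta_1,\beta_2)$-medium supplies additional $\beta$-medium pairs incident to the vertices of $e$, which is crucial in subcases where the pairs inside $e$ themselves are uncooperative. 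Carefully carrying out the subcases (based on whether the $\beta$-bad pairs among the three $\{a_i,a_j\}$ and three $\{a_k,b\}$ are all connate, all split, or mixed), while ensuring disjointness and $\gamma$-goodness throughout, is the technical heart of the argument.
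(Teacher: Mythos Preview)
Your main case is correct and coincides with the paper's reduction: once you have an odd edge that splits into a $\beta_2$-medium connate pair and a $\beta_2$-medium split pair, two applications of Proposition~\ref{clm:types}(d) finish the job.

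The gap is in your ``obstacle'' case. You correctly observe that a $\beta_2$-bad pair lies in many odd edges, but you then hand-wave the construction of the path (``carefully carrying out the subcases \dots\ is the technical heart''), without actually doing it. Your proposed route---building a path with \emph{three} odd edges and running a case analysis on which pairs inside $e$ are bad---is not obviously doomed, but you have not shown how to guarantee that \emph{both} endpoints end up $\gamma$-good while keeping everything disjoint, and the case explosion is real.

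The paper avoids this entirely by a short bootstrap that reduces the hard case back to the main case. Starting from a $\beta_2$-bad pair $\{x,y\}\subseteq e$, it uses the abundance of odd edges through $\{x,y\}$, together with the fact that $y$ is $(\beta_1,\beta_2)$-medium and that almost all vertices are $\gamma$-good, to find a \emph{new} odd edge $\{x,y,z,w\}$ in which $z,w$ are $\gamma$-good vertices and $\{y,z\}$ is $\beta_2$-medium. Either $\{x,w\}$ is also $\beta_2$-medium---in which case this new edge is the union of two $\beta_2$-medium pairs and we are back in the main case---or $\{x,w\}$ is $\beta_2$-bad but now contains the $\gamma$-good vertex $w$. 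One more pass of the same argument (using Proposition~\ref{clm:goodvpt}(i) for $w$ and $(\beta_1,\beta_2)$-mediumness for $x$) then produces an odd edge that \emph{is} the union of two $\beta_2$-medium pairs. So the paper never needs a path of three odd edges or a subcase analysis: it simply iterates at most twice to upgrade the odd edge, then applies the main case once.
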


\begin{proof}
It suffices to show that $H$ contains an odd edge $e$ which is the union of two $\beta_2$-medium pairs. Indeed, we can then write $e = p \cup s$ where $p$ is a $\beta_2$-medium connate pair and $s$ is a $\beta_2$-medium split pair, following which two applications of Proposition~\ref{clm:types}(d) yield a $\gamma$-good connate pair $p'$ and a $\gamma$-good split pair $s'$ such that $p'pss'$ is a path. 

By assumption we may choose an odd edge $e \in H$. If $e$ is the union of two $\beta_2$-medium pairs, then we are done by our first observation. So we may assume that there are vertices $x, y \in e$ such that $\{x, y\}$ is $\beta_2$-bad. This means that $\{x, y\}$ is contained in at most $\beta_2\binom{n}{2}$ even edges, and so (using the minimum degree condition) there are at least $(\frac{1}{2} - 2\beta_2) \binom{n}{2}$ pairs $\{z, w\}$ such that $\{x, y, z, w\}$ is an odd edge of $H$. 
By Proposition~\ref{clm:types}(b) at most $\frac{c}{\gamma^5-2\eps}n\cdot n < \gamma\binom{n}{2}$ pairs $\{z, w\}$ contain a vertex which is not $\gamma$-good.
Furthermore, since $y$ is $(\beta_1,\beta_2)$-medium, at least $\beta_1 n$ pairs containing $y$ are $\beta_2$-medium. Therefore there are at least $\beta_1 n\cdot((\frac{1}{2} - c)n-\beta_1 n-1)>\frac{\beta_1}{2}\binom{n}{2}$ pairs $\{z, w\}$ for which $\{y, z\}$ and $\{y, w\}$ are not both $\beta_2$-bad. 
Since there are at most $(\frac{1}{2} +c) \binom{n}{2}$ pairs $\{z, w\}$ such that $\{x, y, z, w\}$ is an odd $4$-tuple, and $(\frac{1}{2} + c) - (\frac{1}{2} - 2\beta_2) + \gamma < \frac{\beta_1}{2}$, it follows that there exist $\gamma$-good vertices $z$ and $w$ such that $e := \{x, y, z, w\}$ is an odd edge of $H$ and $\{y, z\}$ is a $\beta_2$-medium pair. If $\{x, w\}$ is $\beta_2$-medium, then $e$ is the union of two $\beta_2$-medium pairs and we are done by our first observation, whilst if $\{x, w\}$ is $\beta_2$-bad, then we have a $\beta_2$-bad pair containing the $\gamma$-good vertex $w$. We then proceed as follows.

The minimum degree condition of $H$, combined with the fact that $\{w, x\}$ is $\beta_2$-bad, implies that there are at least $(\frac{1}{2} - 2\beta_2) \binom{n}{2}$ pairs $\{z', w'\}$ such that $\{w, x, z', w'\}$ is an odd edge of $H$. Since $w$ is a $\gamma$-good vertex, by Proposition~\ref{clm:goodvpt}(i) at most $\gamma n$ pairs containing $w$ are not $\gamma$-good. So certainly there are at most $\gamma n^2 \leq 3 \gamma \binom{n}{2}$ pairs $\{z', w'\}$ for which $\{w, z'\}$ is $\beta_2$-bad or $\{w, w'\}$ is $\beta_2$-bad. Moreover, since $x$ is $(\beta_1, \beta_2)$-medium (since we assumed all vertices are), at least $\beta_1 n$ pairs containing $x$ are $\beta_2$-medium. Therefore there are at least $\beta_1 n\cdot((\frac{1}{2} - c)n-\beta_1 n-1)>\frac{\beta_1}{2}\binom{n}{2}$ pairs $\{z',w'\}$ such that $\{w,x,z',w'\}$ is an odd $4$-tuple and such that $\{x, z'\}$ and $\{x, w'\}$ are not both $\beta_2$-bad. Since there are at most $(\frac{1}{2} +c) \binom{n}{2}$ pairs $\{z', w'\}$ such that $\{w, x, z', w'\}$ is an odd $4$-tuple, and $(\frac{1}{2} + c)-(\frac{1}{2} -2\beta_2) + 3\gamma < \frac{\beta_1}{2}$, it follows that there exist vertices $z'$ and $w'$ such that $\{w, x, z', w'\}$ is an odd edge of $H$ containing $\beta_2$-medium pairs $\{w, z'\}$ and $\{x, w'\}$, and we saw already that this is sufficient.
\end{proof}

Now we have all the tools we need to prove Lemma~\ref{thm:evenextr}. In fact, we actually prove the following stronger statement, which additionally asserts that given an even-extremal bipartition as input, we can find a Hamilton $2$-cycle in polynomial time.

\begin{lemma} \label{thm:evenextralgo}
Suppose that $1/n \ll \eps, c' \ll 1$ and that $n$ is even, and let $H$ be a $4$-graph of order $n$ with $\delta(H)\geq n/2-\eps n$. If $H$ is $c'$-even-extremal and every bipartition $\{A, B\}$ of $V(H)$ is even-good, then $H$ contains a Hamilton $2$-cycle. Moreover, there exists an algorithm Procedure~HamCycleEven$(H,\{A,B\})$ which, given as input a $c'$-even-extremal bipartition $\{A,B\}$ of $V(H)$, returns a Hamilton $2$-cycle in $H$ in time $O(n^{10})$.
\end{lemma}

\begin{proof}
For this proof we introduce further constants such that 
\[\eps, c' \ll c \ll\gamma\ll\beta\ll\beta_2\ll\beta_2'\ll\beta_1\ll\beta_1'\ll 1\;.\]
Since $H$ is $c'$-even-extremal we may fix a bipartition $\{A', B'\}$ of $V := V(H)$ with $(\frac{1}{2}-c')n\leq|A'|\leq(\frac{1}{2}+c')n$ which induces at most $c'\binom{n}{4}$ odd edges. We begin by moving all vertices which are $(\beta_1',\beta_2')$-bad to the other side. 
More precisely, we define $A^{\bad}:=\{a\in A': a\textrm{ is }(\beta_1',\beta_2')\textrm{-bad}\}$ and $B^{\bad}:=\{b\in B': b\textrm{ is }(\beta_1',\beta_2')\textrm{-bad}\}$, 
and set $A:=(A'\sm A^{\bad})\cup B^{\bad}$ and $B:=(B'\sm B^{\bad})\cup A^{\bad}$; we say that the vertices of $A^{\bad}\cup B^{\bad}$ are \emph{moved}. 
By Proposition~\ref{clm:types}(c) at most $3c' n$ vertices are moved in total. 

\begin{claim}\label{clm:goodpartition}
$\{A, B\}$ is a $c$-even-extremal bipartition of $V$ with respect to which every vertex of $H$ is $(\beta_1,\beta_2)$-medium. Moreover, every vertex of $H$ is contained in at least $\beta n$ connate pairs which are $\beta$-medium with respect to $\{A, B\}$.
\end{claim}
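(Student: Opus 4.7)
The plan is to verify the three assertions in sequence, all resting on a single parity identity: for any edge $e$ one has $|e \cap A| - |e \cap A'| = |e \cap B^{\bad}| - |e \cap A^{\bad}|$, so the parities of $|e \cap A|$ and $|e \cap A'|$ coincide exactly when $|e \cap M|$ is even, where $M := A^{\bad} \cup B^{\bad}$. Proposition~\ref{clm:types}(c) gives $|M| \leq 3c'n$, which simultaneously controls the size shift and the number of edges whose parity can flip.

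For the first assertion, $|A|$ differs from $|A'|$ by at most $|M|$, keeping $(\tfrac{1}{2}-c)n \leq |A| \leq (\tfrac{1}{2}+c)n$ since $c' \ll c$; and the odd edges with respect to $\{A,B\}$ lie in the union of the odd edges with respect to $\{A',B'\}$ and the edges meeting $M$, totalling at most $c'\binom{n}{4} + 3c'n\binom{n-1}{3} \leq c\binom{n}{4}$. For the second assertion I split on whether $v$ is moved. A non-moved $v$ is $(\beta_1',\beta_2')$-medium with respect to $\{A',B'\}$: all but at most $3c'n$ of its at least $\beta_1' n$ $\beta_2'$-medium partners $x$ lie outside $M$, and for each such $x$ only the $\leq 3c'n^2$ edges $\{v,x,y,z\}$ with $\{y,z\} \cap M \neq \emptyset$ can alter the parity, so $\{v,x\}$ retains at least $\beta_2'\binom{n}{2} - 3c'n^2 \geq \beta_2\binom{n}{2}$ even edges with respect to $\{A,B\}$. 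A moved $v$ is $(\beta_1',\beta_2')$-bad, so by the minimum codegree at least $(1-2\beta_1')n - 3c'n$ of its partners $x \notin M$ have at least $(\tfrac{1}{2}-2\eps-\beta_2')\binom{n}{2}$ odd edges with respect to $\{A',B'\}$; since moving $v$ flips the parity of every such edge whose other two vertices avoid $M$, each such pair becomes $\beta_2$-medium with respect to $\{A,B\}$. In either case $v$ has at least $\beta_1 n$ $\beta_2$-medium partners.

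For the third assertion, part two provides at least $\beta_1 n$ $\beta$-medium pairs containing $v$ since $\beta \leq \beta_2$. If at least $\beta_1 n / 2$ of them are connate we are done as $\beta_1 / 2 \gg \beta$. Otherwise at least $\beta_1 n / 2$ are split; assume WLOG $v \in A$, so each such $\{v,x\}$ with $x \in B$ provides at least $\beta_2\binom{n}{2}$ even edges $\{v,x,y,z\}$, each of which (by parity, since $|e \cap A|$ must be even) satisfies $|e \cap A|=2$, with exactly one of $y,z$ in $A$. Such an edge contributes $1$ to $\sum_{y \in A \sm v} d_{\Heven}(\{v,y\})$ and $2$ to $\sum_{x \in B} d_{\Heven}(\{v,x\})$, so
\[
\sum_{y \in A \sm v} d_{\Heven}(\{v,y\}) \;\geq\; \tfrac{1}{2} \sum_{x \in B} d_{\Heven}(\{v,x\}) \;\geq\; \tfrac{1}{2} \cdot \tfrac{\beta_1 n}{2} \cdot \beta_2\binom{n}{2}.
\]
If fewer than $\beta n$ of the pairs $\{v,y\}$ with $y \in A \sm v$ were $\beta$-medium, this sum would be at most $\beta n \binom{n}{2} + |A| \beta \binom{n}{2} \leq 2 \beta n \binom{n}{2}$, contradicting the lower bound since $\beta \ll \beta_1\beta_2$. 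The main obstacle is this last assertion: the $(\beta_1,\beta_2)$-medium property on its own does not distinguish connate from split pairs, so one must exploit the parity constraint that forces split even edges through $v$ to produce connate partners of $v$.
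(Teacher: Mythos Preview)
Your proof is correct and follows essentially the same approach as the paper's. All three parts use the same ideas: the parity/size control via $|M| \leq 3c'n$ for the first assertion, the split into moved versus unmoved vertices for the second, and for the third the observation that many split $\beta_2$-medium pairs through $v$ force many even edges with $|e\cap A|=2$ through $v$, which by double counting yield many $\beta$-medium connate pairs through $v$. Your presentation of the third part via the inequality $\sum_{y\in A\sm v}d_{\Heven}(\{v,y\})\geq \tfrac12\sum_{x\in B}d_{\Heven}(\{v,x\})$ is a slight reformulation of the paper's direct count of $N$, but the underlying argument is identical.
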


\begin{proof}
Since at most $3c'n$ vertices were moved, we have $(\frac{1}{2}-c)n \leq (\frac{1}{2}-c' - 3c')n \leq |A| (\frac{1}{2}+c' + 3c')n \leq(\frac{1}{2}+c)n$, and the number of edges of $H$ which are odd with respect to $\{A, B\}$ is at most $c'\binom{n}{4} + 3c'n^4 \leq c \binom{n}{4}$, so $\{A, B\}$ is $c$-even-extremal. Now, since $\beta_2 \ll \beta_2' \ll \beta_1 \ll \beta_1'$, every unmoved vertex which was $(\beta_1',\beta_2')$-medium under $\{A', B'\}$ is $(\beta_1,\beta_2)$-medium under~$\{A, B\}$, whilst every moved vertex was in at least $(1-\beta_1')n$ pairs which were not $\beta_2'$-medium under $\{A', B'\}$ and therefore is in at least $(1-\beta_1' - 3c')n$-many $\beta_2$-medium pairs under $\{A, B\}$. Thus every vertex is $(\beta_1,\beta_2)$-medium with respect to $\{A, B\}$. 

Now suppose that some vertex $x \in V$ is contained in fewer than $\beta n$ connate pairs which are $\beta$-medium. Since $x$ is $(\beta_1,\beta_2)$-medium it follows that $x$ is contained in at least $(\beta_1-\beta)n \geq \frac{\beta_1}{2}n$ split pairs which are $\beta_2$-medium, so the number $N$ of even edges $e \in E(H)$ with $|e \cap A| = |e \cap B| =2$ which contain $x$ satisfies $N \geq \frac{1}{3}\cdot \frac{\beta_1}{2} n\cdot\beta_2\binom{n}{2} \geq 2\beta n^3$ even edges $e$ with $|e \cap A| = |e \cap B| =2$. But then there must be at least $\beta n$ connate pairs containing $x$ which are contained in at least $\beta \binom{n}{2}$ even edges of $H$, as otherwise we would have $N < \beta n \cdot \binom{n}{2} + n \cdot \beta \binom{n}{2} < 2 \beta n^3$, and each of these pairs is $\beta$-medium, giving a contradiction.
\end{proof}

We henceforth exclusively use the terms odd, even, $\gamma$-good, $(\beta_1, \beta_2)$-medium and so forth with respect to the bipartition $\{A, B\}$ of $V$. Observe that $H$, $V$, $n$ and the constants $\eps, c, \gamma, \beta, \beta_1$ and $\beta_2$ are as in Setup~\ref{setup:evenextr}.

We distinguish five cases which are related to the cases in the definition of an even-good bipartition. 
Case~A assumes only that $H$ contains two disjoint odd edges. All other cases assume that there are no two disjoint odd edges in $H$, and in addition Case~B assumes that $|A|$ and $|B|$ are even, Case~C assumes that $|A|$ and $|B|$ are odd and that there are two odd edges whose intersection is a split pair, Case~D assumes that $|A|=|B|$ is odd and that there are no two odd edges whose intersection is a split pair and Case~E assumes that $|A|=|B|+2$ and that there are two odd edges whose intersection is a connate pair in $A$. By symmetry Case~E also covers the case where $|B| = |A| + 2$ and there are two odd edges whose intersection is a connate pair in $B$. Since $\{A, B\}$ is even-good by assumption, at least one of these cases must hold, so to prove the lemma it suffices to construct a Hamilton cycle in $H$ in each case.

We begin with Cases A--C, for which we construct a Hamilton cycle as follows.
\begin{enumerate}[label=(\Roman*)]
\item We build a `bridge' $Q$, which is a path on at most $\gamma n$ vertices whose ends are a $\gamma$-good pair $q_A \in \binom{A}{2}$ and a $\gamma$-good pair $q_B \in \binom{B}{2}$ such that $|A\sm V(Q)|$ and $|B\sm V(Q)|$ are both even and so that $Q$ contains all vertices of $H$ which are not $\gamma$-good.
\item Next, we choose $\gamma$-good pairs $p_A \in \binom{A \sm V(Q)}{2}$ and $p_B \in\binom{B \sm V(Q)}{2}$ such that $p_A \cup p_B \in E(H)$, and take $P$ to be the path consisting of this single edge. 
\item Finally, we apply Lemma~\ref{lem:evenextrhampathsplit} twice to form a Hamilton path $P_{A}$ in $H[(A \sm V(Q)) \cup q_A]$ with ends $p_A$ and $q_A$ and a Hamilton path $P_{B}$ in $H[(B\sm V(Q))\cup q_B]$ with ends $p_B$ and $q_B$, and then $QP_{B}PP_{A}$ is a Hamilton cycle in $H$.
\end{enumerate}
It suffices to prove the existence of the bridge $Q$ in each case. Indeed, having constructed the bridge $Q$ we may choose $p_A$ and $p_B$ as in Step~(II) by choosing any $\gamma$-good pair $p_A \in \binom{A \sm V(Q)}{2}$ and then using Proposition~\ref{clm:evenconnect}(iv) to obtain a $\gamma$-good pair $p_B \in \binom{B \sm V(Q)}{2}$ with $p_A \cup p_B \in E(H)$. We then just need to explain how to apply Lemma~\ref{lem:evenextrhampathsplit} in Step~(III).

For this define $A^* := (A\sm V(Q))\cup q_A$, $H_A := H[A^*]$ and $n_A :=|A^*|$. Then by choice of $Q$ we have that $n_A$ is even and $(\frac{1}{2}-2\gamma)n \leq n_A \leq (\frac{1}{2}+c)n$, so we can choose an arbitrary bipartition $A^*=S_1\cup S_2$ of $H_A$ such that $|S_1|=|S_2|$ and such that $p_A$ and $q_A$ are split pairs in relation to $(S_1,S_2)$. Since each vertex $v\in A^*$ is $\gamma$-good in $H$, it follows that there are at most $(\frac{1}{2}+2c)\binom{n}{3}\ - (\frac{1}{2}-\gamma^5)\binom{n}{3}$ even $4$-tuples $S \in \binom{V}{4}$ which contain $v$ and do not form an edge of $H$. Then $v$ is contained in at least $n_A/2\cdot(n_A/2-1) - (2c+\gamma^5)\binom{n}{3} > (1/2-\beta^3)\binom{n_A}{3}$ edges in $H_A$, and so $v$ is $\beta$-good in $H_A$. Similar calculations show that, since each of $p_A$ and $q_A$ is a $\gamma$-good pair in $H$, each is a $\beta$-good pair in~$H_A$. Thus we can apply Lemma~\ref{lem:evenextrhampathsplit} to obtain a Hamilton path in $H_A$ with ends $p_A$ and~$q_A$ as required. The same argument shows that we can find a Hamilton path in $H_B$ with ends $p_B$ and $q_B$ also.

We now show how to construct the bridge $Q$ in each of Cases A--C.

\medskip\noindent {\bf Case~A:} Assume there are two disjoint odd edges $e_1,e_2\in E$. To construct $Q$ we first use Lemma~\ref{lem:bridge} to find a path $Q'_1$ of length three in $H[V \sm e_2]$ whose ends are a $\gamma$-good connate pair $p'_1$ and a $\gamma$-good split pair $s_1$. 
Next, we use Lemma~\ref{lem:bridge} again to find a path $Q_2$ of length three in $H[V\sm V(Q'_1)]$ whose ends are a $\gamma$-good split pair $s_2$ and a $\gamma$-good connate pair~$p_2$. 
By Proposition~\ref{clm:evenconnect}(iv) we may then choose a $\gamma$-good connate pair $p_1 \in \binom{A}{2}$ disjoint from $V(Q'_1) \cup V(Q'_2)$ such that $p_1 \cup p'_1 \in E(H)$, to give a path $Q_1 := p_1p'_1Q'_1s_1$ of length four which is vertex-disjoint from $Q_2$. Write $V' := V \sm (V(Q_1) \cup V(Q_2))$, and let $X = \{x_1,\cdots,x_k\}$ be the set of all vertices not in $Q_1$ or $Q_2$ which are not $\gamma$-good, so $k := |X| \leq \frac{c}{\gamma^5-2\eps}n$ by Proposition~\ref{clm:types}(b). Since every vertex is in at least $\beta n$-many $\beta$-medium connate pairs by Claim~\ref{clm:goodpartition}, and $\beta n > \frac{2c}{\gamma^5-2\eps}n + 18 \geq 2k + |V(Q_1)| + |V(Q_2)|$, we may then greedily choose distinct vertices $\{y_1,\cdots,y_k\} \in V' \sm X$ such that the pair $r_i := \{x_i,y_i\}$ is a $\beta$-medium connate pair for each $i \in [k]$. 

We next choose $\gamma$-good connate pairs $g_i, g'_i \in \binom{V'}{2}$ with $g_i \cup r_i \in E(H)$ and $g'_i \cup r_i \in E(H)$ for each $i \in [k]$, and then choose connate pairs $h_i \in \binom{V'}{2}$ with $h_i \cup g_i \in E(H)$ and $h_i \cup g'_{i+1} \in E(H)$ for each $i \in [k-1]$. We additionally require that the pairs $g_i, g'_i$ and $h_i$ are disjoint from each other and from the pairs $r_i$. By Proposition~\ref{clm:types}(d) it is possible to choose the pairs $g_i$ and $g'_i$ greedily with these properties, whilst Proposition~\ref{clm:evenconnect}(ii) ensures that we may choose the pairs $h_i$ greedily also. Similarly, we may also apply Proposition~\ref{clm:evenconnect}(iv) to choose a $\gamma$-good connate pair $h_k \in \binom{V' \cap B}{2}$ such that $h_k \cup g_k \in E(H)$ which is disjoint from all previously-chosen pairs. Observe that, having made these choices, 
$$Q_3 := g'_1r_1g_1h_1g'_2r_2g_2h_2g'_3r_3g_3h_3g'_4\dots h_{k-1}g'_{k}r_{k}g_{k}h_k$$
is a path in $H$. By Proposition~\ref{clm:evenconnect}(ii) we may then choose a connate pair $p^*$ such that $p^* \cup p_2 \in E(H)$ and $p^* \cup g'_1 \in E(H)$, so `connecting' the paths $Q_2$ and $Q_3$.  

Write $V'' := V \sm (V(Q_1) \cup V(Q_2) \cup V(Q_3) \cup p^*)$. If $|V'' \cap A|$ is odd then we use Proposition~\ref{clm:evenconnect}(i) to choose a split pair $s^* \in \binom{V''}{2}$ such that $s_1 \cup s^* \in E(H)$ and $s_2 \cup s^* \in E(H)$, and define the path $Q := p_1Q_1s_1s^*s_2Q_2p_2p^*g'_1Q_3h_k$. On the other hand, if $|V'' \cap A|$ is even then we first use Proposition~\ref{clm:evenconnect}(iii) to choose a $\gamma$-good split pair $s' \in \binom{V''}{2}$ such that $s_1 \cup s' \in E(H)$ and then use Proposition~\ref{clm:evenconnect}(i) to choose a split pair $s^{*} \in \binom{V''}{2}$ such that $s' \cup s^{*} \in E(H)$ and $s_2 \cup s^{*} \in E(H)$; we then define the path $Q := p_1Q_1s_1s's^{*}s_2Q_2p_2p^*g'_1Q_3h_k$. Either way $Q$ is a path in $H$ whose ends $q_A := p_1 \in \binom{A}{2}$ and $q_B := h_k \in \binom{B}{2}$ are $\gamma$-good pairs such that $|A \sm V(Q)|$ is even; since $n$ and $V(Q)$ are both even it follows that $|B \sm V(Q)|$ is even also. Moreover $Q$ contains all non-$\gamma$-good vertices by choice of $Q_3$, and the total number of vertices in $Q$ is at most $|V(Q_1)| + |V(Q_2)| + |V(Q_3)| + 6 \leq 22 + 8k \leq \gamma n$, so $Q$ has the required properties.

\medskip \noindent {\bf Case~B:} Assume that there are no two disjoint odd edges, but $|A|$ and $|B|$ are even. Then there must be at most one non-$\gamma$-good vertex, as otherwise we would have two disjoint odd edges. Let $x$ be such a vertex, if it exists; otherwise choose $x \in V$ arbitrarily. Then by Claim~\ref{clm:goodpartition} we may choose a $\beta$-medium connate pair $p$ which contains $x$. 
We now apply Proposition~\ref{clm:types}(d) twice to find disjoint $\gamma$-good connate pairs $p_1$ and $p_2$ such that $p_1 \cup p \in E(H)$ and $p_2 \cup p \in E(H)$. By symmetry we may assume that $p_2 \in \binom{B}{2}$, and by Proposition~\ref{clm:evenconnect}(iv) we may choose a $\gamma$-good connate pair $p_0 \in \binom{A}{2}$ disjoint from $p_1, p$ and $p_2$ such that $p_0 \cup p_1 \in E(H)$. We may then take $Q := p_0p_1pp_2$ (in particular $Q$ has an even number of vertices in each of $A$ and $B$, so $|A \sm V(Q)|$ and $|B \sm V(Q)|$ are both even, as required).

\medskip \noindent{\bf Case~C:} Assume that $|A|$ and $|B|$ are odd and there are no two disjoint odd edges, but there are two odd edges $e_1$ and $e_2$ whose intersection is a split pair. That is, we may write $e_1= p_1 \cup s$ and $e_2 = s \cup p_2$ where $s$ is a split pair and $p_1$ and $p_2$ are connate pairs. Then $p_1$ and $p_2$ must be $\gamma$-good pairs, as otherwise we would have two disjoint odd edges. For the same reason all vertices in $V \sm (s \cup p_1 \cup p_2)$ must be $\gamma$-good. By Proposition~\ref{clm:evenconnect}(iv) we may choose disjoint $\gamma$-good connate pairs $q_1 \in \binom{A}{2}$ and $q_2 \in \binom{B}{2}$ which do not intersect $s, p_1$ or $p_2$ so that $p_1 \cup q_1 \in E(H)$ and $p_2 \cup q_2 \in E(H)$, and we may then take $Q := q_1p_1sp_2q_2$ (in particular $Q$ has an odd number of vertices in each of $A$ and $B$, so $|A\sm Q|$ and $|B\sm Q|$ are even, as required).

\medskip We now turn to cases $D$ and $E$, for which we use the following, similar strategy to construct a Hamilton cycle.
\begin{enumerate} [label=(\Roman*)]
\item We construct a path $P_0$ on at most six vertices whose ends $s_1$ and $s_2$ are both $\gamma$-good split pairs such that $P_0$ contains all non-$\gamma$-good vertices of $H$ and $|A\sm V(P_0)| = |B\sm V(P_0)|$.
\item Write $V' := (V\sm V(P_0)) \cup s_1 \cup s_2$. Then by our choice of $P_0$ we have $|A \cap V'| = |B \cap V'|$, and $H[V']$ contains only $\gamma$-good vertices. So by Lemma~\ref{lem:evenextrhampathsplit} there is a Hamilton path~$P_1$ in $H[V']$ with ends $s_1$ and $s_2$, and then $P_0P_1$ is a Hamilton cycle in $H$.
\end{enumerate}
Hence it suffices to construct $P_0$ in each case.

\medskip \noindent {\bf Case~D:} Assume that $|A|=|B|$ is odd, there are no two disjoint odd edges and there are no two distinct odd edges whose intersection is a split pair. Then there is at most one non-$\gamma$-good vertex, as otherwise we would have two disjoint odd edges. If there is such a vertex, we denote it by $x$ and assume without loss of generality that $x \in A$; if not then we choose a vertex $x \in A$ arbitrarily. For every $b \in B$ the pair $\{x, b\}$ must be $\gamma$-good, as otherwise there would be two distinct odd edges whose intersection is the split pair $\{x, b\}$. So we may choose $b\in B$ such that $s_1 :=\{x,b\}$ is $\gamma$-good and then use Proposition~\ref{clm:evenconnect}(iii) to obtain a $\gamma$-good split pair $s_2$ such that $s_1\cup s_2 \in E(H)$. We may then take $P_0$ to be the single edge $s_1 \cup s_2$.

\medskip \noindent {\bf Case~E:} Assume that $|A|=|B|+2$ and there are no two disjoint odd edges, but there are odd edges $e_1$ and $e_2$ whose intersection is a connate pair in $A$. That is, we may write $e_1= s_1 \cup p$ and $e_2= p \cup s_2$ where $s_1$ and $s_2$ are split pairs and $p$ is a connate pair in $A$. Then $s_1$ and $s_2$ must both be $\gamma$-good pairs, as otherwise we would have two disjoint odd edges. For the same reason all vertices in $V \sm (s_1 \cup p \cup s_2)$ must be $\gamma$-good, so we may take $P_0 := s_1ps_2$ (in particular, $P_0$ contains two more vertices from $A$ than $B$, so $|A \sm V(P_0)| = |B \sm V(P_0)|$, as required). \medskip

Finally, for the `moreover' part of the lemma statement, note that if we are given a $4$-graph~$H$ as in the lemma, it is not clear that we can find an even-extremal partition $\{A', B'\}$ of $H$ in polynomial time. However, if such a partition is also given, then the remaining steps of the proof can be carried out efficiently. Indeed, we can identify the sets $A^{\bad}$ and $B^{\bad}$ and form the partition $\{A, B\}$ in time $O(n^4)$, and then we can identify in time $O(n^8)$ which of Cases A--E holds for $\{A, B\}$ (since $\{A, B\}$ is even-good at least one of the cases must hold). In Cases B--E the path $Q$ or $P_0$ (according to the case) has at most 10 vertices, so can be found by exhaustive search in time $O(n^{10})$, whilst the greedy argument given in case $A$ constructs $Q$ in time $O(n^4)$. This completes Step~(I) in each case. In Cases~A--C we can then find an edge $f$ as in Step~(II) in time $O(n^4)$ by exhaustive search, and Lemma~\ref{lem:evenextrhampathsplit} states that we can then complete Step~(III) in time $O(n^4)$ also. Similarly, in Cases~D and~E we can complete Step~(II) in time $O(n^4)$.
\end{proof}

\section{Hamilton 2-cycles in 4-graphs: odd-extremal case}\label{sec:oddextr}

In this section we give a detailed proof of Lemma~\ref{thm:oddextremal} which states that Theorem~\ref{character} holds for odd-extremal $4$-graphs. As in the previous two sections, all paths and cycles we consider in this section are $2$-paths and $2$-cycles, therefore we will again omit the $2$ and speak simply of paths and cycles. Throughout this section we will work within the following setup.

\begin{setup}\label{setup:oddextremal}
Fix constants satisfying $1/n \ll\eps, c\ll\gamma \ll \psi \ll \beta_2\ll\beta_1.$
Let~$H$ be a $4$-graph on $n$ vertices with $\delta(H)\geq n/2-\eps n$, write $V := V(H)$ and let $\{A, B\}$ be a $c$-odd-extremal bipartition of $V$. 
\end{setup}

Our strategy is broadly similar to the one outlined in the previous section. Indeed, the minimum degree condition on $H$, combined with the fact that $H$ has very few even edges (since the bipartition $\{A, B\}$ is odd-extremal), implies that almost all possible odd edges are present in $H$. To find a Hamilton cycle in $H$, we will find a short path $P$ in $H$ whose ends $s_1$ and $s_2$ are split pairs so that, using the high density of odd edges of $H$, we can then find a Hamilton path $Q$ in $H[(V \sm V(P)) \cup s_1 \cup s_2]$ with ends $s_1$ and $s_2$. Together $P$ and $Q$ then form a Hamilton cycle in $H$. To implement this strategy, we begin by establishing some necessary preliminaries, then prove the Hamilton path connecting lemma (Lemma~\ref{lem:oddhampath}) which we use to find $Q$, before proceeding to give the proof of Lemma~\ref{thm:oddextremal}.

\begin{definition}
Under Setup~\ref{setup:oddextremal}, we say that
\begin{enumerate}[label=(\roman*)]
\item a triple $\{x,y,z\} \in \binom{V}{3}$ is \emph{$\gamma$-good} if it is contained in at least $(\frac{1}{2}-\gamma)n$ odd edges,
\item a pair $\{x,y\} \in \binom{V}{2}$ is \emph{$\gamma$-good} if it is contained in at least $(\frac{1}{2}-\gamma^3)\binom{n}{2}$ odd edges,
\item a vertex $v\in V$ is \emph{$\gamma$-good} if it is contained in at least $(\frac{1}{2}-\gamma^5)\binom{n}{3}$ odd edges,
\item a pair $\{x,y\}\in \binom{V}{2}$ is \emph{$\beta_2$-medium} if it is contained in at least $\beta_2\binom{n}{2}$ odd edges,
\item a pair $\{x,y\}\in \binom{V}{2}$ is \emph{$\beta_2$-bad} if it is not \emph{$\beta_2$-medium},
\item a vertex $v\in V$ is \emph{$(\beta_1,\beta_2)$-medium} if there are at least $\beta_1 n$ vertices in $A$ which form a $\beta_2$-medium pair with $v$ and also at least $\beta_1 n$ vertices in $B$ which form a $\beta_2$-medium pair with $v$, and
\item a vertex $v\in V$ is \emph{$(\beta_1,\beta_2)$-bad} if it is not $(\beta_1,\beta_2)$-medium.
\end{enumerate}
\end{definition}

Note that definitions~(i)-(v) above are identical to those of the previous section (Definition~\ref{def:good}) with `odd' in place of `even', but~(vi) and~(vii) (the definitions of $(\beta_1,\beta_2)$-medium and $(\beta_1,\beta_2)$-bad vertices) differ significantly.

\begin{prop}\label{prop:oddproperties}
Adopt Setup~\ref{setup:oddextremal}. Then the following statements hold.
\begin{enumerate}[label=(\alph*)]
\item If $ v\in V$ is $\gamma$-good, then $v$ is contained in at least $(1-\gamma)n$-many $\gamma$-good pairs.
\item If a pair $p \in \binom{V}{2}$ is $\gamma$-good, then $p$ is contained in at least $(1-\gamma)n$-many $\gamma$-good triples.
\item At most $\frac{c}{\gamma^3-2\eps}\binom{n}{2}$ pairs  are not $\gamma$-good.
\item At most $\frac{c}{\gamma^5-2\eps}n$ vertices are not $\gamma$-good.
\item At most $5cn$ vertices are $(\beta_1,\beta_2)$-bad.
\end{enumerate}
Now suppose also that $R \subseteq V$ is such that $|R \cap A|,|R \cap B| \leq n/2-\psi n$. Then the following statements hold.
\begin{enumerate}[label=(\alph*)]
\setcounter{enumi}{5}
\item For every two disjoint $\gamma$-good connate pairs $p_1$ and $p_2$ there exists a split pair $s\in \binom{V\sm R}{2}$ such that $p_1sp_2$ is a path of length $2$ in $H$.
\item For every two disjoint $\gamma$-good split pairs $s_1$ and $s_2$ there exists a connate pair $p\in \binom{A\sm R}{2}$ such that $s_1ps_2$ is a path of length $2$ in $H$.
\item For every $\gamma$-good connate pair $p$ there exists a $\gamma$-good split pair $s\in \binom{V\sm R}{2}$ such that $p\cup s\in E(H)$.
\item For every $\gamma$-good split pair $s$ there exists a $\gamma$-good connate pair $p\in \binom{A\sm R}{2}$ such that $s\cup p\in E(H)$.
\end{enumerate}
\end{prop}

\begin{proof}
The proofs of~(a) and~(b) are identical to those of Proposition~\ref{clm:goodvpt} with the words `odd' and `even' interchanged, the proofs of~(c) and~(d) are similarly identical to those of Proposition~\ref{clm:types}(a) and~(b). For~(e), observe that any vertex which is $(\beta_1, \beta_2)$-bad is contained in at least $(\frac{1}{2} - c - \beta_1)n$-many $\beta_2$-bad pairs, each of which is contained in fewer than $\beta_2 \binom{n}{2}$ odd edges of $H$. Since any pair is contained in at least $(\frac{1}{2}-2\eps)\binom{n}{2}$ edges of $H$, and $H$ has at most $c\binom{n}{4}$ even edges, as $\{A, B\}$ is $c$-odd-extremal, it follows that the number of $(\beta_1, \beta_2)$-bad vertices is at most $$ \frac{4 \cdot 3 \cdot c\binom{n}{4}}{(\frac{1}{2} - c - \beta_1)n \cdot (\frac{1}{2}-2\eps - \beta_2)\binom{n}{2}} < 5 cn\;.$$

For~(f), note that since $p_i$ is $\gamma$-good for $i\in\{1,2\}$, there are at least $(\frac{1}{2}-\gamma)\binom{n}{2}$ split pairs which form an edge with $p_i$. Since there are at most $\frac{n^2}{4}$ split pairs in total, it follows that there are at most $3 \gamma n^2$ split pairs $s$ for which  $p_1 \cup s$ and $p_2 \cup s$ are not both edges. Furthermore there are at least $(\psi-c)n$ vertices in each of $A\sm R$ and $B\sm R$, so there are at least $(\psi-c)^2n^2$ split pairs which do not contain a vertex of $R$. Since $3\gamma n^2<(\psi-c)^2n^2$, it follows that there exists a split pair $s \in \binom{V\sm R}{2}$ such that $p_1\cup s$ and $p_2\cup s$ are both edges, as required. Likewise~(h) follows, since by~(c) at most $\frac{c}{\gamma^3-2\eps}\binom{n}{2} < \gamma n^2$ split pairs are not $\gamma$-good. The proofs of~(g) and~(i) are very similar, so we omit them.
\end{proof}

The following lemma is our Hamilton path connecting lemma which, under the assumption that every vertex of $H$ is good, allows us to choose a Hamilton path in $H$ with specified ends. From a broad perspective the proof is similar to that of Lemma~\ref{lem:evenextrhampathsplit}, but the construction of the `grid' is quite different, reflecting the fact that $H$ is odd-extremal rather than even-extremal. Since many calculations are similar, we will be more concise and primarily emphasise the differences.
 
\begin{lemma}\label{lem:oddhampath}
Adopt Setup~\ref{setup:oddextremal}, and suppose also that every vertex of $H$ is $\gamma$-good, and that
\begin{enumerate}[label=(\roman*)]
\item $n\equiv 6$ mod $8$ and $|A|-|B|\equiv 2$ mod $4$, or
\item $n\equiv 2$ mod $8$ and $|A|-|B|\equiv 0$ mod $4$.
\end{enumerate} 
If $s_1$ and $s_2$ are disjoint $\gamma$-good split pairs, then there exists a Hamilton path in $H$ whose ends are $s_1$ and $s_2$. Moreover, such a path can be found in time $O(n^4)$.
\end{lemma}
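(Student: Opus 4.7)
The key structural observation is that any $2$-path in $H$ all of whose edges are odd must consist of pairs that strictly alternate between split pairs and connate pairs, since the edge $p_i\cup p_{i+1}$ is odd precisely when exactly one of $p_i, p_{i+1}$ is a split pair. Because both ends $s_1, s_2$ are required to be split, such a Hamilton path must contain exactly $(n+2)/4$ split pairs and $(n-2)/4$ connate pairs, and a count of vertices in each side of the bipartition forces exactly $c_A := (4|A|-n-2)/8$ of the connate pairs to lie inside $\binom{A}{2}$ and $c_B := (4|B|-n-2)/8$ inside $\binom{B}{2}$. A brief check confirms that each of the parity hypotheses~(i) and~(ii) is exactly what is needed to make $c_A$ and $c_B$ non-negative integers. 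The plan is to construct a $2$-path realising this alternating pattern, using that nearly all pairs and triples are $\gamma$-good.

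The construction mirrors that of Lemma~\ref{lem:evenextrhampathsplit}. First I would build a grid $L \subseteq V \setminus (s_1\cup s_2)$ of roughly $3m$ vertices consisting of $m+1$ pre-chosen split pairs $\{x_i, z_i\}$ (with $x_i \in A$ and $z_i \in B$) separated by $m$ ``slot'' vertices $y_i$. Each slot is designated either an $A$-slot or a $B$-slot; when fed an extra vertex $w_i$ lying on the same side as $y_i$, the resulting segment $\{x_i,z_i\},\{y_i,w_i\},\{x_{i+1},z_{i+1}\}$ uses two odd edges, each being the union of a split pair and a connate pair. Choosing each triple $\{x_i,z_i,y_i\}$ and $\{y_i,x_{i+1},z_{i+1}\}$ to be $\gamma$-good via Proposition~\ref{prop:oddproperties}, and then defining a vertex to be \emph{palatable} if it matches to a positive fraction of the slots on its own side, I would use an auxiliary bipartite graph and a matching argument (identical in spirit to that in Lemma~\ref{lem:evenextrhampathsplit}) to show that any set of $m_A$ palatable $A$-vertices plus $m_B$ palatable $B$-vertices can be swallowed by $L$, yielding a $2$-path with prescribed split ends $q_1 = \{x_1,z_1\}$ and $q_2 = \{x_{m+1},z_{m+1}\}$.

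Next, repeatedly applying the connecting statements Proposition~\ref{prop:oddproperties}(g)--(j), I would greedily extend $s_1$ and $s_2$ into disjoint paths $Q_1$ and $Q_2$ contained in $V \setminus L$, ending at $q_1$ and $q_2$ respectively, alternating split and connate pairs so that every edge is odd. At each extension step there is a free choice between adding a connate pair in $A$ or in $B$, and this flexibility is used to ensure that $Q_1 \cup Q_2$ contains every non-palatable vertex while the leftover set $R := V \setminus (V(Q_1)\cup V(Q_2)\cup L)$ consists of exactly $m_A$ palatable vertices in $A$ and $m_B$ palatable vertices in $B$. Feeding $R$ through the grid then produces a $2$-path $P^*$ on $V(L) \cup R$ with ends $q_1, q_2$, and the concatenation $Q_1 P^* Q_2$ is the desired Hamilton path. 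The running time $O(n^4)$ is dominated by the perfect-matching computation linking slots to leftover palatable vertices.

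The delicate part will be the parity bookkeeping: we must guarantee the exact equalities $|R\cap A|=m_A$ and $|R\cap B|=m_B$, not merely approximate ones. The grid parameters $m_A, m_B$ must satisfy $m_A+m_B = m$ and be chosen compatibly with $c_A-m_A$ and $c_B-m_B$, the numbers of connate pairs in each side which the filler paths $Q_1, Q_2$ must contribute. The two hypothesis cases~(i) and~(ii) differ in the parities of $c_A+c_B$ and of $|A|-|B|$, so each calls for a slightly different choice of $(m_A,m_B)$ and of the terminal segment of $Q_2$; both can be accommodated by, at the end of $Q_2$, either making no adjustment or inserting a constant-length ``parity-correcting'' segment via a few applications of Proposition~\ref{prop:oddproperties}(g)--(j). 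Once the parities are correctly set up the rest of the argument---palatability, greedy extension, matching and swallowing---proceeds essentially as in Lemma~\ref{lem:evenextrhampathsplit}.
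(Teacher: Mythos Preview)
Your proposal is correct in outline and would work, but the grid you describe is a genuine dual of the one in the paper. The paper's grid uses \emph{connate} pairs as the fixed frame: it consists of two halves, one built from $m+1$ connate pairs $\{x_i,y_i\}\subseteq A$ interleaved with $m$ slot vertices $z_i\in A$, the other from $m+1$ connate pairs $\{x_i',y_i'\}\subseteq B$ interleaved with $m$ slot vertices $z_i'\in B$, the two halves linked by a single split pair $\{v,v'\}$. Each swallowed vertex completes a slot to a \emph{split} pair, so the grid absorbs exactly $m$ palatable vertices from $A$ and $m$ from $B$, and its ends are the connate pairs $p_0\in\binom{A}{2}$ and $q_0\in\binom{B}{2}$. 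Your grid inverts all of this: split pairs form the frame, slot vertices complete to connate pairs, the ends are split, and you absorb $m_A$ from $A$ and $m_B$ from $B$ with $m_A+m_B=m$.

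The practical difference is where the imbalance $|A|-|B|$ is absorbed. In the paper's version the grid target is symmetric ($m$ from each side), and the entire asymmetry is handled by giving the two filler paths different lengths: $Q_1$ uses $k_A$ connate-$A$ pairs and $Q_2$ uses $k_B$ connate-$B$ pairs, where $k_A=\tfrac{1}{8}(n-8m-10+4\ell)$ and $k_B=\tfrac{1}{8}(n-8m-10-4\ell)$ with $\ell=\tfrac12(|A|-|B|)$; the hypotheses (i)/(ii) are exactly what makes these integers. Your route instead distributes the asymmetry between the slot designation $(m_A,m_B)$ and the per-step $A$/$B$ choice in $Q_1,Q_2$, which is more flexible but demands heavier bookkeeping to hit the exact leftover counts. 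Both approaches resolve the same parity constraint, just at different points in the construction; the overall strategy (grid plus filler paths plus matching-based swallowing, with non-palatable vertices forced into the fillers) is the same.
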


\begin{proof}
Set $m := \lceil \frac{(1-\psi)n}{8}\rceil$. 
We begin by using Proposition~\ref{prop:oddproperties}(a) and our assumption that every vertex is $\gamma$-good to greedily choose sets $L'_1 = \{x_1,y_1, \cdots,x_{m+1},y_{m+1}\} \subseteq A \sm (s_1\cup s_2)$ and $L'_2 = \{x'_1,y'_1, \cdots,x'_{m+1},y'_{m+1}\} \subseteq B \sm (s_1\cup s_2)$ such that both $\{x_i, y_i\}$ and $\{x_i', y_i'\}$ are $\gamma$-good for every $i \in [m+1]$. Next we use Proposition~\ref{prop:oddproperties}(b) to greedily choose sets $Z= \{z_1, \dots, z_m\} \subseteq A \sm (L'_1 \cup s_1 \cup s_2)$ and $Z'= \{z'_1, \dots, z'_m\} \subseteq B \sm (L'_2 \cup s_1 \cup s_2)$ such that for each $i \in [m]$ the triples $\{x_{i},y_{i},z_i\}$, $\{z_i,x_{i+1},y_{i+1}\}$, $\{x'_{i},y'_{i},z'_i\}$ and $\{z'_i,x'_{i+1},y'_{i+1}\}$ are all $\gamma$-good. Finally by Proposition~\ref{prop:oddproperties}(f) we can then choose $v \in A \sm (L'_1 \cup Z \cup s_1 \cup s_2)$ and $v' \in B \sm (L'_2 \cup Z' \cup s_1\cup s_2)$ such that $\{x_{m+1},y_{m+1},v, v'\}$ and $\{v,v',x_1',y_1'\}$ are both edges of $H$. Our `grid' is then $L := L'_1 \cup Z \cup \{v,v'\} \cup L'_2 \cup Z'$, and we take $p_0 :=\{x_1,y_1\}$ and $q_0 :=\{x_{m+1}',y_{m+1}'\}$. Observe in particular that $|L \cap A| = |L \cap B| = 3m+3$.

Define $A'=A\sm (L\cup s_1\cup s_2)$ and $B'=B\sm (L\cup s_1\cup s_2)$. Let $G_A$ be the bipartite graph with vertex classes $Z'$ and $A'$ whose edges are all pairs $\{z_{i}',w\}$ with $i\in[m]$ and $w\in A'$ for which both $\{x_{i}',y_i',z_i',w\},\{w,z_i',x_{i+1}',y_{i+1}'\} \in E(H)$, and let $G_B$ be the bipartite graph with vertex classes $Z$ and $B'$ and whose edges are all pairs $\{z_{i},w\}$ with $i\in[m]$ and $w\in B'$ such that $\{x_{i},y_i,z_i,w\},\{w,z_i,x_{i+1},y_{i+1}\}\in E(H)$. We call a vertex $a\in A'$ \emph{palatable} if $d_{G_A}(a)\geq 0.9m$ and a vertex $b\in B'$ \emph{palatable} if $d_{G_B}(b)\geq 0.9m$. Essentially the same argument as in the proof of Lemma~\ref{lem:evenextrhampathsplit} then shows that $A'$ and $B'$ each contain at most $\frac{\psi}{100} n$ non-palatable vertices, and furthermore that $L$ can \emph{swallow} any set $S\subseteq V\sm L$ of $2m$ palatable vertices with $|S\cap A| = |S \cap B| = m$, meaning that for any such $S$ there is a path in $H$ with vertex set $L\cup S$ and ends $p_0$ and $q_0$.

Let $\ell := |A| - \tfrac{n}{2} = \tfrac{n}{2} - |B|= \frac{1}{2}(|A|-|B|)$. We then have $|\ell| \leq cn$ since $\{A, B\}$ is a $c$-odd-extremal bipartition of $V$, and our divisibility assumptions ensure that both $k_A := \frac{1}{8}(n-8m-10+4\ell)$ and $k_B := \frac{1}{8}(n-8m-10-4\ell)$ are integers. Observe also that $k_A, k_B \geq \frac{\psi}{10} n$. Since the number of non-palatable vertices in each of $A'$ and $B'$ is at most~$\frac{\psi}{100} n$, we may choose sets $U_A = \{a_1, \dots, a_{k_A-1}\} \subseteq A'$ and $U_B = \{b_1, \dots, b_{k_B-1}\} \subseteq B'$ of distinct vertices so that $U_A \cup U_B$ contains all non-palatable vertices. 
Now recall our assumption that every vertex of $H$ is $\gamma$-good. So by repeated application of Proposition~\ref{prop:oddproperties}(a) and~(i) we may greedily choose vertex-disjoint $\gamma$-good connate pairs $p_1, \dots, p_{k_A} \in \binom{A'}{2}$ and $q_1, \dots, q_{k_B} \in \binom{B'}{2}$ 
so that $a_i \in p_i$ for each $i \in [k_A-1]$ and $b_j \in q_j$ for each $j \in [k_B-1]$, and also so that each of $s_1 \cup p_{k_A}$ and $s_2 \cup q_{k_B}$ is an edge of $H$.
Write $W := \bigcup_{i \in [k_A]} p_i \cup \bigcup_{j \in [k_B]} q_j$. Then by repeated application of Proposition~\ref{prop:oddproperties}(f) we may 
greedily choose vertex-disjoint split pairs $p'_1, \dots, p'_{k_A}, q'_1, \dots, q'_{k_B} \in \binom{(A' \cup B') \sm W}{2}$ such that 
$p_{i-1} \cup p'_i$ and $p'_i \cup p_i$ are both edges of $H$ for each $i \in [k_A]$ and 
$q_{i-1} \cup q'_i$ and $q'_i \cup q_i$ are both edges of $H$ for each $i \in [k_B]$. 
We then define the paths $Q_1 := p_0p'_1p_1p'_2\dots p_{k_A-1}p'_{k_A}p_{k_A}s_1$ and $Q_2 := q_0q'_1q_1q'_2\dots q_{k_B-1}q'_{k_B}q_{k_B}s_2$. 
Observe that we then have $|(V(Q_1) \sm L) \cap A| = 3k_A+1$ and $|(V(Q_1) \sm L) \cap B| = k_A+1$, and likewise that $|(V(Q_2) \sm L) \cap A| = k_B+1$ and $|(V(Q_2) \sm L) \cap B| = 3k_B+1$. 
Since $|L \cap A| = |L \cap B| = 3m+3$, we conclude that $R:= V \sm (L \cup Q_1 \cup Q_2)$ satisfies 
\begin{align*}
|R \cap A| &= |A| - (3m+3) - (3k_A+1) - (k_B + 1) \\
&= \frac{n}{2} + \ell - 3m - 5 - \frac{3}{8} (n-8m-10+4\ell) - \frac{1}{8}(n-8m-10-4\ell) = m
\end{align*} 
and $|R \cap B| = m$ by a similar calculation. Also every vertex of $R$ is palatable, since by construction  $Q_1$ and $Q_2$ cover all non-palatable vertices. It follows that $L$ can swallow $R$, so there is a path $P$ in $H$ with vertex set $R \cup L$ and with ends $p_0$ and $q_0$. This gives a Hamilton path $Q_1PQ_2$ in $H$ with ends $s_1$ and $s_2$. The argument for the running time is essentially identical to that in the proof of Lemma~\ref{lem:evenextrhampathsplit}.
\end{proof}

Recalling the proof strategy at the start of the section, we will use the following simple proposition to find the short path $P$ whose ends are to be connected using the Hamilton path connecting lemma (Lemma~\ref{lem:oddhampath}).

\begin{prop}\label{clm:medconnect}
Adopt Setup~\ref{setup:oddextremal}, and suppose also that every vertex of $V$ is $(\beta_1,\beta_2)$-medium. Also let $R\subseteq V$ have size $|R|\leq \psi n$. Then the following statements hold.
\begin{enumerate}[label=(\roman*)]
\item For every $\beta_2$-medium connate pair $p$ there exists a $\gamma$-good split pair $s \in \binom{V\sm R}{2}$ such that $p\cup s\in E(H)$.
\item For every $\beta_2$-medium split pair $s$ there exists a $\gamma$-good connate pair $p \in \binom{V\sm R}{2}$ such that $s\cup p\in E(H)$.
\item If there exists a $\beta_2$-bad connate pair $p \in \binom{V\sm R}{2}$, then there exist disjoint $\beta_2$-medium connate pairs $p_1, p_2 \in \binom{V \sm R}{2}$ such that $p_1 \cup p_2 \in E(H)$.
\item If there exists a $\beta_2$-bad split pair $s \in \binom{V\sm R}{2}$, then there exist disjoint $\beta_2$-medium split pairs $s_1, s_2 \in \binom{V \sm R}{2}$ such that $s_1 \cup s_2 \in E(H)$.
\end{enumerate}
\end{prop}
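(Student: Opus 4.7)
The plan is to prove (i) and (ii) by direct counting from the definition of $\beta_2$-medium pairs, and to handle (iii) and (iv) by combining the bad pair's large supply of even edges with a rotation argument inside a single such edge.

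For (i), a $\beta_2$-medium connate pair $p$ is contained in at least $\beta_2\binom{n}{2}$ odd edges of $H$, and every odd edge containing a connate pair has a split pair as its complement (since $|p\cap A|\in\{0,2\}$ while $|e\cap A|$ is odd). Hence there are at least $\beta_2\binom{n}{2}$ split pairs $s$ with $p\cup s\in E(H)$. By Proposition~\ref{prop:oddproperties}(d), at most $\tfrac{c}{\gamma^3-2\eps}\binom{n}{2}$ split pairs fail to be $\gamma$-good, and at most $|R|n\leq\psi n^2$ meet $R$; both of these counts are much smaller than $\beta_2\binom{n}{2}$ under the constant hierarchy $c\ll\gamma\ll\psi\ll\beta_2$, so a valid $s\in\binom{V\sm R}{2}$ remains. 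Statement (ii) is dual, since an odd edge containing a split pair must have a connate complement.

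For (iv), let $s=\{a,b\}$ be the given $\beta_2$-bad split pair in $\binom{V\sm R}{2}$. Combining $\beta_2$-badness with $\delta(H)\geq n/2-\eps n$, the pair $s$ lies in at least $N:=(\tfrac{1}{2}-2\eps-\beta_2)\binom{n}{2}$ even edges, all of type $(2,2)$. Writing such an edge as $e=\{a,b,a',b'\}$, its two split partitions are the natural $s+\{a',b'\}$ and the \emph{rotated} $\{a,b'\}+\{a',b\}$. The plan is to find some $e$ whose rotated partition consists of two $\beta_2$-medium split pairs lying in $V\sm R$; these will serve as $s_1$ and $s_2$. A standard double-count against the global bound $e(H_{\mathrm{even}})\leq c\binom{n}{4}$ shows that the total number of $\beta_2$-bad split pairs in $H$ is $O(cn^2)$, so bad split pairs are globally rare.

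The main obstacle is that the endpoints $a,b$ of the given bad pair could themselves lie in many other bad pairs, so the naive bound $n(B_a+B_b)$ on the number of ``ruined'' rotations (via bad $\{a,b'\}$ or $\{a',b\}$) might exceed $N$. To handle this I would first replace $s$, if necessary, by a different bad pair $s^*\in\binom{V\sm R}{2}$ whose endpoints each lie in only $O(cn)$ bad pairs: in the case of few bad pairs this is automatic, since then $B_a+B_b$ is at most the total number of bad pairs, which is itself small; in the case of many bad pairs a Markov/averaging argument on the identity $\sum_a B_a=$ (total number of bad split pairs, $\leq 10cn^2$), together with $|R|\leq\psi n$, furnishes such a pair. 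For this replacement $s^*$ the count of ruined rotations is $O(cn^2)+O(\psi n^2)\ll N$, so the required edge $e$ and its rotated partition exist.

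Item (iii) proceeds by an analogous rotation argument applied to the given $\beta_2$-bad connate pair $p\subseteq A$. Its $\geq N$ even edges split into type $(4,0)$ (all four vertices in $A$, admitting three connate partitions, two of which avoid $p$) and type $(2,2)$ (one connate partition $p+p'$ with $p'\subseteq B$). A case analysis on which type predominates, together with a rotation argument analogous to that of (iv) applied either within a type $(4,0)$ edge or via the abundant medium connate partners $p'\subseteq B$ in the type $(2,2)$ case, produces two disjoint $\beta_2$-medium connate pairs in $V\sm R$ whose union is an edge of $H$.
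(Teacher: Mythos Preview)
Your arguments for (i) and (ii) are correct and essentially identical to the paper's.

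For (iii) and (iv), however, there is a genuine gap: you never invoke the hypothesis that every vertex is $(\beta_1,\beta_2)$-medium, and without it your ``ruined rotation'' subtraction cannot be closed. The global double-count only gives that the total number of $\beta_2$-bad pairs is $O(cn^2)$; it does not force $B_a,B_b=O(cn)$. Your replacement/averaging step can fail outright: if, say, every bad split pair contains a fixed vertex $a_0$, then every bad pair available to you has one endpoint with $B_{a_0}$ equal to the total number of bad pairs, and the medium-vertex hypothesis permits this total to be as large as $(\tfrac12-\beta_1)n$. In that regime $n(B_a+B_b)$ is of order $n^2$ and the subtraction $N-n(B_a+B_b)$ is uninformative, so neither the original $s$ nor any replacement $s^*$ is handled by your count.

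The correct move --- which is exactly what the paper does --- is to count the \emph{good} rotations directly rather than subtract bad ones from $N$. Writing the bad pair as $\{x_1,x_2\}$, the $(\beta_1,\beta_2)$-medium hypothesis gives $x_1$ at least $\beta_1 n$ medium partners on the appropriate side, and likewise for $x_2$; hence there are at least $\beta_1^2\binom{n}{2}$ pairs $\{y_1,y_2\}$ for which both cross pairs $\{x_1,y_1\}$ and $\{x_2,y_2\}$ are $\beta_2$-medium. Since the bad pair forms an edge with all but at most $2\beta_2\binom{n}{2}$ pairs of the required type, and at most $2\psi\binom{n}{2}$ pairs meet $R$, the hierarchy $\psi,\beta_2\ll\beta_1^2$ immediately leaves a valid choice. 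No case analysis on edge types and no replacement of the given bad pair is needed.
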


\begin{proof}
For~(i) observe that, since $p$ is $\beta_2$-medium, there are $\beta_2\binom{n}{2}$ split pairs $s \in \binom{V}{2}$ such that $p \cup s$ is an edge of $H$. By Proposition~\ref{prop:oddproperties}(c) at most $\frac{c}{\gamma^3-2\eps}\binom{n}{2}$ such pairs are not $\gamma$-good and at most $\psi n^2$ such pairs contain a vertex from $R$. Since $\beta_2\binom{n}{2}>\frac{c}{\gamma^5-2\eps}\binom{n}{2}+\psi n^2$, there exists a $\gamma$-good split pair $s \in  \binom{V\sm R}{2}$ such that $p\cup s\in E(H)$. A similar argument proves~(ii). 

For~(iii), assume without loss of generality that $p \in \binom{A \sm R}{2}$, and 
observe that there are at least $(\frac{1}{2}-2\eps)\binom{n}{2}$ pairs $q$ for which $p \cup q \in E(H)$, but since $p$ is $\beta_2$-bad at most $\beta_2\binom{n}{2}$ such pairs $q$ are split pairs. It follows that $p \cup q \in E(H)$ for all but at most $2 \beta_2\binom{n}{2}$ connate pairs $q \in \binom{A}{2}$. 
Write $p = \{x_1, x_2\}$. Since both $x_1$ and $x_2$ are $(\beta_1,\beta_2)$-medium there are at least $\beta_1^2\binom{n}{2}$ pairs $\{y_1,y_2\} \in \binom{A}{2}$ such that $\{x_1,y_1\}$ and $\{x_2,y_2\}$ are both $\beta_2$-medium. 
Since at most $2\psi\binom{n}{2}$ pairs contain a vertex from $R$, and $2\beta_2+2\psi<\beta_1^2$, we can choose $\{y_1,y_2\}$ such that $e'=\{x_1,x_2,y_1,y_2\}$ is an even edge in $H[V\sm R]$ containing two disjoint $\beta_2$-medium connate pairs. A similar argument proves~(iv).
\end{proof}

The following definition is helpful in the proof of Lemma~\ref{thm:oddextremal}. 

\begin{definition}
We say that an edge $e\in E(H)$ is
\begin{enumerate}[label=(\roman*)]
\item an even connate edge, if $|e\cap A|\in\{0,4\}$,
\item an even split edge, if $|e\cap A|=2$.
\end{enumerate}
\end{definition}

Observe that for pairs $p$ and $q$, if $p \cup q$ is an even connate edge then both $p$ and $q$ must be connate pairs, but if $p \cup q$ is an even split edge then either $p$ and $q$ are both connate pairs or $p$ and $q$ are both split pairs. 

We are now ready to prove Lemma~\ref{thm:oddextremal}; in fact, we actually prove the following stronger algorithmic version of the lemma.

\begin{lemma}\label{thm:oddextremalalgo}
Suppose that $1/n \ll \eps, c' \ll 1$ and that $n$ is even, and let $H$ be a $4$-graph of order $n$ with $\delta(H)\geq n/2-\eps n$. If $H$ is $c'$-odd-extremal and every bipartition $\{A, B\}$ of $V(H)$ is odd-good, then $H$ contains a Hamilton $2$-cycle. Moreover, there exists an algorithm Procedure~HamCycleOdd$(H,\{A,B\})$ which, given as input a $c'$-odd-extremal bipartition $\{A,B\}$ of $V(H)$, returns a Hamilton $2$-cycle in $H$ in time $O(n^{12})$.
\end{lemma}

\begin{proof}
First we introduce further constants such that
\[\eps, c' \ll c  \ll\gamma \ll\beta_2\ll\beta_2'\ll\beta_1\ll\beta_1'\ll\mu\ll 1\;.\]
Since $H$ is $c'$-odd-extremal there exists a bipartition $\{A', B'\}$ with $n/2-c'n\leq|A'|\leq n/2+c'n$ for which there are at most $c'\binom{n}{4}$ even edges. We begin by moving all vertices which are $(\beta_1',\beta_2')$-bad to the other side. To be precise define $A^{\bad}=\{a\in A'\mid a\textrm{ is }(\beta_1',\beta_2')\mbox{-}\bad\}$ and $B^{\bad}=\{b\in B'\mid b\textrm{ is }(\beta_1',\beta_2')\mbox{-}\bad\}$ and we set $A :=(A'\sm A^{\bad})\cup B^{\bad}$ and $B:=(B'\sm B^{\bad})\cup A^{\bad}$; we say that the vertices of $A^{\bad}\cup B^{\bad}$ are \emph{moved}. 

\begin{claim} \label{clm:oddpartition}
$\{A, B\}$ is a $c$-odd-extremal bipartition of $V$ with respect to which every vertex of $H$ is $(\beta_1,\beta_2)$-medium.
\end{claim}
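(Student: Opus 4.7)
The plan is to verify the two assertions of the claim directly by analysing the effect of moving bad vertices between the two parts.

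First, for the $c$-odd-extremal property, I would invoke Proposition~\ref{prop:oddproperties}(f) applied to $\{A', B'\}$ to obtain $|A^{\bad}| + |B^{\bad}| \leq 5c'n$, which bounds how far $|A|$ can shift from $|A'|$ and hence keeps $|A|$ in $[n/2 - cn, n/2 + cn]$. For the even-edge count, I would observe that an edge's parity with respect to the new bipartition coincides with its parity with respect to the old one unless the edge contains an odd number of moved vertices, so only edges incident to some moved vertex can change parity; there are at most $5c'n \cdot \binom{n-1}{3} = O(c')\binom{n}{4}$ such edges, and therefore the number of even edges in $\{A,B\}$ is at most $c'\binom{n}{4} + O(c')\binom{n}{4} \leq c\binom{n}{4}$.

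For the medium property, I would split on whether $v$ is moved. If $v$ is not moved, then $v$ was $(\beta_1', \beta_2')$-medium in the old bipartition and hence had at least $\beta_1' n$ many $\beta_2'$-medium partners in each of $A'$ and $B'$. Most of these partners are also not moved, and for such a pair $\{v, w\}$ the parity of an edge $\{v,w,x,y\}$ is preserved unless $x$ or $y$ is moved; the number of odd edges through $\{v,w\}$ therefore changes by at most $O(c'n^2)$, leaving the pair $\beta_2$-medium because $\beta_2 \ll \beta_2'$. After deducting the at most $5c'n$ moved partners, $v$ retains $\geq \beta_1 n$ many $\beta_2$-medium partners in each of $A$ and $B$.

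If instead $v$ is moved, say $v \in A^{\bad}$ so that $v \in B$ in the new bipartition, then the parities of almost all edges through $v$ flip. The crux is to exploit the failure of $(\beta_1', \beta_2')$-mediumness to show that $v$ has $\Omega(n^3)$ edges of the form $\{v,w,x,y\}$ in $H$ with exactly one of $w,x,y$ in $A'$. In the case where the $A'$-side condition fails for $v$, most pairs $\{v, w\}$ with $w \in A'$ are non-$\beta_2'$-medium and hence account for $\Omega(n^3)$ even edges through $v$ in total; since at most $\binom{|A'|-1}{3}$ such edges can have all three non-$v$ vertices in $A'$, a linear fraction must in fact lie in the desired class. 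The case where the $B'$-side condition fails is handled symmetrically via non-medium split pairs $\{v, w\}$ with $w \in B'$. After moving $v$, each such edge becomes an odd edge of $v$ in $\{A,B\}$ containing exactly one other vertex in $A$ and two in $B$, and a counting argument (using these sums of odd-pair-degrees over $w \in A$ and $w \in B$) then delivers $\geq \beta_1 n$ many $\beta_2$-medium partners of $v$ in each of $A$ and $B$, as required.

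The main obstacle is establishing the lower bound on even-split edges through each moved vertex in a uniform way across both failure modes of $(\beta_1', \beta_2')$-mediumness, and then extracting enough medium partners on both sides from this bound; once this is done, the remaining steps reduce to routine averaging and error-term manipulations that exploit the hierarchy $c' \ll c$ and $\beta_2 \ll \beta_2' \ll \beta_1 \ll \beta_1'$.
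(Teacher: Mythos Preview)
Your proposal is correct and follows essentially the same route as the paper. Both arguments bound the number of moved vertices via Proposition~\ref{prop:oddproperties}(f) to get $c$-odd-extremality, handle unmoved vertices directly using $\beta_2 \ll \beta_2' \ll \beta_1 \ll \beta_1'$, and for a moved vertex $v \in A^{\bad}$ split into the two failure modes of $(\beta_1',\beta_2')$-badness to show that $v$ lies in at least $(1/8 - O(\mu))\binom{n}{3}$ even split edges with respect to $\{A',B'\}$; after moving $v$ these become odd edges with exactly three vertices in $B$, and an averaging argument over partners in $A$ and in $B$ yields the required $\beta_2$-medium pairs on both sides. The only cosmetic difference is that the paper states the final averaging step in one line while you spell out the double counting over $w \in A$ and $w \in B$ explicitly.
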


\begin{proof}
By Proposition~\ref{prop:oddproperties}(e) at most $5c' n$ vertices are moved in total, so $n/2-cn \leq n/2 - 6c'n \leq |A|, |B| \leq n/2 + 6c'n \leq n/2+cn$, and at most $c'\binom{n}{4} + 5c'n \binom{n}{3} \leq c\binom{n}{4}$ edges of $H$ are even with respect to $\{A, B\}$. This proves that $\{A, B\}$ is $c$-odd-extremal. Also, any vertex which was not moved was $(\beta_1', \beta_2')$-medium with respect to $\{A', B'\}$, and so is $(\beta_1, \beta_2)$-medium with respect to $\{A, B\}$. So it remains to show that each moved vertex $v$ is also $(\beta_1,\beta_2)$-medium with respect to $\{A, B\}$. Without loss of generality assume that $v \in A^{\bad}$. First consider the case that there are fewer than $\beta_1 n$ vertices $a \in A$ for which the pair $\{v,a\}$ was $\beta_2$-medium with respect to $\{A', B'\}$. Then $v$ was contained in at least $1/3\cdot(n/2-2\beta_1n)\cdot(1/2-2\beta_2)\binom{n}{2} > (1/4-\mu)\binom{n}{3}$ edges which were even with respect to $\{A', B'\}$. Since there can be at most $(1/8+\mu)\binom{n}{3}$ even connate edges in total, this implies that $v$ was contained in at least $(1/8-2\mu)\binom{n}{3}$ even split edges (with respect to $\{A', B'\}$). Now consider the other case that there are fewer than $\beta_1 n$ vertices $b \in B$ for which $\{v,b\}$ was $\beta_2$-medium with respect to $\{A', B'\}$. Then $v$ was contained in at least $1/3\cdot(1/2-2\beta_1)n\cdot(1/2-2\beta_2)\binom{n}{2} > (1/4-\mu)\binom{n}{3}$ even split edges (with respect to $\{A', B'\}$). 
In either case we conclude that $v$, now after the moving, is contained in at least $(1/8-3\mu)\binom{n}{3}$ edges which have precisely three vertices in $B$; since $v \in B$ it follows that $v$ is $(\beta_1,\beta_2)$-medium with respect to $\{A, B\}$. 
\end{proof}

For the rest of the proof we will not use the constants $c',\beta_1'$ and $\beta_2'$, and we use the terms even, odd, split, connate, $\gamma$-good, $\beta_2$-medium, $(\beta_1, \beta_2)$-medium and so forth exclusively with respect to the partition $\{A, B\}$.
Observe that $H$, $A$, $B$ and the remaining constants satisfy the conditions of Setup~\ref{setup:oddextremal}. 
Fix $m \in \{0,2,4,6\}$ and $d \in \{0,2\}$ with $m \equiv |V| \pmod 8$ and $d \equiv |A|-|B| \pmod 4$.
We consider separately four cases for the pair $(m, d)$ as in the definition of an odd-good bipartition (Definition~\ref{evengood}). In each case we proceed by the following steps.
\begin{enumerate}[label=(\Roman*)]
\item We use the fact that $\{A, B\}$ is odd-good to construct a path $P_0$ with at most $30$ vertices whose ends $s_1$ and $s_2$ are $\gamma$-good split pairs such that, writing $V^*=(V\sm V(P_0))\cup s_1\cup s_2$, $A^*=A\cap V^*$ and $B^*=B\cap V^*$, we have either $|V^*|\equiv 6 \pmod 8$ and $|A^*|-|B^*|\equiv 2 \pmod 4$ or $|V^*|\equiv 2 \pmod 8$ and $|A^*|-|B^*|\equiv 0 \pmod 4$.
\item We extend the path $P_0$ to a path $P$ which contains all non-$\gamma$-good vertices such that 
$|V(P)| \equiv |V(P_0)| \pmod 8$ and $|V(P)\sm (V(P_0)\cap A)| \equiv |V(P) \sm (V(P_0)\cap B)| \pmod 4$ and whose ends are  $\gamma$-good split pairs $s_1$ and $s_3$.
\item Finally, we apply Lemma~\ref{lem:oddhampath} to find a Hamilton path $Q$ in $H'=H[(V\sm V(P)) \cup s_1 \cup s_3]$ with ends $s_1$ and $s_3$. This gives a Hamilton cycle $PQ$ in $H$.
\end{enumerate}
It suffices to show that we can construct the path $P_1$ in each case. Indeed, having constructed the path $P_0$, let $X$ be the set of all non-$\gamma$-good vertices in $V^*$; by adding a single further vertex to $X$ if necessary, we may assume that $q := |X|$ is even. Every vertex of $X$ is $(\beta_1,\beta_2)$-medium by Claim~\ref{clm:oddpartition} and by Proposition~\ref{prop:oddproperties}(d) we have $q \leq \frac{c}{\gamma^5-2\eps}n + 1 < \gamma n/25$. Write $X = \{x_1,\cdots,x_q\}$, and greedily choose distinct vertices $y_1,\cdots,y_q \in V^*\sm (s_1\cup s_2\cup X)$ so that for each $i \in [q]$ the pair $\{x_i,y_i\}$ is $\beta_2$-medium. We now form a path $Q$ by the following iterative process. Write $g_0 := s_2$. Then, for each $i \in [q]$ in turn we proceed as follows to choose connate pairs $f_{3i-2}, f_{3i-1}, f_{3i}$ and split pairs $g_{3i-2}, g_{3i-1}, g_{3i}$. If $\{x_i, y_i\}$ is a connate pair, set $f_{3i-1}:= \{x_i, y_i\}$, and apply Proposition~\ref{clm:medconnect}(i) twice to obtain $\gamma$-good split pairs $g_{3i-2}$ and $g_{3i-1}$ such that $g_{3i-2} \cup f_{3i-1}$ and $f_{3i-1} \cup g_{3i-1}$ are both edges of $H$. Next choose a $\gamma$-good split pair $g_{3i}$, and apply Proposition~\ref{prop:oddproperties}(g) twice to choose connate pairs $f_{3i-2}$ and $f_{3i}$ such that each of $g_{3i-3}f_{3i-2}g_{3i-2}$ and $g_{3i-1}f_{3i}g_{3i}$ is a path of length two in $H$. 
On the other hand, if $\{x_i, y_i\}$ is a split pair, set $g_{3i-1}:= \{x_i, y_i\}$, and apply Proposition~\ref{clm:medconnect}(ii) twice to obtain $\gamma$-good connate pairs $f_{3i-1}$ and $f_{3i}$ such that $g_{3i-1} \cup f_{3i-1}$ and $f_{3i} \cup g_{3i-1}$ are both edges of $H$. Next apply Proposition~\ref{prop:oddproperties}(h) twice to obtain $\gamma$-good split pairs $g_{3i}$ and $g_{3i-2}$ such that $g_{3i-2} \cup f_{3i-1}$ and $f_{3i} \cup g_{3i}$ are both edges of $H$, and finally apply Proposition~\ref{prop:oddproperties}(g) to choose a connate pair $f_{3i-2}$ such that $g_{3i-3}f_{3i-2}g_{3i-2}$ is a path of length two in $H$. If we choose each pair to be disjoint from $V(P'_0)$ and from all previously-chosen pairs, having made these choices for every $i \in [q]$ we obtain the desired path $P_0 = P'_0g_0f_1g_1f_2g_2\dots f_{3q}g_{3q}$ with ends $s_1$ and $s_3 := g_{3q}$.
Observe that we then have $|V(P_0) \sm V(P_0')| = 12q$, and that each $f_i$ is a connate pair and each $g_i$ is a split pair. Since $q$ is even it follows that $|V(P)|\equiv |V(P_0)| \pmod 8$ and $|V(P) \cap A|-|V(P)\cap B| \equiv |V(P_0)\cap A|-|V(P_0)\cap B| \pmod 4$, as required. This completes Step~(II). Finally, since $|V(P)| \leq |V(P_0)| + 12q \leq \gamma n/2$, we may then apply Lemma~\ref{lem:oddhampath} to find a Hamilton path $Q$ as claimed in Step~(III).

We now show how to construct the path $P_0$ in each case.

\medskip \noindent
{\bf Case A: $(m, d) = (0,0)$ or $(m, d) = (4,2)$.} Using Proposition~\ref{prop:oddproperties}(c) we can choose a $\gamma$-good split pair $s_1$. Then by Proposition~\ref{prop:oddproperties}(i) we can find a $\gamma$-good connate pair $p \in \binom{A}{2}$ such that $p \cup s_1$ is an edge of $H$. Finally, using Proposition~\ref{prop:oddproperties}(h) there is a $\gamma$-good split pair $s_2 \in \binom{V \sm s_1}{2}$ such that $p \cup s_2$ is an edge of $H$, and then $P_0 = s_1ps_2$ is the desired path. Observe that we then have either $|V^*|\equiv 0-2\equiv 6 \pmod 8$ and $|A^*|-|B^*|\equiv 0-2 \equiv 2 \pmod 4$, or $|V^*|\equiv 4-2\equiv 2 \pmod 8$ and $|A^*|-|B^*|\equiv 2-2 \equiv 0 \pmod 4$.

\medskip \noindent
{\bf Case B: $(m, d) = (2,2)$ or $(m, d) = (6,0)$.} Since the bipartition $\{A, B\}$ of $V(H)$ is odd-good, in this case $H$ must contain an even edge $e$. If $e$ contains a $\beta_2$-bad connate pair, then we apply Proposition~\ref{clm:medconnect}(iii) with $R=\emptyset$ to obtain two disjoint $\beta_2$-medium connate pairs $p_1$ and $p_2$ such that $p_1 \cup p_2$ is an edge of $H$. On the other hand, if $e$ does not contain a $\beta_2$-bad connate pair, then we may write $e = p_1 \cup p_2$ where $p_1$ and $p_2$ are disjoint connate pairs. In either case we obtain $\beta_2$-medium connate pairs $p_1$ and $p_2$ such that $p_1 \cup p_2$ is an edge of $H$. So by Proposition~\ref{clm:medconnect}(i) there are $\gamma$-good split pairs $s_1,s_2$ such that $s_1\cup p_1$ and $s_2\cup p_2$ are edges in $H$, and then $P_0=s_1p_1p_2s_2$ is the desired path. Note that either $|V^*| \equiv 2-4\equiv 6 \pmod 8$ and $|A^*|-|B^*|\equiv 2-0\equiv2 \pmod 4$ or $|V^*|\equiv 6-4\equiv 2 \pmod 8$ and $|A^*|-|B^*|\equiv 0-0\equiv0 \pmod 4$.

\medskip \noindent
{\bf Case C: $(m, d) = (4,0)$ or $(m, d) = (0,2)$.} Since the bipartition $\{A, B\}$ of $V(H)$ is odd-good, in this case $\Heven$ must have total $2$-pathlength at least two. That is, $H$ contains even edges $e_1$ and $e_2$ such that either $e_1$ and $e_2$ are disjoint or $|e_1 \cap e_2| = 2$.

Suppose first that $e_1$ and $e_2$ are disjoint. Similarly as in Case B, if $e_1$ contains a $\beta_2$-bad connate pair, then we use Proposition~\ref{clm:medconnect}(iii) with $R = e_2$ to obtain $\beta_2$-medium connate pairs $p_1$ and $p_2$ such that $e'_1 := p_1 \cup p_2$ is an even edge of $H$. By replacing $e_1$ with $e_1'$ if necessary, we may assume that $e_1 = p_1 \cup p_2$ where $p_1$ and $p_2$ are $\beta_2$-medium connate pairs, and the same argument applied to $e_2$ shows that we may assume that $e_2 = p_3 \cup p_4$ where $p_3$ and $p_4$ are $\beta_2$-medium connate pairs. Then by Proposition~\ref{clm:medconnect}(i) there are $\gamma$-good split pairs $s_1,s_2,s_3$ and $s_4$ such that $s_1p_1p_2s_3$ and $s_4p_3p_4s_2$ are vertex-disjoint paths in $H$. Then, by applying Proposition~\ref{prop:oddproperties}(i) twice, followed by Proposition~\ref{prop:oddproperties}(f), we obtain $\gamma$-good connate pairs $q_1,q_2$ and a split pair $s_5$ such that $P_0 = s_1p_1p_2s_3q_1s_5q_2s_4p_3p_4s_2$ is the desired path. Note that then either $|V^*|\equiv 4-18\equiv 2 \pmod 8$ and $|A^*|-|B^*|\equiv 0-0\equiv0 \pmod 4$, or $|V^*|\equiv 0-18\equiv 6 \pmod 8$ and $|A^*|-|B^*|\equiv 2-0\equiv2 \pmod 4$.

Now suppose that $|e_1 \cap e_2| = 2$, and write $f_1 = e_1\sm e_2$, $f_2 = e_1 \cap e_2$ and $f_3 = e_2 \sm e_1$, so $f_1f_2f_3$ is a path in $H$. Since $e_1$ and $e_2$ are both even edges, $f_1, f_2$ and $f_3$ are either all connate pairs or all split pairs. If $f_1$ is not $\beta_2$-medium, then we may apply Proposition~\ref{clm:medconnect}(iii) or~(iv) with $R = e_2$ to obtain an even edge $e'_1$ which is disjoint from $e_2$; we may then proceed as in the previous case with $e_1'$ and $e_2$ in place of $e_1$ and $e_2$. So we may assume that $f_1$ is $\beta_2$-medium, and by the same argument applied to $f_3$ we may assume that $f_3$ is $\beta_2$-medium.
If each of $f_1, f_2$ and $f_3$ is a connate pair, then we may apply Proposition~\ref{clm:medconnect}(i) to obtain $\gamma$-good split pairs $s_1,s_2$ such that $s_1\cup f_1$ and $s_2\cup f_3$ are both edges of $H$. Then we may take $P_0=s_1f_1f_2f_3s_2$, since we have either $|V^*|\equiv 4-6\equiv 6 \pmod 8$ and $|A^*|-|B^*|\equiv 0-2=2 \pmod 4$, or $|V^*|\equiv 0-6\equiv 2 \pmod 8$ 
and $|A^*|-|B^*|\equiv 2-2=0 \pmod 4$. 
On the other hand, if each of $f_1, f_2$ and $f_3$ is a split pair, then we may apply Proposition~\ref{clm:medconnect}(ii) followed by Proposition~\ref{clm:medconnect}(i) to first find $\gamma$-good connate pairs $p_1$ and $p_2$, and then $\gamma$-good split pairs $s_1$ and $s_2$, such that $P_0=s_1p_1f_1f_2f_3p_2s_2$ is a path in $H$; we then have either $|V^*|\equiv 4-10\equiv 2 \pmod 8$ and $|A^*|-|B^*|\equiv 0-0\equiv 0 \pmod 4$, or $|V^*|\equiv 0-10\equiv 6 \pmod 8$ and $|A^*|-|B^*|\equiv 2-0\equiv 2 \pmod 4$.

\medskip \noindent
{\bf Case D: $(m, d) = (6,2)$ or $(m, d) = (2,0)$.} Since the bipartition $\{A, B\}$ of $V(H)$ is odd-good, in this case either $H$ contains an even split edge or $\Heven$ has total 2-pathlength at least three.

Suppose first that $e$ is an even split edge of $H$. If $e$ contains a $\beta_2$-bad split pair then we may apply Proposition~\ref{clm:medconnect}(iv) to obtain disjoint $\beta_2$-medium split pairs $s'_1$ and $s'_2$ such that $e' := s'_1 \cup s'_2$ is an even edge of $H$. By replacing $e$ with $e'$ if necessary, we may assume that $e = s'_1 \cup s'_2$ where $s'_1$ and $s'_2$ are $\beta_2$-medium split pairs. We next apply Proposition~\ref{clm:medconnect}(ii) twice to obtain $\gamma$-good connate pairs $p_1$ and $p_2$ such that $p_1s_1's_2'p_2$ is a path, and then Proposition~\ref{prop:oddproperties}(h) twice to obtain $\gamma$-good split pairs $s_1$ and $s_2$ such that $P_0=s_1p_1s_1's_2'p_2s_2$ is the desired path in $H$. Note that then either $|V^*|\equiv 6-8\equiv 6 \pmod 8$ and $|A^*|-|B^*|\equiv 2-0=2 \pmod 4$, or $|V^*|\equiv 2-8\equiv 2 \pmod 8$ and $|A^*|-|B^*|\equiv 0-0=0 \pmod 4$. 

From now on, for the rest of Case D, we may assume that $H$ does not contain an even split edge and therefore any even edge is connate. So suppose now that $H$ contains three disjoint even connate edges $e_1,e_2$ and $e_3$. By using Proposition~\ref{clm:medconnect}(iii) as in previous cases to replace $e_1$, $e_2$ or $e_3$ if necessary, we may assume that we can write $e_1 = q_1 \cup q'_1$, $e_2 = q_2 \cup q'_2$ and $e_3 = q_3 \cup q'_3$ where each of $q_1, q_2, q_3, q'_1, q'_2$ and $q'_3$ is a $\beta_2$-medium connate pair. Then by several applications of Proposition~\ref{clm:medconnect}(i) and Proposition~\ref{prop:oddproperties}(g) we may choose connate pairs $p_1, p_2$ and $\gamma$-good split pairs $s_1,\cdots,s_6$ such that $P_0=s_1q_1q'_1s_3p_1s_4q_2q'_2s_5p_2s_6q_3q'_3s_2$ is the desired path in $H$. Note that then either $|V^*| \equiv 6-24\equiv 6 \pmod 8$ and $|A^*|-|B^*|\equiv 2-0=2 \pmod 4$, or $|V^*|\equiv 2-24\equiv 2 \pmod 8$ and $|A^*|-|B^*|\equiv 0-0=0 \pmod 4$.

Next suppose that $H$ contains even connate edges $e_1$, $e_2$ and $e_3$ such that $|e_1 \cap e_2| = 2$ and $e_3$ is disjoint from $e_1 \cup e_2$. Then exactly as in Case C we may form a path $P_0' = s_1f_1f_2f_3s'_1$ where $s_1$ and $s'_1$ are $\gamma$-good split pairs and $f_1,f_2$ and $f_3$ are $\gamma$-good connate pairs, and we may do this so that $e_3$ is disjoint from $V(P_0')$. Using Proposition~\ref{clm:medconnect}(iii) as in previous cases to replace $e_3$ if necessary, we may then assume that $e_3 = q_1 \cup q_2$ where $q_1$ and $q_2$ are disjoint $\beta_2$-medium connate pairs. By two applications of Proposition~\ref{clm:medconnect}(i) we then choose $\gamma$-good split pairs $s_2$ and $s_2'$ such that $s_2' \cup q_1$ and $s_2 \cup q_2$ are both edges of $H$; finally, using Proposition~\ref{prop:oddproperties}(g) we obtain a connate pair $p$ such that $s_1'ps'_2$ is a path of length two in $H$. This gives the desired path $P_0=s_1f_1f_2f_3s_1'ps_2'q_1q_2s_2$. Note that then either $|V^*|\equiv 6-16\equiv 6 \pmod 8$ and $|A^*|-|B^*|\equiv 2-0=2 \pmod 4$, or $|V^*|\equiv 2-16\equiv 2 \pmod 8$ and $|A^*|-|B^*|\equiv 0-0=0 \pmod 4$.

Finally, suppose that $H$ contains a path of three even connate edges $e_1$, $e_2$ and $e_3$ (appearing in that order). Let $f_1 := e_1 \sm e_2$, $f_2 := e_1 \cap e_2$, $f_3 := e_2 \cap e_3$, and $f_4 := e_3 \sm e_2$, so each of $f_1, f_2, f_3$ and~$f_4$ is a connate pair. If the pair $f_1$ is $\beta$-bad, then by applying Proposition~\ref{clm:medconnect}(iii) to $f_1$ with $R = e_2 \cup e_3$ we obtain an even edge $e'_1$ of $H$ disjoint from the path $f_2f_3f_4$ of two even connate edges. Since we assumed that $H$ contains no even split edge this edge $e'_1$ is also an even connate edge, and we may then proceed as in the previous case. So we may assume that~$f_1$ is $\beta$-medium, and likewise that $f_4$ is $\beta$-medium. Using Proposition~\ref{clm:medconnect}(i) twice we find 
$\gamma$-good split pairs $s_1,s_2$ such that $P_0 = s_1f_1f_2f_3f_4s_2$ is the desired path. Note that then either $|V^*|\equiv 6-8\equiv 6 \pmod 8$ and $|A^*|-|B^*|\equiv 2-0=2 \pmod 4$, or $|V^*|\equiv 2-8\equiv 2 \pmod 8$ and $|A^*|-|B^*|\equiv 0-0=0 \pmod 4$. \medskip

For the `moreover' part of the statement, suppose that we are given a $4$-graph $H$ as in the lemma, and also an odd-extremal partition $\{A', B'\}$ of $V(H)$. We can then identify the sets $A^{\bad}$ and $B^{\bad}$ and form the partition $\{A, B\}$ in time $O(n^4)$, and in time $O(n)$ we can identify which of cases A--D holds for $\{A, B\}$. Furthermore we can find the at most three even edges which we use to begin the construction of $P_0$ in time at most $O(n^{12})$, and for each application of Proposition~\ref{prop:oddproperties} or Proposition~\ref{clm:medconnect} to choose a pair we can find such a pair by exhaustive search in time $O(n^2)$. In this way we can form $P_0$ as in Step~(I) in time $O(n^{12})$ and then extend $P_0$ to $P$ as in Step~(II) in time $O(n^3)$. Finally, the application of Lemma~\ref{lem:oddhampath} for Step~(III) gives the desired path $Q$ in time $O(n^4)$.
\end{proof}

The running-time for Lemmas~\ref{thm:evenextralgo} and~\ref{thm:oddextremalalgo} could be improved by more careful arguments, but we abstain from this here, since these procedures do not provide the dominant term for the running-time of our main algorithm in Section~\ref{sec:algo}.

\section{An algorithm to find a Hamilton 2-cycle in a dense 4-graph}\label{sec:algo}

In this section we complete the proof of Theorem~\ref{2cycle} by describing an algorithm which finds a Hamilton 2-cycle in a dense $4$-graph in polynomial time, or certifies that no such cycle exists. First we show that we can find a Hamilton 2-cycle in a dense, connecting and absorbing $4$-graph in polynomial time. For this we use Edmonds's well known algorithm~\cite{Ed65} which finds a maximum matching in a $2$-graph $G$ of order $n$ in time $O(n^4)$; we refer to this procedure here as $\mathrm{MaximumMatching}(G)$.

\begin{prop}\label{prop:gencase}
Suppose that $1/n\ll \eps\ll\beta\ll\alpha\ll\kappa$ and that $n$ is even. There exists an algorithm Procedure~\ref{proc:gencase}($H$) such that the following holds. If $H$ is a $4$-graph of order $n$ with $\delta(H)\geq n/2-\eps n$ which is $\kappa$-connecting and $(\alpha,\beta)$-absorbing, then Procedure~\ref{proc:gencase}($H$) returns a Hamilton 2-cycle in $H$ in time $O(n^{32})$.
\end{prop}

\begin{proof}
Introduce constants satisfying $1/n\ll \eps\ll\gamma\ll\beta\ll\alpha\ll\lambda\ll \mu \ll \kappa.$
By combining the Procedures~\ref{proc:abspath} and~\ref{proc:longcycle} from Lemmas~\ref{lem:abspath2} and~\ref{lem:longcycle2} we can find a long cycle $C$ in $H$ and a graph $G$ on $V(H)\setminus V(C)$ which contains a perfect matching $M$ with $|M| \leq \gamma n$, and for each edge $e \in M$ there will be at least $2 \gamma n$ vertex-disjoint segments of $C$ which are absorbing structures for $e$. We can find this perfect matching by using $\mathrm{MaximumMatching}(G)$. We can now for each $e \in M$ find (by exhaustive search) a segment $P_e$ which is an absorbing structure for $e$ and which is disjoint from each segment $P_{e'}$ chosen for each previously-considered $e'\in M$. There will always be a segment available, as each of the fewer than $\gamma n$ previously-chosen segments $P_{e'}$ intersects at most two of the segments which could be chosen for $e$. Replacing each $P_e$ in $C$ by a path with vertex set $V(P_e) \cup e$ and the same ends as $P_e$ yields a Hamilton 2-cycle in $H$. For the complexity note that we can find an absorbing structure for each of the at most~$n$ edges in $M$ in $O(n)$ by exhaustive search. Therefore the dominant term for the running time is determined by the Procedure~\ref{proc:abspath}.
\end{proof}

Using the above proposition together with the results of Sections~\ref{sec:evenextr} and~\ref{sec:oddextr} we can now describe a polynomial-time algorithm, Procedure~\ref{proc:hamcycle}$(H)$, which, given a $4$-graph $H$ with $\delta(H)\geq n/2-\eps n$ (where $\eps > 0$ is a fixed constant), either finds a Hamilton $2$-cycle in $H$ or certifies that there is no such cycle. So the existence of this algorithm proves Theorem~\ref{2cycle}. In the latter case the certificate is a bipartition $\{A, B\}$ of $V(H)$ which is not both even-good and odd-good, as by Theorem~\ref{character} the existence of such a bipartition demonstrates that $H$ has no Hamilton $2$-cycle. Since it is straightforward to verify in polynomial time whether a given partition is even-good or odd-good, our choice for the certificate is justified. 

\begin{proof}[Proof of Theorem~\ref{2cycle}]
Introduce constants satisfying $1/n\ll\eps\ll\beta\ll\alpha\ll\kappa\ll c\ll 1,$ and let~$H$ be a $4$-graph of order $n$ with $\delta(H)\geq n/2-\eps n$. By Corollary~\ref{algodecide} there is an algorithm with running time $O(n^{25})$ which tests whether $H$ contains a Hamilton $2$-cycle and which, if this condition fails, returns a bipartition $\{A, B\}$ of $V(H)$ which is not both even-good and odd-good. We first run this algorithm and, if $H$ does not contain a Hamilton $2$-cycle, then we return a certifying bipartition. We may therefore assume that $H$ does contain a Hamilton $2$-cycle and that every bipartition of $V(H)$ is both even-good and odd-good, and in particular that $H$ is of even order. Note that we can test in time $O(n^8)$ whether $H$ is $\kappa$-connecting by counting, for each of the $3\binom{n}{4}$ possible disjoint pairs $p_1, p_2 \in \binom{V}{2}$, the number of paths of length two or three with ends $p_1$ and $p_2$. Similarly, we can test in time $O(n^{10})$ whether $H$ is $(\alpha,\beta)$-absorbing by counting, for each of the $\binom{n}{2}$ possible pairs $p \in \binom{V}{2}$, the number of octuples of vertices from $V$ which form an absorbing structure for $p$. Suppose that $H$ is not $\kappa$-connecting. Then $H$ must be $c$-even-extremal by Lemma~\ref{lem:con}, and Procedure~\ref{proc:evenpartition} (defined in Lemma~\ref{lem:con}) returns a $c$-even-extremal partition of $\{A, B\}$ in time $O(n^8)$. Procedure~HamCycleEven (see Lemma~\ref{thm:evenextralgo}) then returns a Hamilton $2$-cycle in time $O(n^{10})$. Likewise, if $H$ is not $(\alpha,\beta)$-absorbing then $H$ must be $c$-odd-extremal by Lemma~\ref{lem:absstruc}, and Procedure~\ref{proc:oddpartition} (defined in Lemma~\ref{lem:absstruc}) returns a $c$-odd-extremal partition of $\{A, B\}$ in time $O(n^6)$. We can then use Procedure~HamCycleOdd (see Lemma~\ref{thm:oddextremalalgo}) to return a Hamilton $2$-cycle in time $O(n^{12})$. This leaves only the case when $H$ is both $\kappa$-connecting and $(\alpha,\beta)$-absorbing; in this case we can use Procedure~\ref{proc:gencase} (see Proposition~\ref{prop:gencase}) to return a Hamilton $2$-cycle in time $O(n^{32})$ . So in each case we can find a Hamilton $2$-cycle in $H$ in time at most $O(n^{32})$, as claimed.
\end{proof}

\section{Tight Hamilton cycles in $k$-graphs}\label{sec:tight}

In this section we describe the proof of Theorem~\ref{tightcycle}, for which we use the following notation. For a function $f(n)$, we write $\HC(k, f)$ (respectively $\HP(k, f)$) to denote the $k$-graph tight Hamilton cycle (respectively tight Hamilton path) decision problem restricted to $k$-graphs $H$ with minimum codegree $\delta(H) \geq f(|V(H)|)$. On the other hand, for an integer $D$, we write $\oHC(k, D)$ (respectively $\oHP(k, D)$) to denote the $k$-graph tight Hamilton cycle (respectively tight Hamilton path) decision problem restricted to $k$-graphs $H$ with maximum codegree $\Delta(H) \leq D$. Our starting point is the following theorem of Garey, Johnson and Stockmeyer~\cite{GaJoSt76} on subcubic graphs (we say that a graph $G$ is \emph{subcubic} if $G$ has maximum degree $\Delta(G) \leq 3$). 

\begin{thm}[\cite{GaJoSt76}] \label{subcubic}
The problem of determining whether a subcubic graph admits a Hamilton cycle (\emph{i.e.} $\oHC(2, 3)$) is NP-complete.
\end{thm}

Fix any integers $k \geq 2$ and $D$. We first show that there are polynomial-time reductions from $\oHC(k, D)$ to $\oHC(2k-1, 2D)$ and from $\oHC(k, D)$ to $\oHC(2k, D)$. For this, let $H$ be a $k$-graph with vertex set $V$, let $A$ and $B$ be disjoint copies of $V$, and let $\vphi_A: A \rightarrow V$ and $\vphi_B: B \rightarrow V$ be the corresponding bijections. We define $\overline{H}_{2k-1}$ to be the $(2k-1)$-graph with vertex set $A \cup B$ whose edges are all sets $e\in\binom{A\cup B}{2k-1}$ such that either $\vphi_A(e\cap A) \in E(H)$ and $\vphi_B(e\cap B) \subseteq \vphi_A(e\cap A)$, or $\vphi_B(e\cap B) \in E(H)$ and $\vphi_A(e\cap A) \subseteq \vphi_B(e\cap B)$. Likewise we define $\overline{H}_{2k}$ to be the $2k$-graph with vertex set $A \cup B$ whose edges are all sets $e\in\binom{A\cup B}{2k}$ such that $\vphi_A(e\cap A) \in E(H)$, $\vphi_B(e\cap B) \in E(H)$. Then it is easy to check that 
\begin{enumerate}[noitemsep, label=(\alph*)]
\item either $H$, $\overline{H}_{2k-1}$ and $\overline{H}_{2k}$ all contain tight Hamilton cycles, or none of them does, and
\item if $\Delta(H) \leq D$, then $\Delta(\overline{H}_{2k-1}) \leq 2D$ and $\Delta(\overline{H}_{2k}) \leq D$,
\end{enumerate}
so this construction gives the desired reductions.

We next show that there are polynomial-time reductions from $\oHP(k, D)$ to $\HC(2k-1, \lfloor\tfrac{n}{2}\rfloor - k(D+1))$ and from $\oHC(k, D)$ to $\HC(2k, \tfrac{n}{2} - k(D+1))$. For the first reduction, let $H$ be a $k$-graph on $n$ vertices; an elementary reduction shows that we may assume without loss of generality that $k$ divides $n$. Set $\ell:=n/k$ and $U := V(H)$, and let $X$ be a set of size $|X|=\tfrac{(k-1)n}{k} = \ell(k-1)$ which is disjoint from $U$. Next, set $A := U \cup X$, and let $B$ be a set of size $|B|=|A|+1 = \ell(2k-1)+1$ which is disjoint from $A$. Define $H_{2k-1}$ to be the $(2k-1)$-graph with vertex set $A \cup B$ whose edges are all sets $e\in\binom{A \cup B}{2k-1}$ with $|A\cap e| \notin \{k,k+1\}$, or with $|A\cap e|=k$ and $A\cap e \in E(H)$, or with $|A\cap e|=k+1$ and such that $e' \not\subseteq A \cap e$ for every $e' \in E(H)$. For the second reduction let $H$ again be a $k$-graph on $n$ vertices, let $S:= V(H)$ and let $T$ be a set of size $n$ which is disjoint from $S$, and define $H_{2k}$ to be the $2k$-graph with vertex set $S \cup T$ whose edges are all sets $e\in\binom{S \cup T}{2k}$ such that $|S\cap e| \notin \{k,k+1\}$, or such that $|S\cap e|=k$ and $S \cap e \in E(H)$, or such that $|S\cap e|=k+1$ and $e' \not\subseteq S \cap e$ for every $e' \in E(H)$. Observe that then
\begin{enumerate}[noitemsep, label=(\alph*)]
\setcounter{enumi}{2}
\item $H_{2k-1}$ contains a tight Hamilton cycle if and only if $H$ contains a tight Hamilton path,
\item $H_{2k}$ contains a tight Hamilton cycle if and only if $H$ contains a tight Hamilton cycle, and
\item if $\Delta(H) \leq D$, then $\delta(H_{2k-1}) \geq |A|-k(D+1)$ and $\delta(H_{2k}) \geq n-(D+1)k$.
\end{enumerate}
Since $H_{2k-1}$ has precisely  $|A| + |B| = 2|A| + 1$ vertices and $H_{2k}$ has precisely $2n$ vertices, this establishes the desired reductions. 

Finally, observe that there are elementary polynomial-time reductions from $\oHC(k, D)$ to $\oHP(k, D)$ and from $\oHP(k, D)$ to $\oHC(k, D)$; together with the above reductions and Theorem~\ref{subcubic} this observation completes the proof of Theorem~\ref{tightcycle}.

\section{Concluding remarks}\label{sec:disc}

Theorem~\ref{tightcycle} demonstrates an interesting contrast between the tight Hamilton cycle problem and the perfect matching problem in $k$-graphs. These two problems share many similarities: both are NP-hard for $k$-graphs in general (see~\cite{GJ74}), and the minimum codegree threshold which ensures the existence of a perfect matching in a $k$-graph $H$ on $n$ vertices (determined asymptotically by K\"uhn and Osthus~\cite{pp-kuhn06} and exactly for large $n$ by R\"odl, Ruci\'nski and Szemer\'edi~\cite{RoRuSz09b}) is close to $n/2$, that is, asymptotically equal to the minimum codegree threshold for a tight Hamilton cycle (see Theorem~\ref{codeg}). 
However, the two problems exhibit different complexity status between these two codegree thresholds. Indeed, 
Keevash, Knox and Mycroft~\cite{KeKnMy13} and Han~\cite{Han} recently showed that the perfect matching problem can be solved in polynomial time in $k$-graphs $H$ with $\delta(H) \geq n/k$. 
This complements a previous result of Szyma\'nska~\cite{S}, who showed that for any $\eps > 0$ the problem remains NP-hard when restricted to $k$-graphs $H$ with $\delta(H) \geq n/k-\eps n$. By contrast, we have seen in this paper that the tight Hamilton cycle problem is NP-hard even when restricted to $k$-graphs $H$ with $\delta(H) \geq n/2 - C$ for a constant $C$, \emph{i.e.}, there is a significant difference in the minimum codegree thresholds needed to render each problem tractable.

We made no attempt to quantify the constant $\eps$ in Theorem~\ref{character} which arises from our proof. However, we conjecture that in fact Theorem~\ref{character} holds for $\eps = \frac{1}{6}$.

\begin{conjecture} \label{conj}
There exists $n_0$ such that the following statement holds. Let $H$ be a 4-graph on $n \geq n_0$ vertices with $\delta(H) \geq n/3$. Then $H$ admits a Hamilton 2-cycle if and only if every bipartition of $V(H)$ is both even-good and odd-good.
\end{conjecture}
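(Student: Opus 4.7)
The forward direction is immediate from Proposition~\ref{forwards}, which requires no minimum codegree assumption. For the backward direction, my plan is to follow the three-case framework used to prove Theorem~\ref{character}: handle non-extremal $H$, $c$-even-extremal $H$, and $c$-odd-extremal $H$ separately, and show that each case can be treated at the weaker hypothesis $\delta(H)\geq n/3$.

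In each extremal case, I would retain the overall strategy of Sections~\ref{sec:evenextr} and~\ref{sec:oddextr}, since an extremal bipartition $\{A,B\}$ still forces $|A|,|B|\geq n/3-o(n)$ and concentrates almost all edges of $H$ in a single parity class. The new difficulty is that at $\delta(H)\geq n/3$ the induced $4$-graphs $H[A]$ and $H[B]$ have internal codegree only about $2|A|/3-o(|A|)$ rather than being almost complete. Consequently one cannot declare nearly every vertex or pair to be $\beta$-fine, so Lemmas~\ref{lem:evenextrhampathequal} and~\ref{lem:evenextrhampathsplit}, and likewise Lemma~\ref{lem:oddhampath}, must be replaced by connecting-Hamilton-path statements valid for $4$-graphs whose minimum codegree is only above their own Dirac threshold. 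I would prove such replacements by running the absorbing-method machinery internally within $H[A]$ and $H[B]$, invoking Theorem~\ref{codeg} to supply the Hamilton paths between specified end-pairs. This is essentially routine, though tedious.

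In the non-extremal case I would aim to replace the connecting lemma (Lemma~\ref{lem:con}), the absorbing lemma (Lemma~\ref{lem:absorbing}) and the long-cycle lemma (Lemma~\ref{lem:longcycle}) by analogues valid at $\delta(H)\geq n/3$. For Lemma~\ref{lem:con}, the Goodman-type argument should still go through: the red/blue auxiliary graph has $(1/3-O(\eps))\binom{n}{2}$ edges of each colour, which suffices to force $\Theta(n^3)$ monochromatic triangles and extract either many short connecting paths or an even-extremal bipartition. For Lemma~\ref{lem:absstruc}, a pair that is not absorbable should still produce a red/blue colouring from which one can locate the structure used to build an odd-extremal bipartition; the bookkeeping must be redone but I do not foresee a fundamental obstruction.

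The main obstacle will be the long-cycle lemma (Lemma~\ref{lem:longcycle}). Its first stage extends $P$ one edge at a time while $|V(P)\cup R|\leq\delta(H)-2$, which at $\delta(H)\geq n/3$ saturates near $|V(P)|\approx n/3$ rather than $(1/2-\mu)n$; moreover, Claim~\ref{clm:segment} currently needs the $E_1$-edges to have density close to $1/2$ rather than $1/3$. To circumvent this I propose a bootstrapped absorbing strategy: during the construction of the absorbing path $P_0$, reserve additional ``swallowing'' capacity so that residual vertices of low codegree into $V(P)$ can be absorbed at the end, and during the second stage alternate path-extension via a weakened Claim~\ref{clm:segment} with local absorption steps that clear residual vertices from $L$. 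The hard part, which I expect to be the genuinely new content of a proof of Conjecture~\ref{conj}, is showing that the absorbing budget is large enough to compensate for the lost capacity of the first stage while remaining compatible with the structural constraints of the auxiliary graph $G$.
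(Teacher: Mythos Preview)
This statement is Conjecture~\ref{conj}: it is explicitly presented in the paper as an open problem, not a theorem, and the paper contains no proof of it. Indeed, the authors state that their proof of Theorem~\ref{character} ``relies extensively on $\eps$ being small'' and that ``significant new ideas and techniques would be necessary'' to prove Conjecture~\ref{conj}. So there is no paper proof to compare against, and your task was not actually to reprove something the paper already establishes.

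Your submission is not a proof either, and you acknowledge as much: it is a programme with a declared ``hard part'' left unresolved. Beyond the gap you already flag in the long-cycle lemma, there is a structural issue you do not address. The paper's three-case split (non-extremal, even-extremal, odd-extremal) is calibrated to the fact that at $\delta(H)\geq(\tfrac12-\eps)n$ the only obstructions to applying the absorbing/connecting machinery are the two bipartite extremal families. At $\delta(H)\geq n/3$ this dichotomy is no longer exhaustive: the paper's own tightness construction immediately after Conjecture~\ref{conj} exhibits a \emph{tripartite} $4$-graph with $\delta(H)\geq n/3-4$ in which every bipartition is even-good and odd-good yet no Hamilton $2$-cycle exists. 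A $4$-graph that is a small perturbation of this tripartite structure can have $\delta(H)\geq n/3$ while being far from both bipartite extremal families; for such $H$ your ``non-extremal'' case would have to apply, but there is no reason to expect the analogues of Lemmas~\ref{lem:con} and~\ref{lem:absstruc} to hold there---the red/blue colouring arguments are tuned to detect bipartite structure, not tripartite structure. Any genuine attack on Conjecture~\ref{conj} would presumably need a further ``tripartite-extremal'' case (and a proof that near-tripartite $H$ with $\delta(H)\geq n/3$ and all bipartitions good do contain Hamilton $2$-cycles), which your plan does not contemplate.

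In short: the forward direction via Proposition~\ref{forwards} is fine, but for the backward direction you have outlined how one might hope to push the existing machinery, not how to supply the new ideas the authors themselves say are required.
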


If true, the minimum codegree condition of Conjecture~\ref{conj} would be essentially best possible. To see this, fix any $n$ which is divisible by 4 and take disjoint sets $X$, $Y$ and $Z$ each of size $\frac{n}{3} \pm 1$ such that $|X| \neq |Y|$ and $|X \cup Y \cup Z| = n$. Define $H$ to be the $4$-graph on vertex set $V := X \cup Y \cup Z$ whose edges are all sets $S \in \binom{V}{4}$ with $|S \cap X| \equiv |S \cap Y| \pmod 3$. It is easily checked that we then have $\delta(H) \geq n/3 - 4$ and that every bipartition of $V(H)$ is both even-good and odd-good. However, there is no Hamilton $2$-cycle in $H$. Indeed, since $4$ divides $n$, taking every other edge of such a cycle would give a perfect matching $M$ in $H$. Since each edge of $M$ covers equally many vertices of $X$ and $Y$ (modulo 3), the same is true of $M$ as a whole, contradicting the fact that $|X| \not\equiv |Y| \pmod 3$.

If Conjecture~\ref{conj} holds, then by the same argument used to establish Corollary~\ref{algodecide} we may determine in polynomial time whether a $4$-graph $H$ on $n$ vertices with $\delta(H) \geq n/3$ admits a Hamilton 2-cycle. Moreover, we speculate that under the weaker assumption that $\delta(H) \geq n/4$ it may be possible to prove a similar statement to Conjecture~\ref{conj} which considers partitions of $V(H)$ into three parts as well as into two parts. If so, this would allow us to determine in polynomial time whether $H$ contains a Hamilton 2-cycle under this weaker assumption. Such a result would neatly complement Theorem~\ref{NPhard}, which shows that for any $c < 1/4$ it is NP-hard to determine whether a 4-graph $H$ on $n$ vertices with $\delta(H) \geq cn$ contains a Hamilton 2-cycle. However, our proof of Theorem~\ref{character} relies extensively on $\eps$ being small; it seems that significant new ideas and techniques would be needed to prove Conjecture~\ref{conj} or this proposed extension. 

Finally, it would be very interesting to classify the values of $k$ and $\ell$ for which the minimum degree threshold needed to render the $k$-graph Hamilton $\ell$-cycle problem tractable is not asymptotically equal to the threshold which guarantees the existence of a Hamilton $\ell$-cycle in a $k$-graph (we assume for this discussion that $\mbox{P} \neq \mbox{NP}$). Theorems~\ref{codeg} and~\ref{2cycle} show that this is the case for $k=4$ and $\ell=2$. On the other hand, Dahlhaus, Hajnal and Karpi\'nski~\cite{DHK} showed that the thresholds are asymptotically equal for $k=2$ and $\ell=1$, whilst Theorems~\ref{codeg} and~\ref{tightcycle} show that the thresholds are asymptotically equal for $k \geq 3$ and $\ell = k-1$.
Combining Theorem~\ref{NPhard} and Theorem~\ref{codeg} we also find that the thresholds are 
asymptotically equal for any $k$ and $\ell$ such that $k-\ell$ does not divide $k$; all other cases remain open.

\section*{Acknowledgements}

We would like to thank an anonymous referee for a careful review and numerous suggestions for improvements in the presentation of this manuscript.

\bibliographystyle{siam}
\bibliography{cyclesrefs}

\begin{thebibliography}{10}

\bibitem{BMSSS17}
{\sc J.~O. Bastos, G.~O. Mota, M.~Schacht, J.~Schnitzer, and F.~Schulenburg},
  {\em Loose {H}amiltonian cycles forced by large {$(k-2)$}-degree -- sharp
  version}.
\newblock arXiv:1705.03707.

\bibitem{BMSSS16}
\leavevmode\vrule height 2pt depth -1.6pt width 23pt, {\em Loose {H}amiltonian
  cycles forced by large {$(k-2)$}-degree -- approximate version}, SIAM Journal
  on Discrete Mathematics, 31 (2017), pp.~2328--2347.

\bibitem{hh-bermond78}
{\sc J.-C. Bermond, A.~Germa, M.-C. Heydemann, and D.~Sotteau}, {\em
  Hypergraphes hamiltoniens}, in Probl\`emes combinatoires et th\'eorie des
  graphes ({C}olloq. {I}nternat. {CNRS}, {U}niv. {O}rsay, {O}rsay, 1976),
  vol.~260 of Colloq. Internat. CNRS, CNRS, Paris, 1978, pp.~39--43.

\bibitem{BHS13}
{\sc E.~Bu{{\ss}}, H.~H{\`a}n, and M.~Schacht}, {\em Minimum vertex degree
  conditions for loose {H}amilton cycles in 3-uniform hypergraphs}, Journal of
  Combinatorial Theory, Series B, 103 (2013), pp.~658--678.

\bibitem{CM}
{\sc A.~Czygrinow and T.~Molla}, {\em Tight codegree condition for the
  existence of loose {H}amilton cycles in 3-graphs}, SIAM Journal on Discrete
  Mathematics, 28 (2014), pp.~67--76.

\bibitem{DHK}
{\sc E.~Dahlhaus, P.~Hajnal, and M.~Karpi{\'n}ski}, {\em On the parallel
  complexity of {H}amiltonian cycle and matching problem on dense graphs},
  Journal of Algorithms, 15 (1993), pp.~367--384.

\bibitem{dirac52}
{\sc G.~A. Dirac}, {\em Some theorems on abstract graphs}, Proceedings of the
  London Mathematical Society, 2 (1952), pp.~69--81.

\bibitem{Ed65}
{\sc J.~Edmonds}, {\em Paths, trees, and flowers}, Canadian Journal of
  Mathematics, 17 (1965), pp.~449--467.

\bibitem{Er64}
{\sc P.~Erd{\H o}s}, {\em On extremal problems of graphs and generalized
  graphs}, Israel Journal of Mathematics, 2 (1964), pp.~183--190.

\bibitem{GaMy16}
{\sc F.~Garbe and R.~Mycroft}, {\em {The complexity of the Hamilton cycle
  problem in hypergraphs of high minimum codegree}}, in 33rd Symposium on
  Theoretical Aspects of Computer Science (STACS 2016), N.~Ollinger and
  H.~Vollmer, eds., vol.~47 of Leibniz International Proceedings in Informatics
  (LIPIcs), Dagstuhl, Germany, 2016, Schloss Dagstuhl--Leibniz-Zentrum f{\"u}r
  Informatik, pp.~38:1--38:13.

\bibitem{GJ74}
{\sc M.~Garey and D.~Johnson}, {\em Computers and intractability: a guide to
  the theory of NP-completeness}, W.H. Freeman, 1979.

\bibitem{GaJoSt76}
{\sc M.~Garey, D.~Johnson, and L.~Stockmeyer}, {\em Some simplified
  {NP}-complete graph problems}, Theoretical Computer Science, 1 (1976),
  pp.~237--267.

\bibitem{GPW12}
{\sc R.~Glebov, Y.~Person, and W.~Weps}, {\em On extremal hypergraphs for
  {H}amiltonian cycles}, European Journal of Combinatorics, 33 (2012),
  pp.~544--555.

\bibitem{Go59}
{\sc A.~Goodman}, {\em On sets of acquaintances and strangers at any party},
  American Mathematical Monthly, 66 (1959), pp.~778--783.

\bibitem{hh-han10}
{\sc H.~H{\`a}n and M.~Schacht}, {\em Dirac-type results for loose {H}amilton
  cycles in uniform hypergraphs}, Journal of Combinatorial Theory, Series B,
  100 (2010), pp.~332--346.

\bibitem{Han}
{\sc J.~Han}, {\em Decision problem for perfect matchings in dense
  {$k$}-uniform hypergraphs}, Transactions of the American Mathematical
  Society, 369 (2017), pp.~5197--5218.

\bibitem{HZ}
{\sc J.~Han and Y.~Zhao}, {\em Minimum codegree threshold for {H}amilton
  $\ell$-cycles in k-uniform hypergraphs}, Journal of Combinatorial Theory,
  Series A, 132 (2015), pp.~194--223.

\bibitem{hh-han14}
\leavevmode\vrule height 2pt depth -1.6pt width 23pt, {\em Minimum vertex
  degree threshold for loose {H}amilton cycles in 3-uniform hypergraphs},
  Journal of Combinatorial Theory, Series B, 114 (2015), pp.~70--96.

\bibitem{JaLuRu11RG}
{\sc S.~Janson, T.~{\L}uczak, and A.~Ruci{\'n}ski}, {\em Random Graphs}, Wiley
  Series in Discrete Mathematics and Optimization, Wiley, 2011.

\bibitem{Ka72}
{\sc R.~M. Karp}, {\em Reducibility among combinatorial problems}, in
  Complexity of Computer Computations, R.~E. Miller, J.~W. Thatcher, and J.~D.
  Bohlinger, eds., Springer US, 1972, pp.~85--103.

\bibitem{KaRuSz10}
{\sc M.~Karpi{\'n}ski, A.~Ruci{\'n}ski, and E.~Szyma{\'n}ska}, {\em
  Computational complexity of the {H}amiltonian cycle problem in dense
  hypergraphs}, in {LATIN} 2010: Theoretical Informatics, 9th Latin American
  Symposium, Oaxaca, Mexico, April 19-23, 2010. Proceedings, 2010,
  pp.~662--673.

\bibitem{hh-katona99}
{\sc G.~Y. Katona and H.~A. Kierstead}, {\em Hamiltonian chains in
  hypergraphs}, Journal of Graph Theory, 30 (1999), pp.~205--212.

\bibitem{KeKnMy13}
{\sc P.~Keevash, F.~Knox, and R.~Mycroft}, {\em Polynomial-time perfect
  matchings in dense hypergraphs}, Advances in Mathematics, 269 (2014),
  pp.~265--334.

\bibitem{hh-keevash11}
{\sc P.~Keevash, D.~K{\"u}hn, R.~Mycroft, and D.~Osthus}, {\em Loose {H}amilton
  cycles in hypergraphs}, Discrete Mathematics, 311 (2011), pp.~544--559.

\bibitem{KMO14}
{\sc D.~K{\"u}hn, R.~Mycroft, and D.~Osthus}, {\em Hamilton {$\ell$}-cycles in
  uniform hypergraphs}, Journal of Combinatorial Theory, Series A, 117 (2010),
  pp.~910--927.

\bibitem{pp-kuhn06-cherry}
{\sc D.~K{\"u}hn and D.~Osthus}, {\em Loose {H}amilton cycles in 3-uniform
  hypergraphs of high minimum degree}, Journal of Combinatorial Theory, Series
  B, 96 (2006), pp.~767--821.

\bibitem{pp-kuhn06}
\leavevmode\vrule height 2pt depth -1.6pt width 23pt, {\em Matchings in
  hypergraphs of large minimum degree}, Journal of Graph Theory, 51 (2006),
  pp.~269--280.

\bibitem{KOSurvey}
\leavevmode\vrule height 2pt depth -1.6pt width 23pt, {\em Hamilton cycles in
  graphs and hypergraphs: an extremal perspective}, in Proceedings of the
  International Congress of Mathematicians 2014, Seoul, Korea, vol.~4, 2014,
  pp.~381--406.

\bibitem{RRRSSz}
{\sc C.~Reiher, V.~R{\"o}dl, A.~Ruci{\'n}ski, M.~Schacht, and
  E.~Szemer{\'e}di}, {\em Minimum vertex degree condition for tight
  {H}amiltonian cycles in 3-uniform hypergraphs}.
\newblock arXiv:1611.03118.

\bibitem{RRSurvey}
{\sc V.~R{\"o}dl and A.~Ruci{\'n}ski}, {\em Dirac-type questions for
  hypergraphs -- a survey (or more problems for {E}ndre to solve)}, in An
  Irregular Mind, I.~B{\'a}r{\'a}ny, J.~Solymosi, and G.~S{\'a}gi, eds.,
  vol.~21 of Bolyai Society Mathematical Studies, Springer Berlin Heidelberg,
  2010, pp.~561--590.

\bibitem{RR14}
\leavevmode\vrule height 2pt depth -1.6pt width 23pt, {\em Families of triples
  with high minimum degree are {H}amiltonian}, Discussiones Mathematicae Graph
  Theory, 34 (2014), pp.~361--381.

\bibitem{RRSS16}
{\sc V.~R{\"o}dl, A.~Ruci{\'n}ski, M.~Schacht, and E.~Szemer{\'e}di}, {\em On
  the {H}amiltonicity of triple systems with high minimum degree}, Annals of
  Combinatorics, 21 (2017), pp.~95--117.

\bibitem{hh-rodl06}
{\sc V.~R{\"o}dl, A.~Ruci{\'n}ski, and E.~Szemer{\'e}di}, {\em A {D}irac-type
  theorem for 3-uniform hypergraphs}, Combinatorics, Probability and Computing,
  15 (2006), pp.~229--251.

\bibitem{hh-rodl08}
\leavevmode\vrule height 2pt depth -1.6pt width 23pt, {\em An approximate
  {D}irac-type theorem for {$k$}-uniform hypergraphs}, Combinatorica, 28
  (2008), pp.~229--260.

\bibitem{RoRuSz09b}
\leavevmode\vrule height 2pt depth -1.6pt width 23pt, {\em Perfect matchings in
  large uniform hypergraphs with large minimum collective degree}, Journal of
  Combinatorial Theory, Series A, 116 (2009), pp.~613--636.

\bibitem{RoRuSz09}
\leavevmode\vrule height 2pt depth -1.6pt width 23pt, {\em Dirac-type
  conditions for {H}amiltonian paths and cycles in 3-uniform hypergraphs},
  Advances in Mathematics, 227 (2011), pp.~1225--1299.

\bibitem{S}
{\sc E.~Szyma{\'n}ska}, {\em The complexity of almost perfect matchings and
  other packing problems in uniform hypergraphs with high codegree}, European
  Journal of Combinatorics, 34 (2013), pp.~632--646.

\bibitem{ZhaoSurvey}
{\sc Y.~Zhao}, {\em Recent advances on {D}irac-type problems for hypergraphs},
  in Recent Trends in Combinatorics, A.~Beveridge, R.~J. Griggs, L.~Hogben,
  G.~Musiker, and P.~Tetali, eds., Springer International Publishing, 2016,
  pp.~145--165.

\end{thebibliography}

\newpage

\appendix

\section{Algorithmic details for Sections~\ref{sec:gen} and~\ref{sec:algo}}

In this section we give further details of each of the procedures from Sections~\ref{sec:gen} and~\ref{sec:algo}, to enable the reader to more easily verify the correctness and claimed running-time of each such procedure. In each case we will adopt the notation of the proof of the corresponding lemma, and so the algorithm provided here should be read in conjunction with the corresponding proof. Furthermore, we adopt the following constant hierarchy for all procedures in this section.

$$1/n\ll 1/D\ll\eps\ll\gamma\ll\beta\ll\omega\ll\vphi\ll\alpha\ll\lambda\ll \mu \ll \kappa\ll\eta\ll c\ll 1\;.$$

\medskip \noindent {\bf Lemma~\ref{lem:con}:} Let $H$ be a $4$-graph on $n$ vertices which satisfies $\delta(H)\geq n/2-\eps n$. If $H$ is not $\kappa$-connecting, then Procedure~\ref{proc:evenpartition}$(H)$ returns a $c$-even-extremal bipartition $\{A, B\}$ of $V(H)$ in time~$O(n^8)$.\medskip

\begin{procedure}[H]
\SetAlgoVlined
\SetAlgoNoEnd
\caption{EvenPartition($H$)}
\KwData{A dense $4$-graph $H$ which is not $\kappa$-connecting.}
\KwResult{A $c$-even-extremal bipartition $\{A, B\}$ of $V(H)$.}
\BlankLine
By exhaustive search, find disjoint pairs $\{a_1,a_2\},\{b_1,b_2\}\in\binom{V(H)}{2}$ such that $H$ contains fewer than $\kappa n^2$ paths of length $2$ and fewer than $\kappa n^4$ paths of length $3$ whose ends are $\{a_1,a_2\}$ and $\{b_1,b_2\}$.\\
Construct the edge-coloured complete $2$-graph $G$ on $V(H)$ as in the proof of Lemma~\ref{lem:con}.\\ 
Find a monochromatic triangle $T^*$ fulfilling Claim~\ref{clm:triangle} (for the given value of $\eta$) by exhaustive search.\\
\Return{$A := N_{\red}(T^*)$ and $B := V(H) \sm A$.}
\label{proc:evenpartition}
\end{procedure}

\medskip \noindent {\bf Lemma~\ref{lem:reservoir}:} Suppose that $1/n \ll \rho \ll \lambda, \kappa$. If $H=(V,E)$ is a $4$-graph of order $n$ which is $\kappa$-connecting, and $G$ is a $2$-graph on the same vertex set $V$ with $\delta(G)\geq n-\lambda n$, then Procedure~\ref{proc:selres}($H,G,\rho$) returns in time $O(n^{16})$ a subset $R\subseteq V$ such that
\begin{enumerate} [label=(\alph*)]
\item $(1-4\rho)\rho n\leq|R|\leq\rho n$,
\item for every $x\in V$ we have $|N_G(x)\cap R|\geq (1-35\lambda)|R|$ and
\item for every disjoint $p_1,p_2\in\binom{V}{2}$ there are at least $\frac{\kappa}{5}|R|$ internally disjoint paths of length at most three in $H[R\cup p_1\cup p_2]$ with ends $p_1$ and $p_2$.
\end{enumerate}

\begin{procedure}[H]
\SetAlgoVlined
\SetAlgoNoEnd
\caption{SelectReservoir($H,G,\rho$)}
\KwData{A $4$-graph $H$ with vertex set $V$, a $2$-graph $G$ with vertex set $V$ and a constant $\rho>0$.}
\KwResult{A reservoir set $R\subseteq V$.}
\BlankLine
$U:=\{\{p,p'\}\subseteq\binom{V}{2}\mid p\cap p'=\emptyset\}$ and $W:=\binom{V}{4}$.\\
$E_1:=\{\{\{p,p'\},S\}\mid\{p,p'\}\in U, S\in W$ and $H[S\cup p\cup p']$ contains a path with ends $p$, $p'\}$.\\
$E_1':=\{\{S,S'\}\in\binom{W}{2}\mid S\cap S'\neq\emptyset\}$.\\
$E_2:=\{\{S,S'\}\in\binom{W}{2}\mid\{u,v\}\in E(G)$ for all $u\in S,v\in S'\}$.\\
Construct graphs $G_1:=(U\cup W,E_1\cup E_1')$ and $G_2:=(W,E_2)$.\\
$R':=$\ref{proc:SelSet}($G_1,G_2,\frac{\rho}{4}n,\kappa,17\lambda,\rho$).\\
$R:=\bigcup_{S\in R'}S$.\\
\Return{$R$.}
\label{proc:selres}
\end{procedure}

\medskip \noindent {\bf Lemma~\ref{lem:absstruc}:} Let $H$ be a $4$-graph on $n$ vertices which satisfies $\delta(H)\geq n/2-\eps n$. If $H$ is not $(\alpha,\beta)$-absorbing, then Procedure~\ref{proc:oddpartition}($H$) returns a $c$-odd-extremal bipartition $\{A, B\}$ of $V(H)$ in time $O(n^6)$.\medskip

\begin{procedure}[H]
\SetAlgoVlined
\SetAlgoNoEnd
\caption{OddPartition($H$)}
\KwData{A dense $4$-graph $H$ which is not $(\alpha,\beta)$-absorbing.}
\KwResult{A $c$-odd-extremal bipartition $\{A, B\}$ of $V(H)$.}
\BlankLine
By exhaustive search, find an edge $\{x,y,x',y'\}$ of $H$ as in Claim~\ref{clm:abs1}.\\
Form the edge-coloured complete graph $K$ on $V(H)$ as in the proof of Lemma~\ref{lem:absstruc}.\\ 
\If{there is a monochromatic triangle $T$ satisfying the conditions of Claim~\ref{clm:abscase1}}
{\Return{$A := N_{\red}(T)$ and $B := V(H) \sm A$.}}
\Else{By exhaustive search, find a normal vertex $v\in V(H)$ which is contained in at most $4\vphi n^2$ red triangles.\\
\Return{$A := N_{\red}(v)$ and $B := V(H) \sm A$.}}
\label{proc:oddpartition}
\end{procedure}

\medskip \noindent {\bf Lemma~\ref{lem:abspath2}:} If $H$ is a $4$-graph of order $n$ with $\delta(H) \geq n/2 - \eps n$ which is $\kappa$-connecting and $(\alpha,\beta)$-absorbing, then Procedure~\ref{proc:abspath}($H$) returns a path $P$ and a graph $G$ on $V(H)$ with the following properties in $O(n^{32})$.
\begin{enumerate}[label=(\roman*)]
\item $P$ has at most $\mu n$ vertices.
\item Every vertex of $V(H) \setminus V(P)$ is contained in at least $n-\lambda n$ edges of $G$. 
\item For any edge $e$ of $G$ which does not intersect $V(P)$ there are at least $2\gamma n$ vertex-disjoint segments of $P$ which are absorbing structures for $e$.
\end{enumerate}

\begin{procedure}[H]
\SetAlgoVlined
\SetAlgoNoEnd
\caption{AbsorbingPath($H$)}
\KwData{A dense, $\kappa$-connecting and $(\alpha,\beta)$-absorbing $4$-graph $H$.}
\KwResult{An absorbing path $P$ in $H$ and a graph $G$ on $V(H)$.}
\BlankLine
Set $W:=V(H)^8$, set $U$ to be the set of all $\beta$-absorbable pairs of vertices of $H$, and form the graph $G := (V(H), U)$.\\
Set $V_1:=U\cup W$, $E_1:=\{\{p,T\} : p \in U, T \in W \mbox{ and $T$ is an absorbing structure for $p$}\}$, $E_1':=\{\{T,T'\} : T, T' \in W \mbox{ and } T\cap T'\neq\emptyset\}$ and form the graph $G_1:=(V_1,E_1\cup E_1')$.\\
Set $\T :=$\ref{proc:SelSet}$(G_1,\emptyset,\beta^2 n,\beta,1,\beta^2)$.\\
Delete all elements from $\T$ which are not an absorbing structure for some $\beta$-absorbable pair.\\
Enumerate $\T$ as $\{T_1,\cdots,T_q\}$ and choose corresponding paths $P_i$ for each $1 \leq i\leq q$.\\
Set $Q := \bigcup_{i=1}^q T_i$ and $X := \{v\in V(H) \sm Q : d_G(v) < (1-\lambda)n\}$.\\
Greedily form a path $P_0$ with $X \subseteq V(P_0) \subseteq V(H) \sm Q$ as in the proof of Lemma~\ref{lem:abspath2}.\\
Greedily choose paths $Q_1, \dots, Q_q$ of length at most three connecting the paths $P_0, \dots, P_q$; each $Q_i$ may be chosen by exhaustive search.\\
\Return{$P := P_0Q_1P_1 \dots Q_qP_q$ and $G$.}
\label{proc:abspath}
\end{procedure}

\medskip \noindent {\bf Theorem~\ref{thm:Erdoes}:} Here we give a proof of the algorithmic statement. For this we define a \emph{multi-$k$-graph $H$} to consist of a vertex set~$V$ and a multiplicity function $m_H: \binom{V}{k} \to \{0\} \cup \N$. We call $m_H(e)$ the \emph{multiplicity} of $e$, and always count `with multiplicity', so, for example, the number of edges of $H$ is $e(H) := \sum_{e \in \binom{V}{k}} m_H(e)$, and the degree of a vertex $v \in V$ is $d_H(v) := \sum_{e \in \binom{V}{k} : v \in e} m_H(e)$. We begin with the following proposition.

\begin{prop} \label{prop:restrict}
Suppose that $1/n \ll 1/\ell, 1/r \ll d, 1/k$. Let $H$ be a multi-$k$-graph on~$n$ vertices with $e(H) \geq d\binom{n}{k}$ in which all multiplicities are at most $r$. Then there exists a set $X \subseteq V(H)$ of size $k\ell$ such that $e(H[X]) \geq d\binom{\ell}{k}$. Moreover, such a set can be found in time~$O(n^k)$.
\end{prop}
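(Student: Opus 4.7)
The plan is twofold: first, to establish existence of a suitable $X$ by a simple double-counting argument over all $k\ell$-subsets of $V(H)$; and second, to derandomise this into a greedy algorithm that iteratively deletes a vertex of minimum degree while preserving the edge-density.

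For existence, I would count ordered pairs $(X,e)$ with $X\in\binom{V(H)}{k\ell}$ and $e$ an edge of $H[X]$ (counted with multiplicity). Each edge of $H$ lies in exactly $\binom{n-k}{k\ell-k}$ such sets $X$, so the total count equals $e(H)\binom{n-k}{k\ell-k}$. Averaging over $X$ yields some $X$ with
$$e(H[X])\geq \frac{e(H)\binom{n-k}{k\ell-k}}{\binom{n}{k\ell}}=e(H)\cdot\frac{\binom{k\ell}{k}}{\binom{n}{k}}\geq d\binom{k\ell}{k}\geq d\binom{\ell}{k}.$$

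To derandomise, I would iteratively remove vertices. The key observation is that if $H'$ is a multi-$k$-graph on $N$ vertices with $e(H')\geq d\binom{N}{k}$, then by summing degrees the average degree equals $ke(H')/N$, so some vertex $v$ has $d_{H'}(v)\leq ke(H')/N$. Deleting $v$ leaves at least $e(H')(1-k/N)$ edges on $N-1$ vertices; since $\binom{N-1}{k}=\binom{N}{k}(N-k)/N$, the density lower bound $d$ is preserved exactly. Starting from $H$ and repeatedly deleting a minimum-degree vertex until $k\ell$ vertices remain gives a set $X$ with $e(H[X])\geq d\binom{k\ell}{k}\geq d\binom{\ell}{k}$, as required.

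For the running time, the number of edges of $H$ is at most $r\binom{n}{k}=O(n^k)$ since $r$ and $k$ are constants, and a single pass through the edge list computes every vertex-degree in $O(n^k)$ time. Across the $n-k\ell=O(n)$ deletion steps, each edge is discarded at most once, so the total work spent updating neighbours' degrees is $O(k\cdot e(H))=O(n^k)$. Locating the minimum-degree vertex at each step can be done naively in $O(n)$, contributing $O(n^2)$ in total, which is absorbed into the $O(n^k)$ bound since $k\geq 2$. There is no substantive obstacle here; the only point that needs care is verifying that the density is maintained \emph{exactly} rather than eroded by the rounding in $\binom{N-1}{k}$, which rests on the identity $\binom{N-1}{k}/\binom{N}{k}=(N-k)/N$ noted above.
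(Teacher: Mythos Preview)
Your proof is correct and takes a genuinely different route from the paper's. The paper proceeds by induction on $k$: it selects $\ell$ vertices of high degree, forms the link multi-$(k-1)$-graph on the remaining vertices (where each $(k-1)$-set has multiplicity equal to the number of selected $v_i$ with which it forms an edge), and applies the induction hypothesis with $r\ell$ in place of $r$. The multiplicity bound $r$ is essential there, both to guarantee that enough high-degree vertices exist and to control the multiplicities in the link.

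Your minimum-degree-deletion argument is more elementary and arguably cleaner: it sidesteps the induction entirely, preserves the density \emph{exactly} at each step via the identity $\binom{N-1}{k}/\binom{N}{k}=(N-k)/N$, and in fact yields the stronger conclusion $e(H[X])\geq d\binom{k\ell}{k}$ rather than $d\binom{\ell}{k}$. Moreover, your correctness argument never uses the multiplicity cap $r$; you only invoke it to bound $e(H)=O(n^k)$ for the running-time analysis (and even that is avoidable if the input is given as a list of $k$-sets with their multiplicities). So your approach is both simpler and slightly more general.
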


\begin{proof}
We proceed by induction on $k$; the base case $k=1$ is trivial. Suppose therefore that $k \geq 2$ and the proposition holds with $k-1$ in place of $k$. In time $O(n^k)$ we may choose vertices $v_1, \dots, v_\ell$ of $H$ each with degree at least $\frac{d}{2} \binom{n}{k-1} + r \ell \binom{n}{k-2}$ in $H$; such vertices must exist as otherwise we would have
$$ \ell \cdot r \binom{n}{k-1} + n \cdot \left(\frac{d}{2} \binom{n}{k-1} + r \ell \binom{n}{k-2}\right) \geq \sum_{v \in V}d_H(v) = k e(H) \geq d k\binom{n}{k},$$
a contradiction. Observe that each $v_i$ then lies in at least $\frac{d}{2} \binom{n}{k-1}$ edges which do not contain any $v_j$ with $j \neq i$. Now form a multi-$(k-1)$-graph $H'$ with vertex set $V' := V(H) \sm \{v_1, \dots, v_\ell\}$ by taking each $(k-1)$-set $S \in \binom{V'}{k-1}$ to have multiplicity $m_{H'}(S) := \sum_{i=1}^\ell m_{H}(S \cup \{v_i\})$. Then by choice of the vertices $v_i$ we have $e(H') \geq \ell \cdot \frac{d}{2} \binom{n}{k-1}$. So by our induction hypothesis (with $r \ell$ in place of $r$) we can find in time $O(n^{k-1})$ a set $X' \subseteq V'$ of size $(k-1)\ell$ such that $H'[X']$ has at least $\ell \frac{d}{2} \binom{\ell}{k-1} \geq d \binom{\ell}{k}$ edges; taking $X := X' \cup \{v_1, \dots, v_\ell\}$ gives the desired set.
\end{proof}

The following corollary is the algorithmic version of Theorem~\ref{thm:Erdoes}.

\begin{coro}[\cite{Er64}]\label{thm:Erdoesalgo}
Suppose that $1/n\ll d, 1/f, 1/k$. Let $F$ be a $k$-partite $k$-graph on $f$ vertices. If $H$ is a $k$-graph on $n$ vertices with $e(H)\geq d\binom{n}{k}$, then we can find a copy of $F$ in $H$ in time $O(n^k)$.
\end{coro}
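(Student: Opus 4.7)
The plan is to combine Proposition~\ref{prop:restrict} with Theorem~\ref{thm:Erdoes} in a straightforward way. First I would choose an integer $\ell = \ell(d, f, k)$ large enough that Theorem~\ref{thm:Erdoes} applies to any $k$-graph on $k\ell$ vertices with edge density at least $d$, and in particular guarantees a copy of $F$ (this is possible since $F$ has only $f$ vertices and $1/\ell \ll d, 1/f, 1/k$).

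Next, I would view $H$ as a multi-$k$-graph with all multiplicities equal to $1$ (so $r=1$) and apply Proposition~\ref{prop:restrict}. This yields, in time $O(n^k)$, a set $X \subseteq V(H)$ with $|X| = k\ell$ such that $e(H[X]) \geq d\binom{\ell}{k}$. By the choice of $\ell$ and Theorem~\ref{thm:Erdoes}, the induced subgraph $H[X]$ contains a copy of $F$.

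Finally, since $|X| = k\ell$ is a constant depending only on $d$, $f$ and $k$, I can locate a copy of $F$ inside $H[X]$ by exhaustive search over all $\binom{k\ell}{f} = O(1)$ candidate vertex subsets and all $O(1)$ bijections from $V(F)$ to each such subset, checking whether the required edges are present. This takes only constant time. Combining this with the $O(n^k)$ cost of Proposition~\ref{prop:restrict} gives a total running time of $O(n^k)$.

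There is no real obstacle here: the genuinely nontrivial ingredients (the existence result of Erd\H{o}s and its derandomisation via Proposition~\ref{prop:restrict}) have already been established, and the corollary is essentially just the observation that once the problem is reduced to a constant-size host graph, brute force finishes the job without changing the asymptotic running time.
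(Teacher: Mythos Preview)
Your proposal is correct and essentially identical to the paper's proof. The only nitpick is that Proposition~\ref{prop:restrict} as stated requires $1/r \ll d, 1/k$, so you should take $r$ to be a sufficiently large constant (the paper uses $r=\ell$) rather than $r=1$; this is harmless since a simple $k$-graph has all multiplicities at most $1 \leq r$.
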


\begin{proof}
Introduce a constant $\ell$ with $1/n \ll 1/\ell \ll d, 1/f, 1/k$. By Proposition~\ref{prop:restrict} (with $r=\ell$) we may find in time $O(n^k)$ a set $X \subseteq V(H) $ of size $k\ell$ such that $e(H[X]) \geq d\binom{\ell}{k}$. The non-algorithmic part of Theorem~\ref{thm:Erdoes} then implies that $H[X]$ contains a copy of $F$, and we can find such a copy in constant time by exhaustive search.
\end{proof}

\medskip \noindent {\bf Lemma~\ref{lem:longcycle2}:} Suppose that $n$ is even. Let $H$ be a $4$-graph of order $n$ with $\delta(H)\geq n/2-\eps n$ which is $\kappa$-connecting. Also let $P_0$ be a $2$-path in $H$ on at most $\mu n$ vertices, and let $G$ be a $2$-graph on $V(H)$ such that each vertex $v \in V(H) \setminus V(P_0)$ has $d_G(v) \geq (1-\lambda) n$. Then Procedure~\ref{proc:longcycle}$(H,G,P_0)$ returns in time $O(n^{16})$ a $2$-cycle $C$ on at least $(1-\gamma) n$ vertices such that $P_0$ is a segment of $C$ and $G[V(H)\sm V(C)]$ contains a perfect matching.\medskip

\begin{procedure}[H]
\SetAlgoVlined
\SetAlgoNoEnd
\caption{LongCycle($H,G,P_0$)}
\KwData{A dense and $\kappa$-connecting $4$-graph $H$, a very dense $2$-graph $G$ on $V(H)$, and a $2$-path $P_0$ in $H$.}
\KwResult{An long cycle $C$ which contains $P_0$ as a segment and such that $G[V(H)\setminus V(C)]$ contains a perfect matching.}
\BlankLine
Set $V':=V(H)\setminus V(P_0)$, $H':=H[V']$, and $G':=G[V']$.\\
R:=\ref{proc:selres}($H',G',2\gamma/3$).\\
Extend $P_0$ by a single edge at each end to a path $P_0'$.\\
Set $L:=V(H)\setminus(V(P_0')\cup R)$ and $R':=R\setminus V(P_0')$.\\
Extend $P_0'$ greedily by only using vertices from $L$ to a path $P$ of length at least $(1/2-\mu)n$.\\
Set $L:=V(H)\setminus(V(P)\cup R)$ and $R':=R\setminus V(P)$.\\
\While{$|L|>\frac{\gamma}{3}n$}{
\If{$e(H[L])>\mu\binom{|L|}{4}$}{
Use Theorem~\ref{thm:pathindensegraph} to find a path $P'$ in $H[L]$ on at least $\frac{\mu|L|}{4}-1$ vertices.\\
Only using vertices from $R'$ find a path $Q$ of length at most three (by exhaustive search) such that $P:=PQP'$ is a path.\\
Set $L:=V(H)\setminus(V(P)\cup R)$ and $R':=R\setminus V(P)$.\\
}
\Else{
Find $I$, $J$, and $H_0$ as in Claim~\ref{clm:segment} by exhaustive search.\\
Use Theorem~\ref{thm:Erdoes} to find a complete $3$-partite $3$-graph $K$ with all vertex classes of size $D/3$.\\
Derive the complete $4$-partite $4$-graph $K'$ from $K$ and $J$.\\
Find a Hamilton path $Q$ in $K'$.\\
Delete $P[I]$ from $P$ and call the resulting two segments $P_1$ and $P_2$.\\
Find by exhaustive search at most $8$ vertices in $R'$ forming two paths $Q_1$ and $Q_2$ such that $P:=P_1Q_1QQ_2P_2$ is a path.\\
Set $L:=V(H)\setminus(V(P)\cup R)$ and $R':=R\setminus V(P)$.\\
}
}
By exhaustive search find at most $4$ vertices in $R'$ to close $P$ to a cycle $C$.\\
\Return{$C$.}
\label{proc:longcycle}
\end{procedure}

\medskip \noindent {\bf Lemma~\ref{prop:gencase}:} Suppose that $n$ is even. If $H$ is a $4$-graph of order $n$ with $\delta(H)\geq n/2-\eps n$ which is $\kappa$-connecting and $(\alpha,\beta)$-absorbing, then Procedure~\ref{proc:gencase}($H$) returns a Hamilton cycle in $H$ in time $O(n^{32})$.\medskip

\begin{procedure}[H]
\SetAlgoVlined
\SetAlgoNoEnd
\caption{NonExtremalCase($H$)}
\KwData{A dense, $\kappa$-connecting and $(\alpha,\beta)$-absorbing $4$-graph $H$.}
\KwResult{A Hamilton cycle $C$ in $H$.}
\BlankLine
Set $(P,G):=$\ref{proc:abspath}$(H)$.\\
Set $C:=$\ref{proc:longcycle}$(H,G,P)$.\\
Set $M:=\mathrm{MaximumMatching}(G[V(H)\sm V(C)])$.\\
\For{$e\in M$}{
Find a segment $P_e$ in $P$ which is an absorbing structure for $e$ and which is disjoint from each segment $P_{e'}$ chosen for each previously-considered $e' \in M$.\\
Replace $P_e$ in $C$ by a path with vertex set $V(P_e) \cup e$ and the same ends as $P_e$. }
\Return{$C$.}
\label{proc:gencase}
\end{procedure}

\medskip \noindent {\bf Theorem~\ref{2cycle}:} If $H$ is a $4$-graph of order $n$ with $\delta(H) \geq n/2-\eps n$, then Procedure~\ref{proc:hamcycle} $(H,\{A, B\})$ either returns a Hamilton cycle in $H$ or a non-even-good or non-odd-good bipartition of $V(H)$ in time $O(n^{32})$.\medskip

\begin{procedure}[H]
\SetAlgoVlined
\SetAlgoNoEnd
\caption{HamCycle($H$)}
\KwData{A dense $4$-graph $H$.} 
\KwResult{A Hamilton $2$-cycle in $H$ or a certificate for non-existence.}
\BlankLine
\If{$H$ contains a Hamilton cycle (Corollary~\ref{algodecide})}{
  \If{$H$ is not $\kappa$-connecting}
    {Set $\{A, B\}:=$\ref{proc:evenpartition}$(H)$. \\
    \Return{$HamCycleEven(H,\{A, B\})$}.}
  \ElseIf{$H$ is not $(\alpha,\beta)$-absorbing}
    {Set $\{A, B\}:=$\ref{proc:oddpartition}$(H)$. \\
    \Return{$HamCycleOdd(H,\{A, B\})$}.}
  \Else{\Return{\ref{proc:gencase}($H$)}.}}
\Else{
  \Return{A bipartition $\{A, B\}$ of $V(H)$ which is not both even-good and odd-good.}}
\label{proc:hamcycle}
\end{procedure}

\section{Full proofs for the reductions claimed in Section~\ref{sec:tight}}

In this section we give full details of the proofs of the the correctness of the polynomial-time reductions claimed in the proof of Theorem~\ref{tightcycle} in  Section~\ref{sec:tight}. We do this through the following propositions. Throughout this section we write simply `path' and `cycle' to mean tight path and tight cycle, as these are the only types of paths and cycles considered here.

\begin{prop}\label{Fact:HP} For any $k \geq 2$ and any $D$, there is a polynomial-time reduction from $\oHP(k,D)$ to $\oHC(k,D)$ and a polynomial-time reduction from $\oHC(k,D)$ to $\oHP(k,D)$.
\end{prop}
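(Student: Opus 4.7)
The plan is to realize both reductions via a simple constant-size endpoint-forcing gadget, applied polynomially many times in the style of a Cook reduction.

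For the reduction from $\oHC(k,D)$ to $\oHP(k,D)$, the key observation is that opening a tight Hamilton cycle at any of its $n$ edges produces a tight Hamilton path whose starting ordered $(k-1)$-tuple $S=(s_1,\dots,s_{k-1})$ and ending ordered $(k-1)$-tuple $T=(t_1,\dots,t_{k-1})$ satisfy the $k-1$ ``wrap-around'' conditions $\{t_i,\dots,t_{k-1},s_1,\dots,s_{i-1}\}\in E(H)$ for $i=1,\dots,k-1$. I would therefore iterate over the $O(n^{2(k-1)})$ ordered pairs $(S,T)$ of disjoint $(k-1)$-tuples of $V(H)$, check the wrap-around condition in constant time, and reduce the fixed-endpoint tight-Hamilton-path question to an instance of $\oHP(k,D)$ by attaching an endpoint-forcing gadget to $H$.

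The gadget on the $S$-side adds $k-1$ new vertices $w'_1,\dots,w'_{k-1}$ together with the tight-path edges
\[
\{w'_1,w'_2,\dots,w'_{k-1},s_1\},\ \{w'_2,\dots,w'_{k-1},s_1,s_2\},\ \dots,\ \{w'_{k-1},s_1,\dots,s_{k-1}\},
\]
and symmetrically on the $T$-side with new vertices $w_1,\dots,w_{k-1}$. Since $w'_i$ lies in exactly $i$ of the added edges, a short propagation argument shows that any tight Hamilton path of the augmented graph must begin $w'_1,\dots,w'_{k-1},s_1,\dots,s_{k-1}$ and end $t_1,\dots,t_{k-1},w_1,\dots,w_{k-1}$, as required. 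The reverse reduction $\oHP(k,D)\to\oHC(k,D)$ is analogous: I would again enumerate pairs $(S,T)$ of candidate endpoints and instead add a single closing bridge of $k-1$ new vertices together with the $2(k-1)$ edges forming a tight cycle-closure between $T$ and $S$, so that a tight Hamilton cycle in the augmented graph projects to a tight Hamilton path in $H$ from $S$ to $T$.

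The main technical obstacle will be to control the maximum codegree of the augmented graph. Because every added edge contains at least one new vertex, every $(k-1)$-set contained wholly in $V(H)$ keeps its codegree from $H$, with the sole exceptions of $\{s_1,\dots,s_{k-1}\}$ and $\{t_1,\dots,t_{k-1}\}$, whose codegrees each rise by exactly one, while $(k-1)$-sets containing at least one new vertex receive codegree at most two from the gadget. I would handle the $+1$ increase by restricting the enumeration to pairs $(S,T)$ whose $(k-1)$-sets have $H$-codegree strictly less than $D$, with a short separate argument for the ``saturated'' sets (there one enumerates the next vertex of the Hamilton path to shift the relevant endpoint tuple to one of smaller codegree). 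Altogether each auxiliary instance satisfies $\Delta\le D$ (provided $D\ge 2$, which is the only case of interest for Theorem~\ref{Thm:LowMaxDeg}), and the polynomial number of queries completes the reduction in each direction.
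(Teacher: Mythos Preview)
Your approach differs from the paper's: you add pendant gadget vertices to force endpoints, whereas the paper works on the same vertex set and \emph{deletes} edges instead. Both are Cook reductions over $O(n^{2k})$ guesses, and your forcing argument (the propagation from the degree-one gadget vertex $w'_1$ inwards) is correct. The real issue is the codegree bookkeeping, and here your fix is not sound as stated.

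The gadget raises the codegree of $\{s_1,\dots,s_{k-1}\}$ (and of $\{t_1,\dots,t_{k-1}\}$) by exactly one. Your proposed repair is to restrict to endpoints with $H$-codegree $<D$ and, for ``saturated'' tuples, to enumerate the next vertex and shift. But shifting the endpoint by one along the path replaces $\{s_1,\dots,s_{k-1}\}$ by $\{s_2,\dots,s_k\}$, and there is no reason this new set has smaller codegree: a $k$-graph with $\Delta(H)=D$ can have a tight Hamilton cycle in which \emph{every} consecutive $(k-1)$-set has codegree exactly $D$. So the shift can loop forever without finding an unsaturated endpoint, and the reduction as written does not stay inside $\oHP(k,D)$ (respectively $\oHC(k,D)$).

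The paper sidesteps this entirely. In the $\oHP\to\oHC$ direction it enumerates the two terminal \emph{edges} $x=(x_1,\dots,x_k)$ and $y=(y_1,\dots,y_k)$, deletes every edge through $x_k$ or $y_1$, and then adds the $k{+}1$ edges of the tight path $(x_1,\dots,x_k,y_1,\dots,y_k)$; since every new edge contains $x_k$ or $y_1$, all affected $(k-1)$-sets had their codegree reset to zero before the addition, and the tight-path structure keeps them at most $2\le D$. For $\oHC\to\oHP$ it enumerates a length-$2k$ segment and simply deletes edges that touch it except in the two prescribed prefixes, so codegree can only drop. Your gadget approach can be salvaged in the same spirit---e.g.\ in the cycle-to-path direction, also delete the $k{-}1$ wrap-around edges you enumerated, which lowers each boundary codegree by one and exactly cancels the gadget's $+1$---but once you do that you are essentially reproducing the paper's deletion idea. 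Note that relaxing to $\oHP(k,D{+}1)$ would suffice for Theorem~\ref{Thm:LowMaxDeg}, but not for the proposition as stated.
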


\begin{proof}
First, we show that there is a polynomial-time reduction from $\oHP(k,D)$ to $\oHC(k,D)$. If $D < 2$, then $\oHP(k,D)$ is trivial, so assume that $D \geq 2$. Let $H$ be a $k$-graph with $\Delta(H)\leq D$.  For each of the at most $n^{2k}$ ordered $2k$-tuples $(x_1,\cdots,x_k, y_1,\cdots,y_k)$ such that $x = (x_1,\cdots,x_k)$ and $y=(y_1,\cdots,y_k)$ are both edges of $H$ we construct an altered graph $H_{(x,y)}$ by deleting all edges $e \in E(H)$ with $x_k \in e$ or $y_1 \in e$ and then adding the edges $\{x_{i+1}, x_{i+2}, \dots, x_k, y_1, \dots, y_{i}\}$ for $0 \leq i \leq k$, so $(x_1,\cdots,x_k,y_1,\cdots,y_k)$ is a path in $H_{(x, y)}$. We then test each $H_{(x, y)}$ for a Hamilton cycle. The original graph $H$ contains a Hamilton path starting with $y$ and ending with $x$ if and only if the altered graph $H_{(x, y)}$ contains a Hamilton cycle, and since $D \geq 2$ and $\Delta(H) \leq D$ we have $\Delta(H_{(x, y)}) \leq D$. 

Now, we show that there is a polynomial-time reduction for the other direction as well. Given a $k$-graph $H$ with $\Delta(H)\leq D$, for each of the at most $n^{2k}$ sequences of $2k$ vertices $S=(x_{1},\cdots,x_{k},y_{k},\cdots,y_{1})$ such that $S$ is a path in $H$ we construct a graph $H_S$ by deleting every edge $e$ of $H$ which intersects $S$ except for those edges $e$ such that $e\cap S = \{x_1,\cdots,x_{\ell}\}$ or $e\cap S = \{y_1,\cdots,y_{\ell}\}$ for some $\ell \leq k$. Afterwards we test each $H_S$ for a Hamilton path. Observe that we have $\Delta(H_S) \leq \Delta(H) \leq D$, and that there is a Hamilton cycle in $H$ if and only if one of the altered graphs $H_S$ contains a Hamilton path.
\end{proof}

\begin{prop} \label{redkto2k-1}
For any $k \geq 2$ and any $D$, there is a polynomial-time reduction from $\oHC(k,D)$ to $\oHC(2k-1,2D)$.
\end{prop}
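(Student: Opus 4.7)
My plan is to build a $(2k-1)$-graph $H'$ on the doubled vertex set $V(H') := V(H) \cup V'$, where $V' := \{v' : v \in V(H)\}$ is a disjoint ``shadow'' copy. For every edge $e \in E(H)$ and every choice of a distinguished vertex $u \in e$, I add two edges of $H'$: the \emph{Type A} edge $e \cup \{v' : v \in e \setminus \{u\}\}$ (containing $e$ together with the shadows of every $v \in e$ except $u$), and the \emph{Type B} edge $(e \setminus \{u\}) \cup \{v' : v \in e\}$ (containing $e$ minus $u$ together with all shadows of $e$); both have size $2k-1$. The construction runs in polynomial time, and the bound $\Delta(H') \leq 2D$ follows by routine casework on a $(2k-2)$-set $T \subseteq V(H')$, split according to $a := |T \cap V(H)|$ and $b := |T \cap V'|$ with $a+b = 2k-2$. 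The only possibilities are $(a,b) \in \{(k,k-2), (k-1,k-1), (k-2,k)\}$; in the off-diagonal cases at most $2$ edges contain $T$, while in the balanced case $(k-1,k-1)$ a short analysis shows that either $T \cap V'$ equals the set of shadows of $T \cap V$ (so each of Type A and Type B contributes $\leq d_H(T \cap V) \leq D$ edges, totalling $\leq 2D$), or else the underlying $H$-edge is determined up to a single vertex and $T$ is contained in at most one edge.

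The forward direction of the equivalence is direct: given a tight Hamilton cycle $v_1, \dots, v_n$ in $H$, the interleaved cyclic ordering $v_1, v_1', v_2, v_2', \dots, v_n, v_n'$ is a tight Hamilton cycle of $H'$. Indeed, every window of $2k-1$ consecutive vertices starting at position $2j-1$ is a Type A edge coming from the $H$-edge $\{v_j, \dots, v_{j+k-1}\}$ with distinguished vertex $u = v_{j+k-1}$, and every window starting at position $2j$ is a Type B edge from the same $H$-edge with distinguished vertex $u = v_j$.

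The backward direction is the main technical step: I must show that if $H'$ has a tight Hamilton cycle then so does $H$. Call a vertex \emph{unmatched} in an edge $f$ of $H'$ if its partner (shadow or original) is absent from $f$; inspection of the two edge types shows that each edge of $H'$ contains exactly one unmatched vertex --- the distinguished vertex $u$ in a Type A edge, and $u'$ in a Type B edge. In any tight Hamilton cycle $x_1, \dots, x_{2n}$ of $H'$, summing over the $2n$ windows therefore gives exactly $2n$ unmatched occurrences. On the other hand, if $v$ and $v'$ are at cyclic distance $d$ with $d \leq 2k-2$, then $v$ is unmatched in exactly $d$ windows (those containing $v$ but not $v'$) and likewise $v'$ in $d$ windows, while $d > 2k-2$ gives combined contribution $2(2k-1)$. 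Setting $u(v)$ to be the total number of windows in which $v$ or $v'$ is unmatched, we obtain $u(v) \geq 2$ for every $v \in V(H)$ with equality iff $d = 1$, and $\sum_v u(v) = 2n$. Since there are $n$ vertices each contributing at least $2$, equality is forced throughout, whence $v$ and $v'$ are cyclically adjacent for every $v$.

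With adjacency established, the Hamilton cycle of $H'$ consists of $n$ consecutive pairs $\{v, v'\}$ (in either order). Collapsing each pair to its V-vertex yields a cyclic ordering $v_1, \dots, v_n$ of $V(H)$, and inspection of any $(2k-1)$-window --- which contains $k$ consecutive pairs minus one boundary vertex --- identifies it, irrespective of the individual pair orientations, as a Type A or Type B edge whose underlying $H$-edge is $\{v_j, \dots, v_{j+k-1}\}$ for the appropriate $j$. These edges form a tight Hamilton cycle of $H$, completing the reduction. The hardest step is the adjacency claim, where the tightness of $\sum_v u(v) = 2n$ is essential.
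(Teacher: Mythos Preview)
Your construction is exactly the paper's: your Type~A and Type~B edges are precisely the two families the paper defines (the paper phrases them via bijections $\varphi_A,\varphi_B$ between two copies $A,B$ of $V(H)$, but the resulting edge sets coincide). The forward direction and the case split for the codegree bound also match.

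The backward direction, however, is genuinely different. The paper does \emph{not} first prove that $v$ and $v'$ are adjacent in the Hamilton cycle of $H'$. Instead it deletes all $B$-vertices from the cycle and argues directly that any $k$ consecutive surviving $A$-vertices $a_1,\dots,a_k$ form an edge of $H$: if the $(2k-1)$-window starting at $a_1$ already contains $a_k$ this is immediate, and otherwise one slides the window forward by one position and combines the two edge structures to pin down the $H$-edge. Your double-counting of ``unmatched'' vertices is a different and rather elegant route; it yields the stronger structural statement that every tight Hamilton cycle of $H'$ must have the interleaved form (up to orientation of individual pairs), which the paper's argument does not establish. Both proofs are short; yours gives more insight into why the reduction works, while the paper's avoids the adjacency step entirely.

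One small slip: in the balanced case $(k-1,k-1)$ where the shadows of $T\cap V'$ overlap $T\cap V$ in exactly $k-2$ vertices, the underlying $H$-edge is indeed uniquely determined, but $T$ can lie in up to \emph{two} edges of $H'$ --- one of each type, obtained by adding either the missing original vertex or the missing shadow --- not just one. This does not affect the conclusion, since $2\le 2D$ whenever $D\ge 1$.
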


\begin{proof} Given a $k$-graph $H$ with vertex set $V=\{v_1,\cdots,v_n\}$, we construct a $(2k-1)$-graph $\overline{H}_{2k-1}$ as follows. Fix disjoint copies $A=\{v_1^{A},\cdots,v_n^{A}\}$ and $B=\{v_1^{B},\cdots,v_n^{B}\}$ of $V$, and let $\vphi_A:A\rightarrow V$ and $\vphi_B:B\rightarrow V$ be the natural bijections (so $\vphi(v_i^{A}) = v_i = \vphi(v_i^B)$). For convenience we will not always mention the explicit bijections; instead we say that vertices $a \in A$, $b \in B$ and $x \in V$ \emph{correspond} if $\vphi_A(a) = \vphi_B(b) = x$. We take $A \cup B$ to be the vertex set of $\overline{H}_{2k-1}$, and the edges of $\overline{H}_{2k-1}$ are the sets $e\in\binom{A\cup B}{2k-1}$ such that either $\vphi_A(e\cap A)$ is an edge of $H$ and $\vphi_B(e\cap B) \subseteq \vphi_A(e\cap A)$, or $\vphi_B(e\cap B)$ is an edge of $H$ and $\vphi_A(e\cap A) \subseteq \vphi_B(e\cap B)$. 
It is then sufficient to show that
\begin{enumerate}[noitemsep, label=(\roman*)]
\item $H$ contains a Hamilton cycle if and only if $\overline{H}_{2k-1}$ contains a Hamilton cycle, and
\item if $\Delta(H) \leq D$, then $\Delta(\overline{H}_{2k-1}) \leq 2D$.
\end{enumerate}

For~(i), first observe that if  $C = (v_1,\cdots,v_n)$ is a Hamilton cycle in $H$, then $\overline{C} := (v_1^{A},v_1^{B},\cdots,$ $v_n^{A},v_n^{B})$ is a Hamilton cycle in $\overline{H}_{2k-1}$. Indeed, every consecutive subsequence $S$ in $\overline{C}$ of length $2k-1$ either contains $k$ vertices from $A$ or contains $k$ vertices from $B$. These $k$ vertices correspond to $k$ consecutive vertices of $C$ and therefore correspond to some edge $e \in E(H)$, and the remaining $k-1$ vertices of $S$ correspond to a subset of $e$, so $S$ forms an edge of $\overline{H}_{2k-1}$. 
Now suppose that some cyclic ordering of the vertices of $\overline{H}_{2k-1}$ gives a Hamilton cycle $C$ in $\overline{H}_{2k-1}$. It suffices to show that if we delete every vertex of $B$ from this sequence, then every $k$ consecutive vertices $(a_1,\cdots,a_k)$ in the remaining subsequence correspond to an edge of $H$, since this subsequence would then correspond to a Hamilton cycle in $H$. For this, let $Q$ be the subsequence of length $2k-1$ in $C$ beginning with $a_1$, and let $Q'$ be the subsequence of length $2k-1$ in $C$ beginning with the vertex subsequent to $a_1$ in $C$. 
Suppose first that $Q$ contains all of the vertices $a_1,\cdots,a_k$. In this case $\{a_1,\cdots,a_k\}$ must correspond to an edge of $H$, since $Q \in E(\overline{H}_{2k-1})$. Now suppose instead that $Q$ does not contain $a_{k}$. The fact that $Q \in E(\overline{H}_{2k-1})$ then implies that $Q$ contains $k$ vertices of $B$ which correspond to an edge of $H$, and that $a_1,\cdots,a_{k-1}$ each correspond to vertices in $B \cap Q$. 
Since $Q \sm Q' = \{a_1\}$, and $Q'$ is also an edge of $E(\overline{H}_{2k-1})$, we must have $Q' \sm Q = \{a_{k}\}$. It follows that $a_{k}$ corresponds to a vertex of $B\cap Q' = B\cap Q$. Thus the $k$ vertices $a_1,\cdots,a_k$ correspond to the $k$ vertices of $Q \cap B$, and so correspond to an edge of $H$.

For~(ii), fix a set $S \in \binom{A\cup B}{2k-2}$. It is only possible that $S$ is included in an edge of $\overline{H}_{2k-1}$ if either $|A \cap S| = k-2$, $|B \cap S| = k$ and the vertices of $B \cap S$ each correspond to vertices of $A \cap S$, or the same holds with the roles of $A$ and $B$ reversed, or $|A \cap S| = |B \cap S| = k-1$ and at least $k-2$ vertices of $A \cap S$ correspond to vertices of $B \cap S$. In the first case we have $d_{\overline{H}_{2k-1}}(S) = 2$, since there are precisely two vertices which can be added to $S$ to form an edge of $\overline{H}_{2k-1}$, namely the vertices of $A \sm S$ which correspond to vertices of $B \cap S$. Likewise we also have $d_{\overline{H}_{2k-1}}(S) = 2$ in the second case. Finally, suppose that $|A \cap S| = |B \cap S| = k-1$. If exactly $k-2$ vertices of $A \cap S$ correspond to vertices of $B \cap S$, then again there are two edges of $\overline{H}_{2k-1}$ containing $S$, formed by adding the vertex of $A \sm S$ which corresponds to a vertex of $B \cap S$ or vice versa. If instead all $k-1$ vertices of $A \cap S$ correspond to vertices of $B \cap S$, then we can form an edge of $\overline{H}_{2k-1}$ containing $S$ by adding any vertex of $A$ or $B$ which corresponds to a neighbour in $H$ of the corresponding $k-1$ vertices of $H$. Since $\Delta(H) \leq D$ there are at most $2D$ such vertices. So in all cases we have $d_{\overline{H}_{2k-1}}(S) \leq 2D$, as required.
\end{proof}

\begin{prop}\label{Lem:LowMaxDeg}
For any $k \geq 2$ and any $D$, there is a polynomial-time reduction from $\oHC(k,D)$ to $\oHC(2k,D)$.
\end{prop}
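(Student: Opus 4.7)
The plan is to mirror Proposition~\ref{redkto2k-1}: given a $k$-graph $H$ on $V = \{v_1, \dots, v_n\}$ with $\Delta(H) \leq D$, I will build a $2k$-graph $\overline{H}_{2k}$ with $\Delta(\overline{H}_{2k}) \leq D$ such that $\overline{H}_{2k}$ contains a tight Hamilton cycle if and only if $H$ does. Take two disjoint copies $A = \{v_1^A, \dots, v_n^A\}$ and $B = \{v_1^B, \dots, v_n^B\}$ of $V$ with the natural bijections $\vphi_A$ and $\vphi_B$, and let $\overline{H}_{2k}$ be the $2k$-graph on $A \cup B$ whose edges are the $2k$-sets $e$ with $|e \cap A| = |e \cap B| = k$, $\vphi_A(e \cap A) \in E(H)$ and $\vphi_B(e \cap B) \in E(H)$. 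The construction is clearly polynomial-time.

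For the forward direction, given a Hamilton cycle $(v_1, \dots, v_n)$ in $H$ I would verify that the interleaved sequence $(v_1^A, v_1^B, v_2^A, v_2^B, \dots, v_n^A, v_n^B)$ is a Hamilton cycle in $\overline{H}_{2k}$. Any window of $2k$ consecutive entries contains exactly $k$ vertices from each side; starting at an odd position the two projections both equal the edge $\{v_i, \dots, v_{i+k-1}\}$ of~$H$, while starting at an even position one projection is $\{v_{i+1}, \dots, v_{i+k}\}$ and the other is $\{v_i, \dots, v_{i+k-1}\}$, both edges of $H$.

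The backward direction is the main obstacle. Given a Hamilton cycle $\overline{C} = (a_1, \dots, a_{2n})$ of $\overline{H}_{2k}$, I would first observe that consecutive edges $\overline{e}_i = \{a_i, \dots, a_{i+2k-1}\}$ and $\overline{e}_{i+1}$ each contain $k$ vertices of $A$, so the removed vertex $a_i$ and the added vertex $a_{i+2k}$ lie in the same part; hence every window of $2k$ consecutive positions of $\overline{C}$ contains exactly $k$ vertices of $A$. Now let $i_1 < i_2 < \dots < i_n$ (cyclically modulo $2n$) enumerate the positions of the $A$-vertices in $\overline{C}$, and set $P_A := (\vphi_A(a_{i_1}), \dots, \vphi_A(a_{i_n}))$, which is a cyclic permutation of $V$. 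I claim that $P_A$ is a tight Hamilton cycle in $H$: for any $j$ the window $\overline{e}_{i_j}$ contains exactly $k$ vertices of~$A$, which must therefore be $a_{i_j}, a_{i_{j+1}}, \dots, a_{i_{j+k-1}}$ (and in particular $i_{j+k-1} \leq i_j + 2k - 1$), whence by edge-membership of $\overline{e}_{i_j}$ the set $\vphi_A(\overline{e}_{i_j} \cap A) = \{\vphi_A(a_{i_j}), \dots, \vphi_A(a_{i_{j+k-1}})\}$ is an edge of~$H$.

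Finally, for the codegree bound, fix any $S \in \binom{A \cup B}{2k - 1}$. An extension $S \cup \{v\}$ can only be an edge when, up to symmetry, $|S \cap A| = k$ and $|S \cap B| = k-1$, in which case $v$ must lie in $B$ and $\vphi_B(v)$ must extend the $(k-1)$-set $\vphi_B(S \cap B)$ to an edge of~$H$; hence $d_{\overline{H}_{2k}}(S) \leq d_H(\vphi_B(S \cap B)) \leq \Delta(H) \leq D$, and so $\Delta(\overline{H}_{2k}) \leq D$ as required.
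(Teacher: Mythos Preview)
Your proof is correct and follows essentially the same approach as the paper: the construction of $\overline{H}_{2k}$, the interleaved Hamilton cycle for the forward direction, the extraction of a Hamilton cycle in $H$ by projecting the $A$-vertices for the backward direction, and the codegree bound are all identical. Your backward direction is slightly more detailed (you explicitly argue that every $2k$-window has exactly $k$ vertices in $A$, whereas the paper only uses this for windows starting at an $A$-vertex), but the underlying argument is the same.
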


\begin{proof}
Given a $k$-graph $H$ with vertex set $V=\{v_1,\cdots,v_n\}$, we take disjoint copies $A=\{v_1^{A},\cdots,v_n^{A}\}$ and $B=\{v_1^{B},\cdots,v_n^{B}\}$ of $V$ and define $\vphi_A$ and $\vphi_B$ as in the proof of Proposition~\ref{redkto2k-1}. We then construct a $2k$-graph $\overline{H}_{2k}$ as follows: the vertex set of $\overline{H}_{2k}$ is $A \cup B$, and the edges of $\overline{H}_{2k}$ are all sets $e \in\binom{A\cup B}{2k}$ such that $\vphi_A(e\cap A)$ and $\vphi_B(e\cap B)$ are both edges of $H$.
It is then sufficient to show that
\begin{enumerate}[noitemsep, label=(\roman*)]
\item $H$ contains a Hamilton cycle if and only if $\overline{H}_{2k}$ contains a Hamilton cycle, and
\item $\Delta(\overline{H}_{2k}) \leq \Delta(H)$.
\end{enumerate}

To show~(i), first assume that $H$ contains a Hamilton cycle $C = (v_1,\cdots,v_n)$. Then $\overline{C} := (v_1^{A},v_1^{B},\cdots,v_n^{A},v_n^{B})$ is a Hamilton cycle in $\overline{H}_{2k}$, since for every consecutive subsequence $S$ of length $2k$ in $\overline{C}$, each of the sets $S \cap A$ and $S \cap B$ corresponds to a consecutive subsequence in $C$ of length $k$, that is, an edge of $H$, so $S$ is an edge of $\overline{H}_{2k}$. For the other direction, suppose that $C = (v_1,\cdots,v_{2n})$ is a Hamilton cycle in $\overline{H}_{2k}$. Then for any $v_i\in A$ the set $Q=\{v_i,\cdots,v_{i+2k-1}\}$ is an edge of $\overline{H}_{2k}$, so $Q\cap A$ contains exactly $k$ vertices from $A$, and moreover these $k$ vertices correspond to an edge of $H$. So if we delete all vertices of $B$ from the sequence $(v_1,\cdots,v_{2n})$, the resulting subsequence corresponds to a Hamilton cycle in $H$.

For~(ii), let $S$ be a set of $2k-1$ vertices of $\overline{H}_{2k}$. It is only possible that $S$ is included in an edge of $H$ if $S\cap A$ is an edge of $H$ and $|S \cap B| = k-1$, or the same with the roles of $A$ and $B$ reversed; without loss of generality we assume the former. A necessary condition for $\{x\} \cup S$ to be an edge of $\overline{H}_{2k}$ is then that $x\in B$ and that $\vphi_B(\{x\} \cup (S\cap B)) \in E(H)$. The number of such vertices $x$ is at most $d_H(\vphi_B(S\cap B)) \leq \Delta(H)$, so $d_{\overline{H}_{2k}}(S) \leq \Delta(H)$.
\end{proof}

It is convenient to note that the problem of deciding whether a $k$-graph with $\Delta(H) \leq D$ contains a Hamilton path reduces to the problem of whether a $k$-graph $H$ with $\Delta(H) \leq D$ whose order is divisible by $k$  contains a Hamilton path. We refer to the latter problem as $\oHP(k,D)^\bot$.

\begin{prop}\label{Fact:HPmod} For any $k \geq 2$ and any $D$, there is a polynomial-time reduction from $\oHP(k,D)$ to $\oHP(k,D)^\bot$.
\end{prop}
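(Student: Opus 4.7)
The plan is: given a $k$-graph $H$ with $n := |V(H)|$ and $\Delta(H) \le D$, if $k$ divides $n$ then $H$ is already an instance of $\oHP(k,D)^\bot$ and we return it; otherwise we let $r := k - (n \bmod k) \in \{1,\ldots,k-1\}$ and pad $V(H)$ by $r$ fresh vertices via a rigid `staircase' gadget attached at a specified ordered $(k-1)$-tuple of $V(H)$. In the style of Propositions~\ref{Fact:HP} and~\ref{redkto2k-1}, we enumerate the polynomially many possible attachments, run the $\oHP(k,D)^\bot$ oracle on each, and output YES iff at least one oracle call returns YES.

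For each ordered $(k-1)$-tuple $s = (s_1,\ldots,s_{k-1})$ of distinct vertices of $V(H)$, introduce fresh vertices $u_1, \ldots, u_r$ and the staircase edges $e_i := \{s_i, s_{i+1}, \ldots, s_{k-1}, u_1, \ldots, u_i\}$ for $i = 1, \ldots, r$. If $d_H(\{s_1, \ldots, s_{k-1}\}) < D$, let $H_s$ be obtained from $H$ by adding $u_1, \ldots, u_r$ together with the edges $e_1, \ldots, e_r$; otherwise (in which case we may assume $D \ge 2$, since $D \le 1$ admits a direct polynomial-time algorithm) we additionally iterate over each edge $f \in E(H)$ through $\{s_1, \ldots, s_{k-1}\}$ and let $H_{s,f}$ be the same construction with $f$ also removed. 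An elementary calculation using $|e_i \cap e_j| = k - j + i$ for $i < j$ shows that any $(k-1)$-subset of $V(H_s)$ lies in at most two consecutive $e_i$'s, and that the only $(k-1)$-subset of $V(H)$ incident to any new edge is $\{s_1, \ldots, s_{k-1}\}$ itself (via $e_1$), whose codegree rises by exactly one and is offset by the removal of $f$ in the high-codegree case; thus $\Delta(H_s), \Delta(H_{s,f}) \le D$. Each augmented hypergraph has $n + r$ vertices (divisible by $k$) and can be built in polynomial time, giving at most $O(D n^{k-1})$ instances overall.

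Correctness has two directions. The forward implication is direct: given a tight Hamilton path $P$ of $H$ ending with $s_1, \ldots, s_{k-1}$ in order, appending $u_1, u_2, \ldots, u_r$ yields a tight Hamilton path of $H_s$, or of $H_{s,f}$ for any $f$ other than the last edge of $P$ (such an $f$ exists when $D \ge 2$). The backward implication is the main obstacle: given a tight Hamilton path $P' = (v_1, \ldots, v_{n+r})$ of $H_s$ or $H_{s,f}$, I will argue by reverse induction that $v_{n+j} = u_j$ for each $j = 1, \ldots, r$. The base case uses that $u_r$ has degree $1$ in the graph (being contained only in $e_r$), so it must be an endpoint of $P'$, say $v_{n+r} = u_r$; the penultimate $k$-tuple then contains $u_{r-1}$ (which lies only in $e_{r-1}, e_r$) yet differs from $e_r$, so it must equal $e_{r-1}$, pinning $s_{r-1}$ at position $n + r - k$. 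Iterating this step locates each successive edge $e_{r-j+1}$ and each $u_j$ in turn, using the rigid chain structure $e_i \cap e_{i+1} = \{s_{i+1}, \ldots, s_{k-1}, u_1, \ldots, u_i\}$ together with the low degrees $d_{H_s}(u_j) = r - j + 1$. Deleting $u_1, \ldots, u_r$ from $P'$ then leaves $(v_1, \ldots, v_n)$, whose consecutive $k$-tuples lie entirely in $V(H)$ and are thus edges of $H - f \subseteq H$, so $H$ admits a tight Hamilton path, completing the reduction.
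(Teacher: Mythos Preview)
Your construction is essentially identical to the paper's: attach a ``staircase'' of $k - (n \bmod k)$ new vertices at an ordered $(k-1)$-tuple, and iterate over all such tuples. The paper's proof is much terser than yours; in particular it simply asserts $\Delta(H_T) = \Delta(H)$ and that $H$ has a Hamilton tight path if and only if some $H_T$ does, without further justification.

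You are in fact more careful than the paper on the codegree bound. The last staircase edge $e_1 = \{s_1,\ldots,s_{k-1},u_1\}$ contains the $(k-1)$-set $\{s_1,\ldots,s_{k-1}\}$ entirely inside $V(H)$, so its codegree rises by exactly one; the paper's claim $\Delta(H_T) = \Delta(H)$ overlooks this. Your fix---when $d_H(\{s_1,\ldots,s_{k-1}\}) = D$, additionally iterate over edges $f$ through this set and delete one---is correct: in a Hamilton tight path of $H$ ending $(\ldots,s_1,\ldots,s_{k-1})$, the only edge through $\{s_1,\ldots,s_{k-1}\}$ that can appear is the final one, so removing any other $f$ (which exists since $D \ge 2$) leaves the path intact. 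Your backward-direction argument is also sound, though the induction is cleaner if phrased as follows: for $m < r-1$, the set $e_{r-m}$ has only $k-r+m \le k-2$ vertices in $V(H)$, so no edge of $H$ shares $k-1$ vertices with it; hence the edge preceding $e_{r-m}$ in $P'$ must be one of $e_{r-m\pm 1}$, and $e_{r-m+1}$ is ruled out since it already occupies the subsequent position. This avoids tracking which specific $u_j$ lies in each successive edge.
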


\begin{proof}
Let $H$ be a $k$-graph on $n$ vertices; we may assume that $\Delta(H) \geq 2$, as otherwise the Hamilton path problem is trivial. Let $0 \leq r \leq k-1$ and $\ell \geq 0$ be such that $n = \ell k+r$. For every ordered $(k-1)$-set of vertices $T = (v_1,\cdots,v_{k-1})$ in $H$ we form a $k$-graph $H_T$ on $(\ell+1)k$ vertices by adding $k-r$ new vertices $x_1,\cdots,x_{k-r}$ and new edges $\{x_i, \dots, x_{k-r}, v_1, \dots, v_{r+i-1}\}$ for $i \in [k-r]$, so $(x_1,\cdots,x_{k-r}, v_1,\cdots,v_{k-1})$ is a path in $H_T$. 
The resulting graph $H_T$ has order divisible by $k$ and maximum codegree $\Delta(H_T) = \Delta(H)$, and $H$ contains a Hamilton path if and only if one of the altered graphs $H_T$ created in this way contains a Hamilton path.
\end{proof}

\begin{prop} \label{hptohcodd}
For any $k \geq 2$ and any $D$ there is a polynomial-time reduction from $\oHP(k,D)$ to $\HC(2k-1,\lfloor\tfrac{n}{2}\rfloor - (D+1)k)$.
\end{prop}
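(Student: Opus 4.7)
By Proposition~\ref{Fact:HPmod} I may restrict to instances $(H,D)$ of $\oHP(k,D)$ in which $n:=|V(H)|$ is divisible by $k$. Starting from the $(2k-1)$-graph $\overline{H}_{2k-1}$ of Proposition~\ref{redkto2k-1} on vertex set $A\cup B$ (two disjoint copies of $V(H)$), I form a new $(2k-1)$-graph $\overline{H}$ on $N:=4n$ vertices by adjoining a set $C$ of $c:=2n$ fresh ``universal'' vertices and declaring every $(2k-1)$-subset of $A\cup B\cup C$ that meets $C$ to be an edge of $\overline{H}$. This augmentation is manifestly polynomial-time computable.

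The codegree condition is routine: any $(2k-2)$-set meeting $C$ extends to an edge of $\overline{H}$ by any further vertex, while any $(2k-2)$-set contained in $A\cup B$ lies in at least $c=2n\ge\lfloor N/2\rfloor-k(D+1)$ edges, one for each universal vertex. The forward implication is also direct: from a Hamilton path $v_1,\ldots,v_n$ of $H$, enumerate $C=\{u_1,\ldots,u_c\}$ and observe that the cyclic sequence
\[
v_1^A,v_1^B,\ldots,v_n^A,v_n^B,u_1,\ldots,u_c
\]
is a Hamilton cycle of $\overline{H}$: windows lying entirely inside $A\cup B$ are edges of $\overline{H}_{2k-1}$ by the forward direction of Proposition~\ref{redkto2k-1}(i), while windows meeting $C$ are automatically edges of $\overline{H}$.

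The reverse implication is the main obstacle. Given a Hamilton cycle $\hat{C}$ in $\overline{H}$, deleting the vertices of $C$ leaves arcs $\alpha_1,\ldots,\alpha_t$ of consecutive non-universal vertices of total length $2n$; any window of $2k-1$ consecutive positions of $\hat{C}$ contained entirely inside a single arc $\alpha_i$ is an edge of $\overline{H}_{2k-1}$, so every $\alpha_i$ of length at least $2k-1$ is a tight path in $\overline{H}_{2k-1}$. My plan is to adapt the sliding-window argument from the reverse direction of Proposition~\ref{redkto2k-1}(i) to decode each arc into a tight walk in $H$, and then to concatenate these walks around the universal blocks of $\hat{C}$ to produce a tight Hamilton path in $H$. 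The delicate point is that $\hat{C}$ may distribute the vertices of $C$ into several blocks, producing several arcs whose decodings must be shown to be pairwise consistent and to cover $V(H)$ exactly once. I would handle this via a normalisation lemma showing that any Hamilton cycle of $\overline{H}$ may be rerouted into another Hamilton cycle of $\overline{H}$ in which all of $C$ lies in a single consecutive block; once this is achieved, the unique remaining arc of $A\cup B$-vertices is a tight Hamilton path of $\overline{H}_{2k-1}$, which the argument of Proposition~\ref{redkto2k-1}(i) decodes into a tight Hamilton path of $H$. Establishing the normalisation lemma will be the technical heart of the argument, likely requiring local exchanges that use the universality of $C$-vertices to preserve the cycle property while gradually merging universal blocks, with the bound $\Delta(\overline{H}_{2k-1})\le 2D$ from Proposition~\ref{redkto2k-1}(ii) used to control the finitely many ``non-universal'' windows affected by each exchange.
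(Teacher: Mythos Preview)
Your construction fails: the $(2k-1)$-graph $\overline{H}$ you build \emph{always} contains a tight Hamilton cycle, regardless of whether $H$ has a tight Hamilton path. To see this, simply alternate the $2n$ vertices of $A\cup B$ with the $2n$ vertices of $C$ around a cycle of length $4n$; then every window of $2k-1$ consecutive vertices contains at least $k-1\ge 1$ vertices of $C$ and is therefore an edge of $\overline{H}$ by your universality clause. More generally, since $|C|=|A\cup B|$ you can always arrange the $A\cup B$-vertices into arcs of length at most $2k-2$ separated by $C$-vertices, in which case no window of the cycle lies inside $A\cup B$ and so the structure of $\overline{H}_{2k-1}$ (and hence of $H$) is never consulted. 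Consequently the reverse implication is false, and your proposed normalisation lemma cannot hold: when $H$ has no Hamilton path there is simply no way to reroute such a cycle so that all of $C$ is contiguous. Nor can you repair this by shrinking $C$, since the codegree requirement already forces $|C|\ge N/2-O(1)\approx |A\cup B|$.

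The paper avoids this by making no vertex fully universal. It takes $A=V(H)\cup X$ with $|X|=(k-1)n/k$ and a set $B$ with $|B|=|A|+1$, and declares a $(2k-1)$-set $e$ to be an edge according to $|e\cap A|$: sets with $|e\cap A|\notin\{k,k+1\}$ are free, sets with $|e\cap A|=k$ must have $e\cap A\in E(H)$, and sets with $|e\cap A|=k+1$ must \emph{not} contain any edge of $H$. This anti-monotone twist, together with the one-vertex imbalance $|B|=|A|+1$, forces every Hamilton cycle to have a rigid type structure; a double-counting argument on an auxiliary bipartite graph then shows that the $V(H)$-vertices must occupy a single contiguous block of type-$(k,k-1)$ edges, which decodes to a Hamilton path in $H$. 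The idea you are missing is that the high-codegree side of the construction must itself carry a constraint that interacts with $E(H)$; purely universal padding destroys all structural information about $H$.
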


\begin{proof}
By Proposition~\ref{Fact:HPmod} it suffices to give a polynomial-time reduction from $\oHP(k,D)^\bot$ to $\HC(2k-1,\lfloor\tfrac{n}{2}\rfloor - (D+1)k)$. So let $H$ be a $k$-graph on $n$ vertices, where $k$ divides $n$, and set $\ell:=n/k$. We define a $(2k-1)$-graph $H_{2k-1}$ as follows. Let $U:=V(H)$ and let $X$ be a set of size $|X|=\tfrac{(k-1)n}{k} = \ell(k-1)$ which is disjoint from $U$. Set $A := U \cup X$, and let $B$ be a set of size $|B|=|A|+1 = \ell(2k-1)+1$ which is disjoint from $A$. We take the vertex set of $H_{2k-1}$ to be $A \cup B$, and the edges of $H_{2k-1}$ to be all sets $e\in\binom{A \cup B}{2k-1}$ with $|A\cap e| \notin \{k,k+1\}$, or with $|A\cap e|=k$ and $A\cap e \in E(H)$, or with $|A\cap e|=k+1$ and such that $e' \not\subseteq A \cap e$ for every $e' \in E(H)$. Since $H_{2k-1}$ has $|A| + |B| = 2|A| + 1$ vertices, it then suffices to show that 
\begin{enumerate}[noitemsep, label=(\alph*)]
\item $H$ contains a Hamilton path if and only if $H_{2k-1}$ contains a Hamilton cycle, and
\item if $\Delta(H) \leq D$, then $\delta(H_{2k-1}) \geq |A|-k(D+1)$.
\end{enumerate}
For convenience we define the \emph{type} of an edge $e \in E(H_{2k-1})$ to be the pair $(|e \cap A|, |e \cap B|)$.

For~(a), first assume that there is a Hamilton path $P=(h_1,\cdots,h_n)$ in $H$, and write $X=\{x_1,\cdots,x_{\ell (k-1)}\}$ and $B=\{b_1,\cdots, b_{\ell k}, b'_1, \cdots, b'_{\ell (k-1)+1}\}$. Then 
\begin{align*}
C= &\big(x_1, \cdots, x_{k-1}, b_1, \cdots, b_k, \cdots, x_{(\ell-1)(k-1)+1},
\cdots, x_{\ell(k-1)}, b_{(\ell-1)k+1}, \cdots, b_{\ell k}, \\
& h_{1}, \cdots, h_{k}, b'_{1}, \cdots, b'_{k-1}, \cdots, h_{n-k+1},
\cdots, h_{n}, b'_{(\ell-1)(k-1)+1}, \cdots, b'_{\ell(k-1)}, 
b'_{\ell(k-1)+1}\big)\;.
\end{align*}
is a Hamilton cycle $C$ in $H$.
That is, $C$ begins with edges of type $(k-1,k)$ which include all the vertices of $X$, followed by edges of type $(k,k-1)$ which include all of the vertices of $U$. The final vertex from $B$ then returns $C$ to edges of type $(k-1,k)$ to close the cycle. Note that all vertices of $H_{2k-1}$ are contained in $C$ and that every consecutive subsequence of $2k-1$ vertices of $C$ forms an edge of $H_{2k-1}$.

Now assume instead that $H_{2k-1}$ contains a Hamilton cycle $C$. We introduce the following auxiliary bipartite graph $G$ with vertex classes $A$ and $B$. For every $a\in A$ let $S_a$ be the consecutive subsequence of $C$ of length $2k-1$ starting with $a$ (so $S_a$ is an edge of $H_{2k-1}$). We define the edge set of $G$ to be $E(G):=\{\{a,b\}\mid a\in A,b\in S_a\cap B\}$, and also define $U':=\{u \in U \mid d_{G}(u)\geq k\}$ and $B':=\{b\in B\mid d_{G}(b)\leq k-1\}$.

\begin{claim}\label{Obs:BipAuxGraph}
The following properties hold for $G$.
\begin{enumerate}[noitemsep, label=(\roman*)]
\item $d_{G}(a)\geq k-1$ for all $a\in A$,
\item $d_{G}(x)\geq k$ for all $x\in X$,
\item $d_{G}(b)\leq k$ for all $b\in B$,
\item $|B'|\geq n+1$.
\end{enumerate}
\end{claim}

\begin{proof}
The main observation is that if $e$ is an edge of $C$ such that $|e\cap A| \geq k$, then $e \cap A$ is an edge of $H$, so $e$ is of type $(k,k-1)$ and contains no vertex of $X$. To see this, assume otherwise that there exists an edge $e$ for which this observation does not hold. Then $|e \cap A| > k$, and by construction of $H_{2k-1}$ the edge $e'$ preceding $e$ in $C$ fulfils $|e'\cap A| > k$ as well. Consequently every edge of $C$ has $|e\cap A|>|e\cap B|$, contradicting the fact that $|B| \geq |A|$. This observation immediately implies~(i),~(ii) and~(iii). To show~(iv), observe that if every edge of $C$ contains a vertex of $X$, then for the same reason it would follow that $d_G(a) \geq k$ for every $a \in A$ and $d_G(b) \leq k-1$ for every $b \in B$, and so we would have $|A|k \leq |E(G)| \leq |B|(k-1) = (|A| + 1)(k-1)$, contradicting the fact that $|A| \geq k$. 
So some edge $e$ of $C$ does not contain a vertex of $X$, and it follows that we may choose a set $\Sc$ of disjoint consecutive subsequences of $C$ of length $2k-1$, each beginning with a vertex of $X$, such that $X \subseteq \bigcup \Sc$. Then each $S \in \Sc$ is an edge of $C$ containing a vertex of $X$, so has $|S \cap A| \leq k-1$, and so we have $|\Sc| \geq |X|/(k-1) = \ell$. Furthermore, we have $\bigcup\Sc \cap B \subseteq B'$, so if $|\bigcup\Sc\cap B| \geq n + 1$ then we are done. We may therefore assume that $|\bigcup\Sc \cap B| = n = k\ell$, so $|\Sc|=\ell$. Since $C$ has more than $(4k-3)\ell$ vertices, there must exist $S \in \Sc$ such that the $2k-2$ vertices immediately prior to $S$ in $C$ are not in $\bigcup\Sc$. Let $x\in X$ be the first vertex of $S$, let $S'$ be the consecutive subsequence of $C$ of length $2k-1$ ending with $x$, and let $b$ be the last member of $B$ in $S'$. Since $S'$ is an edge of $C$ containing a vertex of $X$, we have $|S' \cap B| \geq k$. It follows that $b\in B'$, and consequently $|B'|\geq n+1$.
\end{proof}

By making use of Claim~\ref{Obs:BipAuxGraph} we now can double-count the edges of $G$ to get
\begin{align*}
|E(G)| & \geq k|X|+(k-1)|U|+|U'| = (k-1)n + (k-1)n+|U'|\;,\mbox{ and}\\
|E(G)| & \leq k|B|-|B'|\leq (2k-1)n + k - (n+1)\;.
\end{align*}
Hence we have $|U'|\leq k-1$.
For every $x\in X$ the previous $k-1$ vertices of $U$ in $C$ are vertices of $U'$ and therefore $|U'|\geq k-1$. Hence $|U'|=k-1$ and all inequalities used in the above calculation are in fact equalities. Therefore the vertices of $U$ have to appear in a consecutive order in $C$ only interrupted by vertices of $B$. Since there are exactly $k-1$ vertices in $U'$, the minimal segment of $C$ containing all the vertices of $U$ is a path consisting only of edges of the type $(k,k-1)$. So deleting all vertices of $B$ from this segment yields a Hamilton path in $H$.

For~(b) let $S$ be a set of $2k-2$ vertices of $H_{2k-1}$. If $|A\cap S|<k-1$ or $|A\cap S|=k+1$, then we can add any vertex of $A \sm S$ to $S$ to form an edge of $H_{2k-1}$, so $d_{H_{2k-1}}(S) \geq |A|-k-1$. If instead $|A\cap S| > k+1$ or $|A \cap S|=k-1$, then we can add any vertex of $B \sm S$ to form an edge of $H_{2k-1}$, so $d_{H_{2k-1}}(S) \geq |B|-k = |A| - k+1$. The same is true if $|A \cap S|=k$ and $A\cap S$ is an edge of $H$. Finally, if $|A\cap S|=k$ and $A\cap S$ is not an edge of $H$, then $S \cup \{a\}$ is an edge of $H_{2k-1}$ for any $a \in A \sm S$ such that $S' \cup \{a\}$ is not an edge of $H$ for any $S' \subseteq S$ of size $k-1$. Since there are $k$ such sets $S'$, each of which has at most $\Delta(H) \leq D$ neighbours in $H$, it follows that $d_{H_{2k-1}}(S) \geq |A|-(D+1)k$. So in all cases we have $d_{H_{2k-1}}(S) \geq |A|-(D+1)k$ as claimed.
\end{proof}

\begin{prop}
For any $k \geq 2$ and any $D$ there is a polynomial-time reduction from $\oHC(k,D)$ to $\HC(2k,\tfrac{n}{2}-(D+1)k)$.
\end{prop}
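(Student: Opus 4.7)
The plan is to mirror the construction of Proposition~\ref{hptohcodd}, with the simplification that---because we are now reducing a Hamilton cycle problem rather than a Hamilton path problem---no auxiliary ``bridging'' vertex set $X$ is required, and we can take $|A| = |B|$. Given a $k$-graph $H$ on $n$ vertices with $\Delta(H) \leq D$, I will set $A := V(H)$ and let $B$ be a set of $n$ new vertices disjoint from $A$, so that $V(H_{2k}) := A \cup B$ has $|V(H_{2k})| = 2n$. The edge set $E(H_{2k})$ will consist of all $2k$-subsets $e \subseteq A \cup B$ such that either $|A \cap e| \notin \{k, k+1\}$, or $|A \cap e| = k$ and $A \cap e \in E(H)$, or $|A \cap e| = k+1$ and no $k$-subset of $A \cap e$ lies in $E(H)$. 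I then aim to prove that (a) $H_{2k}$ contains a tight Hamilton cycle if and only if $H$ does, and (b) $\delta(H_{2k}) \geq n - k(D+1) = |V(H_{2k})|/2 - k(D+1)$.

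The forward implication of (a) is immediate: given a tight Hamilton cycle $(v_1, \dots, v_n)$ of $H$, the cyclic sequence $(v_1, b_1, v_2, b_2, \dots, v_n, b_n)$ (for an arbitrary enumeration $b_1, \dots, b_n$ of $B$) is a tight Hamilton cycle of $H_{2k}$, because every window of $2k$ consecutive vertices has an $A$-part consisting of $k$ consecutive vertices of $H$'s Hamilton cycle, hence an edge of $H$. The codegree bound (b) will follow from a routine case analysis on $|A \cap S|$ for a $(2k-1)$-set $S$, analogous to that in the proof of Proposition~\ref{hptohcodd}: the binding case is $|A \cap S| = k$ with $A \cap S \notin E(H)$, where each $v \in A \sm S$ yields a valid type-$(k+1,k-1)$ edge of $H_{2k}$ unless $(A \cap S \sm \{a\}) \cup \{v\} \in E(H)$ for some $a \in A \cap S$---a condition that fails for at most $kD$ choices of $v$, giving $d_{H_{2k}}(S) \geq |A \sm S| - kD = n - k(D+1)$.

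The hard part will be the backward implication of (a). The plan is to show, via a propagation-plus-averaging argument, that every edge of any tight Hamilton cycle $C$ in $H_{2k}$ has $|A \cap e| = k$ exactly. Suppose for contradiction that some $e_0 \in C$ has $|A \cap e_0| > k$, and let $e'_0 \in C$ be the edge preceding $e_0$. If $|A \cap e_0| \geq k + 2$, then trivially $|A \cap e'_0| \geq |A \cap e_0| - 1 \geq k + 1 > k$. If instead $|A \cap e_0| = k + 1$, then the definition of $H_{2k}$ requires that no $k$-subset of $A \cap e_0$ lies in $E(H)$; for $|A \cap e'_0|$ to equal $k$ would force $e'_0$ and $e_0$ to differ by replacing a single $B$-vertex in $e'_0$ with an $A$-vertex in $e_0$, making $A \cap e'_0$ a proper $k$-subset of $A \cap e_0$, and the validity of $e'_0$ as a type-$(k,k)$ edge with $A \cap e'_0 \subseteq A = V(H)$ would then force $A \cap e'_0 \in E(H)$---a contradiction. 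Hence $|A \cap e'_0| > k$ in both cases, and iterating backward around $C$ yields $|A \cap e| > k$ for every edge. But each $A$-vertex lies in exactly $2k$ edges of $C$, so $\sum_{e \in C} |A \cap e| = 2k|A| = 2kn$, while $C$ has $2n$ edges; the average $|A \cap e|$ therefore equals $k$, contradicting $|A \cap e| > k$ for all edges. Thus every edge of $C$ satisfies $|A \cap e| \leq k$, and the same averaging identity forces equality throughout.

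Once every edge of $C$ has $|A \cap e| = k$ exactly, successive edges must differ by swapping two vertices of the same class, so the $A$-part of each window consists of $k$ consecutive elements in the cyclic $A$-subsequence $(a_1, \dots, a_n)$ obtained by deleting the $B$-vertices of $C$. Since each such $A$-part is an edge of $H$ by our edge definition, $(a_1, \dots, a_n)$ is a tight Hamilton cycle of $H$, completing the proof of (a).
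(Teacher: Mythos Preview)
Your proposal is correct and follows essentially the same approach as the paper's own proof: the construction of $H_{2k}$ is identical, the forward direction and the codegree case analysis match, and your backward argument (propagating $|A\cap e|>k$ around the cycle, then averaging via $\sum_e |A\cap e| = 2k|A|$) is equivalent to the paper's observation that edges of types $(k,k)$ and $(k+1,k-1)$ cannot be adjacent combined with $|A|=|B|$.
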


\begin{proof}
Let $H$ be a $k$-graph on $n$ vertices, let $A:= V(H)$ and let $B$ be a set of size $n$ which is disjoint from $A$. We form a $2k$-graph $H_{2k}$ with vertex set $V := A \cup B$ whose edges are all sets $e\in\binom{V}{2k}$ such that $|A\cap e| \notin \{k,k+1\}$, or such that $|A\cap e|=k$ and $A \cap e \in E(H)$, or such that $|A\cap e|=k+1$ and $e' \not\subseteq A\cap e$ for every $e' \in E(H)$. Since $H_{2k}$ has precisely $2n$ vertices, it then suffices to show that 
\begin{enumerate}[noitemsep, label=(\roman*)]
\item $H$ contains a Hamilton cycle if and only if $H_{2k}$ contains a Hamilton cycle, and
\item if $\Delta(H) \leq D$, then $\delta(H_{2k}) \geq n-(D+1)k$.
\end{enumerate}
As in Proposition~\ref{hptohcodd}, we define the \emph{type} of an edge $e \in E(H_{2k})$ to be the pair $(|e \cap A|, |e \cap B|)$.

For~(i) assume first that $H_{2k}$ has a Hamilton cycle $C$. Observe that our construction of $H_{2k}$ ensures that if $e$ is an edge of $H_{2k}$ of type $(k, k)$, and $e'$ is an edge of $H_{2k}$ of type $(k+1, k-1)$, then $|e \cap e'| < 2k-1$. It follows that $C$ either has no edges with $|e\cap A| > |e\cap B|$ or has no edges with $|e\cap A| \leq |e\cap B|$. Since $|A|=|B|$ we conclude that every edge in $C$ must have type $(k,k)$. Let $C'$ be the subsequence of $C$ obtained by deleting all vertices of $B$ from $C$, and let $S$ be a subsequence of $k$ consecutive vertices of $C'$, so in particular $S \subseteq A$. Then $S$ is included in an edge of $C$, which is an edge of type $(k,k)$ in $H_{2k}$, and so $S$ is an edge of $H$ by construction of $H_{2k}$. We conclude that $C'$ is a Hamilton cycle in $H$. For the other direction assume that $H$ contains a Hamilton cycle $(v_1,\cdots,v_n)$. Then for any enumeration of $B$ as $b_1, \dots, b_n$, the sequence $(v_1,b_1,\cdots,v_n,b_n)$ is a Hamilton cycle in $H_{2k}$, since every consecutive subsequence consisting of $2k$ vertices contains exactly $k$ vertices from $A$, which form an edge of $H$.

For~(ii), fix a set $S$ of $2k-1$ vertices of $H_{2k}$, and observe that if $|S\cap A|\neq k$, then $S$ is included in at least $n-(k+1)$ edges of $H_{2k}$. If instead $|S\cap A|=k$ and $S \cap A$ is an edge of $H$, then $S$ is included in at least $n - (k-1)$ edges of $H_{2k}$, since we may add any vertex of $B \sm S$ to form an edge of $H_{2k}$. Finally, if $|S\cap A|=k$ and $S \cap A$ is not an edge of $H$, then $S\cup\{a\}$ is an edge of $H_{2k}$ for every vertex $a \in A \sm S$ except for those vertices $a$ such that $S' \cup \{a\}$ is an edge of $H$ for some $S' \subseteq S \cap A$ of size $k-1$. Since there are $k$ such subsets $S'$, each of which has $d_H(S') \leq \Delta(H) \leq D$ neighbours in $H$, it follows that $S$ is included in at least $n-k-kD$ edges of $H_{2k}$. So in all cases we have $d_{H_{2k}}(S) \geq n-k-kD$ as claimed.
\end{proof}

\end{document}